\newtheorem{thm}{\bf Theorem}[section]
\newtheorem{prop}[thm]{\bf Proposition}
\newtheorem{lemma}[thm]{\bf Lemma}
\newtheorem{cor}[thm]{\bf Corollary}
\theoremstyle{definition}
\newtheorem{definition}[thm]{\bf Definition}
\theoremstyle{remark}
\newtheorem{remark}[thm]{\bf Remark}
\newtheorem{conjecture}[thm]{\bf Conjecture}
\newtheorem{question}[thm]{\bf Question}
\newtheorem{example}[thm]{\bf Example}
\numberwithin{equation}{section}
\DeclareMathOperator{\height}{{ht}}
\DeclareMathOperator{\depth}{{depth}}
\DeclareMathOperator{\hilb}{{Hilb}}
\DeclareMathOperator{\chara}{{{char}}}
\DeclareMathOperator{\gr}{{{gr}}}
\DeclareMathOperator{\cs}{{cs}}
\DeclareMathOperator{\ms}{{ms}}
\DeclareMathOperator{\mcn}{{mcn}}
\def\f0{\mathbf{0}}
\def\fm{\mathfrak{m}}
\def \QQ{\mathbb Q}
\def \CC{\mathbb C}
\def \RR{\mathbb R}
\def \ZZ{\mathbb Z}
\def \cc{\mathcal C}
\begin{document}

\title[Some new invariants of Noetherian local rings related to square of the maximal ideal]{Some new invariants of Noetherian local rings related to square of the maximal ideal} 

\author{Dylan C. Beck and Souvik Dey}
\address{Department of Mathematics \\ University of Kansas \\ 405 Snow Hall, 1460 Jayhawk Blvd. \\ Lawrence, KS 66045}
\email{dylan.beck@ku.edu and souvik@ku.edu}


\begin{abstract}
We introduce two new invariants of a Noetherian (standard graded) local ring $(R, \fm)$ that measure the number of generators of certain kinds of reductions of $\fm,$ and we study their properties. Explicitly, we consider the minimum among the number of generators of ideals $I$ such that either $I^2 = \fm^2$ or $I \supseteq \fm^2$ holds. We investigate subsequently the case that $R$ is the quotient of a polynomial ring $k[x_1, \dots, x_n]$ by an ideal $I$ generated by homogeneous quadratic forms, and we compute these invariants. We devote specific attention to the case that $R$ is the quotient of a polynomial ring $k[x_1, \dots, x_n]$ by the edge ideal of a finite simple graph $G.$
\end{abstract}
\keywords{Noetherian local ring, Cohen-Macaulay, Weak Lefschetz Property, edge ideal}
\subjclass[2020]{05C25, 13C70, 05E40, 13F20, 13F55}

\maketitle

\section{Introduction}\label{Introduction}

Our work began as a simple curiosity: given ideals $I$ and $J$ in a commutative unital ring $R$ such that $I^2 = J^2,$ how ``close'' must $I$ and $J$ be? For simplicity, suppose that $J$ is a prime ideal of $R.$ Localizing at $J$ reduces to the case that $(R, J)$ is a local ring with $I^2 = J^2.$ Unless otherwise specified, we will henceforth assume that $(R, \fm)$ is a commutative unital Noetherian local ring with a unique maximal ideal $\fm.$ If $R = \bigoplus_{i \geq 0} R_i$ is standard graded, we assume that $R_0$ is a field, $R = R_0[R_1],$ and $\fm = \bigoplus_{i \geq 1} R_i$ is the homogeneous maximal ideal of $R$.

\begin{question}
Let $(R, \fm)$ be a Noetherian (standard graded) local ring. If $I \subsetneq \fm$ is a (homogeneous) ideal of $R$ such that $I^2 = \fm^2,$ what can be said of $R$? What can be said of $\mu(I)$?
\end{question}

Originally, we noticed that if $(R, \fm)$ is a regular local ring with an ideal $I \subseteq \fm$, then $I^2 = \fm^2$ only if $I = \fm,$ hence we were naturally led to study the invariant $$\cs(R) = \min \{\mu(I) \mid I \subseteq \fm \text{ is a (homogeneous) ideal of } R \text{ such that } I^2 = \fm^2\}.$$ Generally, it holds that $\dim R \leq \cs(R) \leq \mu(\fm)$ with $\cs(R) = \mu(\fm)$ if and only if for any ideal $I$ of $R,$ the equality $I^2 = \fm^2$ implies that $I = \fm$ (cf. the first and fifth parts of Proposition \ref{main proposition about cs and ms}, respectively). In particular, if $R$ is a regular local ring, then for any ideal $I$ of $R,$ the equality $I^2 = \fm^2$ implies that $I = \fm$; however, there exist non-regular rings for which $\cs(R) = \mu(\fm).$ For instance, if $R$ is a hypersurface, then $\cs(R) = \mu(\fm)$ (cf. Corollary \ref{cs and ms for hypersurface}). On its own, this observation already gives enough reason to study $\cs(R)$ for various kinds of rings.

Likewise, we consider the more restrictive scenario that $\fm^2 = \fm I$ implies that $I = \fm,$ i.e., the unique maximal ideal $\fm$ does not admit a proper reduction of reduction number one (cf. \cite[Definitions 1.2.1 and 8.2.3]{HS}). By Proposition the fourth part of \ref{main proposition about cs and ms}, one correct invariant to look at is $$\ms(R) = \min \{\mu(I) \mid I \subseteq \fm \text{ is a (homogeneous) ideal of } R \text{ such that } \fm^2 \subseteq I\}.$$ Ultimately, it is not obvious and takes some work to establish that $\ms(R)$ relates to the question of $\fm^2 = \fm I.$ We illustrate that this connection mainly hinges on the induction performed in the results preceding Proposition \ref{ms equivalent to m^2 = mI inductive step} with the end result of Corollary \ref{ms measures number of generators of reductions of m with reduction number 1} verifying this motivating claim.

Largely, Section \ref{Preliminaries and basic properties of the invariants} is devoted to establishing the result of Corollary \ref{ms measures number of generators of reductions of m with reduction number 1}; along the way, however, we prove that one can attain the values $\ms(R)$ and $\cs(R)$ by ideals generated by elements not in $\fm^2.$ We demonstrate subsequently that $\ms(R)$ can be obtained by an ideal generated by general linear elements (cf. Proposition \ref{ms is an open condition} for the precise statement). We conclude this section by recording two observations about the behavior of $\ms(R)$ and $\cs(R)$ with respect to familiar ring operations. Particularly, we show that $\ms(R)$ and $\cs(R)$ decrease along surjective ring homomorphisms and that they remain unchanged when passing to the $\fm$-adic completion. Even more, if $R$ is a standard graded algebra over a field $k,$ then the polynomial ring $S = R[X_{t - 1}, \dots, X_n]$ over $R$ in $n - t$ indeterminates satisfies $\ms(S) = \ms(R) + n - t$ (cf. Proposition \ref{ms of algebra obtained by adjoining indeterminates} for details).

In Section \ref{General bounds on ms(R) and cs(R)}, we present some general bounds for $\ms(R)$ and $\cs(R).$ Chiefly, Proposition \ref{main proposition about cs and ms} gives bounds on these invariants in terms of the (embedding) dimension of $R$ and examines the invariants of $\ms(R)$ and $\cs(R)$ when they are equal to $\dim(R)$ and $\mu(\fm).$ We discuss also the behavior of $\ms(R)$ and $\cs(R)$ when they are ``close to'' the two boundary values of $\dim R$ and $\mu(\fm),$ respectively. We provide in Proposition \ref{upper bound for ms in terms of upper bound for mu(m^2)} a useful bound for $\ms(R)$ when $\mu(\fm^2)$ is sufficiently small. We end this section with Proposition \ref{cs and ms of fiber product} that relates $\cs(R)$ and $\ms(R)$ of two local rings (with the same residue field) to that of their fiber product.

We turn out attention in Section \ref{The standard graded local case and the Weak Lefschetz Property} to studying the first basic properties of the invariants in the standard graded case. Particularly, we relate $\ms(R)$ to the Weak Lefschetz Property of a standard graded Artinian $k$-algebra. Even more, for a standard graded algebra $S = k[x_1, \dots, x_n]/J,$ we show in Proposition \ref{cs and ms depend only on degree two} that it is enough to study the invariants $\cs(R)$ and $\ms(R)$ in the case that $J$ is generated by quadratic polynomials. We also determine bounds on the invariants for the $n$th Veronese subring of $k[x, y]$ in Propositions \ref{cs and ms of Veronese subring} and \ref{upper bound for cs of Veronese subring} by relating them to cardinality of subsets $S$ of $\{0, 1, \dots, n\}$ such that $S + S = \{s + t \mid s, t \in S\} = \{0, 1, \dots, 2n\}.$

Using the result of Proposition \ref{cs and ms depend only on degree two}, we are motivated in Section \ref{Computing ms(R) and cs(R) for quotients by quadratic ideals} to devote specific attention to computing the invariants $\ms(R)$ and $\cs(R)$ in the case that $R$ is the quotient of a polynomial ring in $n$ indeterminates by a homogeneous ideal $I$ generated by quadratic forms. We establish a connection between the minimum number of generators of $I,$ the number of indeterminates of $R,$ and $\cs(R)$ in Proposition \ref{cs of k[x_1, ..., x_n] modulo t homogeneous quadratics} that provides a useful lower bound on $\cs(R)$; then, we use this technique to investigate $\ms(R)$ and $\cs(R)$ in several examples for which either $n$ or $\dim(R/I)$ is small. Particularly, we provide in Remark \ref{cs lower bound using u(m^2)} a general lower bound for $\cs(R).$

Last, in Section \ref{Computing ms(R) and cs(R) for the edge ring of a finite simple graph}, we consider the case that $R$ is the quotient of a polynomial ring by a quadratic squarefree monomial ideal. By the Stanley-Reisner Correspondence, this case is equivalent to the case that $R$ is the edge ring of a finite simple graph $G.$ We relate $\cs(R)$ and $\ms(R)$ to various special properties of finite simple graphs, and we estimate these invariants for many familiar classes of finite simple graphs. We prove in Proposition \ref{ms(k(G)) leq n - mcn(G) + 3} that if $\overline G$ is chordal, then $\ms(R)$ is equal to the independence number of $G.$ If $\overline G$ is not chordal, then the same proposition provides a bound on $\ms(R)$ in terms of the number of vertices and the minimum length of a cycle in $\overline G.$ Particularly, if the minimum length of a cycle of $\overline G$ is four, it turns out that $\ms(R)$ can be quite subtle. We address this case specifically for the path graph in Proposition \ref{cs(P_n) and ms(P_n)} and the cycle graph in Proposition \ref{cs(C_n) and ms(C_n)}. Even though we are not able to give a specific value for $\cs(R)$ or $\ms(R)$ for the cycle graph, we provide Macaulay2 code in Remark \ref{better bounds for cs(C_n) and ms(C_n)} that we use to conjecture better bounds for these two invariants. By the result of Proposition \ref{K_ell-connected edge cover}, it seems that under certain circumstances $\ms(R)$ is related to the number of edges of an edge cover of $G$ in which all of the edges share a common edge or overlap at a vertex. We relate $\cs(R)$ and $\ms(R)$ with the invariants of the graph join of two finite simple graphs in Proposition \ref{cs(G*H) and ms(G*H)}, and we use this to subsequently provide bounds on $\cs(R)$ for the complete ($t$-partite) graph and the wheel graph. We conclude this section with Proposition \ref{ms(K_m vee K_n)} on the wedge of complete graphs and corollaries thereof.

\section{Preliminaries and basic properties of the invariants}\label{Preliminaries and basic properties of the invariants}

We will denote by $(R, \fm, k)$ a commutative Noetherian (standard graded) local ring with unique (homogeneous) maximal ideal $\fm$ and residue field $k.$ By Nakayama's Lemma, the positive integer $\mu(I) = \dim_k(I / \fm I)$ is the unique minimum number of generators of an ideal $I$ of $R.$ Further, we denote by $\mu_1(I)= \dim_k(I / (I \cap \fm^2))$ denote the number of the generators in a minimal generating set of $I$ that do not lie in $\fm^2.$ We say that an element $x \in I$ is {\it general} if the image of $x$ in $I / \fm I$ lies in a nonempty Zariski open subset of $I / \fm I.$ We may assume that the residue field $k$ of $R$ is infinite in order to guarantee the existence of general elements.

Recall that an ideal $J \subseteq I$ is a {\it reduction} of $I$ if there exists an integer $r \gg 0$ such that $I^{r + 1} = I^r J.$ We refer to the least integer $r$ such that $I^{r + 1} = I^r J$ as the {\it reduction number} of $I$ with respect to $J.$ Even more, if $J$ is minimal with respect to inclusion among all reductions of $I,$ then $J$ is said to be a {\it minimal reduction} of $I$; the absolute reduction number of $I$ is the minimum among all reduction numbers of all minimal reductions of $I.$ By \cite[Theorem 8.3.5]{HS}, minimal reductions exist for each ideal of a Noetherian local ring.

\begin{definition}\label{cs and ms definition}
Consider a Noetherian (standard graded) local ring $(R, \fm).$ We define
\begin{align*}
\cs(R) &= \min \{\mu(I) \mid I \subseteq \fm \text{ is a (homogeneous) ideal of } R \text{ such that } I^2 = \fm^2\} \text{ and} \\
\ms(R) &= \min \{\mu(I) \mid I \subseteq \fm \text{ is a (homogeneous) ideal of } R \text{ such that } I \supseteq \fm^2\}.
\end{align*}
We say that an ideal $I$ \textit{witnesses} (or is a \textit{witness of}) $\cs(R)$ or $\ms(R)$ provided that $I^2 = \fm^2$ and $\mu(I) = \cs(R)$ or $I \supseteq \fm^2$ and $\mu(I) = \ms(R),$ respectively. We may also say in this case that $\cs(R)$ or $\ms(R)$ is \textit{witnessed by} $I.$
\end{definition}

Our immediate task is to establish that the invariants $\ms(R)$ and $\cs(R)$ can be witnessed by ideals generated by linear forms. Explicitly, if $\ms(R) = n$ or $\cs(R) = n$, we claim that there exist elements $x_1, \dots, x_n \in \fm \setminus \fm^2$ such that $(x_1, \dots, x_n) R \supseteq \fm^2$ or $(x_1, \dots, x_n)^2 = \fm^2,$ respectively.

\begin{prop}\label{cs is witnessed by linear forms}
We have that $\mu_1(I) \leq \mu (I).$ Equality holds if and only if $I \cap \fm^2 = \fm I.$ Consequently, if $I$ witnesses $\cs(R),$ then $\mu(I) = \mu_1(I),$ i.e., $\cs(R)$ is witnessed by linear forms.
\end{prop}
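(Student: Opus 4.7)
The plan is to prove the inequality, the equality criterion, and the consequence in that order, with the last assertion carrying most of the weight. For the first two claims, the key observation is that $\fm I \subseteq I \cap \fm^2$ automatically (being contained in both $I$ and $\fm \cdot \fm = \fm^2$), so the identity on $I$ descends to a natural surjection $I/\fm I \twoheadrightarrow I/(I \cap \fm^2)$ of $k$-vector spaces. Taking dimensions yields $\mu_1(I) \leq \mu(I)$, and equality is equivalent to this surjection being injective, i.e., to $\fm I = I \cap \fm^2$.

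For the consequence, I would argue by contradiction. Assume $I$ witnesses $\cs(R)$ yet $r := \mu_1(I) < \mu(I)$. The strategy is to produce a strictly smaller witness: lift a $k$-basis of $I/(I \cap \fm^2)$ to elements $x_1, \dots, x_r \in I$ and set $J = (x_1, \dots, x_r)$. The main step, which I expect is the only non-trivial computation, is showing $J^2 = \fm^2$; this would contradict the minimality built into $\cs(R)$ since $\mu(J) \leq r < \mu(I) = \cs(R)$.

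The computation proceeds via the decomposition $I = J + (I \cap \fm^2)$. Expanding $I^2 = J^2 + J(I \cap \fm^2) + (I \cap \fm^2)^2$ and using $J \subseteq \fm$ together with $I \cap \fm^2 \subseteq \fm^2$, the last two summands land in $\fm \cdot \fm^2$. Substituting $I^2 = \fm^2$ gives $\fm^2 = J^2 + \fm \cdot \fm^2$, and Nakayama's Lemma applied to the finitely generated module $\fm^2/J^2$ forces $J^2 = \fm^2$, completing the contradiction.

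Finally, once $\mu(I) = \mu_1(I)$ is established, the equality criterion produces $\fm I = I \cap \fm^2$, so any lift $x_1, \dots, x_n$ of a $k$-basis of $I/(I \cap \fm^2)$ satisfies $I = (x_1, \dots, x_n) + (I \cap \fm^2) = (x_1, \dots, x_n) + \fm I$ and hence, by Nakayama again, $I = (x_1, \dots, x_n)$ with each $x_i \notin \fm^2$; in the standard graded case one may take the $x_i$ homogeneous of degree one, i.e., linear forms. The main obstacle, if any, is just recognizing the two uses of Nakayama and noticing that the cross-term and square in the expansion of $I^2$ both drop into $\fm^3$; otherwise the argument is purely formal.
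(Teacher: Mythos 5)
Your proof is correct, and the first two assertions are handled exactly as in the paper (the surjection $I/\fm I \twoheadrightarrow I/(I \cap \fm^2)$ is the paper's short exact sequence read off at the right-hand end). For the final implication, however, you take a genuinely different route. The paper argues directly: if $I^2 = \fm^2,$ then $I \cap \fm^2 = I \cap I^2 = I^2 = \fm^2 = \fm I,$ the last equality coming from $\fm^2 = I^2 \subseteq \fm I \subseteq \fm^2$; the equality criterion then gives $\mu(I) = \mu_1(I)$ in one line, with no appeal to minimality and no Nakayama. Your argument instead constructs the subideal $J$ generated by lifts of a basis of $I/(I \cap \fm^2),$ expands $I^2 = J^2 + J(I \cap \fm^2) + (I \cap \fm^2)^2 \subseteq J^2 + \fm\fm^2,$ and invokes Nakayama on $\fm^2/J^2$ to get $J^2 = \fm^2,$ contradicting minimality of $\cs(R).$ Every step checks out, and your construction proves something worth noting in its own right: for \emph{any} ideal $I$ with $I^2 = \fm^2,$ the ideal generated by its degree-one part already squares to $\fm^2,$ so $\cs(R) \leq \mu_1(I)$ — a device in the spirit of the paper's later linear-form reductions for $\ms(R).$ The trade-off is that your contradiction argument only concludes $\mu(I) = \mu_1(I)$ for witnesses, whereas the paper's computation shows $I \cap \fm^2 = \fm I$ for every $I$ with $I^2 = \fm^2,$ witness or not, and does so more economically.
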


\begin{proof}
Consider the short exact sequence $0 \to (I \cap \fm^2) / \fm I \to I / \fm I \to I / (I \cap \fm^2) \to 0$ induced by the inclusion $\fm I \subseteq I \cap \fm^2.$ We have that $\mu_1(I) \leq \mu(I)$ by the additivity of length on short exact sequences. Equality holds if and only if $(I \cap \fm^2) / \fm I = 0$ if and only if $I \cap \fm^2 = \fm I.$ Last, if $I$ witnesses $\cs(R),$ then $I^2 = \fm^2$ so that $(I \cap \fm^2) = (I \cap I^2) = I^2 = \fm^2 = \fm I,$ where the last equality follows from $\fm^2 = I^2 \subseteq \fm I \subseteq \fm^2.$
\end{proof}

Likewise, the claim holds for $\ms(R),$ but the proof requires an induction on $\ms(R).$

\begin{prop}\label{ms is witnessed by linear forms}
There exists a (homogeneous) ideal $I$ of $R$ that witnesses $\ms(R)$ that satisfies $\mu(I) = \mu_1(I).$ Put another way, $\ms(R)$ is witnessed by (homogeneous) linear forms.
\end{prop}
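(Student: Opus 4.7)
The plan is to proceed by induction on $\ms(R)=n$. The base case $n=0$ is immediate: $\mu(I)=0$ forces $I=(0)$, hence $\fm^2=0$, and then the zero ideal trivially has $\mu=\mu_1=0$.

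For the inductive step with $n\geq 1$, the strategy is: choose a minimal generator $z$ of some witness $I$ of $\ms(R)$ with $z\in\fm\setminus\fm^2$; pass to $R':=R/(z)$ (with maximal ideal $\fm R'$), where the image of $I$ satisfies $IR'\supseteq(\fm R')^2$ and $\mu_{R'}(IR')\leq n-1$, so $\ms(R')\leq n-1$; apply the inductive hypothesis in $R'$ to obtain a witness $J'=(\bar y_1,\dots,\bar y_m)$ of $\ms(R')$ with $m\leq n-1$ and each generator outside $(\fm R')^2$; and lift each $\bar y_i$ to $y_i\in\fm\setminus\fm^2$ (any preimage of an element outside $(\fm R')^2$ automatically lies outside $\fm^2$). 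Setting $I_0:=(y_1,\dots,y_m,z)$, the identity $I_0R'=J'\supseteq(\fm R')^2$ together with $z\in I_0$ gives $I_0\supseteq\fm^2+(z)\supseteq\fm^2$; the images $\bar y_1,\dots,\bar y_m,\bar z$ are linearly independent in $\fm/\fm^2$ because the kernel of the surjection $\fm/\fm^2\twoheadrightarrow\fm R'/(\fm R')^2$ is $k\bar z$ while the $\bar y_i$'s are independent in the quotient; finally the sandwich $n=\ms(R)\leq\mu(I_0)\leq m+1\leq n$ collapses, forcing $\mu_1(I_0)=\mu(I_0)=n$.

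The only real obstacle is securing, in the first bullet, a witness $I$ of $\ms(R)$ with some minimal generator outside $\fm^2$. The plan is to rule out the alternative, namely that every witness lies inside $\fm^2$ — equivalently (since witnesses contain $\fm^2$) that $\fm^2$ is the unique witness. If this held, then for each $z\in\fm\setminus\fm^2$ the strictly larger ideal $\fm^2+(z)$ would not be a witness, giving $\mu(\fm^2+(z))>n$; together with the obvious upper bound $\mu(\fm^2+(z))\leq\mu(\fm^2)+1=n+1$, equality holds. Since $z\notin\fm^2$ we have $\fm^2+(z)=\fm^2\oplus kz$ as $k$-vector spaces and $\fm(\fm^2+(z))=\fm^3+\fm z$, whence
\[
\mu(\fm^2+(z)) \;=\; \dim_k\bigl(\fm^2/(\fm^3+\fm z)\bigr)+1,
\]
and this equality forces $\fm z\subseteq\fm^3$. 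Since $\fm z\subseteq\fm^3$ also holds trivially when $z\in\fm^2$, we conclude $\fm^2=\fm\cdot\fm\subseteq\fm^3$, so $\fm^2=0$ by Nakayama's lemma, contradicting $n\geq 1$.

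In the standard graded setting the witness $I$, the generator $z$, the witness $J'$ of $\ms(R')$, and the lifts $y_i$ may all be chosen homogeneous, and the same argument applies verbatim.
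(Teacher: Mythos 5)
Your proof is correct, and while it shares the quotient-and-lift skeleton of the paper's argument, the key lemma is genuinely different. The paper reduces everything to the principal case: it kills all but one generator of a witness and must then show that a containment $\fm^2 \subseteq xR$ can be achieved with $x \notin \fm^2$; it does this via a colon-ideal argument (if no linear element works, then $\fm^2 = xR$ is principal, any $\ell \in \fm \setminus (\fm^3 : \fm)$ already satisfies $\fm^2 \subseteq \ell R$, and $\fm = (\fm^3 : \fm)$ would force $\fm^2 \subseteq \fm^3$). You instead extract a single linear minimal generator from an arbitrary witness by the counting argument that $\mu(\fm^2 + (z)) = \mu(\fm^2) + 1$ forces $\fm z \subseteq \fm^3$, and then induct on the value of $\ms(R)$ rather than replacing generators one at a time. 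Both contradictions bottom out at $\fm^2 \subseteq \fm^3$, but the mechanisms are distinct. Your route has one concrete advantage: it delivers, with no extra work, the linear independence of the generators of $I_0$ modulo $\fm^2$, which is precisely what $\mu_1(I_0) = \mu(I_0)$ requires; the paper's Corollary \ref{ms is witnessed by linear forms inductive step} only places each $y_i$ outside $\fm^2$ and leaves the independence (hence the equality $\mu_1 = \mu$) implicit. The trade-off is that the paper's Corollary proves a more flexible statement --- any $n$-generated ideal containing $\fm^2$, witness or not, can be replaced by one generated by $n$ elements of $\fm \setminus \fm^2$ --- which is reused later (e.g., in Proposition \ref{ms equivalent to m^2 = mI inductive step} and Lemma \ref{every witness of ms(R_fm) is the image of a homogeneous ideal of R}), whereas your argument applies only to ideals achieving the minimum $\ms(R)$.
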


We will establish Proposition \ref{ms is witnessed by linear forms} by first establishing the following lemma and corollary.

\begin{lemma}\label{ms is witnessed by linear forms base case}
Let $(R, \fm)$ be a (standard graded) local ring with $\fm^2 \neq 0.$ If there exists a (homogeneous) element $x \in \fm$ such that $\fm^2 \subseteq xR,$ then there exists a (homogeneous) element $y \in \fm \setminus \fm^2$ such that $\fm^2 \subseteq yR.$
\end{lemma}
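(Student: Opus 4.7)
The plan is to split into two cases based on whether $x$ itself already lies outside $\fm^2$. If $x \notin \fm^2$, we simply take $y = x$ and we are done. The interesting case is when $x \in \fm^2$, which combined with the hypothesis $\fm^2 \subseteq xR$ forces $xR = \fm^2$.

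In that case, I would write $x$ as an $R$-linear combination of pairwise products of elements of $\fm$, say $x = \sum_{i=1}^n a_i b_i$ with $a_i, b_i \in \fm$; in the standard graded situation, $x$ is homogeneous so the $a_i, b_i$ may be chosen homogeneous of degree $\geq 1$. Since each $a_i b_i \in \fm^2 = xR$, there exist $r_i \in R$ with $a_i b_i = r_i x$, whence
\[
x \;=\; \Bigl(\sum_{i=1}^n r_i\Bigr)\, x,\qquad \text{so}\qquad \Bigl(1 - \sum_{i=1}^n r_i\Bigr)\, x = 0.
\]
The key observation is that $\sum r_i \notin \fm$: otherwise $1 - \sum r_i$ would be a unit, forcing $x = 0$ and hence $\fm^2 = xR = 0$, contradicting the hypothesis $\fm^2 \neq 0$. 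Thus $\sum r_i$ is a unit, which means at least one $r_i$ must itself be a unit; relabeling, assume $r_1$ is a unit.

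Now I claim $y := a_1$ works. First, $\fm^2 = xR = r_1^{-1}(a_1 b_1) R \subseteq a_1 R$, which gives $\fm^2 \subseteq yR$. It remains to check that $a_1 \notin \fm^2$: if $a_1$ were in $\fm^2$, then $x = r_1^{-1} a_1 b_1$ would lie in $\fm^2 \cdot \fm = \fm^3$, so $\fm^2 = xR \subseteq \fm^3 = \fm \cdot \fm^2$, and Nakayama's Lemma would force $\fm^2 = 0$, a contradiction. In the graded case, homogeneity of $x$ and of the $a_i, b_i$ ensures $y = a_1$ is homogeneous as required.

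The only possible obstacle is the bookkeeping to guarantee that the $a_i, b_i$ can be chosen homogeneous in the standard graded setting, but this is immediate since $\fm^2$ is a homogeneous ideal, so the homogeneous components of any decomposition $x = \sum a_i b_i$ still express $x$ as a sum of products in $\fm \cdot \fm$ of the correct degree. Everything else is a direct application of Nakayama's Lemma and the fact that $R$ is local.
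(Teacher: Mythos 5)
Your proof is correct, but it takes a genuinely different route from the paper's. The paper argues by contradiction: assuming no valid $y$ exists, it deduces $x \in \fm^2$ and $\fm^2 = xR$, then shows that any $\ell \in \fm \setminus (\fm^3 : \fm)$ lies outside $\fm^2$ and satisfies $x \in \ell \fm$, and finally verifies $\fm \setminus (\fm^3 : \fm) \neq \emptyset$ to reach a contradiction. You instead argue directly: after reducing to $xR = \fm^2$, you decompose $x = \sum a_i b_i$ with $a_i, b_i \in \fm$, write each $a_i b_i = r_i x$, and use $(1 - \sum r_i)x = 0$ together with $\fm^2 \neq 0$ to extract a unit coefficient $r_1$, so that $\fm^2 = xR = a_1 b_1 R \subseteq a_1 R$ with $a_1 \notin \fm^2$ by the same Nakayama observation ($\fm^2 \subseteq \fm^3$ forces $\fm^2 = 0$) that the paper uses. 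Your argument is shorter and has the advantage of exhibiting the witness $y = a_1$ explicitly as a factor of $x$, whereas the paper's witness comes from the complement of a colon ideal. One point you should make explicit in the graded case: an element of $R \setminus \fm$ need not be a unit in a graded ring, so the steps ``$1 - \sum r_i$ is a unit'' and ``some $r_i$ is a unit'' require choosing the $r_i$ homogeneous of degree zero (compare degree-$(\deg x)$ components in $a_i b_i = r_i x$), so that each nonzero $r_i$ lies in the field $R_0$ and is genuinely invertible; this is the same care the paper takes in the proof of Corollary \ref{ms equivalent to m^2 = mI base case}, and with it your argument goes through verbatim.
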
 

\begin{proof}
Consider a (homogeneous) element $x \in \fm$ that satisfies $\fm^2 \subseteq xR.$ On the contrary, suppose that for every (homogeneous) element $y \in \fm \setminus \fm^2,$ we have that $\fm^2 \not \subseteq yR.$ By hypothesis that $\fm^2 \subseteq xR,$ we must have that $x \in \fm^2,$ from which it follows that $xR \subseteq \fm^2$ and $\fm^2 = xR.$ We claim that for any (homogeneous) element $\ell \in \fm \setminus \fm^2,$ we have that $x \in \ell \fm$ if and only if $\ell \fm$ contains a minimal generator of $\fm^2 = xR$ if and only if $\ell \fm \cap (\fm^2 \setminus \fm^3) \neq \emptyset.$ We verify the first equivalence. If $x \in \ell \fm,$ then as $x$ is a minimal generator of $\fm^2$ by hypothesis that $\fm^2 \neq 0,$ it is clear that $\ell \fm$ contains a minimal generator of $\fm^2.$ Conversely, if $\ell \fm$ contains a minimal generator of $\fm^2,$ then there exists an element $z \in \ell \fm \cap (\fm^2 \setminus \fm^3)$ such that $zR = \fm^2 = xR,$ from which it follows that $x = zr \in \ell \fm.$ By taking the contrapositive of each equivalence, it follows that $\ell \fm$ does not contain a minimal generator of $\fm^2$ if and only if $\ell \fm \cap (\fm^2 \setminus \fm^3) = \emptyset$ if and only if $\ell \fm \subseteq \fm^3$ if and only if $\ell \in (\fm^3 : \fm).$ Ultimately, if $\ell \in \fm$ and $\ell \notin (\fm^3 : \fm)$ so that $\ell \notin \fm^2,$ then $\ell \fm$ must contain a minimal generator of $\fm^2 = xR.$

We claim that $\fm \setminus (\fm^3 : \fm) \neq \emptyset.$ On the contrary, if $\fm \setminus (\fm^3 : \fm) = \emptyset,$ then we would have that $\fm = (\fm^3 : \fm)$ so that $\fm^2 = (\fm^3 : \fm) \fm \subseteq \fm^3$ --- a contradiction. We conclude that there exists an element $\ell \in \fm \setminus (\fm^3 : \fm)$ so that $\ell \in \fm \setminus \fm^2$ and $\fm^2 = xR \subseteq \ell \fm$ --- a contradiction.
\end{proof}

\begin{cor}\label{ms is witnessed by linear forms inductive step}
Let $(R, \fm)$ be a (standard graded) local ring with $\fm^2 \neq 0.$ If $x_1, \dots, x_n \in \fm$ are (homogeneous) elements such that $\fm^2 \subseteq (x_1, \dots, x_n) R,$ then there are (homogeneous) elements $y_1, \dots, y_n \in \fm \setminus \fm^2$ such that $\fm^2 \subseteq (y_1, \dots, y_n) R.$
\end{cor}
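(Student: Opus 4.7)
The plan is to induct on the number $k$ of indices $i \in \{1, \dots, n\}$ for which $x_i \in \fm^2,$ using Lemma \ref{ms is witnessed by linear forms base case} to replace one bad generator at a time. If $k = 0,$ take $y_i = x_i$ and there is nothing to do. Otherwise, relabel so that $x_n \in \fm^2$ and pass to $\bar R = R/(x_1, \dots, x_{n-1})R,$ which is again a (standard graded) local ring with maximal ideal $\bar \fm$ because the $x_i$ are (homogeneous). The hypothesis $\fm^2 \subseteq (x_1, \dots, x_n)R$ yields the reduced containment $\bar \fm^2 \subseteq \bar x_n \bar R.$

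If $\bar \fm^2 = 0,$ then $\fm^2 \subseteq (x_1, \dots, x_{n-1})R$ already, and since $\fm^2 \neq 0$ forces $\fm \neq \fm^2$ by Nakayama's Lemma, any (homogeneous) $y_n \in \fm \setminus \fm^2$ works. If $\bar \fm^2 \neq 0,$ then Lemma \ref{ms is witnessed by linear forms base case} applied to $\bar R$ produces a (homogeneous) $\bar y_n \in \bar \fm \setminus \bar \fm^2$ with $\bar \fm^2 \subseteq \bar y_n \bar R.$ Lifting to a (homogeneous) $y_n \in R,$ one has $y_n \notin \fm^2$ (since $y_n \in \fm^2$ would force $\bar y_n \in \bar \fm^2$), and pulling the containment $\bar \fm^2 \subseteq \bar y_n \bar R$ back to $R$ yields $\fm^2 \subseteq (x_1, \dots, x_{n-1}, y_n)R.$

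In either case, the new list $x_1, \dots, x_{n-1}, y_n$ still generates an ideal containing $\fm^2$ but has only $k - 1$ members in $\fm^2,$ so the inductive hypothesis delivers the desired $y_1, \dots, y_n \in \fm \setminus \fm^2.$ The substantive step is the preceding lemma; the corollary is essentially its iterated application along a tower of quotients, and the only subtlety is to treat the degenerate case $\bar \fm^2 = 0$ separately, where the lemma does not directly apply.
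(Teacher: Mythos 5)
Your proof is correct and follows essentially the same route as the paper's: pass to the quotient by all but one generator, apply Lemma \ref{ms is witnessed by linear forms base case} there to replace the remaining generator by an element outside $\fm^2$, and iterate. Your induction on the number of generators lying in $\fm^2$ is just cleaner bookkeeping for the paper's ``repeat this argument,'' and your separate treatment of the degenerate case $\bar \fm^2 = 0$ (where the lemma's hypothesis fails) is a point of care that the paper's own proof silently omits.
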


\begin{proof}
Consider the quotient ring $\bar R = R / (x_2, \dots, x_n)R.$ Observe that $$\bar \fm^2 = {\left(\frac \fm {(x_2, \dots, x_n) R} \right)}^{\!2} = \frac{\fm^2 + (x_2, \dots, x_n)R}{(x_2, \dots, x_n)R} \subseteq \frac{(x_1, \dots, x_n) R}{(x_2, \dots, x_n) R} = \bar x_1 \bar R.$$ By Lemma \ref{ms is witnessed by linear forms base case}, there exists a (homogeneous) element $\bar y_1 \in \bar \fm \setminus \bar \fm^2$ such that $\bar \fm^2 \subseteq \bar y_1 \bar R$ so that $$\frac{\fm^2 + (x_2, \dots, x_n) R}{(x_2, \dots, x_n) R} = \bar \fm^2 \subseteq \bar y_1 \bar R = \frac{(y_1, x_2, \dots, x_n) R}{(x_2, \dots, x_n) R},$$ from which it follows that $\fm^2 + (x_2, \dots, x_n) R \subseteq (y_1, x_2, \dots, x_n) R$ and $\fm^2 \subseteq (y_1, x_2, \dots, x_n) R.$ Certainly, we must have that $y_1 \in \fm \setminus \fm^2$: for if $y_1 \in \fm^2,$ then $\bar y_1 \in \bar \fm^2$ --- a contradiction. We have therefore found $y_1 \in \fm \setminus \fm^2$ such that $\fm^2 \subseteq (y_1, x_2, \dots, x_n).$ By repeating this argument with $x_2, \dots, x_n,$ we obtain (homogeneous) elements $y_2, \dots, y_n \in \fm \setminus \fm^2$ such that $\fm^2 \subseteq (y_1, \dots, y_n) R.$
\end{proof}

\begin{cor}\label{ms equivalent to m^2 = mI base case}
Let $(R, \fm)$ be a (standard graded) local ring with $\fm^2 \neq 0.$ If there exists a (homogeneous) element $x \in \fm$ such that $\fm^2 \subseteq xR,$ then there exists a (homogeneous) element $y \in \fm \setminus \fm^2$ such that $\fm^2 = y \fm.$
\end{cor}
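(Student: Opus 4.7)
The plan is to combine Lemma \ref{ms is witnessed by linear forms base case} with a short unit-versus-nonunit argument: the lemma lets me replace $x$ by an element outside $\fm^2$ that still principally covers $\fm^2$, and then the containment $\fm^2 \subseteq yR$ can be upgraded to the equality $\fm^2 = y\fm$ essentially for free.

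Concretely, I would first apply Lemma \ref{ms is witnessed by linear forms base case} to the given $x$ in order to produce a (homogeneous) element $y \in \fm \setminus \fm^2$ with $\fm^2 \subseteq yR$. I claim that this same $y$ satisfies $\fm^2 = y\fm$. The inclusion $y\fm \subseteq \fm \cdot \fm = \fm^2$ is immediate, so only the reverse inclusion requires argument. Given $m \in \fm^2$, the containment $\fm^2 \subseteq yR$ lets me write $m = yr$ for some $r \in R$; if $r \notin \fm$ then $r$ is a unit, whence $y = r^{-1} m \in \fm^2$, contradicting $y \in \fm \setminus \fm^2$. Therefore $r \in \fm$ and $m \in y\fm$. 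In the standard graded case the same reasoning works on homogeneous components: since $y$ is a linear form and each homogeneous $m \in \fm^2$ has degree at least two, one can take $r$ homogeneous of degree $\deg m - 1 \geq 1$, so again $r \in \fm$.

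The genuine obstacle has already been absorbed by Lemma \ref{ms is witnessed by linear forms base case}: once a principal cover of $\fm^2$ is known to exist outside $\fm^2$, the passage from $\fm^2 \subseteq yR$ to $\fm^2 = y\fm$ is almost automatic, because the coefficient produced when expressing an element of $\fm^2$ as a multiple of a minimal generator $y$ of $\fm$ is forced to be a nonunit.
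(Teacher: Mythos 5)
Your proposal is correct and follows essentially the same route as the paper: apply Lemma \ref{ms is witnessed by linear forms base case} to obtain $y \in \fm \setminus \fm^2$ with $\fm^2 \subseteq yR$, and then show the coefficient $r$ in $z = yr$ must lie in $\fm$ (a unit $r$ would force $y \in \fm^2$), handling the graded case by passing to homogeneous components. No gaps.
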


\begin{proof}
By Lemma \ref{ms is witnessed by linear forms base case}, there exists a (homogeneous) element $y \in \fm \setminus \fm^2$ such that $\fm^2 \subseteq yR.$ Consequently, for every (homogeneous) element $z \in \fm^2,$ there exists an element $r \in R$ such that $z = yr.$ We claim that $r \in \fm.$ If $(R, \fm)$ is local, the claim holds: if $r \notin \fm,$ then $r$ is a unit so that $y = zr^{-1}$ belongs to $\fm^2$ --- a contradiction. If $R = \bigoplus_{i \geq 0} R_i$ is graded, then there exist homogeneous elements $r_0, \dots, r_n$ such that $r = r_0 + \cdots + r_n$ and $z = y(r_0 + \cdots + r_n).$ If $z$ is homogeneous of degree $i,$ then $y(r_0 + \cdots + r_n) = yr_0 + \cdots + yr_n$ lies in $R_i,$ hence we have that $z = yr_j$ for some homogeneous element $r_j.$ Either way, we conclude that $r$ is in $\fm$ so that $\fm^2 = y \fm.$
\end{proof}

\begin{prop}\label{ms equivalent to m^2 = mI inductive step}
Let $(R, \fm)$ be a Noetherian (standard graded) local ring with $\fm^2 \neq 0$ and $\ms(R) = n.$ There exist (homogeneous) elements $y_1, \dots, y_{n} \in \fm \setminus \fm^2$ such that $\fm^2 = (y_1, \dots, y_n) \fm.$
\end{prop}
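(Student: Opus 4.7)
The plan is to induct on $n = \ms(R),$ with Corollary \ref{ms equivalent to m^2 = mI base case} handling the base case $n = 1.$ Using Corollary \ref{ms is witnessed by linear forms inductive step}, I may begin with (homogeneous) elements $x_1, \dots, x_n \in \fm \setminus \fm^2$ such that $\fm^2 \subseteq (x_1, \dots, x_n) R;$ this linear-forms normalization is crucial for the inductive step below, as the argument breaks down if $x_n \in \fm^2.$

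For the inductive step ($n \geq 2$), I pass to the quotient $\bar R = R / x_n R,$ which is again a Noetherian (standard graded) local ring whose maximal ideal $\bar \fm$ satisfies $\bar \fm^2 \subseteq (\bar x_1, \dots, \bar x_{n-1}) \bar R.$ Moreover $\bar \fm^2 \neq 0,$ since otherwise $\fm^2 \subseteq x_n R$ would force $\ms(R) \leq 1,$ contradicting $n \geq 2.$ Setting $m := \ms(\bar R) \leq n - 1$ and applying the inductive hypothesis to $\bar R,$ I obtain $\bar y_1, \dots, \bar y_m \in \bar \fm \setminus \bar \fm^2$ with $\bar \fm^2 = (\bar y_1, \dots, \bar y_m) \bar \fm.$ Choosing (homogeneous) lifts $y_i \in \fm,$ each necessarily lies in $\fm \setminus \fm^2$ (else $\bar y_i \in \bar \fm^2$), and the lifted equality reads $\fm^2 + x_n R = (y_1, \dots, y_m) \fm + x_n R.$

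The crux is to upgrade this congruence to $\fm^2 \subseteq (y_1, \dots, y_m, x_n) \fm.$ Given any (homogeneous) $z \in \fm^2,$ I write $z = a + x_n r$ with $a \in (y_1, \dots, y_m) \fm \subseteq \fm^2$ and $r \in R$ (taken homogeneous of degree $\deg z - 1 \geq 1$ in the graded case). Then $x_n r = z - a \in \fm^2,$ and I claim $r \in \fm:$ in the local case, if $r$ were a unit then $x_n = r^{-1}(z - a) \in \fm^2$ would contradict $x_n \in \fm \setminus \fm^2;$ in the graded case, $r$ is already homogeneous of positive degree and hence automatically in $\fm.$ Therefore $z \in (y_1, \dots, y_m, x_n) \fm,$ and the reverse inclusion is trivial. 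The minimality of $\ms(R) = n$ then forces $m = n - 1$ (otherwise $\ms(R) \leq m + 1 < n$), so taking $y_n := x_n$ yields the required $n$ elements.

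The main obstacle, as I see it, is precisely the promotion step in the previous paragraph, where the coefficient $r$ of $x_n$ must be shown to belong to $\fm;$ a priori $r$ could be a unit and spoil the containment. The hypothesis $x_n \in \fm \setminus \fm^2$ guaranteed by Corollary \ref{ms is witnessed by linear forms inductive step} is exactly the ingredient that resolves this, which is why the technical preliminaries (Lemma \ref{ms is witnessed by linear forms base case} and its corollaries) are genuinely needed before the inductive machinery can be set in motion.
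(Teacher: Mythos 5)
Your proof is correct and follows essentially the same route as the paper's: induction on $\ms(R)$ with Corollary \ref{ms equivalent to m^2 = mI base case} as the base case, reduction modulo $x_n$ using the linear-forms normalization from Corollary \ref{ms is witnessed by linear forms inductive step}, and the same unit-coefficient argument to promote $\fm^2 \subseteq (y_1, \dots, y_m, x_n)\fm + x_n R$ to $\fm^2 \subseteq (y_1, \dots, y_m, x_n)\fm.$ Your explicit check that $\bar \fm^2 \neq 0$ before invoking the inductive hypothesis is a small point the paper leaves implicit, but the argument is otherwise identical.
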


\begin{proof}
It suffices to show that there exist (homogeneous) elements $y_1, \dots, y_n \in \fm \setminus \fm^2$ such that $\fm^2 \subseteq (y_1, \dots, y_n) \fm.$ We proceed by induction on $\ms(R).$ Corollary \ref{ms equivalent to m^2 = mI base case} establishes the case that $\ms(R) = 1,$ so we may assume the claim holds for $1 \leq \ms(R) \leq n - 1.$ If $\ms(R) = n,$ by Corollary \ref{ms is witnessed by linear forms inductive step}, there exist (homogeneous) elements $x_1, \dots, x_n \in \fm \setminus \fm^2$ such that $\fm^2 \subseteq (x_1, \dots, x_n) R$ and $$\bar \fm^2 = {\left(\frac{\fm}{x_n R} \right)^{\!2}} = \frac{\fm^2 + x_n R}{x_n R} \subseteq \frac{(x_1, \dots, x_n) R + x_n R}{x_n R} = (\bar x_1, \dots, \bar x_{n - 1}) \bar R.$$ Let $\ms(\bar R) = m.$ By the above equation, we must have that $m + 1 \leq n$. By induction, there exist (homogeneous) elements $\bar y_1, \dots, \bar y_m \in \bar \fm \setminus \bar \fm^2$ such that $\bar \fm^2 = (\bar y_1, \dots, \bar y_m) \bar \fm$ so that
\begin{align*}
\frac{\fm^2 + x_n R}{x_n R} = {\left(\frac{\fm}{x_n R} \right)^{\!2}} = \bar \fm^2 &= (\bar y_1, \dots, \bar y_m) \bar \fm \\ \\
&= \frac{(y_1, \dots, y_m)R + x_n R}{x_n R} \cdot \frac{\fm + x_n R}{x_n R} \\ \\
&\subseteq \frac{(y_1, \dots, y_m) \fm + x_n \fm + x_n R}{x_n R} \\ \\
&= \frac{(y_1, \dots, y_m, x_n) \fm + x_n R}{x_n R},
\end{align*}
hence we have that $\fm^2 \subseteq (y_1, \dots, y_m, x_n) \fm + x_n R.$ We claim that $\fm^2 \subseteq (y_1, \dots, y_m, x_n) \fm.$ Observe that every (homogeneous) element $r \in \fm^2$ has the form $r = a + x_n b$ for some (homogeneous) elements $a \in (y_1, \dots, y_m, x_n) \fm$ and $b \in R.$ Considering that $\bar y_i \in \bar \fm \setminus \bar \fm^2,$ it follows that $y_i \in \fm \setminus \fm^2.$ On the contrary, if $b \notin \fm,$ then $b$ would be a unit by the previous proof so that $x_n = rb^{-1} - ab^{-1}$ belongs to $\fm^2$ --- a contradiction. We conclude that $\fm^2 \subseteq (y_1, \dots, y_m, x_n) \fm.$ Conversely, we have that $(y_1, \dots, y_m, x_n) \fm \subseteq \fm^2$ by hypothesis that $y_1, \dots, y_m, x_n \in \fm \setminus \fm^2.$ Considering that $\ms(R) = n,$ we must have that $n \leq m + 1$ so that $n = m + 1.$
\end{proof}

One immediate consequence of Proposition \ref{ms equivalent to m^2 = mI inductive step} is the following.

\begin{cor}\label{ms measures number of generators of reductions of m with reduction number 1}
If $(R, \fm)$ is a Noetherian (standard graded) local ring with $\fm^2 \neq 0,$ then $$\ms(R) = \min \{\mu(I) : I \text{ is a reduction of } \fm \text{ with reduction number one}\}.$$
\end{cor}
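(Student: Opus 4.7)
The plan is to deduce this corollary as a direct consequence of Proposition \ref{ms equivalent to m^2 = mI inductive step}, which already shoulders the real work. Let $M := \min\{\mu(I) : I \text{ is a reduction of } \fm \text{ with reduction number one}\}$ denote the quantity on the right-hand side, so I must show $\ms(R) = M$.

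First I would prove $M \le \ms(R)$. Setting $n = \ms(R)$ and applying Proposition \ref{ms equivalent to m^2 = mI inductive step}, one obtains (homogeneous) elements $y_1, \dots, y_n \in \fm \setminus \fm^2$ with $\fm^2 = (y_1, \dots, y_n)\fm$. The ideal $I := (y_1, \dots, y_n)R$ then satisfies $\fm I = \fm^2$, which exhibits $I$ as a reduction of $\fm$ with reduction number one; since $\mu(I) \le n$, this yields $M \le \mu(I) \le n = \ms(R)$.

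For the reverse inequality $\ms(R) \le M$, I would argue that any reduction $I$ of $\fm$ with reduction number one automatically satisfies $\fm^2 = \fm I \subseteq I$, so $I$ is among the ideals over which $\ms(R)$ takes its minimum. Hence $\ms(R) \le \mu(I)$ for every such $I$, and taking the infimum on the right gives $\ms(R) \le M$ as desired.

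Because the main difficulty---upgrading a containment $\fm^2 \subseteq (y_1, \dots, y_n)R$ to the equality $\fm^2 = (y_1, \dots, y_n)\fm$ without increasing the number of generators---has already been fully absorbed into Proposition \ref{ms equivalent to m^2 = mI inductive step}, I do not anticipate any genuine new obstacle at this stage; the corollary is essentially a reformulation of that proposition in the language of reductions. The only minor point worth flagging is the interpretation of \emph{reduction number one}: whether one reads this strictly (so that $I \neq \fm$ is required) or loosely (allowing $\fm^2 = \fm I$ with possibly $I = \fm$), the ideal produced by Proposition \ref{ms equivalent to m^2 = mI inductive step} remains a valid witness, and the value of $M$ is unaffected.
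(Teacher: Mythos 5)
Your proposal is correct and follows essentially the same route as the paper: both directions are deduced exactly as in the paper's proof, with Proposition \ref{ms equivalent to m^2 = mI inductive step} supplying the witness $\fm^2 = (y_1, \dots, y_n)\fm$ for one inequality and the observation $\fm^2 = \fm I \subseteq I$ giving the other. Your closing remark on the strict versus loose reading of ``reduction number one'' is a reasonable clarification that the paper leaves implicit, but it does not alter the argument.
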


\begin{proof}
Proposition \ref{ms equivalent to m^2 = mI inductive step} illustrates that $\ms(R)$ is witnessed by a reduction of $\fm$ with reduction number one. Conversely, observe that if $I$ is a reduction of $\fm$ with reduction number one, then we have that $\fm^2 = \fm I \subseteq I,$ from which it follows that $\mu(I) \geq \ms(R),$ and the claim holds.
\end{proof}

In view of Proposition \ref{ms equivalent to m^2 = mI inductive step}, the following result demonstrates that $\ms(R)$ is an open condition.

\begin{prop}\label{ms is an open condition}
Let $(R, \fm, k)$ be a Noetherian local ring with $\ms(R) = n.$ The collection $${\left\{(x_1 + \fm^2, \dots, x_n + \fm^2) \in {\left(\frac \fm {\fm^2} \right)}^{\!\oplus n} : x_i \in \fm \setminus \fm^2 \text{ and } \fm^2 = (x_1, \dots, x_n)\fm \right\}}$$ is a nonempty Zariski open set, where we identify $\fm / \fm^2$ with $\mathbb A^r_k$ via a fixed generating set.
\end{prop}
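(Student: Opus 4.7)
The plan is to reformulate the condition ``$\fm^2 = (x_1, \dots, x_n)\fm$'' as the surjectivity of an explicit linear map between finite-dimensional $k$-vector spaces whose matrix entries are linear forms in the coordinates of $\bar{x}_1, \dots, \bar{x}_n$, and then observe that both this surjectivity and the non-degeneracy $\bar{x}_i \neq 0$ define Zariski opens on $(\fm/\fm^2)^{\oplus n} \cong \mathbb{A}^{rn}_k$. Non-emptiness will follow immediately from Proposition \ref{ms equivalent to m^2 = mI inductive step}.

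I would begin by setting $V := \fm/\fm^2$ and $W := \fm^2/\fm^3$, two finite-dimensional $k$-vector spaces. Since $\fm \cdot \fm^2 \subseteq \fm^3$, multiplication in $R$ descends to a well-defined $k$-bilinear pairing $V \times V \to W$. For any $\bar{x}_1, \dots, \bar{x}_n \in V$, this produces a $k$-linear map
$$\mu_{\bar{x}} \colon V^{\oplus n} \longrightarrow W, \qquad (\bar{v}_1, \dots, \bar{v}_n) \longmapsto \sum_{i=1}^n \bar{x}_i \bar{v}_i.$$
The central observation is that $(x_1, \dots, x_n)\fm = \fm^2$ depends only on the classes $\bar{x}_i$---perturbing each $x_i$ inside $\fm^2$ changes the ideal $(x_1, \dots, x_n)\fm$ only inside $\fm^3$---and, by Nakayama's Lemma applied to $\fm^2/(x_1, \dots, x_n)\fm$, it is equivalent to $(x_1, \dots, x_n)\fm + \fm^3 = \fm^2$, i.e., to the surjectivity of $\mu_{\bar{x}}$.

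Next, I would fix bases of $V$ and $W$. Writing each $\bar{x}_i$ in coordinates with respect to the basis of $V$, the entries of the matrix representing $\mu_{\bar{x}}$ are linear forms in those coordinates, so surjectivity of $\mu_{\bar{x}}$ is cut out by the non-vanishing of at least one of its maximal minors---a Zariski open condition on $\mathbb{A}^{rn}_k$. The further requirement that each $\bar{x}_i \neq 0$ defines the complement of a union of linear subspaces, hence is open as well; intersecting these two opens yields the subset in the statement. Non-emptiness is then immediate from Proposition \ref{ms equivalent to m^2 = mI inductive step}, which exhibits $y_1, \dots, y_n \in \fm \setminus \fm^2$ with $\fm^2 = (y_1, \dots, y_n)\fm$.

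The only substantive step is the Nakayama reduction that converts the ideal-theoretic condition $(x_1, \dots, x_n)\fm = \fm^2$ into the linear-algebraic surjectivity of $\mu_{\bar{x}}$; once this identification is made, the openness is a routine rank/minor consideration and I do not anticipate any serious obstacle.
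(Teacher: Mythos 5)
Your proposal is correct and follows essentially the same route as the paper: both reduce the condition $\fm^2 = (x_1,\dots,x_n)\fm$ via Nakayama's Lemma to the surjectivity of the induced $k$-linear map $(\fm/\fm^2)^{\oplus n} \to \fm^2/\fm^3$, observe that its matrix entries are polynomial (indeed linear) in the coordinates of the $\bar x_i$, and conclude openness from the non-vanishing of maximal minors, with non-emptiness supplied by Proposition \ref{ms equivalent to m^2 = mI inductive step}. Your explicit remarks that the condition depends only on the classes $\bar x_i$ and that the locus $\bar x_i \neq 0$ is open are welcome minor refinements, not departures.
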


\begin{proof}
By Proposition \ref{ms equivalent to m^2 = mI inductive step}, this collection is nonempty. Observe that $\fm^2 = (x_1, \dots, x_n) \fm$ if and only if the $R$-linear map $\fm^{\oplus n} \to \fm^2$ sending $(m_1, \dots, m_n) \mapsto x_1 m_1 + \cdots + x_n m_n$ is surjective if and only if the $k$-linear map $(\fm / \fm^2)^{\oplus n} \to \fm^2 / \fm^3$ sending $(m_1 + \fm^2, \dots, m_n + \fm^2) \mapsto x_1 m_1 + \cdots + x_n m_n + \fm^3$ is surjective. Explicitly, the implication of the second equivalence holds by the right exactness of the functor $k \otimes_R -$; the converse of the second equivalence holds by Nakayama's Lemma applied to the cokernel of $\fm^{\oplus n} \to \fm^2.$

Given that $\mu(\fm) = r,$ fix a generating set $\{y_1, \dots, y_r \}$ of $\fm.$ By setting $e_i$ to be the image $\bar y_i$ of $y_i$ in $\fm / \fm^2,$ we have that $\{e_1, \dots, e_r \}$ is an ordered basis of $\fm / \fm^2.$ Observe that $\{y_i y_j \mid 1 \leq i, j \leq r \}$ generates $\fm^2$ over $R,$ hence $\{e_i e_j \mid 1 \leq i, j \leq r \}$ generates $\fm^2 / \fm^3$ over $k.$ We may therefore obtain a $k$-vector space basis $\{e_{i_\ell} e_{j_\ell} \mid 1 \leq \ell \leq s \},$ where we denote $\dim_k (\fm^2 / \fm^3) = \mu(\fm^2) = s.$ Observe that we may write $\bar x_i = \sum_j \alpha_{ij} e_j$ for some coefficients $\alpha_{ij} \in k \cong \mathbb A_k^1.$ We claim that the entries of the matrix of the $k$-linear map $ (\fm / \fm^2)^{\oplus n} \to \fm^2 / \fm^3$ are polynomials in the coefficients $\alpha_{ij}.$ Considering that a $k$-linear map is uniquely determined by how it acts on a basis, it suffices to determine the images of the $e_i$ under $(\fm / \fm^2)^{\oplus n} \to \fm^2 / \fm^3.$ Observe that the $rn$ basis elements of $(\fm / \fm^2)^{\oplus n}$ are $n$-tuples $f_{ij}$ whose $i$th coordinate is $e_j = \bar y_j.$ Given that $x_i y_j$ is in $\fm^3,$ it follows that $x_i y_j + \fm^3 = 0 + \fm^3$ so that $f_{ij}$ is mapped to $0 + \fm^3.$ By hypothesis that the map is surjective, it follows that at least one basis element $f_{ij}$ is not mapped to $0 + \fm^3,$ hence we have that $$f_{ij} \mapsto x_i y_j + \fm^3 = (x_i + \fm^2)(y_j + \fm^2) = \bar x_i \bar y_j = {\left(\sum_t \alpha_{it} e_t \right)} e_j = \sum_t \alpha_{it} e_t e_j.$$ Crucially, any $e_t e_j$ that are not basis elements of $\fm^2 / \fm^3$ can be written in terms of $e_{i_\ell} e_{j_\ell},$ hence the entries of the matrix corresponding to the $k$-linear map are polynomials in $\alpha_{ij}.$ Last, the surjectivity of a $k$-linear map is an open condition: it can be viewed as a non-vanishing condition on some minors of the corresponding matrix.
\end{proof}

We show next that $\ms(R)$ and $\cs(R)$ behave well with respect to (graded) surjections.

\begin{prop}\label{comparison theorem for cs and ms under surjective ring homomorphism}
Let $(R, \fm)$ and $(S, \mathfrak n)$ be Noetherian (standard graded) local rings. If there exists a surjective (graded) homomorphism $\varphi : R \to S,$ then $\ms(R) \geq \ms(S)$ and $\cs(R) \geq \cs(S).$
\end{prop}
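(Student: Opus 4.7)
The plan is to push ideals forward along $\varphi$ and check that the defining properties of $\cs$ and $\ms$ are preserved, while the number of generators can only decrease.

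First I would observe that since $\varphi$ is a surjection of local rings, $\varphi(\fm) = \mathfrak n$: indeed, $\varphi(\fm)$ is a proper ideal of $S$ (otherwise $1 \in \varphi(\fm)$ would force $\fm + \ker \varphi = R$, contradicting $\ker \varphi \subseteq \varphi^{-1}(\mathfrak n) \subseteq \fm$ in the local case, and similarly in the graded case since a graded surjection sends the homogeneous maximal ideal into the homogeneous maximal ideal and cannot hit a unit in positive degree). In the graded setting one uses that $R_0 \to S_0$ is surjective with both residue fields, so $\varphi(\fm) = \bigoplus_{i \geq 1} S_i = \mathfrak n$.

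Next I would let $I \subseteq \fm$ be a (homogeneous) ideal witnessing $\cs(R)$, so $I^2 = \fm^2$ and $\mu(I) = \cs(R)$. Then $J := \varphi(I) \subseteq \varphi(\fm) = \mathfrak n$ is a (homogeneous) ideal of $S$ with
\[
J^2 = \varphi(I)^2 = \varphi(I^2) = \varphi(\fm^2) = \varphi(\fm)^2 = \mathfrak n^2,
\]
so $J$ is a candidate ideal in the definition of $\cs(S)$. Images of a (minimal) generating set of $I$ form a generating set of $J$, hence $\mu(J) \leq \mu(I)$, giving $\cs(S) \leq \mu(J) \leq \mu(I) = \cs(R)$.

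For $\ms$, the same argument goes through: if $I \subseteq \fm$ is a (homogeneous) ideal with $I \supseteq \fm^2$ and $\mu(I) = \ms(R)$, then $J = \varphi(I) \subseteq \mathfrak n$ satisfies $J = \varphi(I) \supseteq \varphi(\fm^2) = \mathfrak n^2$, and again $\mu(J) \leq \mu(I)$, so $\ms(S) \leq \mu(I) = \ms(R)$. There is no serious obstacle here; the only point requiring even mild care is the identification $\varphi(\fm) = \mathfrak n$, which I would dispatch at the outset so that the rest is a one-line functoriality check in each case.
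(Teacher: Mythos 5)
Your proposal is correct and follows essentially the same route as the paper: establish $\varphi(\fm) = \mathfrak n$ from surjectivity, push a witness ideal forward, verify that squaring and containment of $\fm^2$ are preserved, and compare generator counts. If anything, your use of the inequality $\mu(\varphi(I)) \leq \mu(I)$ is the more careful statement (the paper asserts an equality it does not need), and it suffices since only an upper bound on $\mu(\varphi(I))$ is required.
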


\begin{proof}
By hypothesis that $\varphi$ is a surjective (graded) ring homomorphism, we have that $\varphi(I)$ is a (homogeneous) ideal of $S$ for each (homogeneous) ideal $I$ of $R.$ By hypothesis that $(R, \fm)$ and $(S, \mathfrak n)$ are (standard graded) local rings, every non-unit of $S$ is the image under $\varphi$ of a non-unit of $R,$ i.e., $\varphi(\fm) = \mathfrak n.$ If $I$ witnesses $\ms(R),$ then $\mathfrak n^2 = \varphi(\fm)^2 = \varphi(\fm^2) \subseteq \varphi(I).$ By the Third Isomorphism Theorem, the local rings $R$ and $S$ have the same residue field $k,$ and the $k$-vector spaces $I / \fm I$ and $\varphi(I) / \varphi(\fm I)$ are isomorphic. We conclude that $\mu(\varphi(I)) = \mu(I) = \ms(R),$ from which it follows that $\ms(S) \leq \mu(I) = \ms(R).$ Likewise, if $J$ witnesses $\cs(R),$ then $\mathfrak n^2 = \varphi(\fm^2) = \varphi(J^2) = \varphi(J)^2$ yields that $\cs(S) \leq \mu(J) = \cs(R)$ by a similar rationale as before.
\end{proof}

Our next observation yields nice results on cutting down and adjoining indeterminates.

\begin{prop}\label{ms(R) leq ms(R/I) + mu(I)}
Let $(R, \fm)$ be a Noetherian (standard graded) local ring. Let $I$ be a (homogeneous) proper ideal of $R.$ We have that $\ms(R) \leq \ms(R/I) + \mu(I).$
\end{prop}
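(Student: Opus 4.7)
The plan is to lift a witness of $\ms(R/I)$ up to $R$ and then adjoin generators of $I$ to obtain a (homogeneous) ideal of $R$ containing $\fm^2$ whose minimal number of generators is at most $\ms(R/I) + \mu(I)$. Set $\bar R = R/I$ with maximal ideal $\bar \fm = \fm/I,$ and write $\ms(\bar R) = m$ and $\mu(I) = t.$

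First, I would invoke Proposition \ref{ms is witnessed by linear forms} to pick a (homogeneous) ideal $\bar J \subseteq \bar \fm$ witnessing $\ms(\bar R),$ say $\bar J = (\bar y_1, \dots, \bar y_m) \bar R$ with $\bar \fm^2 \subseteq \bar J$ (the linear form strengthening is not actually needed here, but it costs nothing). Choose (homogeneous) lifts $y_1, \dots, y_m \in \fm$ of $\bar y_1, \dots, \bar y_m,$ and pick a minimal (homogeneous) generating set $z_1, \dots, z_t$ of $I.$ Let $J = (y_1, \dots, y_m, z_1, \dots, z_t) R,$ which is a (homogeneous) ideal of $R$ generated by at most $m + t$ elements, so $\mu(J) \leq m + t.$

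Next, I would verify the containment $\fm^2 \subseteq J.$ Passing to the quotient by $I,$ one has $J/I = (\bar y_1, \dots, \bar y_m) \bar R = \bar J \supseteq \bar \fm^2 = (\fm^2 + I)/I,$ so $\fm^2 + I \subseteq J$ and therefore $\fm^2 \subseteq J.$ Combining this with the generator bound gives $\ms(R) \leq \mu(J) \leq m + t = \ms(R/I) + \mu(I),$ which is exactly the desired inequality.

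There is no significant obstacle here: the argument is a routine lifting plus generator-count, and the only thing to be careful about is that in the graded setting the lifts $y_i$ of $\bar y_i$ can be chosen homogeneous of the same degree (since $R \to \bar R$ is a graded surjection), so that $J$ is genuinely homogeneous when $I$ is.
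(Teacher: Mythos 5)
Your proposal is correct and follows essentially the same route as the paper's proof: lift a witness of $\ms(R/I)$ to $R,$ adjoin a minimal generating set of $I,$ and observe that the resulting ideal contains $\fm^2$ and has at most $\ms(R/I) + \mu(I)$ generators. The only cosmetic difference is that the paper works directly from the definition of $\ms(R/I)$ rather than invoking the linear-forms witness, which, as you note yourself, is not needed.
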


\begin{proof}
Let $\ms(R/I) = n.$ By Definition \ref{cs and ms definition}, there exist (homogeneous) elements $x_1, \dots, x_n \in \fm$ such that $(\fm / I)^2 \subseteq (x_1, \dots, x_n)(R/I).$ Consequently, it follows that $$\frac{\fm^2 + I}{I} = {\left(\frac \fm I \right)}^{\!2} \subseteq (x_1, \dots, x_n) \frac R I = \frac{(x_1, \dots, x_n) + I} I.$$ We conclude that $\fm^2 \subseteq \fm^2 + I \subseteq (x_1, \dots, x_n) + I$ so that $\ms(R) \leq n + \mu(I) = \ms(R/I) + \mu(I).$
\end{proof}

\begin{cor}\label{ms(R) with respect to cutting down by t quadratic forms}
Let $(R, \fm)$ be a Noetherian (standard graded) local ring. For any (homogeneous) elements $x_1, \dots, x_t \in \fm,$ we have that $\ms(R) \geq \ms(R / x_1 R) \geq \cdots \geq \ms(R / (x_1, \dots, x_t)) \geq \ms(R) - t.$
\end{cor}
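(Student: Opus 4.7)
The plan is to observe that this corollary essentially assembles two already-proved facts: the monotonicity of $\ms$ along surjective (graded) ring homomorphisms (Proposition \ref{comparison theorem for cs and ms under surjective ring homomorphism}) and the upper bound in terms of a quotient (Proposition \ref{ms(R) leq ms(R/I) + mu(I)}). No new content is really needed; the work is to spell out how these combine to give the chain.

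First, I would handle the ascending chain of inequalities. Set $R_0 = R$ and $R_i = R / (x_1, \dots, x_i)R$ for $1 \leq i \leq t$. For each $i$, the canonical projection $R_{i-1} \twoheadrightarrow R_i$ is a surjective (graded) homomorphism of Noetherian (standard graded) local rings sending the maximal ideal to the maximal ideal, so Proposition \ref{comparison theorem for cs and ms under surjective ring homomorphism} yields $\ms(R_{i-1}) \geq \ms(R_i)$. Concatenating gives
\[
\ms(R) \geq \ms(R / x_1 R) \geq \cdots \geq \ms\bigl(R / (x_1, \dots, x_t)R\bigr),
\]
which is the left half of the stated inequality.

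Second, I would prove the final inequality $\ms(R / (x_1, \dots, x_t)R) \geq \ms(R) - t$ by a direct application of Proposition \ref{ms(R) leq ms(R/I) + mu(I)} to the (homogeneous) ideal $I = (x_1, \dots, x_t) R$. That proposition gives $\ms(R) \leq \ms(R/I) + \mu(I)$, and since $I$ is generated by $t$ elements we have $\mu(I) \leq t$. Rearranging yields $\ms(R/(x_1, \dots, x_t)R) \geq \ms(R) - \mu(I) \geq \ms(R) - t$, as desired.

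Since both ingredients are already in hand, there is essentially no obstacle. The only small point to be careful about is that in the chain of surjections one must verify that each successive quotient is still a Noetherian (standard graded) local ring with residue field equal to that of $R$ (so that Proposition \ref{comparison theorem for cs and ms under surjective ring homomorphism} applies at every step), which is immediate because each $x_i$ lies in $\fm$ and quotients of (standard graded) local rings by (homogeneous) ideals in the maximal ideal retain the (standard graded) local structure.
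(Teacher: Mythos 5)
Your proof is correct and follows exactly the paper's own argument: the descending chain comes from applying Proposition \ref{comparison theorem for cs and ms under surjective ring homomorphism} to each successive quotient map, and the final inequality comes from Proposition \ref{ms(R) leq ms(R/I) + mu(I)} applied to $I = (x_1, \dots, x_t)R$ together with $\mu(I) \leq t$. Your write-up simply spells out the details the paper leaves implicit.
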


\begin{proof}
We obtain the last inequality by Proposition \ref{ms(R) leq ms(R/I) + mu(I)} and the rest by Proposition \ref{comparison theorem for cs and ms under surjective ring homomorphism}.
\end{proof}

\begin{cor}\label{cutting down by (homogeneous) elements of degree one strictly decreases ms}
Let $(R, \fm)$ be a Noetherian (standard graded) local ring. If $x_1, \dots, x_{\ms(R)} \in \fm \setminus \fm^2$ witness $\ms(R),$ then $\ms(R / (x_1, \dots, x_t)) = \ms(R) - t$ for any integer $1 \leq t \leq \ms(R).$
\end{cor}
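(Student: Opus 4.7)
Let $n = \ms(R)$, and for each $1 \leq t \leq n$ write $\bar R = R/(x_1, \dots, x_t)$ with maximal ideal $\bar \fm = \fm/(x_1,\dots,x_t)$. The plan is to sandwich $\ms(\bar R)$ between $n-t$ from below and $n-t$ from above; one direction is already packaged in Corollary \ref{ms(R) with respect to cutting down by t quadratic forms}, while the other uses the witness hypothesis in a direct way.

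The lower bound $\ms(\bar R) \geq \ms(R) - t = n - t$ is immediate from Corollary \ref{ms(R) with respect to cutting down by t quadratic forms} applied to the (homogeneous) elements $x_1, \dots, x_t \in \fm$.

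For the upper bound, I would exploit the containment $\fm^2 \subseteq (x_1,\dots,x_n)R$ granted by the fact that $x_1, \dots, x_n$ witness $\ms(R)$. Taking the canonical surjection $R \twoheadrightarrow \bar R$ and passing to squares gives
\[
\bar \fm^2 \;=\; \frac{\fm^2 + (x_1,\dots,x_t)}{(x_1,\dots,x_t)} \;\subseteq\; \frac{(x_1,\dots,x_n)R + (x_1,\dots,x_t)}{(x_1,\dots,x_t)} \;=\; (\bar x_{t+1}, \dots, \bar x_n)\bar R.
\]
Thus the ideal $(\bar x_{t+1}, \dots, \bar x_n)\bar R$ is a (homogeneous) ideal of $\bar R$ containing $\bar \fm^2$ and generated by $n - t$ elements, so by Definition \ref{cs and ms definition} we obtain $\ms(\bar R) \leq n - t$. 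Combining the two bounds yields $\ms(\bar R) = n - t = \ms(R) - t$, as desired.

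There is not really a substantive obstacle; the content of the statement is essentially just that the witness $(x_1, \dots, x_n)$ of $\ms(R)$ remains a witness (of the correct size) after quotienting out an initial segment. The only point that needs a modicum of care is keeping track of the fact that $x_i \in \fm \setminus \fm^2$ is needed so that the $\bar x_{t+1}, \dots, \bar x_n$ make sense as a collection of $n - t$ ideal generators in $\bar R$ (and that $\bar\fm^2$ could in principle be zero, in which case $\ms(\bar R) = 0$ still satisfies $\ms(\bar R) \leq n - t$, but the lower bound from the previous corollary forces $t = n$, so equality is maintained).
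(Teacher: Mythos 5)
Your proposal is correct and follows essentially the same route as the paper: the lower bound is quoted from Corollary \ref{ms(R) with respect to cutting down by t quadratic forms}, and the upper bound is obtained by observing that $\fm^2 \subseteq (x_1,\dots,x_n)R$ descends to $\bar\fm^2 \subseteq (\bar x_{t+1},\dots,\bar x_n)\bar R$ in the quotient. No substantive differences.
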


\begin{proof}
Let $\ms(R) = n.$ By Corollary \ref{ms(R) with respect to cutting down by t quadratic forms}, it suffices to show that $\ms(R / (x_1, \dots, x_t)) \leq \ms(R) - t.$ By hypothesis, for any integer $1 \leq t \leq n,$ we have that $\fm^2 + (x_1, \dots, x_t) \subseteq (x_{t + 1}, \dots, x_n) + (x_1, \dots, x_t)$ so that $$\bar \fm^2 = \frac{\fm^2 + (x_1, \dots, x_t)}{(x_1, \dots, x_t)} \subseteq \frac{(x_{t + 1}, \dots, x_n) + (x_1, \dots, x_t)}{(x_1, \dots, x_t)} = (\bar x_{t + 1}, \dots, \bar x_n) \frac R {(x_1, \dots, x_t)},$$ where $\bar x_i$ denotes the image of $x_i$ in $R / (x_1, \dots, x_t).$ We conclude that $\ms(R / (x_1, \dots, x_t)) \leq n - t.$
\end{proof}

\begin{cor}\label{ms(R) with respect to adjoining t indeterminates}
Let $(R, \fm)$ be a Noetherian (standard graded) local ring. For any indeterminates $X_1, \dots, X_t,$ we have that $\ms(R) \leq \ms(R[X_1]) \leq \cdots \leq \ms(R[X_1, \dots, X_t]) \leq \ms(R) + t.$
\end{cor}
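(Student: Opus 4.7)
The plan is to deduce both the ascending chain of lower bounds and the final upper bound by telescoping the two immediately preceding results. Set $S_0 = R$ and $S_i = R[X_1, \dots, X_i]$ for $1 \leq i \leq t$, each viewed as a Noetherian (standard graded) local ring with its natural (homogeneous) maximal ideal $(\fm, X_1, \dots, X_i)$ (in the local case, one localizes at this ideal; in the standard graded case one grades $X_i$ in degree one). For each $i \geq 1$ there is a natural (graded) surjection $\pi_i : S_i \twoheadrightarrow S_{i-1}$ defined by $X_i \mapsto 0$, whose kernel is the principal (homogeneous) ideal $(X_i) \subseteq S_i$, and we have $S_i / (X_i) \cong S_{i-1}$.

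For the ascending chain of inequalities, I would apply Proposition \ref{comparison theorem for cs and ms under surjective ring homomorphism} to each $\pi_i$ to obtain $\ms(S_i) \geq \ms(S_{i-1})$ for every $1 \leq i \leq t$. Concatenating these yields
\[
\ms(R) \leq \ms(R[X_1]) \leq \cdots \leq \ms(R[X_1, \dots, X_t]).
\]

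For the final upper bound, I would apply Proposition \ref{ms(R) leq ms(R/I) + mu(I)} to the ring $S_i$ with the (homogeneous) ideal $I = (X_i)$: since $S_i/(X_i) \cong S_{i-1}$ and $\mu((X_i)) = 1$, this gives $\ms(S_i) \leq \ms(S_{i-1}) + 1$ for each $i$. Iterating $t$ times (or running a trivial induction on $t$) produces $\ms(R[X_1, \dots, X_t]) \leq \ms(R) + t$, completing the claim.

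There is no real obstacle here; the statement is a direct corollary of Propositions \ref{comparison theorem for cs and ms under surjective ring homomorphism} and \ref{ms(R) leq ms(R/I) + mu(I)}. The only subtle point worth flagging is ensuring that each $S_i$ is indeed a Noetherian (standard graded) local ring so that the hypotheses of both cited propositions genuinely apply, but this is automatic under either of the two interpretations of ``adjoining an indeterminate'' indicated above.
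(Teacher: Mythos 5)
Your proof is correct and follows essentially the same route as the paper: both arguments derive the ascending chain from Proposition \ref{comparison theorem for cs and ms under surjective ring homomorphism} applied to the surjections killing the indeterminates, and the upper bound from Proposition \ref{ms(R) leq ms(R/I) + mu(I)}. The only (immaterial) difference is that you telescope the upper bound one variable at a time, whereas the paper applies that proposition once with $I = (X_1, \dots, X_t)$ and $\mu(I) = t$.
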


\begin{proof}
We may identify $R$ and $R[X_1, \dots, X_t] / (X_1, \dots, X_t)$ by the First Isomorphism Theorem. Consequently, the last inequality holds by Proposition \ref{ms(R) leq ms(R/I) + mu(I)} and the others by Proposition \ref{comparison theorem for cs and ms under surjective ring homomorphism}.
\end{proof}

Conversely, if $R = k[X_1, \dots, X_t] / I$ for some homogeneous ideal $I$ of $k[X_1, \dots, X_t],$ then for any indeterminates $X_{t + 1}, \dots, X_n,$ the upper bound of Corollary \ref{ms(R) with respect to adjoining t indeterminates} is sharp.

\begin{prop}\label{ms of algebra obtained by adjoining indeterminates}
Let $R = k[X_1, \dots, X_t] / I$ for some homogeneous ideal $I$ of $k[X_1, \dots, X_t].$ Let $X_{t + 1}, \dots, X_n$ be indeterminates. We have that $\ms(R[X_{t + 1}, \dots, X_n]) = \ms(R) + n - t.$
\end{prop}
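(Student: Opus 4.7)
The upper bound $\ms(R[X_{t+1}, \dots, X_n]) \leq \ms(R) + (n-t)$ is immediate from Corollary \ref{ms(R) with respect to adjoining t indeterminates}, so the plan is to establish the reverse inequality. Since adjoining a single indeterminate to a homogeneous polynomial-ring quotient produces another such quotient (namely $R[Y] = k[X_1, \dots, X_t, Y]/(I \cdot k[X_1, \dots, X_t, Y])$), it suffices, by induction on $n - t$, to prove the one-variable case: for any $R = k[X_1, \dots, X_t]/I$ with $I$ homogeneous, $\ms(R[Y]) \geq \ms(R) + 1$.

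To handle the one-variable case, set $S = R[Y]$ and $n = \ms(S)$. By Proposition \ref{ms is witnessed by linear forms}, I may choose a witness $J$ of $\ms(S)$ generated by $n$ linearly independent homogeneous linear forms $\ell_1, \dots, \ell_n \in S_1 = R_1 \oplus kY$. Writing $\ell_i = r_i + a_i Y$ with $r_i \in R_1$ and $a_i \in k$, I first observe that not all $a_i$ can vanish: otherwise $J \subseteq \fm_R S$, contradicting the fact that the image of $Y^2 \in \fm_S^2 \subseteq J$ under the graded surjection $S \to S/\fm_R S \cong k[Y]$ is nonzero. After reindexing so that $a_1 \neq 0$, rescaling so that $a_1 = 1$, and performing the Gauss-style substitutions $\ell_i \mapsto \ell_i - a_i \ell_1$ for $i \geq 2$ (an invertible change of basis of $J_1$), I may assume $\ell_1 = r_0 + Y$ for some $r_0 \in R_1$ while $\ell_2, \dots, \ell_n \in R_1$.

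The crucial step is to exploit the graded $k$-algebra surjection $\pi \colon S \to R$ defined by $Y \mapsto -r_0$, whose kernel is the principal ideal $(\ell_1)$; thus $S/(\ell_1) \cong R$ as graded rings and $\pi$ sends $\fm_S$ onto $\fm_R$. Under $\pi$, the ideal $J$ is sent onto $\pi(J) = (\ell_2, \dots, \ell_n) R$, an ideal of $R$ generated by at most $n - 1$ linear forms, and the containment $\fm_S^2 \subseteq J$ pushes forward to $\fm_R^2 = \pi(\fm_S^2) \subseteq \pi(J)$. It follows that $\ms(R) \leq \mu(\pi(J)) \leq n - 1$, i.e., $n \geq \ms(R) + 1$, which finishes the one-variable case and hence the proposition.

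The main obstacle is conceptual rather than computational: recognizing that the right move is to mod $S$ out by the particular generator of $J$ having a unit $Y$-coefficient, which produces a graded quotient canonically isomorphic to $R$ with $\fm_R^2$ still contained in the image of $J$, yet with one fewer generator. Once that perspective is in place, the bookkeeping of elementary row operations on the $\ell_i$ and the subsequent dimension count are routine.
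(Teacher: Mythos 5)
Your proof is correct, and it takes a genuinely different route from the paper's. The paper also starts from a witness of $\ms(R[X_{t+1},\dots,X_n])$ generated by homogeneous linear forms $g_1,\dots,g_r$ (via Proposition \ref{ms is witnessed by linear forms} and Proposition \ref{ms equivalent to m^2 = mI inductive step}), but then argues directly in all $n-t$ added variables at once: writing $X_i^2 \in (g_1,\dots,g_r)\fm + I[X_{t+1},\dots,X_n]$ and cancelling all summands that are not scalar multiples of $X_i^2$ (using that $I$ involves only $X_1,\dots,X_t$), it deduces that each added variable $X_i$ itself lies in the witness ideal $J$, then splits $J = J' + (X_{t+1},\dots,X_n)$ with $J'$ an ideal of $R$ containing $(\bar X_1,\dots,\bar X_t)^2$ and counts generators. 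You instead reduce to the single-variable statement $\ms(R[Y]) \geq \ms(R)+1$ by induction, normalize the linear generators so that exactly one, $\ell_1 = r_0 + Y,$ involves $Y$ (your observation that some $Y$-coefficient must be nonzero, since otherwise $Y^2$ would die in $S/\fm_R S \cong k[Y],$ is exactly right), and then use the evaluation $Y \mapsto -r_0$ to identify $S/(\ell_1)$ with $R$ and push the witness down to an ideal of $R$ containing $\fm_R^2$ with one fewer generator. Your version trades the paper's term-cancellation bookkeeping for a transparent change of basis plus the standard fact that the kernel of evaluation at $-r_0$ is $(Y + r_0)$; it is arguably cleaner and makes plain that the only structural input is that $R[Y]$ is a free polynomial extension of a standard graded $k$-algebra. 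The paper's argument yields the slightly stronger byproduct that the added indeterminates themselves belong to any linear-form witness, which is what lets it handle all $n - t$ variables in one pass.
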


\begin{proof}
By Corollary \ref{ms of algebra obtained by adjoining indeterminates}, it suffices to show that $\ms(R[X_{t + 1}, \dots, X_n]) \geq \ms(R) + n - t.$ We will prove that there exists a homogeneous ideal $J$ that witnesses $\ms(R[X_{t + 1}, \dots, X_n])$ and a homogeneous ideal $J'$ of $R$ such that $J = J' + (X_{t + 1}, \dots, X_n)$ and $J' \supseteq (\bar X_1, \dots, \bar X_t)^2$; the claim follows.

Let $\ms(R[X_{t + 1}, \dots, X_n]) = r.$ Let $\fm = (X_1, \dots, X_t, X_{t + 1}, \dots, X_n)$ be the homogeneous maximal ideal of $k[X_1, \dots, X_t, X_{t + 1}, \dots, X_n].$ Let $\bar X_i$ denote the image of $X_i$ in $R[X_{t + 1}, \dots, X_n].$ By Corollary \ref{ms equivalent to m^2 = mI inductive step}, there exist homogeneous linear polynomials $g_1, \dots, g_r \in k[X_1, \dots, X_t, X_{t + 1}, \dots, X_n]$ such that $J = (\bar g_1, \dots, \bar g_r)$ and $\bar \fm^2 = (\bar g_1, \dots, \bar g_r) \bar \fm.$ Particularly, for each integer $t + 1 \leq i \leq n,$ there exist polynomials $p_1, \dots, p_r \in \fm$ such that $\bar X_i^2 = \bar p_1 \bar g_1 + \cdots + \bar p_r \bar g_r.$ Consequently, there exists a polynomial $q \in I[X_{t + 1}, \dots, X_n]$ such that $X_i^2 = p_1 g_1 + \cdots + p_r g_r + q.$ By hypothesis that $I$ belongs to $k[X_1, \dots, X_t],$ the polynomial $q$ does not admit any summands that are a scalar multiple of $X_i^2.$ Cancelling these summands, we may write $X_i^2 = (\alpha_1 g_1 + \cdots + \alpha_r g_r) X_i$ for some scalars $\alpha_1, \dots, \alpha_r.$ We conclude that $X_i = \alpha_1 g_1 + \cdots + \alpha_r g_r,$ hence $X_i$ belongs to $J$ for each integer $n + 1 \leq i \leq t.$ Consequently, for each integer $1 \leq i \leq r,$ we may write $g_i = h_i + f_i(X_{t + 1}, \dots, X_n)$ for some homogeneous polynomials $h_i \in k[X_1, \dots, X_t]$ and $f_i(X_{t + 1}, \dots, X_n) \in k[X_1, \dots, X_t, X_{t + 1}, \dots, X_n].$ By setting $J' = (\bar h_1, \dots, \bar h_r),$ we find that $J = J' + (X_{t + 1}, \dots, X_n)$ and $J'$ is a homogeneous ideal of $R.$

Last, we claim that $J' \supseteq (\bar X_1, \dots, \bar X_t)^2.$ By assumption that $J$ witnesses $\ms(R[X_{t + 1}, \dots, X_n]),$ for any integers $1 \leq i \leq j \leq t,$ the monomial $\bar X_i \bar X_j$ must belong to $J.$ If $\bar X_i \bar X_j$ does not vanish, then there exist polynomials $f_1, \dots, f_{r - n + t} \in J',$ $p_1, \dots, p_{r - n + t}, p_{t + 1}, \dots, p_n \in k[X_1, \dots, X_t, X_{t + 1}, \dots, X_n],$ and $q \in I[X_{t + 1}, \dots X_n]$ such that $X_i X_j = p_1 f_1 + \cdots + p_{r - n + t} f_{r - n + t} + p_{t + 1} X_{t + 1} + \cdots + p_n X_n + q.$ Cancelling any summands from the right-hand side that are not scalar multiples of $X_i X_j,$ we may write $X_i X_j = q_1 f_1 + \cdots + q_{r - n + t} f_{r - n + t}$ for some linear polynomials $q_1, \dots, q_{r - n + t} \in k[X_1, \dots, X_t, X_{t + 1}, \dots, X_n].$ We conclude that $X_i X_j$ belongs to $J'$ for any integers $1 \leq i \leq j \leq t,$ from which it follows that $J' \supseteq (\bar X_1, \dots, \bar X_t)^2,$ as desired.
\end{proof}

Going forward, it will sometimes be useful to treat the local and the standard graded local cases unilaterally. We achieve this as follows. Let $R = \bigoplus_{i \geq 0} R_i$ be a Noetherian standard graded local ring with homogeneous maximal ideal $\fm.$ Observe that $R \setminus \fm$ is the multiplicatively closed subset of $R$ consisting of nonzero elements of $R$ whose degree zero homogeneous component is nonzero. Consequently, if $R_0$ is a field, then the degree zero homogeneous component of any element of $R \setminus \fm$ is a unit. For simplicity, we will often assume that it is. By definition, we have that $\fm^2 R_\fm = \{x / s \mid x \in \fm^2 \text{ and } s \in R \setminus \fm\},$ from which one can verify that $\fm^2 R_\fm = (\fm R_\fm)^2.$ We begin by establishing that if $R_0$ is a field, then the square of the homogeneous maximal ideal of $R$ consists precisely of elements of $R$ whose degree zero and degree one components vanish.

\begin{lemma}\label{the square of the maximal ideal consists of elements whose degree zero and degree one homogeneous components vanish}
Let $R$ be a Noetherian standard graded local ring with homogeneous maximal ideal $\fm$ such that $R_0$ is a field. We have that $\fm^2 = \bigoplus_{i \geq 2} R_i.$
\end{lemma}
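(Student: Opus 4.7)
The plan is to prove the two containments separately, exploiting the standard graded hypothesis in one direction and the basic description of $\fm$ in the other. Both directions should be short.

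For the inclusion $\fm^2 \subseteq \bigoplus_{i \geq 2} R_i$, I would argue that since $\fm^2$ is generated as an $R$-module by products $ab$ with $a, b \in \fm = \bigoplus_{i \geq 1} R_i$, it suffices to show each such product lies in $\bigoplus_{i \geq 2} R_i$. Decomposing $a = \sum_{i \geq 1} a_i$ and $b = \sum_{j \geq 1} b_j$ into homogeneous components, each summand $a_i b_j$ lies in $R_{i+j}$ with $i + j \geq 2$; hence $ab \in \bigoplus_{i \geq 2} R_i$. Since $\bigoplus_{i \geq 2} R_i$ is itself an ideal of $R$, this gives the containment.

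For the reverse inclusion $\bigoplus_{i \geq 2} R_i \subseteq \fm^2$, this is where the standard graded hypothesis $R = R_0[R_1]$ is used crucially. It implies $R_i = R_1^i$ for every $i \geq 1$, and in particular, for $i \geq 2$ we can write $R_i = R_1 \cdot R_{i-1}$. Since $R_1 \subseteq \fm$ and $R_{i-1} \subseteq \fm$ (as $i - 1 \geq 1$), this gives $R_i \subseteq \fm \cdot \fm = \fm^2$. Summing over $i \geq 2$ yields the desired containment.

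There is no real obstacle here; the lemma is essentially a bookkeeping exercise separating what $\fm^2$ means ideal-theoretically from its graded description. The only subtlety worth flagging explicitly is that the reverse inclusion genuinely requires $R = R_0[R_1]$ (i.e., the standard graded assumption, not merely graded): without it, elements of $R_i$ for $i \geq 2$ need not be expressible as products of lower-degree elements. Once both containments are established, the equality $\fm^2 = \bigoplus_{i \geq 2} R_i$ follows immediately.
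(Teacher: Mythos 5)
Your proof is correct and follows essentially the same route as the paper's: both directions decompose elements into homogeneous components and use $R = R_0[R_1]$ for the reverse containment. Your phrasing of the reverse inclusion via $R_i = R_1 \cdot R_{i-1} \subseteq \fm\cdot\fm$ is a slightly cleaner packaging of the paper's cancellation argument, but the underlying idea is identical.
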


\begin{proof}
By definition, every element of $\fm^2$ is of the form $x_1 y_1 + \cdots + x_n y_n$ for some integer $n \geq 0$ and some elements $x_1, \dots, x_n, y_1, \dots, y_n \in \fm.$ By assumption that $R_0$ is a field, we have that $\fm = \bigoplus_{i \geq 1} R_i.$ Consequently, every nonzero element of $\fm^2$ is a sum of homogeneous elements of degree at least two, hence we have that $\fm^2 \subseteq \bigoplus_{i \geq 2} R_i.$ Conversely, suppose that $x$ is a homogeneous element of $R$ of degree at least two. By assumption that $R$ is standard graded, we have that $R = R_0[R_1],$ hence there exist integers $n, m_1, \dots, m_n \geq 0$ and elements $\alpha_1, \dots, \alpha_n \in R_0$ and $r_{1, 1}, \dots, r_{m_1, 1}, \dots, r_{1, n}, \dots, r_{m_n, n} \in R_1$ such that $x = \sum_{i = 1}^n \alpha_i r_{1, i} \cdots r_{m_i, i}.$ By assumption that $x$ is homogeneous of degree at least two, all summands on the right-hand side that lie in degree one must cancel; the rest of the summands lie in $\fm^2,$ hence $x$ lies in $\fm^2.$ We conclude that $\bigoplus_{i \geq 2} R_i \subseteq \fm^2.$
\end{proof}

We will now demonstrate that $\ms(R) = \ms(R_\fm)$ for a Noetherian standard graded local ring with homogeneous maximal ideal $\fm$ such that $R_0$ is a field. Our next lemma provides a crucial ingredient and illustrates that the generators of any ideal of $R_\fm$ that witnesses $\ms(R_\fm)$ can be replaced by the images of homogeneous elements of $R$ of degree one.

\begin{lemma}\label{every witness of ms(R_fm) is the image of a homogeneous ideal of R}
Let $R$ be a Noetherian standard graded local ring with homogeneous maximal ideal $\fm$ such that $R_0$ is a field. If the ideal generated by $x_1 / 1, \dots, x_n / 1$ witnesses $\ms(R_\fm),$ then there exist homogeneous elements $y_1, \dots, y_n \in \fm \setminus \fm^2$ such that $(y_1 / 1, \dots, y_n / 1) R_\fm$ witnesses $\ms(R_\fm).$
\end{lemma}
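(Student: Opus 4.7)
The plan is to sidestep the given generators $x_i$ entirely by applying Proposition \ref{ms equivalent to m^2 = mI inductive step} directly to the Noetherian local ring $R_\fm$, producing a more tightly controlled witness of $\ms(R_\fm) = n$, and then replacing each generator by its degree-one homogeneous component via Nakayama's Lemma. The main obstacle is that the Nakayama step requires the exact equality $\fm^2 R_\fm = (a_1/1, \ldots, a_n/1) \fm R_\fm$ for our chosen generators, not merely the containment $\fm^2 R_\fm \subseteq (a_1/1, \ldots, a_n/1) R_\fm$ that an arbitrary witness provides; this is precisely why the proof must start from Proposition \ref{ms equivalent to m^2 = mI inductive step} rather than from the given $x_i/1$.

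Concretely, Proposition \ref{ms equivalent to m^2 = mI inductive step} applied to $R_\fm$ yields elements $w_1, \ldots, w_n \in \fm R_\fm \setminus (\fm R_\fm)^2$ satisfying $(\fm R_\fm)^2 = (w_1, \ldots, w_n) \fm R_\fm$; the resulting ideal contains $\fm^2 R_\fm$, so it also witnesses $\ms(R_\fm)$. Writing $w_i = a_i/s_i$ with $a_i \in \fm$ and $s_i \in R \setminus \fm$, the element $a_i/1 = (s_i/1) \cdot w_i$ is a unit multiple of $w_i$ in $R_\fm$, so the collection $\{a_i/1\}$ generates the same ideal as $\{w_i\}$ and satisfies the same key equation. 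In particular $a_i/1 \notin (\fm R_\fm)^2 = \fm^2 R_\fm$, which forces $a_i \in \fm \setminus \fm^2$.

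For the final step, I would use Lemma \ref{the square of the maximal ideal consists of elements whose degree zero and degree one homogeneous components vanish} to decompose $a_i = y_i + z_i$, where $y_i \in R_1$ is the degree-one homogeneous component of $a_i$ (its degree-zero part vanishes since $a_i \in \fm$ and $R_0$ is a field) and $z_i \in \fm^2$. Because $a_i \notin \fm^2$, the component $y_i$ must be nonzero, so $y_i \in R_1 \setminus \{0\} \subseteq \fm \setminus \fm^2$ is the required homogeneous linear element. Since $a_i/1 - y_i/1 = z_i/1 \in \fm^2 R_\fm = (a_1/1, \ldots, a_n/1) \fm R_\fm$, one gets
\[ (a_1/1, \ldots, a_n/1) R_\fm = (y_1/1, \ldots, y_n/1) R_\fm + \fm R_\fm \cdot (a_1/1, \ldots, a_n/1) R_\fm, \]
and Nakayama's Lemma (applied to the finitely generated $R_\fm$-module $(a_1/1, \ldots, a_n/1) R_\fm$) collapses this to an equality of ideals. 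The common ideal $(y_1/1, \ldots, y_n/1) R_\fm$ therefore witnesses $\ms(R_\fm)$, as desired.
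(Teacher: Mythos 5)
Your proof is correct, and it takes a genuinely different and considerably shorter route than the paper's. The paper argues by induction on $n$: in the base case it takes $y$ with $\fm^2 R_\fm = (y/1)\fm R_\fm$, clears denominators in the equation expressing each higher homogeneous component $y^m$ ($m \geq 2$) as an element of $\fm^2 R_\fm,$ and compares graded pieces degree by degree to show every such component lies in $y^1 R$; the inductive step repeats this for $y_n$ and then passes to $R / y_n^1 R$ via Corollary \ref{cutting down by (homogeneous) elements of degree one strictly decreases ms}. You replace all of that bookkeeping with a single application of Nakayama's Lemma, and you correctly isolate the one point where care is required: an arbitrary witness only gives $\fm^2 R_\fm \subseteq I,$ which would place the correction terms $z_i/1$ in $I$ but not in $\fm R_\fm \cdot I,$ so Nakayama would not apply; starting from Proposition \ref{ms equivalent to m^2 = mI inductive step} instead supplies the reduction-number-one equality $\fm^2 R_\fm = I \, \fm R_\fm,$ which puts each $z_i/1$ in $\fm R_\fm \cdot I$ and collapses $I = (y_1/1, \dots, y_n/1) R_\fm + \fm R_\fm \cdot I$ to an equality of ideals. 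The only caveat is the degenerate case $\fm^2 R_\fm = 0,$ where Proposition \ref{ms equivalent to m^2 = mI inductive step} does not apply; but then $\ms(R_\fm) = 0$ and there is nothing to prove, and the paper's proof makes the same implicit assumption. What your approach buys is brevity and the complete avoidance of componentwise manipulation of fractions; the paper's approach, by working one generator at a time, shows a bit more explicitly that the straightened ideal coincides with the one produced by its earlier corollaries, but your argument in fact also yields $(y_1/1, \dots, y_n/1) R_\fm = (a_1/1, \dots, a_n/1) R_\fm,$ so nothing is lost.
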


\begin{proof}
We proceed by induction on $\ms(R_\fm) = n.$ Consider first the case that $n = 1.$ By Corollary \ref{ms equivalent to m^2 = mI base case}, if $\fm^2 R_\fm \subseteq (x / 1) R_\fm$ for some element $x / 1 \in \fm R_\fm,$ then there exists an element $y / 1 \in \fm R_\fm \setminus \fm^2 R_\fm$ such that $\fm^2 R_\fm = (y / 1) \fm R_\fm.$ We note that $y \in \fm \setminus \fm^2,$ hence if $y = y^0 + \cdots + y^d$ is the unique representation of $y$ in terms of its homogeneous components, then $y^0 = 0$ and $y^1$ is nonzero by Lemma \ref{the square of the maximal ideal consists of elements whose degree zero and degree one homogeneous components vanish}. If $y = y^1$ is homogeneous, then we are done; if $y$ is not homogeneous, then we may define $m = \min \{i \geq 2 \mid y^i \neq 0\}.$ We note that $y^m$ belongs to $\fm^2,$ hence there exist nonzero elements $r \in \fm$ and $s, t \in R \setminus \fm$ such that $sty^m = rty.$ Comparing the homogeneous components of degree $m$ on the left- and right-hand sides and using the fact that $r^0 = 0$ and $y^i = 0$ for all $2 \leq i \leq m - 1,$ we find that $$s^0 t^0 y^m = r^0 t^0 y^m + r^{m - 1} t^0 y^1 + r^0 t^{m - 1} y^1 = r^{m - 1} t^0 y^1.$$ Considering that $s, t \in R \setminus \fm,$ we must have that $s^0$ and $t^0$ are units. We conclude that $y^m = r^{m - 1} t^0 u y^1$ lies in $y^1 R,$ where $u$ is the multiplicative inverse of $s^0 t^0.$ By induction on $i \geq m,$ we may write $s^0 t^0 y^i$ as a sum of $r^j t^k y^1$ such that $j + k = i - 1$ and $r^0 j^0 y^i.$ Observe that the former lies in $y^1 R,$ and the latter is zero. We conclude that all homogeneous components of $y$ of degree at least two lie in $y^1 R$ so that $(y / 1) R_\fm = (y^1 / 1) R_\fm.$

We will assume now that $\fm^2 R_\fm \subseteq (x_1 / 1, \dots, x_n / 1) R_\fm$ for some elements $x_1 / 1, \dots, x_n / 1 \in \fm R_\fm.$ By Proposition \ref{ms equivalent to m^2 = mI inductive step}, there exist elements $y_1 / 1, \dots, y_n / 1 \in \fm R_\fm \setminus \fm^2 R_\fm$ such that $\fm^2 R_\fm = (y_1 / 1, \dots, y_n / 1) \fm R_\fm.$ We note that $y_1, \dots, y_n \in \fm \setminus \fm^2.$ If $y_n = y_n^1$ is homogeneous, then we may proceed to the next paragraph. If $y_n$ is not homogeneous, then define $m = \min\{i \geq 2 \mid y_n^i \neq 0\}.$ Considering that $y_n^m$ belongs to $\fm^2,$ it follows that $$\frac{y_n^m} 1 = \sum_{i = 1}^n \frac{r_i y_i}{s_i} = \frac{p_1 r_1 y_1 + \cdots + p_n r_n y_n}{s_1 \cdots s_n}$$ for some nonzero elements $r_1, \dots, r_n \in \fm,$ $s_1, \dots, s_n \in R \setminus \fm,$ and $p_i = \prod_{j \neq i} s_j.$ Consequently, there exists an element $t \in R \setminus \fm$ such that $s_1 \cdots s_n t y_n^m = p_1 r_1 t y_1 + \cdots + p_n r_n t y_n.$ Comparing the homogeneous components of degree $m$ on the left- and right-hand sides and using the fact that $y_n^i = 0$ for all $2 \leq i \leq m - 1,$ we find that $$s_1^0 \cdots s_n^0 t^0 y_n^m = (p_1 r_1 t y_1 + \cdots + p_{n - 1} r_{n - 1} t y_{n - 1})^m + p_n^0 r_n^{m - 1} t^0 y_n^1,$$ as any term involving $r_n^0$ vanishes because $r_n^0 = 0.$ Observe that the element $a = p_1 r_1 t y_1 + \cdots + p_{n - 1} r_{n - 1} t y_{n - 1}$ lies in $(y_1, \dots, y_{n - 1}) R,$ $b = p_n^0 r_n^{m - 1} t^0$ lies in $R,$ and $y_n^m = ua^m + buy_n^1,$ where $u$ is the multiplicative inverse of $s_1^0 \cdots s_n^0 t^0.$ Consequently, we find that $y_n^m / 1 = ua^m / 1 + buy_n^1 / 1$ lies in $(y_1 / 1, \dots, y_{n - 1} / 1, y_n^1 / 1) R_\fm.$ By induction on $i \geq m,$ once again, we find that the image of all homogeneous components of $y_n$ of degree at least two lie in $(y_1 / 1, \dots, y_n^1 / 1) R_\fm,$ from which it follows that $(y_1 / 1, \dots, y_n / 1) R_\fm = (y_1 / 1, \dots, y_n^1 / 1) R_\fm.$ 

Observe that $R_\fm / (y_n^1 / 1) R_\fm \cong (R / y_n^1 R)_\fm$ is the localization of a Noetherian standard graded local ring whose degree zero graded piece is a field; moreover, it satisfies $\ms(R_\fm / (y_n^1 / 1) R_\fm) = n - 1$ by Corollary \ref{cutting down by (homogeneous) elements of degree one strictly decreases ms} because $y_n^1 / 1$ belongs to a minimal system of generators of an ideal that witnesses $\ms(R_\fm).$ By induction, there exist homogeneous elements $\bar z_1^1, \dots, \bar z_{n - 1}^1$ of degree one in $R / y_n^1 R$ such that $\bar \fm^2 \subseteq (\bar z_1^1 / 1, \dots, \bar z_{n - 1}^1 / 1)(R_\fm / (y_n^1 / 1) R_\fm).$ But this yields $\fm^2 R_\fm \subseteq (z_1^1 / 1, \dots, z_{n - 1}^1 / 1, y_n^1 / 1) R_\fm,$ hence $(z_1^1 / 1, \dots, z_{n - 1}^1 / 1, y_n^1 / 1) R_\fm$ witnesses $\ms(R_\fm).$
\end{proof}

\begin{prop}\label{ms(R) = ms(R_m) for a standard graded local ring}
Let $R$ be a Noetherian standard graded local ring with homogeneous maximal ideal $\fm$ such that $R_0$ is a field. We have that $\ms(R) = \ms(R_\fm).$
\end{prop}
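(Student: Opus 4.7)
The plan is to prove both inequalities separately, using the preceding lemma to handle the harder direction.

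For $\ms(R_\fm) \leq \ms(R)$, I would argue directly. Let $I$ be a homogeneous ideal of $R$ that witnesses $\ms(R)$, so $\fm^2 \subseteq I$ and $\mu(I) = \ms(R).$ Localizing at $\fm$ and using that localization commutes with products of ideals, $(\fm R_\fm)^2 = \fm^2 R_\fm \subseteq I R_\fm.$ The images of a minimal generating set of $I$ generate $I R_\fm,$ so $\mu(I R_\fm) \leq \mu(I),$ and thus $\ms(R_\fm) \leq \mu(I R_\fm) \leq \mu(I) = \ms(R).$

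For the reverse inequality, which is the main point, I would invoke Lemma \ref{every witness of ms(R_fm) is the image of a homogeneous ideal of R}: setting $n = \ms(R_\fm),$ there exist homogeneous elements $y_1, \dots, y_n \in \fm \setminus \fm^2$ such that $(y_1/1, \dots, y_n/1) R_\fm$ witnesses $\ms(R_\fm).$ Let $J = (y_1, \dots, y_n) R,$ a homogeneous ideal of $R$ with $\mu(J) \leq n.$ It remains to show $\fm^2 \subseteq J.$ Since both $\fm^2$ and $J$ are homogeneous ideals, it suffices to show that every homogeneous $x \in \fm^2$ lies in $J.$

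The key homogeneous-extraction step: fix a homogeneous element $x \in \fm^2$ of degree $d \geq 2$ (using Lemma \ref{the square of the maximal ideal consists of elements whose degree zero and degree one homogeneous components vanish}). Because $x/1 \in \fm^2 R_\fm \subseteq (y_1/1, \dots, y_n/1) R_\fm,$ there exist $r_1, \dots, r_n \in R$ and $s \in R \setminus \fm$ such that the relation $sx = r_1 y_1 + \cdots + r_n y_n$ holds in $R$ (after clearing denominators and absorbing an annihilator factor from $R \setminus \fm$). Comparing homogeneous components of degree $d$ on both sides, the left side contributes $s^0 x$ (since $x$ is homogeneous of degree $d$), while the right side contributes $\sum_{i=1}^n r_i^{d - d_i} y_i \in J,$ where $d_i = \deg y_i$ and $r_i^{d - d_i}$ is the degree-$(d - d_i)$ component of $r_i$ (taken to be zero if $d < d_i$). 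Because $s \in R \setminus \fm$ and $R_0$ is a field, $s^0$ is a unit in $R_0,$ so $x = (s^0)^{-1} \bigl(\sum_i r_i^{d - d_i} y_i\bigr) \in J.$ This gives $\fm^2 \subseteq J$ and hence $\ms(R) \leq \mu(J) \leq n = \ms(R_\fm),$ completing the proof.

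The main obstacle is the $\leq$ direction: without the previous lemma guaranteeing homogeneous witnesses in $R_\fm,$ one could not extract a candidate homogeneous ideal $J$ of $R$ at all, and even with homogeneous generators in hand, one must convert the relation $x/1 \in J R_\fm$ (which is a relation only up to an annihilator from $R \setminus \fm$) into an honest containment $x \in J$ in $R$; the grading together with the field hypothesis on $R_0$ is precisely what makes this possible.
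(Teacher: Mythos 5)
Your proposal is correct and follows essentially the same route as the paper: the easy inequality by localizing a witness of $\ms(R),$ and the reverse inequality by invoking Lemma \ref{every witness of ms(R_fm) is the image of a homogeneous ideal of R} to obtain homogeneous generators, clearing denominators, and comparing degree-$d$ homogeneous components using that the degree-zero part of an element of $R \setminus \fm$ is a unit. The only cosmetic difference is that you absorb all denominators into a single $s \in R \setminus \fm,$ whereas the paper carries the product $s_1 \cdots s_\ell t$ explicitly.
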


\begin{proof}
If $I$ witnesses $\ms(R),$ then $\fm^2 \subseteq I$ and $\ms(R) = \mu(I).$ Consequently, we have that $\fm^2 R_\fm \subseteq I R_\fm,$ from which it follows that $\ms(R_\fm) \leq \mu(I R_\fm) \leq \mu(I) = \ms(R).$ Conversely, if $\ms(R_\fm) = \ell,$ then by Lemma \ref{every witness of ms(R_fm) is the image of a homogeneous ideal of R}, there exist homogeneous elements $x_1, \dots, x_\ell \in \fm \setminus \fm^2$ such that $\fm^2 R_\fm \subseteq (x_1, \dots, x_\ell) R_\fm.$ We claim that $\fm^2 \subseteq (x_1, \dots, x_\ell) R.$ It suffices to prove that the homogeneous elements of $\fm^2$ belong to $(x_1, \dots, x_\ell) R.$ If $a \in \fm^2$ is homogeneous, then as in the proof of Lemma \ref{every witness of ms(R_fm) is the image of a homogeneous ideal of R}, there exist elements $r_1, \dots, r_\ell \in R,$ $s_1, \dots, s_\ell, t \in R \setminus \fm,$ and $p_i = \prod_{j \neq i} s_j$ such that $s_1 \cdots s_\ell t a = p_1 r_1 t x_1 + \cdots + p_\ell r_\ell t x_\ell.$ Observe that $s_1^0 \cdots s_\ell^0 t^0 a$ belongs to $(x_1, \dots, x_\ell) R$ and $s_1^0 \cdots s_\ell^0 t^0$ is a unit by assumption that $R_0$ is a field, hence $a$ belongs to $(x_1, \dots, x_\ell) R.$ We conclude that the homogeneous elements of $\fm^2$ belong to $(x_1, \dots, x_\ell) R$ so that $\fm^2 \subseteq (x_1, \dots, x_\ell) R$ and $\ms(R) \leq \ell = \ms(R_\fm).$
\end{proof}

We wrap up this section by establishing that the invariants $\ms(R)$ and $\cs(R)$ do not change with respect to $\fm$-adic completion $\widehat R$ of a Noetherian (standard graded) local ring $(R, \fm).$ Recall that $\widehat M \cong M \otimes_R \widehat R$ for any finitely generated $R$-module $M.$ Because $\widehat R$ is faithfully flat over $R,$ we have that $M = 0$ if and only if $\widehat M = 0.$ Likewise, for any ideals $I_1 \subseteq I_2,$ we have that $I_1 = I_2$ if and only if $\widehat{I_1} = \widehat{I_2}.$

\begin{prop}\label{cs and ms are invariant under completion}
Let $(R, \fm)$ be a Noetherian (standard graded) local ring with $\fm$-adic completion $\widehat R.$ We have that $\cs(R) = \cs(\widehat R)$ and $\ms(R) = \ms(\widehat R).$
\end{prop}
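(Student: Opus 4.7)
The plan is to prove both inequalities in each identity. The inequalities $\cs(\widehat R) \le \cs(R)$ and $\ms(\widehat R) \le \ms(R)$ will follow from extending ideals along $R \to \widehat R$; the reverse inequalities will be proved by lifting witnesses from $\widehat R$ back to $R$ using the density of $R$ in the $\fm$-adic topology together with Nakayama's Lemma.

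For the easy direction, I would use the identifications $\widehat \fm = \fm \widehat R$, $\widehat \fm^2 = \fm^2 \widehat R$, and $\widehat R/\widehat \fm \cong R/\fm = k$. Given a (homogeneous) ideal $I \subseteq \fm$ witnessing $\ms(R)$ (resp.\ $\cs(R)$), its extension $I \widehat R$ satisfies $\widehat \fm^2 \subseteq I \widehat R$ (resp.\ $(I \widehat R)^2 = \widehat \fm^2$), and the standard computation $I\widehat R / \widehat \fm I\widehat R \cong (I/\fm I) \otimes_k k \cong I/\fm I$ yields $\mu(I\widehat R) = \mu(I)$. Hence $\ms(\widehat R) \le \ms(R)$ and $\cs(\widehat R) \le \cs(R)$.

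For the reverse inequality on $\ms$, set $n = \ms(\widehat R)$ and apply Proposition \ref{ms equivalent to m^2 = mI inductive step} inside $\widehat R$ to obtain $y_1, \dots, y_n \in \widehat \fm \setminus \widehat \fm^2$ with $\widehat \fm^2 = (y_1, \dots, y_n) \widehat \fm$. Exploiting the isomorphism $R/\fm^2 \cong \widehat R/\widehat \fm^2$, I would choose $x_i \in \fm$ (homogeneous of degree one in the graded case) so that $y_i - x_i \in \widehat \fm^2 = (y_1, \dots, y_n)\widehat \fm$. The finitely generated $\widehat R$-module $M = (y_1, \dots, y_n)\widehat R / (x_1, \dots, x_n)\widehat R$ is generated by the classes of the $y_i$, and each $\bar y_i = \overline{y_i - x_i}$ lies in $\widehat \fm M$; Nakayama's Lemma forces $M = 0$, giving $\widehat \fm^2 \subseteq (x_1, \dots, x_n)\widehat R$, which descends to $\fm^2 \subseteq (x_1, \dots, x_n)R$ by faithful flatness and proves $\ms(R) \le n$.

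For the reverse inequality on $\cs$, proceed in parallel: by Proposition \ref{cs is witnessed by linear forms} select $y_1, \dots, y_n \in \widehat \fm \setminus \widehat \fm^2$ with $(y_1, \dots, y_n)^2\widehat R = \widehat \fm^2$ and $n = \cs(\widehat R)$, and again pick $x_i \in \fm$ with $y_i - x_i \in \widehat \fm^2$. The key computation $x_i x_j - y_i y_j = x_i(x_j - y_j) + (x_i - y_i) y_j \in \widehat \fm^3$ yields $\widehat \fm^2 = (x_1, \dots, x_n)^2 \widehat R + \widehat \fm^3$, and Nakayama applied to the finitely generated $\widehat R$-module $\widehat \fm^2 / (x_1, \dots, x_n)^2 \widehat R$ collapses it to zero; faithful flatness then descends $(x_1, \dots, x_n)^2 \widehat R = \widehat \fm^2$ to $(x_1, \dots, x_n)^2 R = \fm^2$. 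The main obstacle is the $\cs$ case: unlike for $\ms$, the approximation error $y_i - x_i$ does not sit inside the ideal under consideration, so the conclusion cannot be obtained in a single Nakayama step and one must first route through the associated graded piece $\widehat \fm^2/\widehat \fm^3$ before closing with Nakayama on $\widehat \fm^2/(x_1, \dots, x_n)^2 \widehat R$.
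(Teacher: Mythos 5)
Your proof is correct, but for the harder inequalities $\cs(R) \leq \cs(\widehat R)$ and $\ms(R) \leq \ms(\widehat R)$ it takes a genuinely different route from the paper. The paper observes that the canonical map induces $R/\fm^2 \cong \widehat R/(\fm \widehat R)^2$, so by the lattice correspondence \emph{every} ideal $J$ of $\widehat R$ containing $(\fm \widehat R)^2$ is extended from an ideal $I \supseteq \fm^2$ of $R$ with $J = I \widehat R$; since a $\cs$-witness satisfies $J \supseteq J^2 = (\fm\widehat R)^2$ and an $\ms$-witness contains $(\fm\widehat R)^2$ by definition, the whole witness descends at once, and faithful flatness transfers $I^2 \widehat R = \fm^2 \widehat R$ to $I^2 = \fm^2$ (and $\mu(I) = \mu(J)$). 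You instead approximate a generating set $y_1, \dots, y_n$ of a witness by elements $x_i \in \fm$ with $y_i - x_i \in \widehat\fm^2$, and then close the gap with Nakayama --- for $\ms$ using the reduction-number-one form $\widehat\fm^2 = (y_1,\dots,y_n)\widehat\fm$ from Proposition \ref{ms equivalent to m^2 = mI inductive step} to see that $(x_1,\dots,x_n)\widehat R = (y_1,\dots,y_n)\widehat R$, and for $\cs$ via the congruence $x_i x_j \equiv y_i y_j \pmod{\widehat\fm^3}$ followed by Nakayama on $\widehat\fm^2/(x_1,\dots,x_n)^2\widehat R$. Both arguments are sound; the paper's is shorter and treats the two invariants uniformly, while yours is more constructive (it produces explicit generators in $R$) and the perceived obstacle in the $\cs$ case --- that the approximation error does not lie in the witness ideal --- evaporates under the paper's observation that the witness automatically contains $(\fm\widehat R)^2$, so the detour through $\widehat\fm^2/\widehat\fm^3$ is not actually needed.
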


\begin{proof}
For any (homogeneous) ideal $I$ of $R,$ we have that $\mu(I) = \mu(I \widehat R)$ and $I^2 \widehat R = (I \widehat R)^2.$ Consequently, if $I^2 = \fm^2,$ then the exposition preceding the statement of proposition implies that $(I \widehat R)^2 = I^2 \widehat R = \fm^2 \widehat R = (\fm \widehat R)^2.$ Likewise, if $\fm^2 \subseteq I,$ then $(\fm \widehat R)^2 = \fm^2 \widehat R \subseteq I \widehat R.$ We conclude that $\cs(\widehat R) \leq \cs(R)$ and $\ms(\widehat R) \leq \ms(R).$

We will establish now that $\cs(R) \leq \cs(\widehat R)$ and $\ms(R) \leq \ms(\widehat R).$ Observe that the canonical injection $R \to \widehat R$ induces an isomorphism $R / \fm^2 \cong \widehat R / (\fm \widehat R)^2.$ By the Fourth Isomorphism Theorem, for any ideal $J$ of $\widehat R$ with $J^2 = (\fm \widehat R)^2 = \fm^2 \widehat R,$ there exists an ideal $I$ of $R$ such that $I\supseteq \fm^2$ and $J = I \widehat R.$ Considering that $I^2 \widehat R = (I \widehat R)^2 = J^2 = \fm^2 \widehat R$ and $I^2 \subseteq \fm^2,$ we conclude that $I^2 = \fm^2$ so that $\cs(R) \leq \cs(\widehat R).$ Likewise, for any ideal $L$ of $\widehat R$ with $L^2 \supseteq (\fm \widehat R)^2 = \fm^2 \widehat R,$ there exists an ideal $K$ of $R$ such that $K \supseteq \fm^2$ and $L = K \widehat R$ so that $\ms(R) \leq \ms(\widehat R).$
\end{proof}

\section{General bounds on \texorpdfstring{$\ms(R)$}{ms(R)} and \texorpdfstring{$\cs(R)$}{cs(R)}}\label{General bounds on ms(R) and cs(R)}

Primarily, we devote this section to providing bounds for $\ms(R)$ and $\cs(R).$ By the proof of Proposition \ref{ms is an open condition}, we obtain an immediate lower bound for $\ms(R).$

\begin{cor}\label{initial lower bound for ms and comparison theorem for mu(m^n) for ms(R) = 1}
Let $(R, \fm)$ be a Noetherian local ring. We have that $$\ms(R) \geq {\left \lceil \dfrac{\mu(\fm^2)}{ \mu(\fm)} \right \rceil}.$$ Even more, if $\ms(R) = 1,$ then we have that $\mu(\fm^{n+1}) \leq \mu(\fm^n)$ for all integers $n \geq 1.$
\end{cor}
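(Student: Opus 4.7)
The plan is to leverage Proposition \ref{ms equivalent to m^2 = mI inductive step}, which produces a minimal reduction of $\fm$ of reduction number one with exactly $\ms(R)$ generators, to bound $\mu(\fm^2)$ from above and, in the extreme case $\ms(R) = 1$, to propagate the relation $\fm^2 = y\fm$ to every higher power of $\fm$.

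For the first inequality, I would set $n = \ms(R)$ and invoke Proposition \ref{ms equivalent to m^2 = mI inductive step} to choose $y_1, \dots, y_n \in \fm \setminus \fm^2$ with $\fm^2 = (y_1, \dots, y_n)\fm$. Fixing a minimal generating set $x_1, \dots, x_r$ of $\fm$ (so $r = \mu(\fm)$), the $nr$ products $y_i x_j$ generate $\fm^2$ as an $R$-module, which forces $\mu(\fm^2) \leq n \mu(\fm) = \ms(R) \cdot \mu(\fm)$. The ceiling then comes for free because $\ms(R)$ is an integer. (Alternatively, one can read this inequality directly out of the surjection $(\fm/\fm^2)^{\oplus n} \to \fm^2/\fm^3$ established inside the proof of Proposition \ref{ms is an open condition}.)

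For the second assertion, I would assume $\ms(R) = 1$ and pick $y \in \fm \setminus \fm^2$ with $\fm^2 = y\fm$. First I would show by induction on $n \geq 1$ that $\fm^{n+1} = y\fm^n$: the base case is the hypothesis, and the inductive step uses $\fm^{n+2} = \fm \cdot \fm^{n+1} = \fm \cdot y\fm^n = y \cdot \fm^{n+1}$. Then multiplication by $y$ supplies a surjective $R$-module map $\fm^n \to \fm^{n+1}$; since $y \in \fm$ carries $\fm^{n+1}$ into $\fm^{n+2}$, this descends to a $k$-linear surjection $\fm^n/\fm^{n+1} \to \fm^{n+1}/\fm^{n+2}$, and comparing $k$-dimensions gives $\mu(\fm^{n+1}) \leq \mu(\fm^n)$.

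There is essentially no substantive obstacle here, since all of the heavy lifting was carried out in Proposition \ref{ms equivalent to m^2 = mI inductive step}; the only mild point to check is that the multiplication-by-$y$ map is well-defined on the associated graded pieces, which is immediate because $y \in \fm$.
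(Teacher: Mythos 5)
Your proposal is correct and follows essentially the same route as the paper: both invoke Proposition \ref{ms equivalent to m^2 = mI inductive step} to write $\fm^2 = (y_1, \dots, y_{\ms(R)})\fm$, deduce $\mu(\fm^2) \leq \ms(R)\,\mu(\fm)$, and in the case $\ms(R) = 1$ propagate $\fm^{n+1} = y\fm^n$ inductively to compare minimal numbers of generators. The only cosmetic difference is that you phrase the final comparison via the induced surjection $\fm^n/\fm^{n+1} \to \fm^{n+1}/\fm^{n+2}$, whereas the paper simply notes $\mu(\fm^{n+1}) = \mu(y\fm^n) \leq \mu(\fm^n)$; these are the same observation.
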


\begin{proof}
Let $\ms(R) = r.$ By Proposition \ref{ms equivalent to m^2 = mI inductive step}, there exist elements $y_1, \dots, y_r \in \fm \setminus \fm^2$ such that $\fm^2 = (y_1, \dots, y_r) \fm.$ Consequently, we have that $\mu(\fm^2) = \mu((y_1, \dots, y_r) \fm) \leq r \mu(\fm)$ or $$\ms(R) = r \geq {\left \lceil \dfrac{\mu(\fm^2)}{\mu(\fm)} \right \rceil}.$$ If $\ms(R) = 1,$ we have that $\fm^2 = \ell \fm$ for some $\ell \in \fm \setminus \fm^2$ so that $\ell \fm^2 = (\ell \fm) \fm = \fm^3.$ Continuing in this manner shows that $\ell \fm^n = \fm^{n + 1}$ and $\mu(\fm^{n + 1}) = \mu(\ell \fm^n) \leq \mu(\fm^n)$ for all integers $n \geq 1.$ 
\end{proof}

Our next proposition establishes more general bounds for $\ms(R)$ and $\cs(R).$ We illustrate moreover that if $R$ is a Cohen-Macaulay local ring, then $\ms(R)$ and $\cs(R)$ are as small as possible if and only if $R$ exhibits ``nice'' properties. Before this, we need the following lemma.

\begin{lemma}\label{I^2 = mI implies R is regular}
Let $(R, \fm)$ be a Cohen-Macaulay local ring of positive dimension $d.$ If there exist an integer $n \geq 0$ and an $\fm$-primary ideal $(x_1, \dots, x_d)$ of $R$ such that $(x_1, \dots, x_d)^{n + 1} = \fm (x_1, \dots, x_d)^n,$ then $R$ is regular. 
\end{lemma}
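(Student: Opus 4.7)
The plan is to argue via the associated graded ring of $J := (x_1, \dots, x_d)$ with respect to itself, using that a system of parameters in a Cohen-Macaulay ring is a regular sequence.

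First, I would observe that because $R$ is Cohen-Macaulay of dimension $d$ and the ideal $J$ is $\fm$-primary and generated by exactly $d$ elements, the sequence $x_1, \dots, x_d$ is a system of parameters and hence a regular sequence. By the classical theorem on regular sequences (essentially the fact that the Koszul complex on $x_1, \dots, x_d$ is acyclic), the natural surjection
\[
(R/J)[T_1, \dots, T_d] \twoheadrightarrow G_J(R) = \bigoplus_{i \geq 0} J^i/J^{i+1},\qquad T_i \mapsto x_i + J^2,
\]
is an isomorphism of graded rings. In particular, for every $i \geq 0$, the $R/J$-module $J^i/J^{i+1}$ is free of rank $\binom{i + d - 1}{d - 1}$, and so it is nonzero whenever $d \geq 1$.

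Next, I would translate the hypothesis $J^{n+1} = \fm J^n$ into a statement about this free module. Since $J \subseteq \fm$, the containment $J^{n+1} = JJ^n \subseteq \fm J^n$ is automatic, so the hypothesis says $\fm J^n = J^{n+1}$, which upon passing to $J^n/J^{n+1}$ reads
\[
(\fm/J)\cdot (J^n/J^{n+1}) = 0.
\]
Because $J^n/J^{n+1}$ is a nonzero free $R/J$-module (using $d \geq 1$), it is a faithful $R/J$-module, so its annihilator in $R/J$ is zero. This forces $\fm/J = 0$, i.e., $\fm = J = (x_1, \dots, x_d)$.

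Finally, $\mu(\fm) \leq d$ together with the general inequality $\mu(\fm) \geq \dim R = d$ yields $\mu(\fm) = \dim R$, so $R$ is regular. The only non-routine ingredient is the identification $G_J(R) \cong (R/J)[T_1, \dots, T_d]$ for a regular sequence, which is standard and can simply be cited; once that is in hand, the implication $\fm J^n = J^{n+1} \Rightarrow \fm = J$ is immediate from the freeness of $J^n/J^{n+1}$ over $R/J$.
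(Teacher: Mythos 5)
Your proof is correct, and it rests on the same key structural input as the paper's argument: since $R$ is Cohen--Macaulay and $J=(x_1,\dots,x_d)$ is $\fm$-primary with $d=\dim R$ generators, the $x_i$ form a regular sequence and hence $\bigoplus_{i\ge 0} J^i/J^{i+1}\cong (R/J)[T_1,\dots,T_d],$ so that $J^n/J^{n+1}$ is a free $R/J$-module of rank $\binom{n+d-1}{d-1}>0.$ Where you diverge is in how you extract $J=\fm$ from this. The paper takes lengths: from $J^{n+1}=\fm J^n$ it computes $\mu(J^n)=\binom{n+d-1}{n}\,\ell_R(R/J),$ compares this with the a priori bound $\mu(J^n)\le \binom{n+d-1}{n},$ and concludes $\ell_R(R/J)=1.$ You instead observe that $J^{n+1}=\fm J^n$ says exactly that $\fm/J$ annihilates the nonzero free $R/J$-module $J^n/J^{n+1},$ and a nonzero free module is faithful, so $\fm/J=0.$ Your finish is more direct: it avoids the generator-counting inequality entirely and, as you note, even handles $n=0$ uniformly rather than as a separate trivial case. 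Both arguments are sound; yours is the leaner of the two.
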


\begin{proof}
If $n = 0,$ the assertion is trivial, so we may assume that $n \geq 1$ and $I = (x_1, \dots, x_d).$ By hypothesis, the elements $x_1, \dots, x_d$ form a system of parameters that is an $R$-regular sequence, as $R$ is Cohen-Macaulay. Consequently, the map $(R/I)[X_1, \dots, X_d] \to \bigoplus_{k \geq 0} I^k / I^{k + 1}$ that sends $X_i$ to the image of $x_i$ in $I / I^2$ is an isomorphism. Particularly, $I^n / I^{n+1}$ is isomorphic to the degree $n$ graded piece of $(R/I)[X_1, \dots, X_d],$ which is isomorphic to $(R/I)^{\oplus \binom{n + d - 1} n}$. By assumption that $I^{n + 1} = \fm I^n,$ we have that $I^n / \fm I^n \cong (R/I)^{\oplus \binom{n + d - 1} n}$ so that $\mu(I^n) = \binom{n + d - 1}n \ell_R(R/I)$ by taking length on both sides. Considering that $\mu(I^n) \leq \binom{n + \mu(I) - 1} n \leq \binom{n + d - 1} n,$ we find that $\binom{n + d - 1} n \ell_R(R/I) \leq \binom{n + d - 1} n$ and $\ell_R(R/I) \leq 1 = \ell_R(R / \fm).$ On the other hand, the inclusion $I \subseteq \fm$ induces a surjection $R/I \to R / \fm,$ hence the additivity of length on short exact sequences yields $\ell_R(R/I) \geq \ell_R(R / \fm).$ We conclude that $\ell_R(R/I) = \ell_R(R / \fm)$ so that $\ell_R(\fm / I) = 0$ and $I = \fm,$ i.e., $R$ is regular. 
\end{proof}

\begin{prop}\label{main proposition about cs and ms}
Let $(R, \fm)$ be a Noetherian (standard graded) local ring.
\begin{enumerate}[\rm (1.)] 

\item We have that $\dim R \leq \ms(R) \leq \cs(R) \leq \mu(\fm).$ Particularly, if $R$ is regular, these invariants are all equal.

\vspace{0.25cm}

\item If $R$ is a Cohen-Macaulay local ring with infinite residue field, then $\ms(R) = \dim R$ if and only if $R$ has minimal multiplicity.

\vspace{0.25cm}

\item If $R$ is local Cohen-Macaulay of $\dim R > 0,$ then $\cs(R) = \dim R$ if and only if $R$ is regular.

\vspace{0.25cm} 

\item We have that $\ms(R) = \mu(\fm)$ if and only if $\fm I = \fm^2$ implies $I = \fm$ for any ideal $I$ of $R.$

\vspace{0.25cm}

\item We have that $\cs(R) = \mu(\fm)$ if and only if $I^2 = \fm^2$ implies $I = \fm$ for any ideal $I$ of $R.$

\end{enumerate}
\end{prop}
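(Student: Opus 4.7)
For part (1), the inequality $\cs(R) \leq \mu(\fm)$ is witnessed by $\fm$ itself, while any ideal $I \subseteq \fm$ with $I^2 = \fm^2$ automatically satisfies $\fm^2 = I^2 \subseteq I$, so such an $I$ is also a valid candidate for $\ms(R)$, yielding $\ms(R) \leq \cs(R)$. For the lower bound, I would appeal to Corollary \ref{ms measures number of generators of reductions of m with reduction number 1}: a witness of $\ms(R)$ is a reduction $J$ of $\fm$, so $\sqrt{J} = \fm$ and $\height(J) = \dim R$, hence Krull's height theorem gives $\mu(J) \geq \dim R$. When $R$ is regular, $\dim R = \mu(\fm)$ and the sandwich collapses.

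For part (2), I would use the characterization of Corollary \ref{ms measures number of generators of reductions of m with reduction number 1} again: $\ms(R) = \dim R$ is equivalent to the existence of a reduction of $\fm$ with reduction number one that is generated by exactly $\dim R$ elements. Such a reduction is forced to be a parameter ideal (since its radical is $\fm$ and $\mu(J) = \dim R$), and under the Cohen-Macaulay hypothesis, the existence of a parameter ideal $J$ with $\fm^2 = J\fm$ is precisely the definition of minimal multiplicity; the infinite residue field is used in the reverse direction to produce a minimal reduction generated by a full system of parameters. For part (3), if $\cs(R) = \dim R > 0$ and $I$ witnesses $\cs(R)$, then part (1) forces $\ms(R) = \dim R$ as well, and the chain $I^2 \subseteq \fm I \subseteq \fm^2 = I^2$ yields $\fm I = I^2$. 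Since $I$ is a parameter ideal (radical $\fm$, generated by $\dim R$ elements), Lemma \ref{I^2 = mI implies R is regular} applied with $n = 1$ concludes that $R$ is regular; the converse is immediate from part (1).

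Parts (4) and (5) both reduce to a dimension-counting principle. For the reverse direction of (4), I would invoke Proposition \ref{ms equivalent to m^2 = mI inductive step} to produce a witness $J$ of $\ms(R)$ with $\fm J = \fm^2$; the hypothesis then forces $J = \fm$, so $\ms(R) = \mu(\fm)$. Conversely, if $\ms(R) = \mu(\fm)$ and $\fm I = \fm^2$ with $I \subseteq \fm$, then $\fm^2 \subseteq I$ gives $\mu(I) \geq \ms(R) = \mu(\fm)$, while the natural inclusion $I/\fm I = I/\fm^2 \hookrightarrow \fm/\fm^2$ gives $\mu(I) \leq \mu(\fm)$; equality of dimensions forces $I + \fm^2 = \fm$, hence $I = \fm$. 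Part (5) runs identically once one observes, as in Proposition \ref{cs is witnessed by linear forms}, that $I^2 = \fm^2$ implies $\fm I = \fm^2$ via $I^2 \subseteq \fm I \subseteq \fm^2 = I^2$, so the same dimension count goes through with $\cs(R)$ replacing $\ms(R)$.

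The main obstacle is the nontrivial deduction in part (3): one must first combine the hypothesis $\cs(R) = \dim R$ with part (1) to squeeze out $\ms(R) = \dim R$, then extract the multiplicative relation $\fm I = I^2$ that places us inside the scope of Lemma \ref{I^2 = mI implies R is regular}. Everything else is careful bookkeeping with the structural results established earlier in the section.
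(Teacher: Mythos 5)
Your proposal is correct and follows essentially the same route as the paper: the same Krull-height argument for the lower bound in (1), the same reduction to a parameter ideal with $\fm^2 = J\fm$ via the characterization of minimal multiplicity in (2), the same application of Lemma \ref{I^2 = mI implies R is regular} with $n = 1$ after extracting $I^2 = \fm I$ in (3), and the same dimension count in $\fm/\fm^2$ for (4) and (5). Your substitution of Corollary \ref{ms measures number of generators of reductions of m with reduction number 1} for the paper's appeal to Propositions \ref{cs is witnessed by linear forms} and \ref{ms is witnessed by linear forms} is only a cosmetic rerouting through equivalent results from the same section.
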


\begin{proof}
(1.) By definition of $\cs(R),$ we have that $\cs(R) \leq \mu(\fm).$ If $I$ witnesses $\cs(R),$ then $I^2 = \fm^2$ so that $\fm^2 \subseteq I$ and $\ms(R) \leq \cs(R).$ If $I$ witnesses $\ms(R),$ then $\fm^2 \subseteq I \subseteq \fm$ so that $\sqrt I = \fm$ and $\height(I) = \height(\fm).$ Further, we have that $\height(I) \leq \mu(I) = \ms(R)$ by Krull's Height Theorem. Combining these two observations gives that $\dim R = \height(\fm) = \height(I) \leq \ms(R).$ If $R$ is regular, then $\dim R = \mu(\fm)$ and the invariants are all equal.

(2.) We will assume that the residue field $k$ of $R$ is infinite. By \cite[Exercise 4.6.14]{BH}, $R$ has minimal multiplicity if and only if $\fm^2 = (x_1, \dots, x_n) \fm$ for some $R$-regular sequence $(x_1, \dots, x_n).$ If $R$ has minimal multiplicity, therefore, there exists an $R$-regular sequence $(x_1, \dots, x_n)$ such that $\fm^2 \subseteq (x_1, \dots, x_n)$ and $n \leq \dim R.$ By definition of $\ms(R)$ and (1.), we find that $n \leq \dim R \leq \ms(R) \leq n,$ from which it follows that $\ms(R) = \dim R.$ Conversely, we will assume that $\ms(R) = \dim R.$ By Proposition \ref{ms is witnessed by linear forms}, there exists an ideal $I$ such that $\fm^2 \subseteq I$ and $\mu(I) = \mu_1(I) = \ms(R) = \dim R.$ By Proposition \ref{cs is witnessed by linear forms}, we have that $I \cap \fm^2 = \fm I$ so that $\fm^2 = \fm I.$ Considering that $\mu(I) = \ms(R) = \dim R$ and $\fm^2 \subseteq I \subseteq \fm,$ we have that $I$ is a parameter ideal, hence $I$ is generated by a regular sequence since $R$ is Cohen-Macaulay. Consequently, $R$ has minimal multiplicity.

(3.) If $R$ is regular, the claim holds. Conversely, we will assume that $\cs(R) = \dim R = d > 0.$ By definition of $\cs(R),$ there exists an ideal $I$ such that $I^2 = \fm^2$ and $\mu(I) = \dim R.$ Considering that $\fm^2 = I^2 \subseteq \fm I \subseteq \fm^2,$ we find that $I^2 = \fm I,$ hence $I$ is $\fm$-primary. By Lemma \ref{I^2 = mI implies R is regular} with $n = 1,$ we conclude that $R$ is regular.

(4.) We will assume first that $\fm I = \fm^2$ implies that $I = \fm$ for any (homogeneous) ideal $I.$ If $\ms(R) = n,$ there exists a (homogeneous) ideal $J$ with $\mu(J) = n$ and $\fm^2 = \fm J$ by Proposition \ref{ms equivalent to m^2 = mI inductive step}. By hypothesis, we conclude that $J = \fm$ so that $\ms(R) = \mu(J) =\mu(\fm).$ Conversely, assume that $\ms(R) = \mu(\fm).$ Given a (homogeneous) ideal $I$ such that $\fm^2 = \fm I,$ we have an inclusion of $k$-vector spaces $I / \fm I = I / \fm^2 \subseteq \fm / \fm^2$ such that $\dim_k(I / \fm I) = \ms(R) = \mu(\fm) = \dim_k(\fm / \fm^2).$ Consequently, we must have that $I / \fm^2 = \fm / \fm^2$ so that $I = \fm.$

(5.) If $I^2 = \fm^2$ implies that $I = \fm$ for any (homogeneous) ideal $I,$ then the set of (homogeneous) ideals of $R$ whose square is $\fm^2$ is $S = \{\fm\}.$ We conclude that $\cs(R) = \min \{\mu(I) \mid I \in S\} = \mu(\fm).$ We will assume therefore that $\cs(R) = \mu(\fm).$ Given an ideal $I$ such that $I^2 = \fm^2,$ we have that $$\dim_k(I/\fm^2) = \dim_k(I/\fm I) = \mu(I) \geq \cs(R) = \mu(\fm) = \dim_k(\fm/\fm^2).$$ Considering that $I/\fm^2$ is a $k$-subspace of $\fm/\fm^2,$ we conclude that $I/\fm^2 = \fm/\fm^2$ and $I = \fm.$
\end{proof}

\begin{remark}\label{necessity of certain assumptions}
Based on Proposition \ref{main proposition about cs and ms}(2.), if $R$ is Cohen-Macaulay with infinite residue field and $R$ does not have minimal multiplicity, then $\ms(R) > \dim R.$ One such family of Cohen-Macaulay local rings is given by $\CC \fps[x, y, z] / (x^n + y^n + z^n)$ for any integer $n \geq 3.$ Generally, for a regular local ring $(S, \mathfrak n)$ and any nonzero element $s \in \mathfrak n,$ we have that $e(S/(s)) = \min \{m \mid s \in \fm^m \setminus \fm^{m + 1}\}$ so that $\ms(S/(s)) = \dim(S/(s))$ if and only if $e(S/(s)) \leq 2$ if and only if $s \notin \mathfrak n^3$ by Corollary \ref{cs and ms for hypersurface} and \cite[Example 11.2.8]{HS}. Considering that $R$ is a complete intersection with unique maximal ideal $\fm = (\bar x, \bar y, \bar z)$ and $x^n + y^n + z^n \in (x,y,z)^3$ by assumption that $n \geq 3,$ we have that $\ms(R) > \dim R.$


Even more, it is possible for $\dim R < \ms(R) < \cs(R) < \mu(\fm)$ to hold simultaneously. Consider the Artinian complete intersection ring $R = k[x,y,z] / (x^2, y^2, z^2)$ with $\fm = (\bar x, \bar y, \bar z)$ over a field $k$ with $\chara(k) \neq 2.$ Observe that $I = (\bar x + \bar y + \bar z)$ satisfies $\fm^2 \subseteq I,$ from which it follows that $\ms(R) = 1.$ We demonstrate in Proposition \ref{cs of k[x_1, ..., x_n]/(x_1^2, ..., x_n^2)} that $\cs(R) = 2.$ Evidently, we have that $\mu(\fm) = 3.$

Last, the Cohen-Macaulay assumption in the third part of Proposition \ref{main proposition about cs and ms} is necessary. Consider the ring $R = k[x, y] / (x^2, xy).$ Observe that $I = (\bar y)$ witnesses both $\ms(R)$ and $\cs(R)$ since we have that $\bar \fm^2 = (\bar x, \bar y)^2 = (\bar y^2),$ from which it follows that $\dim R = \ms(R) = \cs(R) = 1 < 2 = \mu(\fm)$ so that $R$ is not regular; however, both $\bar x$ and $\bar y$ are zero divisors in $R,$ hence we have that $\depth(R) = 0,$ i.e., $R$ is not Cohen-Macaulay.
\end{remark}

Our next corollary improves upon Corollary \ref{initial lower bound for ms and comparison theorem for mu(m^n) for ms(R) = 1} in the case that $R$ is a one-dimensional Cohen-Macaulay local ring with infinite residue field such that $\ms(R) = 1.$

\begin{cor}\label{1-dimensional Cohen-Macaulay local ring with ms(R) = 1}
Let $(R, \fm)$ be a one-dimensional Cohen-Macaulay local ring with infinite residue field. If $\ms(R) = 1,$ we have that $\mu(\fm) = \mu(\fm^{n + 1})$ for all integers $n \geq 1.$
\end{cor}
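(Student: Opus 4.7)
My plan is to produce a nonzerodivisor $y \in \fm \setminus \fm^2$ with $\fm^2 = y\fm$, and then to observe that multiplication by $y^n$ realizes an $R$-module isomorphism $\fm \xrightarrow{\sim} \fm^{n+1}$. Passing to $\fm/\fm^2$ on both sides via $- \otimes_R k$ then yields $\mu(\fm) = \mu(\fm^{n+1})$. The existence of any $y$ with $\fm^2 = y\fm$ is immediate from the hypothesis $\ms(R)=1$ together with Proposition \ref{ms equivalent to m^2 = mI inductive step}, but a general such $y$ need not be regular; the heart of the argument is arranging regularity while preserving the reduction property.

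The existence of such a $y$ should follow by intersecting two nonempty Zariski open sets in $\fm/\fm^2 \cong \mathbb A^{\mu(\fm)}_k$. First, by Proposition \ref{ms is an open condition} applied with $n = \ms(R) = 1$, the collection $U \subseteq \fm/\fm^2$ of cosets $y + \fm^2$ with $y \in \fm \setminus \fm^2$ and $\fm^2 = y\fm$ is a nonempty Zariski open set. Second, since $R$ is Cohen-Macaulay of positive dimension, $\Ass(R) = \Min(R)$, and each minimal prime $\mathfrak p$ has height $0$; by Nakayama, $\mathfrak p + \fm^2 \subsetneq \fm$ (otherwise $\mathfrak p = \fm$, contradicting $\dim R > 0$), so the image of each $\mathfrak p$ in $\fm/\fm^2$ is a proper $k$-subspace. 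Because $k$ is infinite, the complement of the finite union of these proper subspaces contains a nonempty Zariski open set $V$. Irreducibility of $\mathbb A^{\mu(\fm)}_k$ guarantees $U \cap V \neq \emptyset$, and any $y$ whose coset lies in $U \cap V$ is a nonzerodivisor satisfying $\fm^2 = y\fm$.

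With such a $y$ in hand, a routine induction gives $\fm^{n+1} = y^n \fm$ for every $n \geq 1$: the base case is the hypothesis on $y$, and the step $\fm^{n+2} = \fm \cdot \fm^{n+1} = \fm \cdot y^n\fm = y^n \fm^2 = y^{n+1}\fm$ carries the induction forward. Since $y$, and hence $y^n$, is a nonzerodivisor, the multiplication map $y^n \cdot (-): \fm \to y^n\fm = \fm^{n+1}$ is both surjective (by construction) and injective, hence an isomorphism of $R$-modules. Tensoring with $k$ then produces an isomorphism $\fm/\fm^2 \cong \fm^{n+1}/\fm^{n+2}$ of $k$-vector spaces, giving $\mu(\fm) = \mu(\fm^{n+1})$.

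The main obstacle is the regularity of $y$: without it, the surjection $\fm \twoheadrightarrow \fm^{n+1}$ need not be injective, and all one obtains is the inequality $\mu(\fm^{n+1}) \leq \mu(\fm)$ already supplied by Corollary \ref{initial lower bound for ms and comparison theorem for mu(m^n) for ms(R) = 1}. The Cohen-Macaulay hypothesis in dimension one, combined with the infinite residue field, is used precisely to exhibit a witness of $\ms(R) = 1$ that is also a nonzerodivisor.
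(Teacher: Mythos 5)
Your argument is correct, and the final step coincides with the paper's: once one has a nonzerodivisor $y$ with $\fm^2 = y\fm$, the identity $\fm^{n+1} = y^n\fm$ and the injectivity of multiplication by $y^n$ give $\fm^{n+1} \cong \fm$ and hence $\mu(\fm^{n+1}) = \mu(\fm)$. Where you diverge is in how the regular witness is produced. The paper gets it in one stroke: by Proposition \ref{main proposition about cs and ms}(2.), the hypotheses $\ms(R) = 1 = \dim R$ force $R$ to have minimal multiplicity, and the cited characterization of minimal multiplicity (\cite[Exercise 4.6.14]{BH}) says precisely that $\fm^2 = x\fm$ for some $R$-regular element $x.$ You instead build the regular witness by hand: Proposition \ref{ms is an open condition} makes the locus of witnesses a nonempty Zariski open subset of $\fm/\fm^2,$ the Cohen-Macaulay hypothesis (no embedded primes, equidimensionality, so every associated prime has positive-dimensional quotient and proper image in $\fm/\fm^2$) makes the locus of nonzerodivisors contain a nonempty open set over the infinite field $k,$ and irreducibility lets the two loci meet. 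Your route is self-contained modulo the paper's own openness result and avoids invoking the minimal multiplicity characterization; the paper's route is shorter and makes the structural reason (minimal multiplicity) explicit. Both are valid, and your prime-avoidance argument is carried out correctly.
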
 

\begin{proof}
By Proposition \ref{main proposition about cs and ms}, it follows that $R$ has minimal multiplicity. Consequently, we have that $\fm^2 = x \fm$ for some $R$-regular element $x$ and $\fm^{n+1} = x^n \fm \cong \fm$ for all integers $n \geq 1.$
\end{proof} 

By Proposition \ref{main proposition about cs and ms}, we can explicitly compute $\ms(R)$ and $\cs(R)$ when $(R, \fm)$ is a hypersurface.

\begin{cor}\label{cs and ms for hypersurface}
If $(R, \fm)$ is a hypersurface and $\fm^2 \neq 0,$ then $\cs(R) = \mu(\fm).$ Further, if $e(R) \leq 2,$ then we have that $\ms(R) = \dim R;$ otherwise, we have that $\ms(R) = \mu(\fm).$
\end{cor}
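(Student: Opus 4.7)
The plan is to write $R = S/(f)$ for a regular local ring $(S, \mathfrak{n})$ and a nonzero element $f \in \mathfrak{n}$, so that $e(R) = \ord_{\mathfrak{n}}(f)$ and $\dim R = \dim S - 1$, while $\mu(\fm)$ equals $\dim S - 1$ when $f \notin \mathfrak{n}^2$ (forcing $R$ regular) and equals $\dim S$ when $f \in \mathfrak{n}^2$. Throughout, we will pull each ideal $I \subseteq \fm$ back to the unique ideal $J \subseteq \mathfrak{n}$ of $S$ with $(f) \subseteq J$ and $I = J/(f)$, translate the hypothesis on $I$ into a statement about $J$ in $S$, and force $J = \mathfrak{n}$ via Nakayama's lemma together with a dimension count in $\gr_{\mathfrak{n}}(S) \cong \Sym(\mathfrak{n}/\mathfrak{n}^2)$ or, where available, by directly applying the regular case of Proposition \ref{main proposition about cs and ms}.

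For the equality $\cs(R) = \mu(\fm)$, by Proposition \ref{main proposition about cs and ms}(5) it suffices to show that $I^2 = \fm^2$ forces $I = \fm$; lifting gives $J^2 + (f) = \mathfrak{n}^2 + (f)$ in $S$. If $f \notin \mathfrak{n}^2$, then $R$ is regular and Proposition \ref{main proposition about cs and ms}(1) settles the claim. If $f \in \mathfrak{n}^3$, then the right-hand side collapses to $\mathfrak{n}^2$ and $(f) \subseteq \mathfrak{n} \cdot \mathfrak{n}^2$, so Nakayama applied to the finitely generated $S$-module $\mathfrak{n}^2/J^2$ yields $J^2 = \mathfrak{n}^2$, and since $S$ is regular the regular case of Proposition \ref{main proposition about cs and ms}(5) forces $J = \mathfrak{n}$. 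The delicate case is $f \in \mathfrak{n}^2 \setminus \mathfrak{n}^3$: working modulo $\mathfrak{n}^3$ in $\mathfrak{n}^2/\mathfrak{n}^3 \cong \Sym^2(\mathfrak{n}/\mathfrak{n}^2)$, the image of $J^2$ lies inside $\Sym^2 V$ with $V = (J + \mathfrak{n}^2)/\mathfrak{n}^2$, while $\bar f$ contributes only one further vector, so $J^2 + (f) + \mathfrak{n}^3 = \mathfrak{n}^2$ forces $\binom{\dim V + 1}{2} + 1 \geq \binom{\dim S + 1}{2}$; the hypothesis $\fm^2 \neq 0$ rules out the degenerate scenario $\dim S = 1$ with $\mathfrak{n}^2 = (f)$, so $\dim S \geq 2$, and the inequality can hold only when $\dim V = \dim S$, i.e., $J + \mathfrak{n}^2 = \mathfrak{n}$, whence $J = \mathfrak{n}$ by Nakayama.

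For the $\ms$ statements we split according to $e(R)$. The case $e(R) = 1$ is the regular case, and Proposition \ref{main proposition about cs and ms}(1) gives $\ms(R) = \dim R$. In the case $e(R) = 2$, i.e., $f \in \mathfrak{n}^2 \setminus \mathfrak{n}^3$, the ring $R$ is a Cohen-Macaulay hypersurface with $e(R) = 2 = \mu(\fm) - \dim R + 1$, so $R$ has minimal multiplicity and Proposition \ref{main proposition about cs and ms}(2) yields $\ms(R) = \dim R$. Finally, in the case $e(R) \geq 3$, i.e., $f \in \mathfrak{n}^3$, we invoke Proposition \ref{main proposition about cs and ms}(4) and show that $\fm I = \fm^2$ implies $I = \fm$: lifting yields $\mathfrak{n} J + (f) = \mathfrak{n}^2 + (f) = \mathfrak{n}^2$, and since $f \in \mathfrak{n}^3 = \mathfrak{n} \cdot \mathfrak{n}^2$, Nakayama applied to $\mathfrak{n}^2/(\mathfrak{n} J)$ produces $\mathfrak{n} J = \mathfrak{n}^2$; regularity of $S$ together with Proposition \ref{main proposition about cs and ms}(4) then forces $J = \mathfrak{n}$, and therefore $I = \fm$.

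The hard part will be the case $f \in \mathfrak{n}^2 \setminus \mathfrak{n}^3$ of the $\cs$ argument: the leading forms of $f$ and of generic products $j_1 j_2$ both sit in the same graded piece $\mathfrak{n}^2/\mathfrak{n}^3$, so one cannot simply absorb $(f)$ into $\mathfrak{n} \cdot \mathfrak{n}^2$ and finish by Nakayama. One must instead carry out the $\Sym^2$ dimension count and isolate precisely where the hypothesis $\fm^2 \neq 0$ enters, namely by ruling out the possibility $\dim S = 1$ with $\mathfrak{n}^2$ principal and generated by $f$.
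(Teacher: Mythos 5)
Your proposal is correct, but it takes a genuinely different route from the paper. The paper never chooses a presentation of $R$: it argues softly that a hypersurface is Cohen--Macaulay with $\mu(\fm) - \dim R \leq 1$, so by Proposition \ref{main proposition about cs and ms}(1.) the invariant $\cs(R)$ is squeezed into $\{\mu(\fm) - 1, \mu(\fm)\}$, and the value $\cs(R) = \dim R = \mu(\fm)-1$ is excluded by Proposition \ref{main proposition about cs and ms}(3.) (whose engine is Lemma \ref{I^2 = mI implies R is regular}); the $\ms$ claim is then read off from the minimal-multiplicity criterion $e(R) = \mu(\fm) - \dim R + 1 \leq 2$ via Proposition \ref{main proposition about cs and ms}(2.). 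You instead lift everything to the regular presentation $S/(f)$, stratify by $\ord_{\mathfrak n}(f)$, and finish with Nakayama plus a dimension count in $\gr_{\mathfrak n}(S) \cong \Sym(\mathfrak n/\mathfrak n^2)$. Your approach buys a self-contained, quantitative explanation of where the order of $f$ and the hypothesis $\fm^2 \neq 0$ enter (in particular your $e(R) \geq 3$ case proves $\fm I = \fm^2 \Rightarrow I = \fm$ directly rather than by elimination, and your $\Sym^2$ count in the case $f \in \mathfrak n^2 \setminus \mathfrak n^3$ is a legitimate substitute for Lemma \ref{I^2 = mI implies R is regular}); the paper's approach buys brevity and independence from any choice of presentation. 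Two small caveats for your write-up: if ``hypersurface'' is taken to mean that only the completion has the form $S/(f)$, you should first invoke Proposition \ref{cs and ms are invariant under completion} (together with the invariance of $e$, $\dim$, and $\mu(\fm)$ under completion) before writing $R = S/(f)$; and your $e(R) = 2$ case leans on Proposition \ref{main proposition about cs and ms}(2.), which assumes an infinite residue field --- the paper's own proof shares this reliance, so it is not a defect relative to the paper, but it is worth flagging.
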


\begin{proof}
If $R$ is regular, then we have that $\dim R = \ms(R) = \cs(R) = \mu(\fm),$ and the claim holds. We may assume therefore that $R$ is not regular. By hypothesis that $R$ is a hypersurface, we have that $R$ is Cohen-Macaulay and $\dim R \geq \mu(\fm) - 1 .$ If $\dim R = 0,$ it follows that $\mu(\fm) = 1$ by assumption that $\fm^2 \neq 0$ so that $\cs(R) \leq \mu(\fm) = 1.$ On the other hand, we have that $\cs(R) \geq 1$ since $\fm^2 \neq 0,$ hence $\cs(R) = 1.$ We will assume therefore that $\dim R \geq 1.$ By Proposition \ref{main proposition about cs and ms}, we have that $\dim R \leq \cs(R) \leq \mu(\fm)$ so that $\cs(R) = \mu(\fm)$ or $\cs(R) = \mu(\fm) - 1$; however, the latter cannot happen, as it would imply that $R$ is regular by the third part of Proposition \ref{main proposition about cs and ms} --- a contradiction. We conclude that $\cs(R) = \mu(\fm).$

By hypothesis that $R$ is a hypersurface, we have that $\ms(R) = \mu(\fm) - 1$ or $\ms(R) = \mu(\fm).$ Considering that $R$ is Cohen-Macaulay, it follows that $\ms(R) = \dim R$ if and only if $R$ has minimal multiplicity, i.e., if and only if $e(R) = \mu(\fm) - \dim R + 1 \leq 2.$
\end{proof}

\begin{cor}\label{ms for a hypersurface of positive dimension}
Let $(R, \fm)$ be a Noetherian (standard graded) local ring. If $R$ is a hypersurface and $\dim(R) > 0,$ then $\ms(R) \in \{\mu(\fm) - 1, \mu(\fm)\}$; the latter happens if and only if $\fm I = \fm^2$ implies that $I = \fm$ for any ideal $I.$
\end{cor}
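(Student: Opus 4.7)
The plan is to deduce this corollary directly from parts (1) and (4) of Proposition \ref{main proposition about cs and ms}, together with the standard structural constraint on the embedding dimension of a hypersurface. The hypersurface hypothesis tells us that $\mu(\fm) \leq \dim R + 1$, which, when combined with the general sandwich $\dim R \leq \ms(R) \leq \mu(\fm)$ from Proposition \ref{main proposition about cs and ms}(1), forces $\ms(R)$ into the two-element set $\{\mu(\fm) - 1, \mu(\fm)\}$.

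Concretely, I would first write $R = S/(f)$ for some regular local ring $(S, \mathfrak{n})$ and some nonzerodivisor $f \in S$ (available since $\dim R > 0$). Then $\dim R = \dim S - 1$, while $\mu(\fm)$ equals $\dim S$ or $\dim S - 1$ according to whether $f \in \mathfrak{n}^2$ or not. In the first case $\dim R = \mu(\fm) - 1$, and in the second case $R$ is itself regular with $\dim R = \mu(\fm)$. Either way, the key inequality $\dim R \geq \mu(\fm) - 1$ holds.

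Plugging this inequality into Proposition \ref{main proposition about cs and ms}(1) yields $\mu(\fm) - 1 \leq \ms(R) \leq \mu(\fm)$, which is the first assertion. For the second assertion, the equivalence between $\ms(R) = \mu(\fm)$ and the implication ``$\fm I = \fm^2 \Rightarrow I = \fm$ for all ideals $I$'' is literally the statement of Proposition \ref{main proposition about cs and ms}(4), so no further work is required.

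There is no genuine obstacle here; the corollary is a direct packaging of the two cited parts of the main proposition with the embedding-dimension estimate for a hypersurface, and the only tiny bookkeeping issue is making sure that the regular case (when $\dim R = \mu(\fm)$) is handled uniformly, which it is because both the membership $\ms(R) = \mu(\fm) \in \{\mu(\fm) - 1, \mu(\fm)\}$ and the equivalence in part (4) hold trivially in that case.
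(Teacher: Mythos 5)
Your proposal is correct and follows essentially the same route as the paper: the embedding-dimension estimate $\mu(\fm) - 1 \leq \dim R$ for a hypersurface, combined with the sandwich $\dim R \leq \ms(R) \leq \mu(\fm)$ from Proposition \ref{main proposition about cs and ms}(1) and the equivalence in Proposition \ref{main proposition about cs and ms}(4). The only cosmetic difference is that the paper first invokes Corollary \ref{cs and ms for hypersurface} (which is not actually needed for the $\ms$ conclusion), whereas you derive the key inequality directly from the presentation $R = S/(f)$; if one is careful, that presentation is for the completion $\widehat R$, but this is harmless by Proposition \ref{cs and ms are invariant under completion}.
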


\begin{proof}
If $R$ is a hypersurface of positive dimension, then Corollary \ref{cs and ms for hypersurface} implies that $\cs(R) = \mu(\fm).$ By Proposition \ref{main proposition about cs and ms}, we have that $\mu(\fm) - 1 \leq \dim R \leq \ms(R) \leq \mu(\fm)$ so that $\ms(R) = \mu(\fm) - 1$ or $\ms(R) = \mu(\fm)$; the latter happens if and only if $\fm I = \fm^2$ implies that $I = \fm.$
\end{proof}

One crucial point in Corollary \ref{cs and ms for hypersurface} is the assumption that $R$ is Cohen-Macaulay. If $\mu(\fm) = \dim R + 1,$ both $\ms(R)$ and $\cs(R)$ take values in $\{\dim R, \dim R + 1 \};$ however, it might not be easy to conclude which value each invariant takes if $R$ is not Cohen-Macaulay. Bearing this in mind, it is desirable to seek some additional properties of $R$ along with $\mu(\fm) = \dim R + 1$ that guarantee $R$ is Cohen-Macaulay; the following proposition addresses this concern. We note that this is known, but we provide the statement and proof for reference.

\begin{prop}\label{mu(m) = dim R + 1 implies Cohen-Macaulay in some cases}
Let $(R, \fm)$ be a Noetherian local ring with $\mu(\fm) = \dim R + 1.$ If the $\fm$-adic completion $\widehat R$ is an integral domain, then $R$ is Cohen-Macaulay. Particularly, if the associated graded ring $\gr_\fm(R)$ is an integral domain, then $R$ is Cohen-Macaulay.
\end{prop}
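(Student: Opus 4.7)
The plan is to reduce to a hypersurface statement via Cohen's structure theorem and faithful flatness of completion. First I would invoke Proposition \ref{cs and ms are invariant under completion} together with the standard fact that Cohen-Macaulayness descends through faithfully flat maps with regular (in particular, Cohen-Macaulay) fibers: it suffices to show that $\widehat R$ is Cohen-Macaulay, and by hypothesis I may assume $\widehat R$ is a domain. Also, passing to the completion does not change $\mu(\fm)$ or $\dim R$, so the numerical hypothesis $\mu(\widehat \fm) = \dim \widehat R + 1$ is preserved.

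Next, I would apply Cohen's structure theorem to write $\widehat R = S / J$, where $(S, \mathfrak n)$ is a complete regular local ring with $\dim S = \mu(\widehat \fm) = \dim \widehat R + 1$. Then
\[
\height(J) = \dim S - \dim(S/J) = (\dim \widehat R + 1) - \dim \widehat R = 1.
\]
Since $\widehat R = S/J$ is a domain, $J$ is a height one prime ideal in the regular local ring $S$. Because every regular local ring is a UFD, every height one prime is principal, so $J = (f)$ for some nonzero $f \in \mathfrak n$. Thus $\widehat R$ is a hypersurface, and in particular a complete intersection, so $\widehat R$ is Cohen-Macaulay. Faithfully flat descent of the Cohen-Macaulay property then yields that $R$ itself is Cohen-Macaulay, establishing the first assertion.

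For the second assertion, I would reduce it to the first by showing that if $\gr_\fm(R)$ is an integral domain, then $\widehat R$ is an integral domain. The key observation is that $\gr_\fm(R) \cong \gr_{\widehat \fm}(\widehat R)$ as graded rings, so it suffices to prove the general statement that a Noetherian local ring whose associated graded ring is a domain, and which is $\fm$-adically separated (automatic in the complete case by Krull's intersection theorem), is itself a domain. Given nonzero $x, y \in \widehat R$, choose integers $n, m$ maximal with $x \in \widehat \fm^n$ and $y \in \widehat \fm^m$, so the images $\bar x \in \widehat \fm^n / \widehat \fm^{n+1}$ and $\bar y \in \widehat \fm^m / \widehat \fm^{m+1}$ are nonzero; since $\gr_{\widehat \fm}(\widehat R)$ is a domain, $\bar x \bar y \neq 0$ in degree $n+m$, which forces $xy \notin \widehat \fm^{n+m+1}$ and hence $xy \neq 0$. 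With $\widehat R$ a domain, the first part applies.

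The main subtlety I anticipate is the reduction from $R$ to $\widehat R$: I rely on the numerical invariants $\mu(\fm)$ and $\dim R$ being preserved under completion (which follow from flatness and the fact that $\fm \widehat R$ is the maximal ideal of $\widehat R$), together with the standard but nontrivial descent result that $\widehat R$ Cohen-Macaulay implies $R$ Cohen-Macaulay. Everything else is a straightforward application of Cohen's structure theorem and the UFD property of regular local rings.
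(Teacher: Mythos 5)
Your proof is correct and follows essentially the same route as the paper: reduce to $\widehat R$ via faithfully flat descent, apply the Cohen Structure Theorem to write $\widehat R \cong S/J$ with $J$ a height-one prime in a regular local (hence UFD) ring, conclude $J$ is principal, and deduce the graded case by noting that $\gr_\fm(R)$ a domain forces $\widehat R$ to be a domain. The only quibble is that your citation of Proposition \ref{cs and ms are invariant under completion} is not the relevant fact (that result concerns $\cs$ and $\ms$, not $\mu(\fm)$ or $\dim R$), but you justify the needed invariance directly anyway, and your filled-in argument for the associated graded step is a welcome elaboration of what the paper leaves implicit.
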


\begin{proof}
Observe that $\mu(\widehat{\fm}) - 1 = \mu(\fm) - 1 = \dim R = \dim \widehat R.$ By the Cohen Structure Theorem, we can write $\widehat R \cong S/J$ for some ideal $J$ in a regular local ring $S$ such that $\dim S = \mu(\fm).$ By hypothesis that $\widehat R$ is an integral domain, we must have that $J$ is a prime ideal of $S$ with $\height(J) = \dim S - \dim(S/J) = \mu(\fm) - \dim R = 1.$ Considering that $S$ is a UFD, we have that $J$ is principal, hence $\widehat R$ is Cohen-Macaulay so that $R$ is Cohen-Macaulay. Particularly, if $\gr_\fm(R)$ is an integral domain, then $\widehat R$ is also a domain, hence the last claim follows.
\end{proof} 

\begin{remark}
It cannot be concluded (even in the equicharacteristic case) that $R$ is Cohen-Macaulay if we only assume that $(R, \fm)$ is a domain with $\mu(\fm) = \dim R + 1.$ Counterexamples exist even in dimension two. We will construct an example of such by employing \cite[Theorem 1]{L}, a discussion of which can be found in \cite{dJ}.

Given a field $k,$ consider the complete Noetherian local ring $S = k \fps[x, y, z] / (xy, xz)$ with unique maximal ideal $\mathfrak n = (\bar x, \bar y, \bar z).$ Observe that $S$ is reduced but not equidimensional, hence we have that $\depth(S) = 1.$ Considering that $S$ contains the field $k,$ the prime ring of $S$ is either $\ZZ$ or $\ZZ / p \ZZ,$ and its action on $S$ is torsion-free. By \cite[Theorem 1]{L}, it follows that $S$ is the completion of a local domain $(R, \fm).$ We have conclude that $\dim R = \dim S = 2$ and $\mu(\fm) = \mu(\mathfrak n) = 3 = \dim R + 1.$ Because $\widehat R$ is not Cohen-Macaulay, $R$ is not Cohen-Macaulay. We must therefore assume that $R$ is a hypersurface in Corollary \ref{cs and ms for hypersurface}, i.e., $R$ must be Cohen-Macaulay.
\end{remark}

If the square of the maximal ideal of a local ring $(R, \fm)$ is minimally generated by ``few enough'' elements, then we obtain an upper bound for $\ms(R)$ as follows.

\begin{prop}\label{upper bound for ms in terms of upper bound for mu(m^2)}
Let $(R, \fm, k)$ be a Noetherian (standard graded) local ring with infinite residue field $k$ with $\fm^2 \neq 0.$ If $\mu(\fm^2) < \binom{r + 2} r,$ then $\ms(R) \leq r.$ Particularly, if $\mu(\fm^2) \leq 2,$ then $\ms(R) = 1.$ 
\end{prop}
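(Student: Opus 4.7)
The plan is to reduce to the standard graded case and, for $r + 1$ generic linear forms, use a linear dependence among their pairwise products together with polarization to exhibit a reduction of $\fm^2$ generated by $r$ linear forms.

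First, by Proposition \ref{ms(R) = ms(R_m) for a standard graded local ring}, I may assume $R = \bigoplus_{i \geq 0} R_i$ is standard graded with $R_0 = k$ infinite, so that $\mu(\fm^2) = \dim_k R_2$ (since $\fm \cdot \fm^2 = \fm^3$). The case $r \geq \mu(\fm)$ is immediate, so assume $r < \mu(\fm) = \dim_k R_1$. It then suffices to produce $r$ linear forms $y_1, \ldots, y_r \in R_1$ with $y_1 R_1 + \cdots + y_r R_1 = R_2$, since this yields $\fm^2 = R_2 \cdot R \subseteq (y_1, \ldots, y_r) R$ and hence $\ms(R) \leq r$.

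To produce such $y_i$, I choose $r + 1$ generic linear forms $y_0, y_1, \ldots, y_r \in R_1$ (available since $k$ is infinite) and consider their $\binom{r+2}{2}$ pairwise products $\{y_i y_j : 0 \leq i \leq j \leq r\}$ in $R_2$. By hypothesis $\dim_k R_2 < \binom{r+2}{2}$, so these products satisfy a nontrivial linear relation $\sum_{i \leq j} \alpha_{ij} y_i y_j = 0$. The crucial step is to arrange this relation so that $\alpha_{00} \neq 0$: the kernel $K(y) \subseteq k^{\binom{r+2}{2}}$ of the ``products map'' $\alpha \mapsto \sum \alpha_{ij} y_i y_j$ has dimension $\geq \binom{r+2}{2} - \dim_k R_2 \geq 1$, and if it were contained in the hyperplane $\{\alpha_{00} = 0\}$ for generic $y$, then by $S_{r+1}$-equivariance of the setup it would lie in the intersection $\bigcap_k \{\alpha_{kk} = 0\}$ (the off-diagonal subspace of dimension $\binom{r+1}{2}$). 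Combining this containment with the polarization identity $y_i y_j = \tfrac{1}{2}\bigl((y_i + y_j)^2 - y_i^2 - y_j^2\bigr)$ (valid in characteristic $\neq 2$) converts the off-diagonal dependencies into a relation among $\binom{r+2}{2}$ squares of linear forms, which, combined with the dimension bound $\dim_k R_2 < \binom{r+2}{2}$, forces a contradiction.

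Granting such an $\alpha$ with $\alpha_{00} \neq 0$, write $y_0^2 = -\alpha_{00}^{-1} \sum_{(i,j) \neq (0,0)} \alpha_{ij} y_i y_j$; every term on the right lies in $(y_1, \ldots, y_r) R_1$ (since at least one of $i, j$ lies in $\{1, \ldots, r\}$), so $y_0^2 \in (y_1, \ldots, y_r) R_1$. Since $y_0$ was generic and the condition $y_0^2 \in (y_1, \ldots, y_r) R_1$ is Zariski-closed (the preimage of a subspace under the polynomial squaring map $R_1 \to R_2$), it holds for every $y_0 \in R_1$. Polarizing once more via $(y + z)^2 - y^2 - z^2 = 2yz$, we conclude $yz \in (y_1, \ldots, y_r) R_1$ for all $y, z \in R_1$, giving $R_2 = R_1 \cdot R_1 \subseteq (y_1, \ldots, y_r) R_1$ and therefore $\ms(R) \leq r$. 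The particular case is $r = 1$: here $\binom{3}{1} = 3$, so $\mu(\fm^2) \leq 2$ gives $\ms(R) \leq 1$, and combining with $\fm^2 \neq 0 \Rightarrow \ms(R) \geq 1$ yields $\ms(R) = 1$. The main obstacle is the rigorous execution of the ``off-diagonal impossibility'' step in the second paragraph, which is where the precise numerical bound $\binom{r+2}{2}$ enters.
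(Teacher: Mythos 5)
Your proposal is an attempt to reprove from scratch the special case $n = 2$ of the Eakin--Sathaye theorem on prestable ideals, which is all the paper actually does: it cites \cite{ES} (equivalently \cite[Theorem 8.6.8]{HS}) with $I = \fm$ and $n = 2$ to get linear forms $x_1, \dots, x_r$ with $\fm^2 = (x_1, \dots, x_r)\fm$, and then handles the graded case via Proposition \ref{ms(R) = ms(R_m) for a standard graded local ring}. Your direct argument has three genuine gaps. First, the opening reduction is backwards: Proposition \ref{ms(R) = ms(R_m) for a standard graded local ring} lets one pass \emph{from} a standard graded ring \emph{to} its localization, not from an arbitrary Noetherian local ring to a graded one. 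A general local ring is not standard graded, so your argument as written only covers the graded case; to cover the local case you would need to work in $\gr_\fm(R)$ and then lift the conclusion $[\gr_\fm(R)]_2 = \sum \ell_i^* [\gr_\fm(R)]_1$ back to $\fm^2 = \sum \ell_i \fm + \fm^3$ and apply Nakayama, none of which appears in the proposal.

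Second, the step you yourself flag as ``the main obstacle'' --- forcing a relation with $\alpha_{00} \neq 0$ --- is not actually carried out, and the sketch offered does not close it: converting an off-diagonal relation into ``a relation among $\binom{r+2}{2}$ squares of linear forms'' cannot by itself contradict $\dim_k R_2 < \binom{r+2}{2}$, since a linear relation among more vectors than the ambient dimension is exactly what one expects, not a contradiction. (A correct version of this symmetry argument exists --- one shows that the kernel would have to be a proper $GL_{r+1}$-invariant subspace of $\Sym^2$ contained in the off-diagonal part, hence zero --- but that is a different argument from the one you describe, and it is precisely the nontrivial content of Eakin--Sathaye.) Third, both polarization steps require $\chara(k) \neq 2$, a hypothesis absent from the proposition and from the Eakin--Sathaye theorem; in characteristic $2$ the passage from ``all squares of linear forms lie in $(y_1, \dots, y_r)R_1$'' to ``all products of linear forms lie in $(y_1, \dots, y_r)R_1$'' simply fails (in $k[x,y]/(x^2, y^2)$ every square of a linear form vanishes while $xy$ does not), so your proof does not establish the statement in that case.
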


\begin{proof}
We obtain this result in the local case as a corollary to the main theorem of \cite{ES} by taking $I = \fm$ and $n = 2$ (cf. \cite[Theorem 8.6.8]{HS}). If $\mu(\fm^2) < \binom{r+2}{r},$ then by the aforementioned theorem, there exist linear forms $x_1, \dots, x_r$ such that $\fm^2 = (x_1, \dots, x_r) \fm \subseteq (x_1, \dots, x_r)$ so that $\ms(R) \leq r.$ Given that $\mu(\fm^2) \leq 2 < 3 = \binom{1+2}{1},$ we may set $r = 1$ to obtain $\ms(R) \leq 1.$ We conclude that $\ms(R) = 1$ by assumption that $\fm^2 \neq 0.$

If $R$ is standard graded local and $R_0$ is a field, the result holds by Proposition \ref{ms(R) = ms(R_m) for a standard graded local ring}.
\end{proof}

\begin{cor}\label{upper bound for ms in terms of upper bound for mu(m^2) 2}
If $(R, \fm, k)$ is a Noetherian (standard graded) local ring with infinite residue field $k$ such that $\mu(\fm^2) < \binom{\mu(\fm) + 1} 2,$ then $\ms(R) \leq \mu(\fm) - 1.$ If $\dim(R) = \mu(\fm) - 1,$ equality holds.
\end{cor}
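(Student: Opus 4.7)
The plan is to deduce this as a direct specialization of Proposition \ref{upper bound for ms in terms of upper bound for mu(m^2)}, using the identity $\binom{r+2}{r} = \binom{r+2}{2}$ and then squeezing the equality case between the trivial lower bound from Proposition \ref{main proposition about cs and ms}(1.) and the upper bound we just derived.

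For the first assertion, I would set $r = \mu(\fm) - 1$ in Proposition \ref{upper bound for ms in terms of upper bound for mu(m^2)}. Since $\binom{r+2}{r} = \binom{r+2}{2}$, substituting $r = \mu(\fm) - 1$ gives $\binom{\mu(\fm) + 1}{2}$, so the standing hypothesis $\mu(\fm^2) < \binom{\mu(\fm)+1}{2}$ is exactly the input required by that proposition. One subtlety worth noting: I should verify that we may assume $\fm^2 \neq 0$, since Proposition \ref{upper bound for ms in terms of upper bound for mu(m^2)} requires this. But if $\fm^2 = 0$, then $\ms(R) = 0 \leq \mu(\fm) - 1$ trivially as long as $\mu(\fm) \geq 1$; and if $\mu(\fm) = 0$, then $R$ is a field and the claim is vacuous (there is nothing to prove, or one can interpret $\ms(R)$ as $0$).

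For the equality statement, I would combine the just-established bound $\ms(R) \leq \mu(\fm) - 1$ with the lower bound $\dim R \leq \ms(R)$ from Proposition \ref{main proposition about cs and ms}(1.). The hypothesis $\dim R = \mu(\fm) - 1$ then forces $\mu(\fm) - 1 = \dim R \leq \ms(R) \leq \mu(\fm) - 1$, so $\ms(R) = \mu(\fm) - 1$.

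There is no real obstacle here; the main content has already been done in Proposition \ref{upper bound for ms in terms of upper bound for mu(m^2)} (which in turn invokes the Eakin--Sathaye theorem), and this corollary is purely a matter of plugging in the right value of $r$ and reading off the binomial coefficient. The only thing to be careful about is the convention at the boundary cases $\fm^2 = 0$ or $\mu(\fm) = 0$, but these can be dispatched in a single sentence.
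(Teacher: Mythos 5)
Your proposal is correct and follows exactly the paper's own argument: substitute $r = \mu(\fm) - 1$ into Proposition \ref{upper bound for ms in terms of upper bound for mu(m^2)} via the identity $\binom{\mu(\fm)+1}{\mu(\fm)-1} = \binom{\mu(\fm)+1}{2}$, then squeeze the equality case against the lower bound $\dim R \leq \ms(R)$ from Proposition \ref{main proposition about cs and ms}(1.). Your extra sentence handling the degenerate cases $\fm^2 = 0$ or $\mu(\fm) = 0$ is a harmless refinement the paper omits.
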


\begin{proof}
Using $r = \mu(\fm) - 1$ and the fact that $\binom{\mu(\fm) + 1} {\mu(\fm) - 1} = \binom{\mu(\fm) + 1} 2,$ we obtain the first claim from Proposition \ref{upper bound for ms in terms of upper bound for mu(m^2)}; the second claims follows from first part of Proposition \ref{main proposition about cs and ms}.
\end{proof}

We note that it is always true that $\mu(\fm^2) \leq \binom{\mu(\fm) + 1} 2.$ In fact, for most of the rings that we will consider in this paper, strict inequality holds because some element of $\fm^2$ vanishes.

Observe that when $\dim R$ is small --- especially when $R$ is Artinian --- it is less restrictive to assume that $\mu(\fm^2)$ is small than to assume that $\mu(\fm)$ is small, as $\mu(\fm^2)$ can be small when $\mu(\fm)$ is arbitrarily large. Our next two propositions deal with cases when $\mu(\fm^2)$ is small. We remark that Proposition \ref{upper bound for ms in terms of upper bound for mu(m^2)} guaranteed that $\ms(R) = 1$ whenever $\mu(\fm^2) \leq 2$ without any further assumptions on $R.$ Even though we will mainly focus on $\cs(R)$ in the following two propositions, we do make mention of $\ms(R)$ without resorting to Proposition \ref{upper bound for ms in terms of upper bound for mu(m^2)}.

\begin{prop}\label{cs and ms when u(m^2) is small}
Let $(R, \fm, k)$ be a Noetherian local ring such that $\fm^2 \neq 0.$
\begin{enumerate}[\rm (1.)] 

\item If $\cs(R) = 1,$ then $\mu (\fm^2) = 1.$ If $\dim R = 1,$ then $e(R) = 1.$ If $R$ is Cohen-Macaulay, then $R$ is regular.

\vspace{0.25cm}

\item Conversely, if $\mu(\fm^2) = 1,$ then $\ms(R) = 1$ and $\dim R \in \{0, 1\}.$ If $\dim R = 1$ and $R$ is Cohen-Macaulay, then $R$ is regular and $\cs(R) = 1.$ On the other hand, if $\dim R = 0$ and 2 is a unit in $R,$ then $\cs(R) = 1.$

\end{enumerate}
\end{prop}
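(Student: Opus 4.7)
The plan is to treat the two parts of the proposition in sequence. For the first claim of part (1), any witness $I = (z)$ of $\cs(R) = 1$ satisfies $\fm^2 = I^2 = z^2R$, so $\fm^2$ is principal and $\mu(\fm^2) = 1$ (using $\fm^2 \neq 0$). The heart of part (1) is $e(R) = 1$ under $\dim R = 1$. I would first note $\fm^2 = z^2R \subseteq z\fm \subseteq \fm^2$, so $\fm^2 = z\fm$, making $(z)$ an $\fm$-primary reduction of $\fm$ of reduction number one. The equation $z\fm = z^2R$ then forces every $y \in \fm$ to satisfy $z(y - zr) = 0$ for some $r$, yielding $\fm = zR + (0:_R z)$. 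The most delicate step is proving $0:_R z = H^0_\fm(R)$: given $s \in R$ with $z^ns = 0$, one iteratively writes $s = zr + h$ with $h \in 0:_R z$ and substitutes $r = zr' + h'$, producing $s = z^kr_k + h$ for each $k \geq 1$; the hypothesis $z^ns = 0$ gives $z^{n+k}r_k = 0$, and if any $r_k$ were a unit we would have $z^{n+k} = 0$, impossible since $zR$ is $\fm$-primary in a one-dimensional ring. Hence every $r_k$ lies in $\fm$, and Krull's intersection theorem yields $s - h \in \bigcap_k z^kR \subseteq \bigcap_k \fm^k = 0$, so $s \in 0:_R z$. Letting $H := H^0_\fm(R) = 0:_R z$, the quotient $R/H$ has principal maximal ideal generated by $\bar z$ and positive depth, so it is a one-dimensional Cohen-Macaulay local ring with $\mu(\fm/H) = 1$, that is, regular, giving $e(R/H) = 1$. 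Since $H$ has finite length, $e(R) = e(R/H) = 1$.

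The last sentence of part (1) is then a direct application of Proposition \ref{main proposition about cs and ms}: the inequality $\dim R \leq \cs(R) = 1$ forces the Cohen-Macaulay positive-dimensional case to have $\dim R = 1 = \cs(R)$, and part (3) of that Proposition supplies regularity.

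For part (2), writing $\fm^2 = aR$, the ideal $I = (a)$ witnesses $\ms(R) \leq 1$, whence $\ms(R) = 1$ and $\dim R \leq \ms(R) = 1$ via Proposition \ref{main proposition about cs and ms}. In the Cohen-Macaulay dimension-one case, $a$ is a nonzerodivisor, for otherwise a minimal prime would contain $\fm^2 = (a)$, hence would equal $\fm$, forcing $\dim R = 0$. Induction on $k$ then gives $\fm^{2k} = a^kR$ and $\fm^{2k+1} = a^k\fm$, and the regularity of $a^k$ yields $\lambda(\fm^n/\fm^{n+1})$ alternating between $1$ and $\mu(\fm)$ for $n \geq 2$. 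As this length must stabilize to $e(R)$ in dimension one, we conclude $\mu(\fm) = 1$, so $R$ is regular and $\cs(R) = 1$ trivially. For the Artinian case with $1/2 \in R$, the polarization identity $2xy = (x+y)^2 - x^2 - y^2$ shows that if every $z \in \fm$ satisfied $z^2 \in \fm^3$, then $\fm^2 \subseteq \fm^3$, and Nakayama's Lemma would give $\fm^2 = 0$, contrary to our standing hypothesis. Picking $z \in \fm$ with $z^2 \notin \fm^3$, its class generates the one-dimensional space $\fm^2/\fm^3$, so $(z^2) + \fm^3 = \fm^2$; Nakayama's Lemma again yields $(z^2) = \fm^2$, proving $\cs(R) = 1$.

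The principal obstacle is the identification $0:_R z = H^0_\fm(R)$ in part (1), for which the iterative Krull intersection argument above is essential; the other steps are routine manipulations with Hilbert-Samuel multiplicities, Nakayama's Lemma, and the earlier propositions of the paper.
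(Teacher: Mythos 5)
Your proof is correct, but it departs from the paper's at the two substantive steps, and the trade-offs are worth recording. For $e(R)=1$ in part (1), the paper simply observes that $\fm^{2n}=z^{2n}R$ is principal for every $n$ and that in dimension one $e(R)=\mu(\fm^n)$ for $n\gg 0$, so $e(R)\leq 1$ in two lines; your route through $\fm=zR+(0:_Rz)$, the identification $0:_Rz=\HH{0}{\fm}{R}$ via the Krull-intersection iteration, and additivity of multiplicity along $\ses{H}{R}{R/H}$ is sound (the unit check on the $r_k$ and the fact that $z$ cannot be nilpotent in a one-dimensional ring are exactly the points needing care), but it proves far more than is required, exhibiting $R/\HH{0}{\fm}{R}$ as a discrete valuation ring when only the numerical equality is needed. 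Conversely, in the Artinian case of part (2) your polarization argument --- $2xy=(x+y)^2-x^2-y^2$ forces some $z\in\fm$ with $z^2\notin\fm^3$, and then $z^2R+\fm^3=\fm^2$ together with Nakayama gives $z^2R=\fm^2$ --- is more elementary and self-contained than the paper's, which invokes Sally's structure theory of stretched Artinian rings and a case analysis on the socle degree; your version also isolates precisely where the hypothesis that $2$ is a unit enters. One small point both treatments must finesse: the clause ``if $R$ is Cohen--Macaulay, then $R$ is regular'' in part (1) is false if read independently of the preceding clause (an Artinian local ring with $\fm^2\neq 0$ is Cohen--Macaulay but not regular), so it must be read cumulatively with $\dim R=1$; your appeal to part (3) of Proposition \ref{main proposition about cs and ms} in the positive-dimensional case does this correctly, where the paper instead uses Abhyankar's inequality with $e(R)=1$. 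The remaining steps (the alternating-lengths computation in the one-dimensional Cohen--Macaulay case of part (2), the nonzerodivisor argument via $\Ass(R)=\Min(R)$) agree with the paper's in substance.
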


\begin{proof}
(1.) Consider a (homogeneous) ideal $I$ that witnesses $\cs(R).$ We have that $\mu(I) = 1$ and $\mu(I^2) = 1.$ By definition of $\cs(R),$ we have that $I^2 = \fm^2$ so that $\mu(\fm^2) = \mu(I^2) = 1,$ as desired.

Consequently, it follows that $\mu(\fm^{2n}) = 1$ for all integers $n \geq 1.$ By Krull's Height Theorem, we have that $\dim R = \height(\fm) = \height(\fm^2) \leq \mu(\fm^2) = 1.$ If $\dim R = 1,$ we have that $e(R) = \mu(\fm^n)$ for all $n \gg 0,$ from which it follows that $e(R) = 1.$ Last, if $R$ is Cohen-Macaulay, then $R$ is regular by Abhyankar's Inequality since it is a Cohen-Macaulay local ring of multiplicity $1.$

(2.) Considering that $\ms(R) \leq \mu(\fm^2) = 1,$ we find that $\ms(R) = 1$ by assumption that $\fm^2 \neq 0.$ By Krull's Height Theorem, we have that $\dim R \leq \mu(\fm^2) = 1,$ hence $\dim R = 0$ or $\dim R = 1.$

If $\dim R = 1,$ then $e(R) = \mu(\fm^n)$ for all $n \gg 0.$ Considering that $\mu(\fm^{2n}) = 1$ for all integers $n \geq 1,$ we have that $e(R) = 1.$ By assumption that $R$ is Cohen-Macaulay, we have that $R$ is regular. By the first part of Proposition \ref{main proposition about cs and ms}, we have that $\cs(R) = \mu(\fm) = \dim R = 1.$

Last, suppose that $\dim R = 0,$ i.e., $R$ is Artinian. By assumption that $\mu(\fm^2) = 1,$ i.e., that $\fm^2$ is principal, we have that $R$ is stretched in the sense of \cite{Sal80} and \cite{Sal79}. We will adopt the notation of the former so that $\ell_R(R) = e,$ $\mu(\fm) = e - h,$ and $\dim_k(0 : \fm) = r.$ By definition of stretched, we have that $\fm^{h + 1} = 0.$ By assumption that $\fm^2 \neq 0,$ we have that $h \geq 2.$ Consider the following cases. 
\begin{enumerate}[\rm (a.)]

\item If $h > 2,$ then following the discussion preceding \cite[Theorem 1]{Sal80}, we have that $\fm^2 = z^2 R$ for some element $z \in \fm \setminus \fm^2.$ Consequently, we have that $\cs(R) = 1.$

\vspace{0.25cm}

\item On the other hand, if $h = 2,$ then once again by the exposition preceding \cite[Theorem 1]{Sal80}, for all indices $i, j \in \{1, \dots, e - h - r + 1\},$ there exist elements $z_i, z_j \in \fm \setminus \fm^2$ such that either $z_i z_j = 0$ or $\fm^2 = z_i z_j R.$ If $z_i^2 = z_i z_i \neq 0$ for some index $i,$ then we must have that $\fm^2 = z_i^2 R.$ Otherwise, we have that $z_i^2 = 0$ for all indices $i.$ By the aforementioned discussion in \cite{Sal80}, for each index $i,$ there exists an index $j$ such that $z_i z_j \neq 0,$ from which it follows that $\fm^2 = z_i z_j R$ so that $(z_i + z_j)^2 = 2z_i z_j R = z_i z_j R = \fm^2.$
\end{enumerate}
Either way, we have that $\cs(R) = 1.$
\end{proof}

\begin{prop}\label{cs and ms when u(m^2) = 2}
Let $(R, \fm, k)$ be a Noetherian local ring such that $\mu(\fm^2) = 2.$ We have that $\dim R = 0$ or $\dim R = 1.$ Even more, the following properties hold.
\begin{enumerate}[\rm (1.)] 

\item If $\dim R = 0$ and $\chara(k) = 0,$ then $\cs(R) = 2.$

\vspace{0.25cm}

\item If $\dim R = 1,$ then $e(R) \leq 2.$ Further, if $R$ is Cohen-Macaulay, then $R$ has minimal multiplicity $e(R) = \mu(\fm) = 2,$ $\ms(R) = 1,$ and $\cs(R) = 2.$

\end{enumerate}
\end{prop}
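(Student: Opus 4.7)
The plan is to bound $\dim R$ via $\ms(R)$, handle part (1.) through a polarization identity in characteristic zero, and handle part (2.) via Serre's Euler characteristic formula together with a kernel computation for multiplication by a suitable element. First, after passing to $R(X) = R[X]_{\fm R[X]}$ if needed (which preserves both $\dim R$ and $\mu(\fm^2)$), I may assume $k$ is infinite. Since $\mu(\fm^2) = 2 < 3 = \binom{3}{1}$, Proposition \ref{upper bound for ms in terms of upper bound for mu(m^2)} gives $\ms(R) \leq 1$, and because $\fm^2 \neq 0$, $\ms(R) = 1$; Proposition \ref{main proposition about cs and ms}(1.) then forces $\dim R \leq 1$, so $\dim R \in \{0, 1\}$.

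For part (1.), assume $\dim R = 0$ and $\chara k = 0$. The lower bound $\cs(R) \geq 2$ is immediate from Proposition \ref{cs and ms when u(m^2) is small}(1.), since $\cs(R) = 1$ would force $\mu(\fm^2) = 1$. For the upper bound, I set $V = \fm/\fm^2$ and $W = \fm^2/\fm^3$, a $2$-dimensional $k$-vector space, and consider the squaring map $\phi : V \to W$, $\bar u \mapsto \bar u^2$. Using the polarization identity $2 \bar u \bar v = \phi(\bar u + \bar v) - \phi(\bar u) - \phi(\bar v)$ (valid since $\chara k \neq 2$), if $\phi(V)$ were contained in any $1$-dimensional $k$-subspace $L \subseteq W$, then every product $\bar u \bar v$ would lie in $L$, contradicting the surjection $\Sym^2 V \twoheadrightarrow W$. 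Hence there exist $u, v \in \fm$ with $\overline{u^2}, \overline{v^2}$ being $k$-linearly independent in $\fm^2/\fm^3$, and Nakayama's Lemma yields $(u^2, v^2) = \fm^2 \subseteq (u, v)^2 \subseteq \fm^2$, so $\cs(R) \leq 2$.

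For part (2.), assume $\dim R = 1$. Since $\ms(R) = 1$, Proposition \ref{ms equivalent to m^2 = mI inductive step} produces $y \in \fm \setminus \fm^2$ with $\fm^2 = y\fm$; then $(y)$ is $\fm$-primary and a reduction of $\fm$ of reduction number one, so $e(R) = e((y); R)$, and Serre's Euler characteristic formula for the Koszul complex on $y$ gives
\begin{equation*}
e((y); R) = \ell(R/(y)) - \ell(0 :_R y).
\end{equation*}
Since $\fm^2 \subseteq (y)$, the quotient $\fm/(y)$ is a $k$-vector space of dimension $\mu(\fm) - 1$, so $\ell(R/(y)) = \mu(\fm)$; meanwhile, multiplication by $y$ produces a surjection $\fm/\fm^2 \twoheadrightarrow \fm^2/\fm^3$ whose kernel is isomorphic to $(\fm \cap (0 :_R y))/(\fm^2 \cap (0 :_R y))$, a $k$-subquotient of $0 :_R y$. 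Taking $k$-dimensions gives $\mu(\fm) - 2 \leq \ell(0 :_R y)$, and hence $e(R) = \mu(\fm) - \ell(0 :_R y) \leq 2$.

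If $R$ is additionally Cohen-Macaulay, prime avoidance on $\fm/\fm^2$ over the infinite residue field combined with Proposition \ref{ms is an open condition} lets me choose $y$ to be a non-zero-divisor, so that $0 :_R y = 0$ and multiplication by $y$ is injective; then $\fm \cong y\fm = \fm^2$ forces $\mu(\fm) = \mu(\fm^2) = 2$ and $e(R) = \mu(\fm) = 2$. The remaining claims $\ms(R) = 1$ and $\cs(R) = 2$ follow from the sandwich $2 \leq \cs(R) \leq \mu(\fm) = 2$ via Propositions \ref{cs and ms when u(m^2) is small}(1.) and \ref{main proposition about cs and ms}(1.). The main technical obstacle is the preceding paragraph: proving $e(R) \leq 2$ without the Cohen-Macaulay hypothesis requires carefully trading $\mu(\fm)$ against $\ell(0 :_R y)$ by means of the kernel of multiplication by $y$ on $\fm/\fm^2$ in conjunction with Serre's formula.
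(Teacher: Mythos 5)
Your proof is correct, and it takes a genuinely different route from the paper's at every stage. For the dimension bound, the paper combines Krull's Height Theorem with Sally's result that $\mu(\fm^{n+2}) \leq \mu(\fm^2)$ and the limit formula for multiplicity; you instead get $\ms(R) = 1$ from Eakin--Sathaye via Proposition \ref{upper bound for ms in terms of upper bound for mu(m^2)} and then invoke $\dim R \leq \ms(R)$ from Proposition \ref{main proposition about cs and ms}(1.). In part (1.) the paper appeals to the Elias--Valla structure theory of almost stretched Artinian rings to write $\fm^2 = (x^2, xy)$, whereas your polarization argument on the squaring map $\fm/\fm^2 \to \fm^2/\fm^3$ is elementary and self-contained; it only uses that $2$ is a unit and never uses $\dim R = 0$, so it is in fact slightly more general than the paper's statement. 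In part (2.) the paper obtains $e(R) \leq 2$ immediately from $e(R) = \mu(\fm^n)$ for $n \gg 0$ in dimension one together with $\mu(\fm^{n+2}) \leq 2$; your route through Serre's formula $e((y);R) = \ell(R/yR) - \ell(0:_R y)$ and the kernel of multiplication by $y$ on $\fm/\fm^2$ is heavier, but it also produces the exact value $e(R) = \mu(\fm) - \ell(0:_R y)$, which the paper's argument does not give. One caveat, which you share with the paper rather than introduce: your use of Eakin--Sathaye and of prime avoidance to choose a regular $y$ requires an infinite residue field, and while passing to $R(X)$ preserves $\dim R$, $\mu(\fm)$, $\mu(\fm^2)$, $e(R)$, and minimal multiplicity, the paper nowhere proves that $\ms$ descends along $R \to R(X)$, so the conclusion $\ms(R) = 1$ over a finite residue field is not fully justified by either argument --- the paper's own proof cites Proposition \ref{main proposition about cs and ms}(2.), which also assumes $k$ infinite.
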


\begin{proof}
By Krull's Height Theorem, we have that $\dim R = \height(\fm) = \height(\fm^2) \leq \mu(\fm^2) = 2.$ On the contrary, suppose that $\dim R = 2.$ By the first corollary of \cite{Sal75}, we have that $\mu(\fm^{n + 2}) \leq 2$ for all integers $n \geq 0.$ Evidently, then, we have that $\mu(\fm^t) \leq 2$ for all $t \gg 0.$ We have therefore that $e(R) = \lim_{n \to \infty} \mu(\fm^n)/n = 0$ --- a contradiction. We conclude that $\dim R = 0$ or $\dim R = 1.$

(1.) If $\dim R = 0,$ then $R$ is Artinian and almost stretched in the sense of \cite{EV}. By \cite[Proposition 2.3]{EV}, we have that $\fm^2 = (x^2, xy)$ for some elements $x,y \in \fm \setminus \fm^2.$ Consider the ideal $I = (x,y) \subseteq \fm.$ Observe that $\fm^2 = (x^2, xy) \subseteq (x^2, xy, y^2) = I^2$ so that $I^2 = \fm^2$ and $\cs(R) \in \{1, 2\}.$ If $\cs(R) = 1,$ then by Proposition \ref{cs and ms when u(m^2) is small}, we would have that $\mu(\fm^2) = 1$ --- a contradiction --- so $\cs(R) = 2.$

(2.) If $\dim R = 1,$ then we have that $e(R) = \mu(\fm^n)$ for all $n \gg 0.$ Considering that $\mu(\fm^n) \leq 2$ for all $n \gg 0$ by our exposition in the first paragraph, we have that $e(R) \leq 2.$

Further, if $R$ is Cohen-Macaulay, then we must have that $e(R) = 2.$ For if $e(R) = 1,$ then it would follow that $R$ is regular so that $\mu(\fm) = \dim R = 1$ and thus $\mu(\fm^2) = 1$ --- a contradiction. Consequently, we have that $\mu(\fm) = \mu(\fm) - \dim R + 1 \leq e(R) = 2.$ Considering that $\mu(\fm) \neq 1,$ we conclude that $\mu(\fm) = 2 = e(R)$ so that $R$ has minimal multiplicity. By the second part of Proposition \ref{main proposition about cs and ms}, it follows that $\ms(R) = \dim R = 1.$ By the first part of Proposition \ref{main proposition about cs and ms}, we have that $1 = \dim R \leq \cs(R) \leq \mu(\fm) = 2.$ If $\cs(R) = 1,$ then once again, by Proposition \ref{cs and ms when u(m^2) is small}, we would have that $\mu(\fm^2) = 1$ --- a contradiction --- so we conclude that $\cs(R) = 2.$
\end{proof}

We conclude this section with a proposition concerning the fiber product.

\begin{prop}\label{cs and ms of fiber product}
Consider Noetherian local rings $(S, \fm_S, k)$ and $(T, \fm_T, k)$ with the same residue field $k.$ We have that $\max \{\cs(S), \cs(T)\} \leq \cs(S \times_k T) \leq \cs(S) + \cs(T)$ and $\ms(S \times_k T) = \max\{\ms(S), \ms(T)\}.$
\end{prop}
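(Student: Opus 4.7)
The plan is to exploit the standard structural facts about the fiber product $R = S \times_k T$: its maximal ideal is $\fm_R = \{(s, t) : s \in \fm_S,\, t \in \fm_T\}$, and one checks directly that $\fm_R^2 = \fm_S^2 \oplus \fm_T^2$ as ideals of $R$ (the inclusion ``$\supseteq$'' uses that $(s_1, 0)(s_2, 0) = (s_1 s_2, 0)$ with each $(s_i, 0) \in \fm_R$, and symmetrically on the $T$-side). The canonical projections $\pi_S: R \to S$ and $\pi_T: R \to T$ are surjective local ring homomorphisms (surjectivity uses that $T \twoheadrightarrow k$, so any $s \in S$ lifts to some $(s, t) \in R$), so Proposition \ref{comparison theorem for cs and ms under surjective ring homomorphism} instantly yields the two lower bounds $\cs(R) \geq \max\{\cs(S), \cs(T)\}$ and $\ms(R) \geq \max\{\ms(S), \ms(T)\}$.

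For the upper bound $\cs(R) \leq \cs(S) + \cs(T)$, I would choose generators $s_1, \ldots, s_p$ of a witness $J$ of $\cs(S)$ and $t_1, \ldots, t_q$ of a witness $K$ of $\cs(T)$, and form the ideal $I \subseteq \fm_R$ generated by the $p + q$ elements $(s_i, 0)$ and $(0, t_j)$. Then $I^2 \subseteq \fm_R^2$ since $I \subseteq \fm_R$, while the products $(s_i, 0)(s_j, 0) = (s_i s_j, 0)$ together with the liftability of arbitrary elements of $S$ to $R$ force $\fm_S^2 \oplus 0 \subseteq I^2$; symmetrically $0 \oplus \fm_T^2 \subseteq I^2$, giving $I^2 = \fm_R^2$ and hence $\cs(R) \leq \mu(I) \leq p + q$.

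For the upper bound $\ms(R) \leq \max\{\ms(S), \ms(T)\}$, assume without loss of generality that $n = \ms(S) \geq \ms(T) = m$. The key step is to invoke Proposition \ref{ms equivalent to m^2 = mI inductive step} to select $s_1, \ldots, s_n \in \fm_S \setminus \fm_S^2$ with the \emph{stronger} property $\fm_S^2 = \fm_S(s_1, \ldots, s_n)$, and similarly $t_1, \ldots, t_m \in \fm_T \setminus \fm_T^2$ with $\fm_T^2 = \fm_T(t_1, \ldots, t_m)$. Then the ideal $I$ of $R$ generated by the $n$ elements $(s_1, t_1), \ldots, (s_m, t_m), (s_{m+1}, 0), \ldots, (s_n, 0)$ is my candidate witness. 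Given $x \in \fm_S^2$, I would write $x = \sum_{i=1}^n a_i s_i$ with $a_i \in \fm_S$; each $(a_i, 0)$ now lies in $R$, and summing the products $(a_i, 0) \cdot (s_i, t_i)$ for $i \leq m$ with $(a_i, 0) \cdot (s_i, 0)$ for $i > m$ yields $(x, 0) \in I$. A symmetric computation using $(0, b_i) \cdot (s_i, t_i) = (0, b_i t_i)$ places $(0, y) \in I$ for $y \in \fm_T^2$, so $\fm_R^2 = \fm_S^2 \oplus \fm_T^2 \subseteq I$.

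The main obstacle — and the reason $\ms$ achieves the maximum rather than the sum — is precisely this need to ``recycle'' the same $m$ slots for both $S$ and $T$: the construction only closes up because the reduction-number-one equality $\fm_S^2 = \fm_S(s_1, \ldots, s_n)$ forces the coefficients $a_i$ into $\fm_S$, which is exactly what is required for $(a_i, 0)$ to belong to the subring $R \subseteq S \times T$. Without the strengthening provided by Proposition \ref{ms equivalent to m^2 = mI inductive step}, one would be stuck with coefficients $a_i \in S$ that fail the residue-matching condition cutting out $R$, and the pairing of the $(s_i, t_i)$ generators would collapse — which is why the corresponding argument for $\cs$ does not yield a $\max$-type bound in that case.
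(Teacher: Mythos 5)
Your proposal is correct and follows essentially the same route as the paper's proof: the identification $\fm_R^2 = \fm_S^2 \oplus \fm_T^2$, the lower bounds via the surjections $R \to S$ and $R \to T$, the $\cs$ upper bound from the ideal generated by $(s_i, 0)$ and $(0, t_j)$ using the residue-field lifting trick, and the $\ms$ upper bound from the diagonal generators $(s_i, t_i)$, where Proposition \ref{ms equivalent to m^2 = mI inductive step} supplies coefficients in $\fm_S$ (resp.\ $\fm_T$) so that $(a_i, 0)$ and $(0, b_i)$ actually lie in the fiber product. Your closing remark correctly isolates the same key point the paper relies on, namely that the reduction-number-one strengthening is what makes the $\max$-type bound for $\ms$ close up.
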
 

\begin{proof}
Consider the canonical surjections $\pi_S : S \to k$ and $\pi_T : T \to k.$ We define the fiber product of $S$ and $T$ as the subset of $S \times T$ consisting of all pairs $(a, b)$ of $S \times T$ whose images are equal under the respective canonical surjections, i.e., we have that $$S \times_k T \stackrel{\text{def}} = \{(a,b)\in S \times T \mid \pi_S(a) = \pi_T(b)\}.$$ Observe that $S \times_k T$ is a local subring of $S \times T$ with unique maximal ideal $\fm_S \oplus \fm_T.$ We will denote $R = S \times_k T$ when convenient. By definition of $R,$ the maps $p_S : S \times_k T \to S$ and $p_T : S \times_k T \to T$ defined by $p_S(a, b) = a$ and $p_T(a, b) = b$ give rise to the following commutative diagram.
\begin{center}
\begin{tikzcd}
S\times_k T \arrow[r, "p_S"] \arrow[d, "p_T"] & S \arrow[d, "\pi_S"] \\
T \arrow[r, "\pi_T"'] & k
\end{tikzcd}
\end{center}
Considering that $\pi_S$ and $\pi_T$ are surjections and $\pi_S \circ p_S$ and $\pi_T \circ p_T$ are surjective by definition of $S \times_k T,$ we have that $p_S$ and $p_T$ are surjective. Consequently, it follows that $\ms(S \times_k T) \geq \ms(S)$ and $\ms(S \times_k T) \geq \ms(T)$ by Proposition \ref{comparison theorem for cs and ms under surjective ring homomorphism} so that $\max\{\ms(S), \ms(T)\} \leq \ms(S \times_k T).$ Likewise, the same holds for $\cs(S \times_k T).$

Given any ideals $I \subseteq \fm_S$ and $J \subseteq \fm_T$ of $S$ and $T,$ respectively, observe that $I \oplus J$ is an ideal of $R,$ as we have that $\pi_S(I) = 0 = \pi_T(J).$ We claim that $\mu(I \oplus J) \leq \mu(I) + \mu(J).$ If $I = (s_1, \dots, s_m)S$ and $J = (t_1, \dots, t_n)T,$ then for any element $(a,b) \in I \oplus J,$ there exist elements $a_1, \dots, a_m \in S$ and $b_1, \dots, b_n \in T$ such that $$(a,b) = {\left(\sum_{i = 1}^m a_i s_i, \sum_{j = 1}^n b_j t_j \right)} = \sum_{i = 1}^m (a_i s_i, 0) + \sum_{j = 1}^n (0, b_j t_j).$$ Certainly, we have that $(a_i s_i, 0) = (a_i, 1)(s_i, 0) = (a_i, 0)(s_i, 0)$ as elements of $S \times T$; however, this may not hold in $S \times_k T$ because there is no guarantee that $\pi_S(a_i) \in \{0, 1\}.$ Luckily, there is no issue, as we may employ the following trick: for each integer $1 \leq i \leq m,$ we have that $\pi_S(a_i)$ belongs to the residue field $k = T / \fm_T,$ hence we may find an element $a_i' \in T$ such that $\pi_S(a_i) = \pi_T(a'_i).$ Likewise, for each integer $1 \leq j \leq n,$ we may find an element $b_j' \in S$ such that $\pi_S(b_j') = \pi_T(b_j),$ $\pi_T(b_j)$ belongs to $k = S / \fm_S.$ We can therefore write $$(a,b) = \sum_{i = 1}^m (a_i s_i, 0) + \sum_{j = 1}^n (0, b_j t_j) = \sum_{i = 1}^m (a_i, a_i')(s_i, 0) + \sum_{j = 1}^n (b_j', b_j)(0, t_j),$$ hence $(a, b)$ belongs to the ideal $K$ of $R$ generated by $\{(s_1, 0), \dots, (s_m, 0), (0, t_1), \dots, (0, t_n)\}.$ We conclude that $I \oplus J \subseteq K.$ Conversely, the generators of $K$ belong to $I \oplus J.$ We have therefore shown that $I \oplus J = K$ so that $\mu(I \oplus J) = \mu(K) \leq m + n = \mu(I) + \mu(J).$ 

Now, if $I$ witnesses $\cs(S)$ and $J$ witnesses $\cs(T),$ then we have that $\fm_S^2 = I^2$ and $\fm_T^2 = J^2$ so that $(\fm_S \oplus \fm_T)^2 = \fm_S^2 \oplus \fm_T^2 = I^2 \oplus J^2 = (I \oplus J)^2$ and $\cs(S \times_k T) \leq \cs(S) + \cs(T).$

On the other hand, assume $I$ witnesses $\ms(S),$ $J$ witnesses $\ms(T),$ and $n = \max\{\ms(S), \ms(T)\}.$ Denote by $I = (s_1, \dots, s_n)S$ and $J = (t_1, \dots, t_n)T.$ Considering that $I \subseteq \fm_S$ and $J \subseteq \fm_T,$ we have that $s_i \in \fm_S$ and $t_j \in \fm_T$ so that $(s_i, t_j) \in R$ for all integers $1 \leq i,j \leq n.$ We claim that $K = ((s_1, t_1), (s_2, t_2), \dots, (s_n, t_n))R$ contains $\fm_R^2,$ hence $\ms(R) \leq n = \max\{\ms(S), \ms(T)\}.$ Observe that $\fm_R^2 = \fm^2_S \oplus \fm^2_T,$ so we may consider $(x,y) \in \fm_S^2 \oplus \fm_T^2.$ Considering that $\fm_S^2 = \fm_S I$ and $\fm_T^2 = \fm_T J$ by Proposition \ref{ms equivalent to m^2 = mI inductive step}, we have that $x \in \fm_S I$ and $y \in \fm_T J.$ By definition, there exist some elements $x_1, \dots, x_n \in \fm_S$ and $y_1, \dots, y_n \in \fm_T$ such that $x = \sum_{i = 1}^n s_i x_i$ and $y = \sum_{i = 1}^n t_i y_i.$ Consequently, the pairs $(x_i, 0)$ and $(0, y_i)$ lie in $R$ and $$(x, y) = \sum_{i = 1}^n (x_i, 0)(s_i, t_i) + \sum_{i = 1}^n (0, y_i) (s_i, t_i)$$ lies in $K.$ We conclude therefore that $\fm_R^2 \subseteq K,$ as desired.
\end{proof}

\section{The standard graded local case and the Weak Lefschetz Property}\label{The standard graded local case and the Weak Lefschetz Property}

We turn our attention to the case that $(R, \fm)$ is a standard graded local ring with unique homogeneous maximal ideal $\fm.$ Ultimately, we will consider the case that $R$ is the quotient of a polynomial ring over a field by a homogeneous ideal, e.g., a quadratic (monomial) ideal.

Given an ideal $I$ of a Noetherian local ring $(R, \fm),$ recall that the Rees algebra of $I$ in $R$ is defined by $R[It] = \oplus_{n = 0}^\infty I^n t^n \subseteq R[t].$ We note that $R[It]$ is likewise Noetherian. We define the special fiber ring of $I$ in $R$ as $\mathfrak F_I(R) = R[It] / \fm R[It].$ Observe that $\mathfrak F_\fm(R) \cong \gr_\fm(R).$

One naturally wonders how the invariants $\ms(R)$ and $\cs(R)$ behave with respect to the associated graded ring of $R.$ Unfortunately, as our next proposition illustrates, it is difficult to say.

\begin{prop}\label{ms(gr_m(R)) leq ms(R) when gr_m(R) is a domain}
Let $(R, \fm)$ be a Noetherian local ring. If the associated graded ring $\gr_\fm(R)$ is an integral domain and $\ms(R) = 1,$ then we have that $\ms(\gr_\fm(R)) \leq \ms(R).$
\end{prop}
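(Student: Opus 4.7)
The plan is to use Proposition \ref{ms equivalent to m^2 = mI inductive step} to produce a single element witnessing $\ms(R) = 1$ in $R$, and then to show its image in the associated graded ring witnesses $\ms(\gr_\fm(R)) \leq 1$. First I would invoke Proposition \ref{ms equivalent to m^2 = mI inductive step} to obtain $y \in \fm \setminus \fm^2$ with $\fm^2 = y\fm$. The key algebraic observation is that this relation iterates: for every $n \geq 1$, we have
\[
\fm^{n+1} = \fm^2 \cdot \fm^{n-1} = y\fm \cdot \fm^{n-1} = y\fm^n.
\]
This identity is what allows a degree-one witness in $G := \gr_\fm(R)$ to control $\fM^2$ in all degrees $\geq 2$ simultaneously, and not merely in degree $2$.

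Next, write $\fM = \bigoplus_{n \geq 1} \fm^n/\fm^{n+1}$ for the homogeneous maximal ideal of $G$, and let $y^* \in G_1 = \fm/\fm^2$ denote the initial form of $y$. Since $y \notin \fm^2$, we have $y^* \neq 0$. I would then show $\fM^2 \subseteq y^* G$, which forces $\ms(G) \leq \mu(y^* G) \leq 1$. Because $\fM^2$ is a homogeneous ideal, it suffices to check inclusion on an arbitrary homogeneous element $\alpha$ of degree $n \geq 2$. Write $\alpha = a + \fm^{n+1}$ with $a \in \fm^n$. By the iterated identity, $a = yb$ for some $b \in \fm^{n-1}$, and the multiplication in $G$ gives
\[
\alpha \; = \; yb + \fm^{n+1} \; = \; (y + \fm^2)(b + \fm^n) \; = \; y^* \cdot (b + \fm^n) \; \in \; y^* G,
\]
as desired.

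There is no substantive obstacle in this approach: the proposition is essentially a bookkeeping statement that the witness $y$ of $\ms(R)=1$ passes directly to a witness $y^*$ of $\ms(G) \leq 1$. The only mildly nonroutine point is recognizing that one needs the strengthened relation $\fm^{n+1} = y\fm^n$ for \emph{every} $n \geq 1$, rather than merely $n = 1$, in order to reach into all positive-degree components of $\fM^2$. I note that the integral domain hypothesis on $G$ does not appear to be used anywhere in this argument for the inequality itself; it may be included to guarantee $\fM^2 \neq 0$ (so that $\ms(G)$ is nontrivial in parallel with $\ms(R) \geq 1$), but the containment $\fM^2 \subseteq y^* G$ itself goes through without it.
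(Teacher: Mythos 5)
Your proof is correct, but it takes a genuinely different route from the paper's. The paper's argument only uses the containment $\fm^2 \subseteq \ell R$ for a witness $\ell$: given a homogeneous element of $\widetilde{\fm}^2$ realized as the initial form $r^*$ of some $r \in \fm^2,$ it writes $r = s\ell$ and invokes the identity $(s\ell)^* = s^* \ell^*,$ which is exactly where the domain hypothesis on $\gr_\fm(R)$ is needed --- without it, $s^*\ell^*$ could vanish while $(s\ell)^*$ lives in a higher degree, and the factorization of initial forms breaks down. You instead invoke the sharper conclusion $\fm^2 = y\fm$ of Proposition \ref{ms equivalent to m^2 = mI inductive step}, iterate it to $\fm^{n} = y\fm^{n-1}$ for all $n \geq 2,$ and thereby produce a factor $b$ lying precisely in $\fm^{n-1}$; this lets you compute the product $y^*\cdot(b + \fm^{n})$ directly from the definition of multiplication in $\gr_\fm(R),$ with no appeal to multiplicativity of initial forms. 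The payoff is real: your observation that the integral domain hypothesis is never used is accurate, so your argument establishes the stronger statement that $\ms(R) = 1$ implies $\ms(\gr_\fm(R)) \leq 1$ for any Noetherian local ring. (Your speculation about why the domain hypothesis appears is not quite on target --- if $\widetilde{\fm}^2 = 0$ the conclusion is trivial anyway --- but this is immaterial to the correctness of the proof.)
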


\begin{proof}
We will henceforth denote $\ms(R) = n$ and the unique homogeneous maximal ideal $$\widetilde{\fm} = \bigoplus_{i \geq 1} \frac{\fm^i}{\fm^{i + 1}}$$ of $\gr_\fm(R).$ By the multiplication defined on $\gr_\fm(R),$ we have that $$\widetilde{\fm}^2 = \bigoplus_{i \geq 2} \frac{\fm^i}{\fm^{i + 1}.}$$ We denote by $r^*$ the initial form of $r$ in $\gr_\fm(R),$ i.e., $r^* = \{r + \fm^n / \fm^{n + 1} \mid r \in \fm^n \setminus \fm^{n + 1} \}.$

We will assume that $\ms(R) = 1.$ By Proposition \ref{ms is witnessed by linear forms}, there exists a linear form $\ell$ such that $\fm^2 \subseteq \ell R \subseteq \fm.$ We claim that $\widetilde \fm^2 \subseteq \ell^* \gr_\fm(R).$ Certainly, it is enough to show the inclusion for all homogeneous elements of $\widetilde \fm^2,$ hence we may consider some element $r^* \in \widetilde \fm^2$ with $r \in \fm^2.$ By hypothesis that $\fm^2 \subseteq \ell R,$ we have that $r = s \ell$ for some element $s \in R.$ By taking initial forms on both sides, we have that $r^* = (s \ell)^* = s^* \ell^*$ is in $\ell^* \gr_\fm(R)$ by hypothesis that $\gr_\fm(R)$ is a domain. We conclude that $\widetilde \fm^2 \subseteq \ell^* \gr_\fm(R)$ so that $\ms(\gr_\fm(R)) \leq \ms(R).$
\end{proof}

Observe that $\ms(R) \leq \mu(\fm^2)$ holds in general. On the other hand, if $\gr_\fm(R)$ has positive depth, then we have that $\cs(R) \leq \mu(\fm^2).$ Before we establish this, we record the following lemmas.

\begin{lemma}\label{u(I^k) leq u(I^k + 1) when depth F_I(R) geq 1}
If $\depth \mathfrak F_I(R) \geq 1,$ then $\mu(I^k) \leq \mu(I^{k+1})$ for each positive integer $k.$ Further, if equality holds for some positive integer $k,$ it must be the case that $\height(I) \leq 1.$
\end{lemma}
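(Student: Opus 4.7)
\medskip

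The plan is to exploit the standard graded $k$-algebra structure of the special fiber ring $\mathfrak F_I(R) = \bigoplus_{n \geq 0} I^n / \fm I^n$, whose $n$th graded piece has dimension $\mu(I^n)$ over $k = R/\fm$. The strategy is to find a non-zero-divisor in degree one and use multiplication by it to compare consecutive graded pieces. After possibly passing to a faithfully flat extension with infinite residue field (which preserves depth and all $\mu(I^n)$, for instance via $R \rightsquigarrow R(x) = R[X]_{\fm[X]}$), we may assume $k$ is infinite. Then, since $\depth \mathfrak F_I(R) \geq 1$ and $\mathfrak F_I(R)$ is standard graded, the usual graded prime avoidance argument produces an element $x \in I \setminus \fm I$ whose initial form $x^* \in (\mathfrak F_I(R))_1$ is a non-zero-divisor on $\mathfrak F_I(R)$.

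Next, I would observe that multiplication by $x^*$ yields an injective $k$-linear map
\[
\cdot x^* : \frac{I^k}{\fm I^k} \lhook\joinrel\longrightarrow \frac{I^{k+1}}{\fm I^{k+1}}
\]
for every $k \geq 1$. Taking $k$-vector space dimensions immediately gives $\mu(I^k) \leq \mu(I^{k+1})$, which is the first assertion.

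For the equality case, suppose $\mu(I^k) = \mu(I^{k+1})$ for some $k \geq 1$. Then the injection above is an isomorphism, which translates to $xI^k + \fm I^{k+1} = I^{k+1}$. Applying Nakayama's Lemma to the finitely generated $R$-module $I^{k+1}$ yields $xI^k = I^{k+1}$. Iterating gives $x^j I^k = I^{k+j}$ for all $j \geq 0$, so that every $a \in I$ satisfies $a^{k+1} \in I^{k+1} = xI^k \subseteq (x)R$, i.e., $a \in \sqrt{(x)R}$. Hence $\sqrt{I} = \sqrt{(x)R}$, and by Krull's Height Theorem
\[
\height(I) = \height(\sqrt{(x)R}) = \height((x)R) \leq 1,
\]
as desired.

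The main technical point is producing a linear non-zero-divisor on $\mathfrak F_I(R)$, which requires the residue field to be infinite; this is a mild and standard reduction, but it is worth flagging explicitly. Once that is in hand, the argument becomes a short sequence of standard manipulations (injection from the non-zero-divisor, Nakayama to upgrade surjectivity modulo $\fm$ to an actual equality, and the reduction-to-radical trick to pass from $I^{k+1} = xI^k$ to a height bound).
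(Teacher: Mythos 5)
Your proof is correct and follows essentially the same route as the paper's: produce a degree-one non-zerodivisor on $\mathfrak F_I(R)$, use multiplication by it to inject $I^k/\fm I^k$ into $I^{k+1}/\fm I^{k+1}$, and in the equality case upgrade surjectivity to $I^{k+1} = xI^k \subseteq xR$ to bound the height. If anything you are more careful than the paper, which silently assumes a linear non-zerodivisor exists (your reduction to an infinite residue field via $R \rightsquigarrow R(x)$ justifies this) and elides the Nakayama step.
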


\begin{proof}
If $\depth \mathfrak F_I(R) \geq 1,$ there is a linear form $x \in \mathfrak F_I(R)$ that is not a zero divisor on $\mathfrak F_I(R).$ Consequently, multiplication by $x$ induces an injective map $I^k/\fm I^k \to I^{k + 1}/\fm I^{k + 1},$ hence we have that $\mu(I^k) \leq \mu(I^{k + 1}).$ If $\mu(I^k) = \mu(I^{k + 1}),$ then $I^k/\fm I^k \to I^{k + 1}/\fm I^{k + 1}$ is surjective, from which it follows that $I^{k + 1} = x I^k$ and $I^{k + 1} \subseteq xR.$ We conclude that $\height(I) = \height(I^{k + 1}) \leq \height(xR) = 1.$
\end{proof}

\begin{lemma}\label{regular element of gr_m(R) is regular element of R}
Let $(R, \fm)$ be a Noetherian local ring. Given any element $x \in R,$ denote by $\bar x$ the image of $x$ in $\gr_\fm(R).$ If $\bar x$ is not a zero divisor on $\gr_\fm(R),$ then $x$ is not a zero divisor on $R.$
\end{lemma}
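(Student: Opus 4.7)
The plan is to argue by contradiction: suppose $x$ is a zero divisor in $R$, so there exists a nonzero $y \in R$ with $xy = 0$, and I will produce a nonzero element of $\gr_\fm(R)$ that is killed by $\bar x$. The main tool for this is Krull's Intersection Theorem, which guarantees $\bigcap_{n \geq 0} \fm^n = 0$. Since $y \neq 0$, there is a largest integer $n \geq 0$ for which $y \in \fm^n$, and then the initial form $y^* := y + \fm^{n+1} \in \fm^n/\fm^{n+1}$ is a nonzero homogeneous element of $\gr_\fm(R)$.

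Next, since $\bar x$ is by hypothesis a nonzero (in particular, non-zero-divisor) element of $\gr_\fm(R)$, the element $x$ itself is nonzero and lies in $\fm^m \setminus \fm^{m+1}$ for some $m \geq 0$, with initial form $x^* = x + \fm^{m+1}$. The crucial observation is that the product in $\gr_\fm(R)$ satisfies
\[
x^* \cdot y^* \;=\; xy + \fm^{m+n+1} \;=\; 0 + \fm^{m+n+1} \;=\; 0,
\]
because $xy = 0 \in \fm^{m+n+1}$. But $y^* \neq 0$ by construction, so this exhibits $\bar x = x^*$ as a zero divisor on $\gr_\fm(R)$, contradicting the hypothesis. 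Hence no such $y$ exists, and $x$ is a non-zero-divisor on $R$.

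I expect no serious obstacle here; the only subtlety is being careful about what ``the image of $x$ in $\gr_\fm(R)$'' means. The natural reading is the initial form, with $\bar x = 0$ if $x \in \bigcap_n \fm^n$, and otherwise $\bar x = x + \fm^{m+1} \in \fm^m/\fm^{m+1}$ where $m$ is maximal with $x \in \fm^m$. Since $\bar x$ is assumed to be a non-zero-divisor on $\gr_\fm(R)$, in particular $\bar x \neq 0$, so $x \notin \bigcap_n \fm^n$ and the integer $m$ is well-defined; this justifies extracting the initial form unambiguously. The argument is otherwise a direct application of Krull's Intersection Theorem together with the definition of multiplication on the associated graded ring.
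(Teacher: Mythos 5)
Your proof is correct and follows essentially the same route as the paper's: both argue contrapositively/by contradiction, use Krull's Intersection Theorem to guarantee that a nonzero $y$ with $xy=0$ has nonzero initial form $y^*$ in $\gr_\fm(R),$ and then observe that $x^* y^* = xy + \fm^{m+n+1} = 0.$ The paper's version is terser (it simply writes $\bar x \bar y = \overline{xy} = \bar 0$), whereas you spell out the needed point that $x^* y^*$ vanishes because $xy = 0$ lies in $\fm^{m+n+1}$ even though the identity $(xy)^* = x^* y^*$ need not hold in general; that extra care is welcome but does not change the argument.
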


\begin{proof}
We will establish the contrapositive. Consider some nonzero element $y \in R$ such that $xy = 0.$ By Krull's Intersection Theorem, it follows that $\bar y$ is nonzero in $\gr_\fm(R);$ however, we have that $\bar x \bar y = \overline{xy} = \bar 0$ in $\gr_\fm(R),$ hence $\bar x$ is a zero divisor on $\gr_\fm(R).$
\end{proof}

\begin{prop}\label{cs(R) leq u(m^2) when depth gr_m(R) geq 1}
Let $(R, \fm)$ be a Noetherian local ring. We have that $\ms(R) \leq \min \{\mu(\fm), \mu(\fm^2)\}.$ If we have that $\depth \gr_\fm(R) \geq 1,$ then $\cs(R) \leq \mu (\fm^2).$ If equality holds here, then both $R$ and $\gr_\fm(R)$ are one-dimensional Cohen-Macaulay local rings, and $R$ has minimal multiplicity. Even more, if the stronger equality $\ms(R) = \mu(\fm^2)$ holds, then $R$ is regular of dimension one.
\end{prop}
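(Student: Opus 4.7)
The plan is to verify the four assertions in sequence, relying mainly on Lemma~\ref{u(I^k) leq u(I^k + 1) when depth F_I(R) geq 1}, Lemma~\ref{regular element of gr_m(R) is regular element of R}, and Proposition~\ref{main proposition about cs and ms}. The inequality $\ms(R)\le\mu(\fm)$ is Proposition~\ref{main proposition about cs and ms}(1), while $\ms(R)\le\mu(\fm^2)$ follows directly from Definition~\ref{cs and ms definition} by choosing $I=\fm^2$ (which satisfies both $\fm^2\subseteq\fm$ and $\fm^2\supseteq\fm^2$). To obtain $\cs(R)\le\mu(\fm^2)$ assuming $\depth\gr_\fm(R)\ge 1$, I apply Lemma~\ref{u(I^k) leq u(I^k + 1) when depth F_I(R) geq 1} with $I=\fm$ and $k=1$ to get $\mu(\fm)\le\mu(\fm^2)$, then combine with $\cs(R)\le\mu(\fm)$.

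Now suppose $\cs(R)=\mu(\fm^2)$. The chain $\cs(R)\le\mu(\fm)\le\mu(\fm^2)$ collapses, so $\mu(\fm)=\mu(\fm^2)$, and the equality clause of Lemma~\ref{u(I^k) leq u(I^k + 1) when depth F_I(R) geq 1} gives $\dim R=\height(\fm)\le 1$. I rule out $\dim R=0$ as follows: if $R$ were Artinian, then $\fm^n=0$ for some $n$, yet the degree-one nonzerodivisor $x\in\gr_\fm(R)$ furnished by the depth hypothesis induces injections $\gr_\fm(R)_j\hookrightarrow\gr_\fm(R)_{j+1}$ for every $j\ge 0$, so composing these yields an injection $k=\gr_\fm(R)_0\hookrightarrow\gr_\fm(R)_n=0$, which is impossible. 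Hence $\dim R=1$. Picking any lift $y\in\fm\setminus\fm^2$ of $x$, Lemma~\ref{regular element of gr_m(R) is regular element of R} shows $y$ is a nonzerodivisor in $R$, so $\depth R\ge 1=\dim R$ and $R$ is Cohen-Macaulay; likewise $\depth\gr_\fm(R)\ge 1=\dim\gr_\fm(R)=\dim R$ gives that $\gr_\fm(R)$ is Cohen-Macaulay as well. Revisiting the proof of Lemma~\ref{u(I^k) leq u(I^k + 1) when depth F_I(R) geq 1}, the surjectivity of the induced map $\fm/\fm^2\to\fm^2/\fm^3$ together with Nakayama's Lemma upgrades $\mu(\fm)=\mu(\fm^2)$ into the structural identity $\fm^2=y\fm$; thus $yR$ is a minimal reduction of $\fm$ with reduction number one, which is exactly the criterion for $R$ to have minimal multiplicity (cf.~\cite[Exercise~4.6.14]{BH} as cited in Proposition~\ref{main proposition about cs and ms}(2)).

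Finally, under the stronger hypothesis $\ms(R)=\mu(\fm^2)$, the full chain $\ms(R)\le\cs(R)\le\mu(\fm)\le\mu(\fm^2)=\ms(R)$ collapses to equalities, so the previous step applies and produces $y\in\fm\setminus\fm^2$ with $\fm^2=y\fm\subseteq yR$; hence $\ms(R)\le\mu(yR)=1$, forcing $\mu(\fm)=\ms(R)=1=\dim R$, so $R$ is a regular local ring of dimension one. The main obstacle, in my view, lies in the equality case $\cs(R)=\mu(\fm^2)$: one must upgrade the purely numerical collapse $\mu(\fm)=\mu(\fm^2)$ into the structural identity $\fm^2=y\fm$ by tracing through the proof of Lemma~\ref{u(I^k) leq u(I^k + 1) when depth F_I(R) geq 1}, and then stitch together all four conclusions (one-dimensionality, Cohen-Macaulayness of both $R$ and $\gr_\fm(R)$, and minimal multiplicity) from this single identity together with Lemma~\ref{regular element of gr_m(R) is regular element of R} and the dimension count.
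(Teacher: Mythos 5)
Your proof is correct and follows essentially the same route as the paper: the same chain $\cs(R)\le\mu(\fm)\le\mu(\fm^2)$ via Lemma~\ref{u(I^k) leq u(I^k + 1) when depth F_I(R) geq 1}, the same revisiting of that lemma's proof to extract $\fm^2=y\fm$ from $\mu(\fm)=\mu(\fm^2)$, and the same use of Lemma~\ref{regular element of gr_m(R) is regular element of R} to transfer regularity of the initial form down to $R$. The only (harmless) deviations are cosmetic: you exclude $\dim R=0$ by an explicit injection of graded pieces rather than by $\dim\gr_\fm(R)=\dim R\ge\depth\gr_\fm(R)$, and in the final step you deduce $\mu(\fm)=1$ directly from $\fm^2\subseteq yR$ and $\ms(R)=\mu(\fm)$ instead of passing through the minimal-multiplicity equivalence.
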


\begin{proof}
By definition, we have that $\ms(R) = \min \{\mu (I) \mid I \subseteq \fm \text{ is an ideal of } R \text { and } \fm^2 \subseteq I \subseteq \fm \},$ from which it follows that $\ms(R) \leq \mu(\fm)$ and $\ms(R) \leq \mu(\fm^2)$ or $\ms(R) \leq \min\{\mu(\fm), \mu(\fm^2)\}.$

We will assume throughout the rest of the proof that $\depth \gr_\fm(R) \geq 1.$ By Lemma \ref{u(I^k) leq u(I^k + 1) when depth F_I(R) geq 1}, it follows that $\mu(\fm) \leq \mu(\fm^2)$ so that $\cs(R) \leq \mu(\fm^2)$ by the first part of Proposition \ref{main proposition about cs and ms}.

If $\cs(R) = \mu(\fm^2),$ then $\mu(\fm^2) = \cs(R) \leq \mu(\fm)$ by the first part of Proposition \ref{main proposition about cs and ms}, from which it follows that $\mu(\fm) = \mu(\fm^2)$ by the previous paragraph. By Lemma \ref{u(I^k) leq u(I^k + 1) when depth F_I(R) geq 1}, we conclude that $\dim R = \height(\fm) \leq 1.$ Considering that $\dim R = \dim \gr_\fm(R) \geq \depth \gr_\fm(R) \geq 1$ by assumption, we conclude that $\dim R = 1$ so that $\dim \gr_\fm(R) = 1$ and $\depth \gr_\fm(R) = 1,$ i.e., $\gr_\fm(R)$ is Cohen-Macaulay of dimension one; it remains to be seen that $\depth R = 1.$

By the proof of Lemma \ref{u(I^k) leq u(I^k + 1) when depth F_I(R) geq 1}, we have that $\fm^2 = x \fm$ for some element $x \in R$ whose image in $\gr_\fm(R)$ is a linear form and hence is neither a unit in $\gr_\fm(R)$ nor a zero divisor on $\gr_\fm(R).$ By Lemma \ref{regular element of gr_m(R) is regular element of R}, we have that $x$ is not a zero divisor on $R,$ and since $x$ is not a unit in $R,$ we have that $\depth R \geq 1.$ We conclude that $\depth R = 1$ so that $R$ is Cohen-Macaulay of dimension one. Observe that this shows that $R$ has minimal multiplicity since $\fm^2 = x \fm$ for some $R$-regular element $x.$ By the second part of Proposition \ref{main proposition about cs and ms}, we conclude that $\ms(R) = \dim R.$

Last, if $\ms(R) = \mu(\fm^2),$ then $\mu(\fm^2) = \ms(R) \leq \cs(R) \leq \mu(\fm^2)$ so that $\cs(R) = \mu(\fm^2).$ By the previous paragraphs, $R$ is Cohen-Macaulay, $\dim R = 1,$ and $\mu(\fm) = \mu(\fm^2)$ so that $\dim R = \ms(R) = \mu(\fm^2) = \mu(\fm).$
\end{proof}

We will assume for the rest of this section that $R = k[x_1, \dots, x_n] / J$ is a standard graded Artinian $k$-algebra with unique maximal ideal $\fm.$ We will express $J$ explicitly when necessary. Out of desire for notational convenience, we will simply write an element of $R$ as $f$ as opposed to $\bar f.$ Further, we will denote by $[R]_i$ the $i$th graded piece of $R,$ i.e., the $k$-vector subspace generated by the monomials of $k[x_1, \dots, x_n] / J$ of degree $i.$

\begin{definition}\label{Weak Lefschetz Property definition}
We say that $R$ enjoys the {\it Weak Lefschetz Property} (henceforth abbreviated WLP) if for any general linear form $\ell,$ the multiplication map $$\cdot \ell : [R]_i \to [R]_{i + 1}$$ has maximal rank, i.e., it is either injective or surjective.
\end{definition}

If $R$ has Hilbert function $(1, h_1, h_2, \dots, h_e),$ then \cite[Lemma 2.9]{MN} guarantees that $R$ enjoys the WLP if and only if for any general linear form $\ell$ and all indices $i,$ we have that $$h_i(R / (\ell)) = \max \{h_i - h_{i-1}, 0 \},$$ where $h_i = \dim_k [R]_i = \mu(\fm^i)$ and $h_i(R / (\ell))$ is the $i$th entry of the Hilbert function of $R / (\ell).$

Consider a homogeneous ideal $I$ of $R$ that witnesses $\ms(R).$ By Proposition \ref{ms is witnessed by linear forms}, $I$ can be generated by linear forms, i.e., there exist $\ell_1, \dots, \ell_{\ms(R)} \in I \setminus \fm^2$ such that $I = (\ell_1, \dots, \ell_{\ms(R)}).$ We will denote by $R_i = R / (\ell_1, \dots, \ell_i)$ and by $h_j(i)$ the $j$th Hilbert coefficient of $R_i.$ We say that $R$ {\it enjoys the Weak Lefschetz Property at the $i$th step} if $R_i$ enjoys the WLP, hence $R$ enjoys the WLP at each step $1 \leq i \leq \ms(R)$ whenever $R_i$ enjoys the WLP for each $1 \leq i \leq \ms(R).$

\begin{prop}\label{ms(R) if R enjoys the WLP}
If $R$ enjoys the WLP at each step, then $\ms(R) = \min \{i \mid h_2(i) = 0 \}.$
\end{prop}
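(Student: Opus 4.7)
The plan is to prove the equality $\ms(R) = \min\{i \mid h_2(i) = 0\}$ by directly relating the vanishing of $h_2(i)$ to the vanishing of $\ms(R_i)$, and then invoking Corollary~\ref{cutting down by (homogeneous) elements of degree one strictly decreases ms} to compute $\ms(R_i)$ in terms of $\ms(R)$. In particular, I will not work with the two inequalities separately but instead establish an equivalence $h_2(i) = 0 \iff \ms(R_i) = 0$ and read off the minimum.

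The key reduction is the chain of equivalences
\[
h_2(i) = 0 \iff [R_i]_j = 0 \text{ for all } j \geq 2 \iff \fm_{R_i}^2 = 0 \iff \ms(R_i) = 0.
\]
Here $R_i$ is a standard graded algebra over $k$, being the quotient of a standard graded algebra by an ideal generated in degree one, so the middle equivalence is an instance of Lemma~\ref{the square of the maximal ideal consists of elements whose degree zero and degree one homogeneous components vanish}, and the first equivalence follows by induction on $j$ using the standard graded relation $[R_i]_{j+1} = [R_i]_1 \cdot [R_i]_j$: once $[R_i]_2 = 0$, every higher graded piece vanishes. The last equivalence is immediate from Definition~\ref{cs and ms definition}, since the only way the minimum defining $\ms$ can equal $0$ is if the zero ideal contains $\fm_{R_i}^2$.

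Since $\ell_1, \dots, \ell_{\ms(R)}$ were chosen to witness $\ms(R)$, Corollary~\ref{cutting down by (homogeneous) elements of degree one strictly decreases ms} yields $\ms(R_i) = \ms(R) - i$ for all $1 \leq i \leq \ms(R)$. Combining this with the chain above, the function $i \mapsto h_2(i)$ vanishes exactly when $i = \ms(R)$, so the asserted minimum is precisely $\ms(R)$. The WLP hypothesis is used implicitly to make $h_2(i)$ an invariant of $R$'s Hilbert function alone via the recursion $h_j(i) = \max\{h_j(i-1) - h_{j-1}(i-1), 0\}$, so that the right-hand side of the claimed equality does not depend on the particular choice of witness $\ell_1, \dots, \ell_{\ms(R)}$. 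The main technical subtlety is the propagation from degree-two vanishing to full $\fm^2$ vanishing, which is where the standard graded structure of $R_i$ is essential; beyond that, the proof amounts to stringing together already-established equivalences.
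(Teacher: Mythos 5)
Your proof is correct, and its core coincides with the paper's: everything reduces to the observation that $h_2(i) = 0$ is equivalent to $\fm^2 \subseteq (\ell_1, \dots, \ell_i).$ Your chain $h_2(i) = 0 \iff [R_i]_j = 0$ for all $j \geq 2 \iff \fm_{R_i}^2 = 0 \iff \ms(R_i) = 0$ is sound --- the middle links use that $R_i$ is standard graded with $[R_i]_0 = k$, via Lemma \ref{the square of the maximal ideal consists of elements whose degree zero and degree one homogeneous components vanish} and $[R_i]_{j + 1} = [R_i]_1 \cdot [R_i]_j$ --- and Corollary \ref{cutting down by (homogeneous) elements of degree one strictly decreases ms} applies because the $\ell_j$ are linear witnesses of $\ms(R)$ by Proposition \ref{ms is witnessed by linear forms}. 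Where you differ from the paper is in how the minimality is extracted: the paper argues directly that $h_2(i) = 0$ forces $\ms(R) \leq \mu((\ell_1, \dots, \ell_i)) \leq i$ by the definition of $\ms,$ and that $h_2(\ms(R)) = 0$ because the $\ell_j$ witness $\ms(R),$ whereas you package both directions into the identity $\ms(R_i) = \ms(R) - i.$ The two routes are equivalent in content (the cutting-down corollary is itself proved by the same containment argument), so yours buys concision at the cost of invoking a slightly heavier prior result. One caveat on your closing remark: the recursion $h_j(i) = \max\{h_j(i - 1) - h_{j - 1}(i - 1), 0\}$ requires $\ell_i$ to be a \emph{general} linear form for $R_{i - 1},$ which the WLP of $R_{i - 1}$ does not by itself guarantee for the specific witnesses $\ell_1, \dots, \ell_{\ms(R)}$; this does not affect the proposition as stated, whose proof (yours and the paper's alike) never actually uses the WLP hypothesis, but it is worth flagging if one intends to read $h_2(i)$ off the Hilbert function of $R$ alone in later applications.
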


\begin{proof}
If $h_2(i) = 0,$ then $\fm^2 \subseteq (\ell_1, \dots, \ell_i)$ so that $\ms(R) \leq \min \{i \mid h_2(i) = 0 \}.$ On the other hand, by Proposition \ref{ms is witnessed by linear forms}, $\ms(R)$ is the least positive integer $i$ with $\fm^2 \subseteq (\ell_1, \dots, \ell_i),$ i.e., $h_2(i) = 0.$
\end{proof}

\begin{prop}\label{ms(R) if R enjoys the WLP and u(m^2) leq u(m)}
If $R$ enjoys the WLP and $\mu(\fm^2) \leq \mu(\fm),$ then $\ms(R) \leq 1.$ Conversely, if $\ms(R) \leq 1,$ then $R$ enjoys the WLP and $\mu(\fm^{i + 1}) \leq \mu(\fm^i)$ for all integers $i \geq 1.$
\end{prop}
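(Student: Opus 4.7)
The plan is to translate both sides of the equivalence into statements about the multiplication-by-$\ell$ map on graded pieces, using that $\mu(\fm^i) = h_i = \dim_k [R]_i$ in the standard graded Artinian setting. The two key inputs are the definition of WLP itself (maximal rank of $\cdot \ell$ on each graded piece for a general linear form $\ell$) and Proposition \ref{ms is an open condition}, which tells us that when $\ms(R) = 1$ there is a nonempty Zariski open subset of $[R]_1$ consisting of linear forms $\ell$ with $\fm^2 = \ell\fm$.

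For the forward direction, I would fix a general linear form $\ell$ witnessing WLP. Since $\mu(\fm^2) \leq \mu(\fm)$ reads as $h_2 \leq h_1$, maximal rank for $\cdot \ell : [R]_1 \to [R]_2$ forces this map to be surjective, so $[R]_2 = \ell[R]_1 \subseteq (\ell)$. Because $R$ is standard graded, $[R]_j = [R]_1 \cdot [R]_{j-1}$, so an easy induction on $j \geq 2$ gives $[R]_j \subseteq (\ell)$, and hence $\fm^2 = \bigoplus_{j \geq 2}[R]_j \subseteq (\ell)$. This yields $\ms(R) \leq \mu((\ell)) \leq 1$.

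For the converse direction, I would split on whether $\ms(R) = 0$ or $\ms(R) = 1$. If $\ms(R) = 0$, then $\fm^2 = 0$, whence $\mu(\fm^{i+1}) = 0$ for all $i \geq 1$ and WLP holds trivially (the map $\cdot \ell$ is the zero map for $i \geq 1$, which is maximal rank since $h_{i+1} = 0$, and is injective on $[R]_0$ for nonzero $\ell$). If $\ms(R) = 1$, Proposition \ref{ms is an open condition} furnishes a nonempty Zariski open $U \subseteq [R]_1$ such that every $\ell \in U$ satisfies $\fm^2 = \ell\fm$. Comparing graded pieces yields $[R]_{i+1} = \ell [R]_i$ for every $i \geq 1$, so $\cdot \ell : [R]_i \to [R]_{i+1}$ is surjective. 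Surjectivity immediately gives $h_{i+1} \leq h_i$, i.e., $\mu(\fm^{i+1}) \leq \mu(\fm^i)$, and under this inequality surjectivity is exactly maximal rank. For $i = 0$, a general $\ell$ in $U \cap ([R]_1 \setminus \{0\})$ (nonempty unless $[R]_1 = 0$, in which case $R = k$ and WLP is vacuous) gives $\cdot \ell : [R]_0 \to [R]_1$ injective, which is maximal rank. Hence $R$ enjoys WLP.

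There is no substantial obstacle: the only delicate point is making sure a single $\ell$ simultaneously witnesses both the WLP open condition and the open condition of Proposition \ref{ms is an open condition} (together with nonvanishing), but this follows because finite intersections of nonempty Zariski open sets in the irreducible affine space $[R]_1 \cong \AA^{h_1}_k$ remain nonempty.
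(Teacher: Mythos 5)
Your proof is correct and follows essentially the same route as the paper: both directions hinge on the equivalence between $\fm^2 \subseteq (\ell)$ and surjectivity of $\cdot\ell : [R]_1 \to [R]_2$ for a general linear form $\ell$, combined with the comparison $h_2 \le h_1$. The only difference is that where the paper cites \cite[Lemma 2.9]{MN} for the Hilbert function of $R/(\ell)$ and \cite[Proposition 2.6(a)]{MN} to propagate surjectivity to higher degrees, you verify these facts directly from standard gradedness ($[R]_{j} = [R]_1[R]_{j-1}$ and $\fm^{i+1} = \ell\fm^i$), and you are somewhat more explicit than the paper about intersecting the relevant nonempty Zariski open subsets of $[R]_1$; these are cosmetic rather than substantive differences.
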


\begin{proof}
We will assume first that $R$ enjoys the WLP and $\mu(\fm^2) \leq \mu(\fm).$ Certainly, if $\fm^2 = 0,$ then it follows that $\ms(R) = 0,$ as the zero ideal witnesses $\ms(R).$ We may assume therefore that $\fm^2 \neq 0.$ By hypothesis that $R$ enjoys the WLP, for any general linear form $\ell,$ the Hilbert function of $R / (\ell)$ is given by $(1, \mu(\fm) - 1, \max \{\mu(\fm^2) - \mu(\fm), 0\}, \dots).$ Considering that $\mu(\fm^2) \leq \mu(\fm),$ we have that $\mu(\fm^2) - \mu(\fm) \leq 0$ so that the Hilbert function of $R / (\ell)$ is in fact $(1, \mu(\fm) - 1, 0, \dots).$ We conclude by Proposition \ref{ms(R) if R enjoys the WLP} that $\ms(R) = 1.$

Conversely, suppose that $\ms(R) \leq 1.$ If $\ms(R) = 0,$ then $\fm^i = 0$ for all integers $i \geq 2,$ so assume that $\ms(R) = 1.$ By Proposition \ref{ms is witnessed by linear forms}, we have that $\fm^2 \subseteq \ell R \subseteq \fm$ for some general linear form $\ell.$ Given any minimal generator $f$ of $\fm^2,$ there exists an element $g \in \fm$ such that $f = g \ell.$ Consequently, the multiplication map $\cdot \ell : [R]_1 \to [R]_2$ is surjective. By \cite[Proposition 2.6(a)]{MN}, it follows that the multiplication map $\cdot \ell : [R]_i \to [R]_{i + 1}$ is surjective for all integers $i \geq 1,$ hence $R$ enjoys the WLP. Further, we have that $\mu(\fm^{i + 1}) = \dim_k [R]_{i+1} \leq \dim_k [R]_i = \mu(\fm^i)$ for all $i \geq 1.$
\end{proof}

\begin{prop}\label{ms(R) if R enjoys the WLP and u(m^2) > u(m)}
Let $R$ enjoy the WLP, and suppose that $\mu(\fm^2) > \mu(\fm).$
\begin{enumerate}[\rm (1.)]

\item If $\mu(\fm) = 2,$ we have that $\mu(\fm^2) = 3$ and $\ms(R) = 2.$

\vspace{0.25cm}

\item If $\mu(\fm) \geq 3$ and $\mu(\fm^2) - \mu(\fm) \in \{1,2\},$ we have that $\ms(R) = 2.$

\end{enumerate}
\end{prop}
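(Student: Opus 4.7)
The plan is to reduce both parts to earlier results in the paper. In both parts the lower bound $\ms(R) \geq 2$ comes immediately from the contrapositive of Proposition \ref{ms(R) if R enjoys the WLP and u(m^2) leq u(m)}: if $\ms(R) \leq 1$ were true, that proposition would force $\mu(\fm^{i+1}) \leq \mu(\fm^i)$ for all $i \geq 1$, contradicting the standing hypothesis $\mu(\fm^2) > \mu(\fm)$. It therefore suffices to prove $\ms(R) \leq 2$ in each case.

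For part (1), I would first pin down $\mu(\fm^2)$ via the universal inequality $\mu(\fm^2) \leq \binom{\mu(\fm)+1}{2}$ noted just after Corollary \ref{upper bound for ms in terms of upper bound for mu(m^2) 2}: when $\mu(\fm) = 2$ this reads $\mu(\fm^2) \leq 3$, and combined with $\mu(\fm^2) > \mu(\fm) = 2$ it forces $\mu(\fm^2) = 3$. Since $\mu(\fm) = 2$, the first part of Proposition \ref{main proposition about cs and ms} gives $\ms(R) \leq \mu(\fm) = 2$, so $\ms(R) = 2$.

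For part (2), my strategy is to cut down by one general linear form and then apply Proposition \ref{upper bound for ms in terms of upper bound for mu(m^2)} to the quotient. Fix a general linear form $\ell_1 \in [R]_1$. By WLP the map $\cdot \ell_1 \colon [R]_1 \to [R]_2$ has maximal rank, and since $\dim_k [R]_1 = \mu(\fm) < \mu(\fm^2) = \dim_k [R]_2$, this map must be injective. Writing $R_1 = R/\ell_1 R$ with homogeneous maximal ideal $\bar{\fm}$, injectivity forces
\[
\mu(\bar \fm^2) \;=\; \dim_k [R_1]_2 \;=\; \dim_k [R]_2 - \dim_k(\ell_1 [R]_1) \;=\; \mu(\fm^2) - \mu(\fm) \;\in\; \{1,2\}.
\]
In particular $\bar \fm^2 \neq 0$, so Proposition \ref{upper bound for ms in terms of upper bound for mu(m^2)} gives $\ms(R_1) = 1$, and Proposition \ref{ms(R) leq ms(R/I) + mu(I)} applied with $I = \ell_1 R$ yields $\ms(R) \leq \ms(R_1) + 1 = 2$.

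The main (and rather mild) obstacle is the bookkeeping that translates WLP into the Hilbert-function identity $h_2(R_1) = \mu(\bar \fm^2) = \mu(\fm^2) - \mu(\fm)$; once that is in hand, Proposition \ref{upper bound for ms in terms of upper bound for mu(m^2)} does all the remaining work, and no assumption of WLP at further steps is needed.
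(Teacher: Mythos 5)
Your proof is correct, and for the upper bounds it takes a genuinely different route from the paper's. The paper stays inside the WLP framework throughout: in part (1) it deduces $\mu(\fm^2) = 3$ from the fact that the maximal ideal of $R/(\ell)$ is principal, and in both parts it invokes \cite[Theorem 5]{MZ07} (with the binomial-expansion verification) to show that $R/(\ell)$ again enjoys the WLP, kills a second general linear form, and reads off $\ms(R) = 2$ from $h_2(2) = 0$ via Proposition \ref{ms(R) if R enjoys the WLP}. You instead get part (1) essentially for free from the universal bound $\mu(\fm^2) \leq \binom{\mu(\fm)+1}{2}$ together with $\ms(R) \leq \mu(\fm)$ from Proposition \ref{main proposition about cs and ms}, and in part (2) you cut down by a single general linear form, use WLP only to see that $\cdot \ell_1 : [R]_1 \to [R]_2$ is injective (so that $\mu(\bar\fm^2) = \mu(\fm^2) - \mu(\fm) \in \{1,2\}$), and finish with the Eakin--Sathaye bound of Proposition \ref{upper bound for ms in terms of upper bound for mu(m^2)} applied to $R/\ell_1 R$ together with Proposition \ref{ms(R) leq ms(R/I) + mu(I)}. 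What your argument buys is the elimination of the Migliore--Zanello propagation step and the observation that the WLP is needed only at the first step (indeed, part (1) uses no WLP at all for the upper bound); what the paper's argument buys is the extra information that $R/(\ell)$ still enjoys the WLP and the explicit Hilbert functions of the successive quotients. Your lower bound $\ms(R) \geq 2$ via the converse direction of Proposition \ref{ms(R) if R enjoys the WLP and u(m^2) leq u(m)} is valid; one could equally cite Corollary \ref{initial lower bound for ms and comparison theorem for mu(m^n) for ms(R) = 1}, since $\mu(\fm^2) > \mu(\fm)$ forces $\lceil \mu(\fm^2)/\mu(\fm) \rceil \geq 2.$ The only hypothesis to keep in view is that Proposition \ref{upper bound for ms in terms of upper bound for mu(m^2)} requires an infinite residue field, but this is already implicit in the WLP setting, where general linear forms are assumed to exist.
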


\begin{proof}
(1.) By our hypotheses that $R$ enjoys the WLP and $\mu(\fm) = 2,$ given any general linear form $\ell,$ the Hilbert function of $R / (\ell)$ is given by $(1, 1, \mu(\fm^2) - 2, \dots).$ Consequently, the image $\bar \fm$ of $\fm$ in $R / (\ell)$ is principal, i.e., we have that $\bar \fm = (\bar f)$ for some element $\bar f \in R / (\ell).$ But this implies that $\bar \fm^2 = (\bar f^2)$ so that $\bar \fm^2$ is either principal or the zero ideal, i.e., $\mu(\fm^2) - 2 = 1$ or $\mu(\fm^2) - 2 = 0.$ By hypothesis that $\mu(\fm^2) > \mu(\fm) = 2,$ the latter cannot happen, hence we conclude that $\mu(\fm^2) = 3,$ and the Hilbert function of $R / (\ell)$ is given by $(1, 1, 1, \dots).$ Observe that $\min \{i \mid h_i(1) \leq i \} = 1,$ hence \cite[Theorem 5]{MZ07} implies that $R / (\ell)$ enjoys the WLP. Given any linear form $\ell'$ in $R$ such that $\bar{\ell'}$ is a general linear form in $R / (\ell),$ then, the Hilbert function of $R / (\ell, \ell')$ is given by $(1, 0, 0, \dots).$ We conclude by Proposition \ref{ms(R) if R enjoys the WLP} that $\ms(R) = 2.$

(2.) By our hypothesis that $R$ enjoys the WLP and $\mu(\fm^2) - \mu(\fm) \in \{1,2\},$ given any general linear form $\ell,$ the Hilbert function of $R / (\ell)$ is given by either $(1, \mu(\fm) - 1, 1, \dots)$ or $(1, \mu(\fm) - 1, 2, \dots).$ Consequently, we have that $h_2(1) \leq 2.$ By hypothesis that $\mu(\fm) \geq 3,$ we have that $\mu(\fm) - 1 \geq 2,$ and we conclude that $\min \{i \mid h_i(1) \leq i\} = 2.$ By \cite[Theorem 5]{MZ07}, $R / (\ell)$ enjoys the WLP if and only if $h_0(1) = 1 = ((h_1(1))_{(1)})_{-1}^{-1}$ (cf. \cite[Definition-Remark 1]{MZ07}). By definition, the unique 1-binomial expansion of $h_1(1)$ is $\binom{h_1(1)}{1},$ hence we have $$((h_1(1)_{(1)})_{-1}^{-1} = {\!\left(\binom{h_1(1)}{1}\!\right)}_{\!-1}^{\!-1} = \binom{h_1(1) - 1}{0} = 1 = h_0(1),$$ and $R / (\ell)$ enjoys the WLP. Given any linear form $\ell'$ in $R$ such that $\bar{\ell'}$ is a general linear form in $R / (\ell),$ the Hilbert function of $R / (\ell, \ell')$ is either $(1, \mu(\fm) - 2, 0, \dots)$ or $(1, \mu(\fm) - 3, 0, \dots)$ by assumption that $\mu(\fm) \geq 3.$ Either way, Proposition \ref{ms(R) if R enjoys the WLP} gives that $\ms(R) = 2.$
\end{proof}

We turn our attention to the Artinian complete intersection $R = k[x_1, \dots, x_n] / (x_1^2, \dots, x_n^2)$ over a field $k$ of characteristic zero. By \cite[Theorem 1.1]{MN}, $R$ enjoys the Weak Lefschetz Property.

\begin{conjecture}\label{k[x_1, ..., x_n]/(x_1^2, ..., x_n^2) enjoys the WLP}
If $R = k[x_1, \dots, x_n] / (x_1^2, \dots, x_n^2)$ and $\chara(k) = 0,$ then $R_i = R / (\ell_1, \dots, \ell_i)$ enjoys the Weak Lefschetz Property for any linear forms $\ell_1, \dots, \ell_i$ in $R.$
\end{conjecture}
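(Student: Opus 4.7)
The plan is to prove the conjecture by induction on $i,$ taking the base case $i = 0$ from \cite[Theorem 1.1]{MN}. For the inductive step, fix linear forms $\ell_1, \dots, \ell_{i+1}$ in $R$ (which we may assume linearly independent, since duplicates generate the same ideal) and try to deduce WLP for $R_{i+1}$ from WLP for $R_i$ combined with the specific structure of $R.$ We may also extend scalars to assume the residue field $k$ is infinite, since WLP is stable under such extensions.

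The natural approach I would try is to interpret the WLP via Macaulay's inverse system. Because $R$ is Gorenstein, its Matlis dual $R^{\vee}$ is a cyclic $R$-module generated by the socle element dual to $F = x_1 x_2 \cdots x_n;$ the ideal $(\ell_1, \dots, \ell_i) \subseteq R$ corresponds to the submodule of $R^{\vee}$ consisting of forms annihilated by $\ell_1, \dots, \ell_i$ acting as partial differential operators. The Hessian criterion of Maeno and Watanabe then translates WLP for $R_i$ into the nonvanishing of a Hessian determinant built from a basis of this submodule, which one would seek to propagate across the induction. A useful auxiliary tool is a deformation argument: the failure of WLP is a Zariski-closed condition on the coefficients of the $\ell_j,$ so it suffices to verify WLP on a Zariski-dense subset of configurations. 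A particularly convenient such subset consists of linear forms supported on disjoint subsets of the $x_j;$ for such configurations $R_i$ factors as a tensor product of smaller monomial complete intersections of squares, and WLP can be transferred from the factors by an earlier case of the induction.

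The main obstacle is that $R_i$ is generally not Gorenstein, so most standard tools for proving WLP (Gorenstein duality, Hard Lefschetz on smooth projective varieties, representation-theoretic symmetries of the ambient algebra) either fail outright or require substantial modification. A complementary geometric viewpoint, identifying $R$ with the cohomology ring $H^{*}((\PP^1)^n; k)$ and the $\ell_j$ with first Chern classes of line bundles, runs into the parallel issue once successive hyperplane sections $X_{\ell_j}$ lose transversality or are no longer effective. Passing from a Zariski-dense set of "nice" configurations to arbitrary ones is delicate, because the locus where WLP fails need not be irreducible and the Hilbert function of $R_i$ can deform discontinuously. Resolving these obstructions in full generality appears to be the crux of the conjecture, and likely requires either a new invariant tailored to the combinatorics of the monomial $F = x_1 \cdots x_n$ or an interpolation argument that identifies a connected family of configurations joining any $(\ell_1, \dots, \ell_{i+1})$ to a disjoint-support configuration while maintaining the Hessian nonvanishing.
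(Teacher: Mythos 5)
This statement is labeled a \emph{conjecture} in the paper, and the paper offers no proof of it (only consequences conditional on it, such as Proposition \ref{ms if k[x_1, ..., x_n]/(x_1^2, ..., x_n^2) enjoys WLP}), so there is no in-paper argument to compare yours against. Your submission is candid that it is a research plan rather than a proof, and as such it does not establish the statement. The concrete gaps are these. First, the conjecture quantifies universally over the linear forms $\ell_1, \dots, \ell_i,$ so no genericity or semicontinuity argument can suffice on its own: even if you verify the Hessian nonvanishing on a Zariski-dense set of disjoint-support configurations, WLP for a special configuration does not follow, precisely because (as you note yourself) the Hilbert function of $R_i$ can jump under specialization and the maximal-rank condition is measured against that Hilbert function. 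The locus where WLP holds is therefore not open in the parameter space of tuples $(\ell_1, \dots, \ell_{i+1}),$ and the proposed interpolation from a nice configuration to an arbitrary one is exactly the unproved crux. Second, the inductive step is not actually set up: \cite[Theorem 1.1]{MN} gives WLP for $R$ itself (the case $i = 0$), but you do not supply any mechanism by which WLP for $R_i$ with respect to a general form implies maximal rank of multiplication by a general form on $R_{i + 1} = R_i / (\ell_{i + 1})$ when $\ell_{i + 1}$ is \emph{not} general in $R_i$; the Maeno--Watanabe Hessian criterion you invoke requires a Gorenstein ambient algebra, and as you observe $R_i$ is generally not Gorenstein, so the tool is unavailable at exactly the step where it is needed.

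If you want a tractable partial result in the spirit of your plan, the disjoint-support case is genuinely provable: if each $\ell_j$ is supported on a block of variables and the blocks are pairwise disjoint, then $R_{i}$ decomposes as a tensor product of algebras of the form $k[x_1, \dots, x_m]/(x_1^2, \dots, x_m^2, \ell),$ and one could try to combine WLP of the factors via the known results on tensor products of algebras with the Strong Lefschetz Property in characteristic zero (the full ring $R$ has SLP via the standard $(\PP^1)^n$ cohomology interpretation). But transferring WLP (rather than SLP) across tensor factors already requires care, and none of this reaches arbitrary $\ell_1, \dots, \ell_i.$ In short: the approach identifies reasonable tools but the statement remains open after your argument, just as it does in the paper.
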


\begin{prop}\label{ms if k[x_1, ..., x_n]/(x_1^2, ..., x_n^2) enjoys WLP}
If Conjecture \ref{k[x_1, ..., x_n]/(x_1^2, ..., x_n^2) enjoys the WLP} holds, then $$\ms(R) = {\left \lceil \frac{1}{2} (2n + 1 - \sqrt{8n + 1}) \right \rceil}.$$
\end{prop}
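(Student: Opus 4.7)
The plan is to feed the Hilbert function of $R = k[x_1,\dots,x_n]/(x_1^2,\dots,x_n^2)$, which is the $n$th row of Pascal's triangle $h_j(0) = \binom{n}{j}$, through the Weak Lefschetz recursion and locate the smallest $i$ for which $h_2(i)$ vanishes. By Proposition \ref{ms is an open condition} I may pick a witness $\ell_1,\dots,\ell_{\ms(R)}$ of $\ms(R)$ consisting of general linear forms, chosen inductively so that each $\ell_{i+1}$ is general modulo $(\ell_1,\dots,\ell_i)$; Proposition \ref{ms(R) if R enjoys the WLP} then identifies $\ms(R)$ with $\min\{i : h_2(i) = 0\}$.

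Conjecture \ref{k[x_1, ..., x_n]/(x_1^2, ..., x_n^2) enjoys the WLP} combined with the characterization of WLP via \cite[Lemma 2.9]{MN} recalled in the text yields the step-by-step recursion $h_j(i) = \max\{h_j(i-1) - h_{j-1}(i-1), 0\}$ for each $i$. A straightforward induction on $i$ gives $h_1(i) = n - i$ while positive and
\[ h_2(i) = \binom{n}{2} - in + \binom{i}{2} \]
so long as the right-hand side remains nonnegative; once it drops to zero or below, $h_2(i)$ stays equal to zero thereafter, since the recursion cannot resurrect a vanished graded piece.

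Therefore $\ms(R)$ is the smallest nonnegative integer $i$ satisfying $\binom{n}{2} + \binom{i}{2} \leq in$, equivalently $i^2 - (2n+1)i + n(n-1) \leq 0$. The quadratic formula produces the two roots $\tfrac{(2n+1) \pm \sqrt{8n+1}}{2}$ (the discriminant is $(2n+1)^2 - 4n(n-1) = 8n+1$), so the inequality holds precisely on the closed interval between them, and the smallest integer therein is $\lceil \tfrac{(2n+1) - \sqrt{8n+1}}{2} \rceil$, which is the stated value of $\ms(R)$. The only non-routine ingredient is Conjecture \ref{k[x_1, ..., x_n]/(x_1^2, ..., x_n^2) enjoys the WLP} itself, which is exactly strong enough to ensure that the WLP recursion persists at every stage; the remainder of the argument is an elementary computation with binomial coefficients and a quadratic inequality.
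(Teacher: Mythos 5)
Your proposal is correct and follows essentially the same route as the paper: apply the WLP recursion to get $h_1(i) = n-i$ and $h_2(i) = \binom{n}{2} - in + \binom{i}{2}$, then solve the quadratic inequality $i^2 - (2n+1)i + n(n-1) \leq 0$ to locate the first vanishing of $h_2(i)$ and invoke Proposition \ref{ms(R) if R enjoys the WLP}. Your explicit handling of the $\max\{\cdot,0\}$ truncation and of the genericity of the linear forms at each step is a slightly more careful rendering of the same argument.
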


\begin{proof}
By \cite[Lemma 2.9]{MZ}, we may focus our attention on the coefficients $h_2(i).$ If Conjecture \ref{k[x_1, ..., x_n]/(x_1^2, ..., x_n^2) enjoys the WLP} holds, then $R_i$ enjoys the Weak Lefschetz Property for each integer $1 \leq i \leq \ms(R).$ Consequently, we have that $h_0(i) = 1,$ $h_1(i) = n - i,$ and $h_2(i) = h_2(i-1) - h_1(i-1)$ with $h_2(0) = h_2 = \binom{n}{2}$ and $h_1(0) = h_1 = n.$ One can check that $h_2(i) = \binom{n}{2} - in + \binom{i}{2} = \frac{1}{2}[i^2- (2n+1)i + n(n-1)]$ and $$\min \{i \mid h_2(i) = 0 \} = {\left \lceil \frac{1}{2} (2n + 1 - \sqrt{8n + 1}) \right \rceil}.$$ Our proof is complete by Proposition \ref{ms(R) if R enjoys the WLP}.
\end{proof}

\begin{remark}
We note that the integer in Proposition \ref{ms if k[x_1, ..., x_n]/(x_1^2, ..., x_n^2) enjoys WLP} is precisely the number of non-triangular numbers that do not exceed $n,$ according to the \cite[OEIS]{OEIS}. 
\end{remark}

\begin{comment}
By Wolfram Alpha, the only integer solutions of this are $i = \frac{m^2 - m}{2}$ and $n = \frac{m^2 + m}{2}.$
\end{comment}

Observe that $\gr_\fm(R)$ is a standard graded $k$-algebra, hence we may write $\gr_\fm(R)$ as the quotient of the polynomial ring $S = k[x_1, \dots, x_{\mu(\fm)}]$ by a homogeneous ideal $I.$ Let $\bar x_i$ denote the image of $x_i$ modulo $I.$ Our next proposition reduces our study to a polynomial ring modulo an ideal generated by quadratic forms.

\begin{prop}\label{cs and ms depend only on degree two}
Let $S$ and $I$ be defined as above. We have that $\cs(S/I) = \cs(S/I_2)$ and $\ms(S/I) = \ms(S/I_2),$ where $I_2$ is the ideal generated by the elements of $I$ of degree two.
\end{prop}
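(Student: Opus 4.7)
The plan is to exploit the fact that $I_2 \subseteq I$, so there is a natural surjection of standard graded $k$-algebras $R_2 := S/I_2 \twoheadrightarrow S/I =: R_1$, inducing $\ms(R_1) \leq \ms(R_2)$ and $\cs(R_1) \leq \cs(R_2)$ by Proposition \ref{comparison theorem for cs and ms under surjective ring homomorphism}. The substantive work is the reverse inequalities $\ms(R_2) \leq \ms(R_1)$ and $\cs(R_2) \leq \cs(R_1)$. The key structural observation I would use is that since $I$ is contained in the square of the homogeneous maximal ideal of $S$ (as $S \to \gr_\fm(R)$ is minimal, so $\mu(\fm) = \dim_k(\gr_\fm(R))_1 = \dim_k S_1$), we have $(I_2)_d = I_d$ for $d \leq 2$, and therefore the surjection $\pi : R_2 \to R_1$ restricts to an isomorphism on the graded pieces of degree $0,1,2$; only the pieces of degree $\geq 3$ may shrink under $\pi$.

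For the reverse inequality on $\ms$, let $J_1 \subseteq \fm_1$ witness $\ms(R_1)$. By Proposition \ref{ms is witnessed by linear forms}, we may assume $J_1 = (\bar\ell_1, \dots, \bar\ell_r) R_1$ for linear forms $\ell_1, \dots, \ell_r \in S_1$. Define the analogous ideal $J_2 = (\tilde\ell_1, \dots, \tilde\ell_r) R_2$ in $R_2$. I would show $\fm_2^2 \subseteq J_2$, which then gives $\ms(R_2) \leq \mu(J_2) \leq r = \ms(R_1)$. Using Lemma \ref{the square of the maximal ideal consists of elements whose degree zero and degree one homogeneous components vanish}, it suffices to verify $(R_2)_d \subseteq J_2$ for every $d \geq 2$, and this I would prove by induction on $d$. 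For the base case $d = 2$: since $\pi$ is an isomorphism in degrees $\leq 2$, the subspace $(J_2)_2 \subseteq (R_2)_2$ corresponds under $\pi$ to $(J_1)_2 \subseteq (R_1)_2$; since $J_1 \supseteq \fm_1^2$, we have $(J_1)_2 = (R_1)_2$, hence $(J_2)_2 = (R_2)_2$. For the inductive step $d \geq 3$: since $R_2$ is standard graded, $(R_2)_d = (R_2)_1 \cdot (R_2)_{d-1}$, and by induction $(R_2)_{d-1} \subseteq J_2$, so $(R_2)_d \subseteq (R_2)_1 \cdot J_2 \subseteq J_2$.

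For the reverse inequality on $\cs$, the argument is entirely parallel. Let $J_1$ witness $\cs(R_1)$; by Proposition \ref{cs is witnessed by linear forms}, we can again take $J_1 = (\bar\ell_1, \dots, \bar\ell_r) R_1$ with $\ell_i \in S_1$, and set $J_2 = (\tilde\ell_1, \dots, \tilde\ell_r) R_2$. The inclusion $J_2^2 \subseteq \fm_2^2$ is clear; for the reverse, I would again show $(R_2)_d \subseteq J_2^2$ for all $d \geq 2$ by induction. For $d = 2$, $(J_2^2)_2$ corresponds under $\pi$ to $(J_1^2)_2 = (\fm_1^2)_2 = (R_1)_2$, so $(J_2^2)_2 = (R_2)_2$. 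For $d \geq 3$, $(R_2)_d = (R_2)_1 \cdot (R_2)_{d-1} \subseteq (R_2)_1 \cdot J_2^2 \subseteq J_2^2$ by the inductive hypothesis, completing the proof that $J_2^2 = \fm_2^2$ and hence $\cs(R_2) \leq r = \cs(R_1)$. The only nontrivial step in the whole argument is recognizing that witnessing ideals may be taken to be generated by linear forms (so the data of the witness is insensitive to replacing $I$ by $I_2$) together with the observation that the entire ``test'' for being a witness takes place in degrees $\leq 2$ combined with a trivial propagation to higher degrees via standard gradedness.
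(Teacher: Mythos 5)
Your proof is correct and follows essentially the same route as the paper: one inequality comes from the graded surjection $S/I_2 \twoheadrightarrow S/I$ via Proposition \ref{comparison theorem for cs and ms under surjective ring homomorphism}, and the reverse comes from lifting a linear-form witness (Propositions \ref{cs is witnessed by linear forms} and \ref{ms is witnessed by linear forms}) and checking the containment in degree two, where $S/I$ and $S/I_2$ agree because $(I_2)_d = I_d$ for $d \leq 2.$ The only cosmetic difference is that you package the degree-two comparison as an isomorphism of graded pieces and then propagate to higher degrees by induction, whereas the paper compares homogeneous components of an explicit expression $x_i x_j = a_1 \ell_1 + \cdots + a_k \ell_k + s$; your induction for $d \geq 3$ is in fact redundant, since $\fm^2$ is generated in degree two and $J_2$ (respectively $J_2^2$) is an ideal.
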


\begin{proof}
Considering that $I_2 \subseteq I,$ it follows that the canonical projection $S/I_2 \to S/I$ is a surjective graded ring homomorphism. Consequently, Proposition \ref{comparison theorem for cs and ms under surjective ring homomorphism} guarantees that $\cs(S/I) \leq \cs(S/I_2)$ and $\ms(S/I) \leq \ms(S/I_2).$ Conversely, we will assume that $\bar J$ witnesses $\ms(S/I)$ with $\mu(\bar J) = \ms(S/I) = k.$ By Proposition \ref{ms is witnessed by linear forms}, there exist linear forms $\bar \ell_1, \dots, \bar \ell_k$ in $S/I$ such that $\bar J = (\bar \ell_1, \dots, \bar \ell_k).$ Given that $\bar \fm = (\bar x_1, \dots, \bar x_m),$ we have that $$\frac{(x_1, \dots, x_m)^2 + I} I = \frac{\fm^2 + I} I = \bar \fm^2 \subseteq \bar J = \frac{J + I}{I} = \frac{(\ell_1, \dots, \ell_k) + I} I$$ so that $(x_1, \dots, x_m)^2 + I \subseteq (\ell_1, \dots, \ell_k) + I.$ Given any generator $x_i x_j$ of $\fm^2,$ write $x_i x_j = a_1 \ell_1 + \cdots + a_k \ell_k + s$ for some elements $a_e$ in $S$ and $s$ in $I.$ Express $s$ in terms of its homogeneous components $s = s_0 + s_1 + \cdots + s_d,$ where each element $s_f$ is homogeneous of degree $f.$ Comparing degrees shows that $a_1 \ell_1 + \cdots + a_k \ell_k + s = b_1 \ell_1 + \cdots + b_k \ell_k + s'$ for some elements $b_e$ in $S$ and $s'$ in $I$ such that the $b_e \ell_e$ and $s'$ are homogeneous of degree two. Consequently, $x_i x_j$ is an element of $(\ell_1, \dots, \ell_k) + I_2$ so that $(x_1, \dots, x_m)^2 \subseteq (\ell_1, \dots, \ell_k) + I_2$ and $$\frac{\fm^2 + I_2}{I_2} = \frac{(x_1, \dots, x_m)^2 + I_2}{I_2} \subseteq \frac{(\ell_1, \dots, \ell_k) + I_2}{I_2} = \frac{J + I_2}{I_2}.$$ We conclude that $\ms(S/I) \geq \ms(S/I_2),$ from which it follows that $\ms(S / I) = \ms(S / I_2).$ Likewise, if $\bar J$ witnesses $\cs(S/I)$ with $\mu(\bar J) = \cs(S / I) = k,$ by Proposition \ref{cs is witnessed by linear forms}, there exist linear forms $\bar \ell_1, \dots, \bar \ell_k$ such that $\bar J = (\bar \ell_1, \dots, \bar \ell_k).$ Given that $\bar \fm = (\bar x_1, \dots, \bar x_m),$ we have $$\frac{(x_1, \dots, x_m)^2 + I}{I} = \frac{\fm^2 + I}{I} = \bar \fm^2 = \bar J^2 = \frac{J^2 + I}{I} = \frac{(\ell_1, \dots, \ell_k)^2 + I}{I}$$ so that $(x_1, \dots, x_m)^2 + I = (\ell_1, \dots, \ell_k)^2 + I.$ Given any generator $x_i x_j$ of $\fm^2,$ write $x_i x_j = s + \sum_{e, f} a_{ef} \ell_e \ell_f$ for some elements $a_{ef}$ in $S$ and $s$ in $J.$ Once again, comparing the degrees on the left- and right-hand sides gives that $s + \sum_{e, f} a_{ef} \ell_e \ell_f = s' + \sum_{e, f} b_{ef} \ell_e \ell_f$ for some elements $b_{ef}$ in $S$ and $s'$ in $I$ such that $b_{ef} \ell_e \ell_f$ and $s'$ are homogeneous of degree two. Like before, we conclude that $\cs(S/I) \geq \cs(S / I_2)$ so that $\cs(S/I) = \cs(S/I_2).$
\end{proof}

One natural curiosity that arises when studying $\ms(R)$ and $\cs(R)$ for a standard graded $k$-algebra $R$ is whether these invariants depend on the field $k.$ Our next proposition provides a partial answer and describes the behavior of $\ms(R)$ and $\cs(R)$ with respect to field extensions.

\begin{prop}\label{comparison theorem for cs and ms under field extension}
Consider an injective ring homomorphism $\iota : K \to L$ of fields. If $I$ is a monomial ideal of $R = K[x_1, \dots, x_n]$ and $S = L[x_1, \dots, x_n],$ then $\cs(S/I) \leq \cs(R/I)$ and $\ms(S/I) \leq \ms(R/I).$
\end{prop}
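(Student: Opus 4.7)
The plan is to show that if $J$ is a homogeneous ideal of $R/I$ witnessing $\cs(R/I)$ or $\ms(R/I)$, then the extended ideal $J' := J \cdot (S/I)$ witnesses the corresponding invariant of $S/I$ with the same number of minimal generators, yielding both inequalities at once.

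The first step is the structural identification $S/I \cong (R/I) \otimes_K L$ as standard graded $L$-algebras. This is where the monomial hypothesis enters critically: because the generators of $I$ are monomials, they lie in both $R$ and $S$ and are identified under the natural tensor product. Since $L$ is free, hence faithfully flat, over $K$, the extension $R/I \to S/I$ is faithfully flat. Under this identification, the homogeneous maximal ideal satisfies $\fm_{S/I} = \fm_{R/I} \cdot (S/I)$, and ideal operations commute with base change: for any homogeneous ideal $J$ of $R/I$, we have $(J')^2 = J^2 \cdot (S/I)$ and $\fm_{S/I}^2 = \fm_{R/I}^2 \cdot (S/I)$. Consequently, $J^2 = \fm_{R/I}^2$ forces $(J')^2 = \fm_{S/I}^2$, and $\fm_{R/I}^2 \subseteq J$ forces $\fm_{S/I}^2 \subseteq J'$.

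The second step is to check that $\mu(J') = \mu(J)$. Using flatness together with $(S/I)/\fm_{S/I} \cong L$, one computes
\[ J'/\fm_{S/I} J' \;\cong\; J \otimes_{R/I} L \;\cong\; (J/\fm_{R/I} J) \otimes_K L, \]
and taking $L$-dimensions on both sides shows that $\mu(J') = \mu(J)$.

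Combining these two observations: if $J$ witnesses $\cs(R/I)$ (respectively, $\ms(R/I)$), then $J'$ satisfies $(J')^2 = \fm_{S/I}^2$ (respectively, $\fm_{S/I}^2 \subseteq J'$) and $\mu(J') = \mu(J)$, which gives the desired inequalities. The only genuine subtlety is the initial identification $S/I \cong (R/I) \otimes_K L$, which is where the monomial assumption is invoked; once this is granted, the remainder is a routine faithfully flat base change argument, and there is no serious obstacle.
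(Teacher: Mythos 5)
Your proof is correct and follows the same basic strategy as the paper: extend a witness ideal of $\cs(R/I)$ or $\ms(R/I)$ along $R/I \to S/I$ and check that the extension still witnesses. The execution differs in a minor but real way. The paper first replaces the witness by one generated by linear forms (via Propositions \ref{cs is witnessed by linear forms} and \ref{ms is witnessed by linear forms}) and then simply re-reads the explicit identities $\bar x_i \bar x_j = f_1 \bar \ell_1 + \cdots + f_n \bar \ell_n$ as identities in $S/I$; you instead package the whole verification as faithfully flat base change through the identification $S/I \cong (R/I) \otimes_K L,$ which lets you work with an arbitrary witness $J$ (no reduction to linear forms needed) and yields the exact equality $\mu(J') = \mu(J)$ rather than the inequality $\mu(J') \leq \mu(J),$ which is all that is required. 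One small remark: the equality $\mu(J') = \mu(J)$ and the isomorphism $S/I \cong (R/I)\otimes_K L$ do not actually need the monomial hypothesis --- they hold for the extension $I S$ of any ideal $I$ --- so the role of that hypothesis in both your argument and the paper's is only to make ``the same ideal $I$ in $S$'' canonically well-defined. Both arguments are sound.
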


\begin{proof}
We may view any monomial ideal $I$ of $R$ as the monomial ideal of $S$ generated by the same monomials as in $R.$ Every element $a$ of $K$ may be identified with the element $\iota(a)$ of $L,$ hence we may simply write $a$ in place of the element $\iota(a)$ of $L.$ We will denote by $\bar \fm$ the image of the homogeneous maximal ideal $\fm = (x_1, \dots, x_n)$ of $R$ (or $S$) in the quotient ring $R/I$ (or $S/I$).

By Proposition \ref{cs is witnessed by linear forms}, there exist elements $\bar \ell_1, \dots, \bar \ell_n$ in $\bar \fm \setminus \bar \fm^2$ such that $\bar \fm^2 = J^2$ for the ideal $J = (\bar \ell_1, \dots, \bar \ell_n)R$ and $\cs(R/I) = n.$ Consequently, for every pair of integers $1 \leq i \leq j \leq n,$ there exist polynomials $f_1, \dots, f_n$ of $R/I$ such that $\bar x_i \bar x_j = f_1 \bar \ell_1 + \cdots + f_n \bar \ell_n.$ Considering this as an identity in $S/I,$ we conclude that $J = (\bar \ell_1, \dots, \ell_n) S$ satisfies $\bar \fm^2 = J^2$ so that $\cs(S/I) \leq \cs(R/I).$

By Proposition \ref{ms is witnessed by linear forms}, the invariant $\ms(R/I)$ is likewise be witnessed by linear forms, hence by a similar argument as above, we conclude that $\ms(S/I) \leq \ms(R/I).$
\end{proof}

\begin{remark}
For any two algebraically closed fields $K$ and $L$ of the same characteristic, either $K$ embeds into $L$ or vice-versa; in this case, the key hypothesis of Proposition \ref{comparison theorem for cs and ms under field extension} is satisfied.
\end{remark}

We conclude this section with a discussion of $\ms(R)$ and $\cs(R)$ for two-dimensional Veronese subrings. Let $R = k[x, y].$ Recall that for a positive integer $n,$ the monomial subring $R^{(n)} = k[x^i y^{n - i} \mid 0 \leq i \leq n]$ is called the $n$th {\it Veronese} subring of $R.$ Observe that $R$ is integral over $R^{(n)},$ hence we have that $\dim R^{(n)} = \dim R = 2.$ Further, $R^{(n)}$ is a standard graded local ring with homogeneous maximal ideal $\fm = (x^i y^{n - i} \mid 0 \leq i \leq n).$

\begin{prop}\label{cs and ms of Veronese subring}
Let $k$ be a field, and let $n \geq 1$ be an integer. Let $R^{(n)}$ denote the $n$th Veronese subring of $R = k[x, y],$ and let $[n] \cup \{0\} = \{0, 1, \dots, n\}.$ We have that $\ms(R^{(n)}) = 2$ and $$\cs(R^{(n)}) \leq \min \{\lvert S \rvert : S \subseteq [n] \cup \{0\} \text{ and } S + S = [2n] \cup \{0\} \}.$$
\end{prop}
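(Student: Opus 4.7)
The plan is to compute everything directly in terms of the natural monomial basis of $R^{(n)}$. First I would record the structural facts that drive both claims: the homogeneous maximal ideal $\fm$ of $R^{(n)}$ is minimally generated by the $n+1$ monomials $x^i y^{n-i}$ for $0 \le i \le n$, and since any product $x^i y^{n-i}\cdot x^j y^{n-j} = x^{i+j}y^{2n-(i+j)}$ with $0 \le i,j \le n$ realizes every exponent $a=i+j \in \{0,1,\dots,2n\}$, the ideal $\fm^2$ is generated by the $2n+1$ monomials $x^a y^{2n-a}$ for $0 \le a \le 2n$ (so $\dim R^{(n)}=2$ and $\mu(\fm^2)=2n+1$).

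For $\ms(R^{(n)})=2$, the lower bound $\ms(R^{(n)})\ge \dim R^{(n)} = 2$ is immediate from the first part of Proposition \ref{main proposition about cs and ms}. For the upper bound I would exhibit the ideal $I=(x^n,y^n)\subseteq \fm$: for every $0 \le a \le 2n$, either $a \ge n$, in which case $x^n$ divides $x^a y^{2n-a}$, or $a < n$, in which case $2n-a > n$ and $y^n$ divides $x^a y^{2n-a}$. Either way $x^a y^{2n-a}\in I$, so $\fm^2\subseteq I$ and $\ms(R^{(n)})\le \mu(I)\le 2$.

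For the bound on $\cs(R^{(n)})$, given any $S\subseteq \{0,1,\dots,n\}$ with $S+S=\{0,1,\dots,2n\}$, I would introduce the ideal $I_S = (x^i y^{n-i} : i \in S)$. The generators are distinct degree-$n$ monomials of $k[x,y]$, i.e.\ distinct elements of $\fm\setminus\fm^2$ whose images in the $k$-vector space $\fm/\fm^2$ are linearly independent, so $\mu(I_S)=|S|$. Moreover $I_S^2$ is generated by $x^{i+j}y^{2n-(i+j)}$ for $i,j\in S$, and the hypothesis $S+S=\{0,1,\dots,2n\}$ forces these to be exactly the generators of $\fm^2$, so $I_S^2=\fm^2$. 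Hence $\cs(R^{(n)})\le |S|$ for every admissible $S$, which is the desired minimum bound.

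There is not really a hard step here; the only subtlety worth flagging explicitly is the justification that $\mu(I_S)=|S|$, which uses that distinct monomials of a fixed degree in $k[x,y]$ are linearly independent over $k$ and hence remain so after passing modulo $\fm I_S \subseteq \fm^2$. Everything else is a direct calculation with exponents.
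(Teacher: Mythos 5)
Your proposal is correct and follows essentially the same route as the paper: the lower bound $\ms(R^{(n)}) \geq 2$ via $\dim R^{(n)} = 2$, the upper bound via the ideal $(x^n, y^n)$ (your divisibility check is just the paper's observation that $([n]\cup\{0\}) + \{0,n\} = [2n]\cup\{0\}$, so $\fm^2 = \fm(x^n,y^n)$), and the $\cs$ bound via $I_S = (x^i y^{n-i} : i \in S)$ with $I_S^2 = \fm^2$. The only cosmetic difference is that you verify $\mu(I_S) = \lvert S\rvert$ exactly, whereas the paper only needs (and only uses) the trivial inequality $\mu(I_S) \leq \lvert S \rvert$.
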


\begin{proof}
Observe that $\fm^2 = (x^{i + j} y^{2n - i - j} \mid 1 \leq i \leq j \leq n) = (x^\ell y^{2n - \ell} \mid 0 \leq \ell \leq 2n).$ Clearly, we have that $([n] \cup \{0\}) + \{0, n\} = \{x + y \mid x \in [n] \cup \{0\} \text{ and } y \in \{0, n\}\} = [2n] \cup \{0\}$ so that $$\fm^2 = (x^\ell y^{2n - \ell} \mid 0 \leq \ell \leq 2n) = (x^{i + j} y^{2n - i - j} \mid i \in [n] \text{ and } j \in \{0, n\}) = \fm (x^n, y^n) \subseteq (x^n, y^n).$$ We conclude that $\ms(R^{(n)}) \leq 2.$ Considering that $R$ is integral over $R^{(n)},$ we have that $\ms(R^{(n)}) \geq 2$ by the first part of Proposition \ref{main proposition about cs and ms}, hence equality holds.

On the other hand, if $S \subseteq [n] \cup \{0\}$ satisfies $S + S = [2n] \cup \{0\},$ then $I = (x^i y^{n - i} \mid i \in S)$ satisfies $$I^2 = (x^{i + j} y^{2n - i - j} \mid i, j \in S) = (x^\ell y^{2n - \ell} \mid \ell \in S + S) = (x^\ell y^{2n - \ell} \mid 0 \leq \ell \leq 2n) = \fm^2,$$ from which it follows that $\cs(R^{(n)}) \leq \mu(I) \leq \lvert S \rvert.$ Consequently, we conclude that
\begin{align*}
\cs(R^{(n)}) &\leq \min \{\lvert S \rvert : S \subseteq [n] \cup \{0\} \text{ and } S + S = [2n] \cup \{0\}\}. \qedhere
\end{align*}
\end{proof}

Before we establish our main result on $\cs(R^{(n)}),$ we establish two technical lemmas. We gratefully acknowledge Gerry Myerson for his original suggestion in the comments of \cite{D-MO} to consider the set $S(n, d)$ of Lemma \ref{complete double of [n]}.

\begin{lemma}\label{largest integer multiple d leq (n - d)/d}
Given any integers $1 \leq d \leq n - 1,$ we have that $${\left \lfloor \frac{n - d} d \right \rfloor} d =
n - r - d,$$ where $r$ is the least non-negative residue of $n$ modulo $d.$
\end{lemma}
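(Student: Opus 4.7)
The plan is to invoke the division algorithm on $n$ and compute directly. Write $n = qd + r$ with $0 \leq r \leq d - 1$; the hypothesis $d \leq n - 1$ forces $n > d$ and hence $q \geq 1$, so the subtraction $n - d$ remains non-negative.

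Substituting this expression into $(n-d)/d$ gives
\[
\frac{n - d}{d} = \frac{qd + r - d}{d} = (q - 1) + \frac{r}{d}.
\]
Because $0 \leq r < d$, the fractional part $r/d$ lies in $[0, 1)$, so
\[
\left\lfloor \frac{n - d}{d} \right\rfloor = q - 1,
\]
and multiplying by $d$ yields $(q-1)d = qd - d = n - r - d$, as required.

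Routine though the argument is, the only point worth flagging is the inequality $q \geq 1$: without the assumption $d \leq n - 1$ one could have $n = 0$ or $n < d$, in which case $\lfloor (n-d)/d \rfloor$ could be $-1$ rather than $q - 1$, and the stated identity would fail; the hypothesis is precisely what rules this out. I don't anticipate any real obstacle beyond recording this verification.
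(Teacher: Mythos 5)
Your proof is correct and follows essentially the same route as the paper: apply the Division Algorithm to write $n = qd + r$ and compute the floor of $(n-d)/d$ directly. The only difference is that the paper treats the case $n - d < d$ as a separate base case, whereas your observation that $d \leq n-1$ forces $q \geq 1$ lets a single computation cover everything --- a slight streamlining, not a different argument.
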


\begin{proof}
If $n - d < d,$ then ${\left \lfloor \frac{n - d} d \right \rfloor} = 0$ and $r = n - d,$ so the claim holds. If $n - d \geq d,$ then by the Division Algorithm, there exist integers $q \geq 1$ and $0 \leq r \leq d - 1$ such that $n = qd + r.$ Consequently, we find that
\begin{align*}
{\left \lfloor \frac{n - d} d \right \rfloor} d &= {\left \lfloor \frac{(q - 1)d + r} d \right \rfloor} d = {\left \lfloor q - 1 + \frac r d \right \rfloor} d = (q - 1)d = qd - d = n - r - d. \qedhere
\end{align*}
\end{proof}

\begin{lemma}\label{complete double of [n]}
Given any integers $1 \leq d \leq n,$ the set $$S(n, d) = \{0, 1, \dots, d, n - d, n - d + 1, \dots, n\} \cup \{kd \mid k \geq 1 \text{ is an integer and } d \leq kd \leq n - d\}$$ is contained in $[n] \cup \{0\}$ and satisfies $S(n, d) + S(n, d) = [2n] \cup \{0\}.$
\end{lemma}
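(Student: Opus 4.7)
The plan is to decompose $S(n,d) = A \cup B \cup C$, where $A = \{0, 1, \ldots, d\}$, $C = \{n - d, n - d + 1, \ldots, n\}$, and $B = \{kd : k \geq 1, \, d \leq kd \leq n - d\}$, then compute all pairwise sumsets of these three pieces and verify that their union exhausts $\{0, 1, \ldots, 2n\}$. The containment $S(n, d) \subseteq \{0, 1, \ldots, n\}$ is immediate from the definition, so the only content is the sumset equality.

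First I would record the three sums involving only the end blocks as intervals of consecutive integers: $A + A = \{0, \ldots, 2d\}$, $A + C = \{n - d, \ldots, n + d\}$, and $C + C = \{2n - 2d, \ldots, 2n\}$. Letting $r$ denote the least non-negative residue of $n$ modulo $d$, Lemma \ref{largest integer multiple d leq (n - d)/d} identifies $B$ as exactly $\{d, 2d, \ldots, n - r - d\}$ (empty when $n < 2d$). Since $A + kd = \{kd, \ldots, (k+1)d\}$, summing $A$ with the successive elements of $B$ and telescoping yields $A + B = \{d, d+1, \ldots, n - r\}$; the identical argument applied to $C$ gives $B + C = \{n, n+1, \ldots, 2n - r - d\}$.

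The remaining step is to check that these five intervals tile $\{0, 1, \ldots, 2n\}$ with no gap. Working from left to right, $\{0, \ldots, 2d\}$ meets $\{d, \ldots, n - r\}$ at $d$; the latter meets $\{n - d, \ldots, n + d\}$ since $r \leq d - 1$ forces $n - d \leq n - r$; that meets $\{n, \ldots, 2n - r - d\}$ since $n + d \geq n$; and finally $\{n, \ldots, 2n - r - d\}$ meets $\{2n - 2d, \ldots, 2n\}$ since $r \leq d$ yields $2n - 2d \leq 2n - r - d$. The union is thus $\{0, 1, \ldots, 2n\}$. In the degenerate case $B = \emptyset$ (equivalently $n < 2d$, which forces $n - r - d = 0$) the two middle intervals simply drop out, but then $n \leq 2d$ implies $2d \geq n - d$ and $n + d \geq 2n - 2d$, so the three end-block sums alone tile $\{0, 1, \ldots, 2n\}$. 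The principal work is only this interval bookkeeping; every required inequality reduces to the bounds $0 \leq r \leq d - 1$ and the explicit form of the largest middle multiple supplied by the preceding lemma.
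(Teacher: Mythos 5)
Your proof is correct and follows essentially the same route as the paper's: identify the top element of the middle block via Lemma \ref{largest integer multiple d leq (n - d)/d}, express the relevant pairwise sums as intervals, and check they chain across $\{0,1,\dots,2n\}$ using $0 \leq r \leq d-1$. Your version is slightly more systematic in that it computes all five pairwise sumsets and explicitly treats the degenerate case $B = \emptyset$ (i.e., $n < 2d$), which the paper's argument handles only implicitly.
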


\begin{proof}
Let $r$ denote the least non-negative residue of $n$ modulo $d.$ By Lemma \ref{largest integer multiple d leq (n - d)/d}, we have that $$\max \{kd \mid k \geq 1 \text{ is an integer and } d \leq k \leq n - d \} = n - r - d.$$ Considering that $kd + i$ belongs to $S(n, d) + S(n, d)$ each pair of integers $0 \leq i \leq d$ and $k \geq 0$ such that $0 \leq kd \leq n - d,$ the integers $0, 1, \dots, n - r$ belong to $S(n, d) + S(n, d).$ By hypothesis that $S(n, d)$ contains $n - d, n - d + 1, \dots, n,$ we conclude that $0, 1, \dots, n$ belong to $S(n, d) + S(n, d).$ Further, $kd + (n - d + i)$ belongs to $S(n, d) + S(n, d)$ for each pair of integers $0 \leq i \leq d$ and $k \geq 1$ such that $d \leq kd \leq n - d,$ hence $n + 1, n + 2, \dots, 2n - d - r$ belong to $S(n, d) + S(n, d).$ Clearly, the integers $2n - 2d, 2n - 2d + 1, \dots, 2n$ belong to $S(n, d) + S(n, d),$ hence $S(n, d) + S(n, d) = [2n] \cup \{0\}.$
\end{proof}

\begin{comment}
Clearly, the integers $0, 1, \dots, n$ belong to $S(n, d) + S(n, d).$ By the Division Algorithm, every integer greater than or equal to $d + 1$ can be written as $kd + r$ for some integers $k \geq 1$ and $0 \leq r \leq d - 1.$ Consequently, the integers $d + 1, d + 2, \dots, n - r - 1$ belong to $S(n, d) + S(n, d)$ by Lemma \ref{largest integer multiple d leq (n - d)/d}. Considering that $r \leq d - 1,$ it follows that $n - r - 1 \geq n - d.$ Combined with our assumption that the integers $n - d, n - d + 1, \dots, n$ belong to $S(n, d),$ this implies that the integers $d + 1, d + 2, \dots, n - d, n - d + 1, \dots, n$ belong to $S(n, d) + S(n, d).$ Furthermore, the integers $n, n + 1, \dots, 2n - r - d$ belong to $S(n, d) + S(n, d),$ as they can be written as $n + i = (n - d + i) + kd$ for some integers $0 \leq i \leq d$ and $k \geq 1$ such that $d \leq kd \leq n - d.$ Considering that $r \leq d - 1,$ it follows that $2n - r - d \geq 2n - 2d + 1.$ By construction, the integers $n - d, n - d + 1, \dots, n$ belong to $S(n, d),$ hence the integers $2n - 2d, 2n - 2d + 1, \dots, 2n$ belong to $S(n, d) + S(n, d).$ We conclude therefore that $S(n, d) + S(n, d) = [2n] \cup \{0\},$ as desired.
\end{comment}

\begin{prop}\label{upper bound for cs of Veronese subring}
Let $n$ be a positive integer. We have that $$\min \{\lvert S \rvert : S \subseteq [n] \cup \{0\} \text{ and } S + S = [2n] \cup \{0\}\} \leq 2 \sqrt{2n} + 1.$$ Consequently, for the $n$th Veronese subring $R^{(n)}$ of $R = k[x, y],$ we have that $\cs(R^{(n)}) \leq 2 \sqrt{2n} + 1.$
\end{prop}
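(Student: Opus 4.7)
The plan is to apply Lemma \ref{complete double of [n]} with $d = \lceil \sqrt{n/2}\rceil$, which is a positive integer satisfying $1 \leq d \leq n$ for every $n \geq 1$. Bounding $|S(n,d)|$ carefully and combining with Proposition \ref{cs and ms of Veronese subring} will yield the result.

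The first step is to decompose $S(n,d)$ into its three pieces and bound the cardinality. The blocks $\{0,1,\dots,d\}$ and $\{n-d,\dots,n\}$ each contribute $d+1$ elements, and the arithmetic progression $\{kd : 1 \leq k \leq \lfloor(n-d)/d\rfloor\}$ contributes $\lfloor(n-d)/d\rfloor$ elements. Accounting for overlaps---the element $d$ always belongs to both the first block and the progression (via $k=1$), and when $d \mid n$ the element $n-d$ also belongs to both the last block and the progression---a short case analysis on whether $d$ divides $n$ yields the uniform bound
\[
|S(n,d)| \;\leq\; 2d + \frac{n}{d} - \frac{1}{d}.
\]

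The second step is to verify that $2d + n/d - 1/d \leq 2\sqrt{2n} + 1$ when $d = \lceil \sqrt{n/2}\rceil$. Writing $\delta = d - \sqrt{n/2} \in [0,1)$ and clearing denominators, the inequality reduces to
\[
2d^{2} + n - 1 \;\leq\; (2\sqrt{2n} + 1)\, d,
\]
which, after substituting $d = \sqrt{n/2} + \delta$ and expanding, simplifies to $2\delta^{2} - \delta - 1 \leq \sqrt{n/2}$. This is immediate, since the quadratic $2\delta^{2} - \delta - 1$ is nonpositive on $[0,1)$ while $\sqrt{n/2} > 0$ for $n \geq 1$. Combining with Proposition \ref{cs and ms of Veronese subring} then gives $\cs(R^{(n)}) \leq 2\sqrt{2n}+1$.

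The main obstacle I anticipate is extracting the correct constant $+1$ rather than the weaker $+2$. The naive count $|S(n,d)| \leq 2d + n/d$ only yields $2\sqrt{2n}+2$ at $d = \lceil \sqrt{n/2}\rceil$; the improvement comes from noticing that there is always at least one overlap (namely $d$) among the three subsets, and in the divisibility case a second overlap ($n - d$). Folding both cases into the single sharpened bound $|S(n,d)| \leq 2d + n/d - 1/d$ is the technical ingredient that makes the optimization at $d = \lceil \sqrt{n/2}\rceil$ deliver the asserted constant.
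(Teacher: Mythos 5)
Your proposal is correct, and while it follows the same skeleton as the paper's proof --- invoke Lemma \ref{complete double of [n]}, bound $\lvert S(n, d) \rvert,$ and optimize over $d$ --- your execution of the optimization step is genuinely different and, in fact, more careful than the paper's. The paper bounds $\lvert S(n, d) \rvert \leq 2d + \frac{n}{d} + 1 = f_n(d)$ and then appeals to the fact that $f_n(x)$ attains its minimum $2\sqrt{2n} + 1$ at $x = \sqrt{n/2}$; but that only says $f_n(d) \geq 2\sqrt{2n} + 1$ for every integer $d,$ so the desired inequality does not follow from this bound alone unless $\sqrt{n/2}$ is itself an integer (for $n = 3,$ for instance, $\min_d f_3(d) = f_3(1) = 6 > 2\sqrt{6} + 1$). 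Your two refinements --- subtracting the overlap between the endpoint blocks and the arithmetic progression to obtain $\lvert S(n, d) \rvert \leq 2d + \frac{n}{d} - \frac{1}{d},$ and then evaluating at the explicit integer $d = \lceil \sqrt{n/2} \rceil$ with the reduction to $2\delta^2 - \delta - 1 \leq \sqrt{n/2}$ for $\delta \in [0, 1)$ --- are exactly what is needed to beat the real minimum of $f_n$ at a nearby integer, and the algebra checks out.

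One small point to tidy: the element $d$ lies in the progression $\{kd \mid d \leq kd \leq n - d\}$ only when $n \geq 2d.$ With $d = \lceil \sqrt{n/2} \rceil$ this fails precisely for $n = 1$ and $n = 3,$ where the progression is empty; in those cases the two endpoint blocks themselves overlap, so $S(n, d) = \{0, 1, \dots, n\}$ has $n + 1$ elements, and the bound $n + 1 \leq 2d + \frac{n - 1}{d}$ still holds because $2d \geq n + 1.$ Thus your uniform bound $\lvert S(n, d) \rvert \leq 2d + \frac{n}{d} - \frac{1}{d}$ is true in all cases, but the stated justification for the overlap at $d$ needs this one extra sentence to cover the two small values of $n.$
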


\begin{proof}
By Lemma \ref{complete double of [n]}, we have that $S(n, d) \subseteq [n] \cup \{0\}$ and $S(n, d) + S(n, d) = [2n] \cup \{0\}.$ We claim that $\lvert S(n, d) \rvert \leq 2 \sqrt{2n} + 1.$ One can readily verify that $\lvert S(n, d) \rvert = (d + 1) + (d + 1) + {\left \lfloor \frac{n - d} d \right \rfloor} = 2d + {\left \lfloor \frac n d \right \rfloor} + 1.$ Consequently, we have that $\lvert S(n, d) \rvert \leq 2d + \frac n d + 1 = f_n(d)$ for all integers $d \geq 1.$ Considering that $f_n(x)$ attains its minimum $2 \sqrt{2n} + 1$ at $x = \sqrt{\frac n 2},$ we conclude that $\lvert S(n, d) \rvert \leq 2 \sqrt{2n} + 1.$
\end{proof}

\begin{cor}\label{upper bound for size of complete double}
For any integer $n \geq 2,$ we have that $$\min \{\lvert S \rvert : S \subseteq [n] \cup \{0\} \text{ and } S + S = [2n] \cup \{0\}\} \geq 2 \sqrt{n + \frac 9 {16}} - \frac 1 2.$$
\end{cor}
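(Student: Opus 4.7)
The plan is to count the number of unordered sums one can form from a $k$-element subset $S$ of $\{0,1,\dots,n\}$ and compare this against $\lvert S+S\rvert = 2n+1$.

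Let $k = \lvert S \rvert$. Since addition is commutative, every element of $S + S$ has the form $s + t$ with $s, t \in S$ and $s \leq t$, so
\[
\lvert S + S \rvert \;\leq\; \binom{k}{2} + k \;=\; \binom{k+1}{2} \;=\; \frac{k(k+1)}{2}.
\]
Under the hypothesis $S + S = \{0, 1, \dots, 2n\}$, we have $\lvert S + S \rvert = 2n + 1$, so the inequality becomes
\[
\frac{k(k+1)}{2} \;\geq\; 2n + 1, \qquad\text{i.e.,}\qquad k^2 + k - (4n + 2) \;\geq\; 0.
\]
Applying the quadratic formula (and discarding the negative root) yields
\[
k \;\geq\; \frac{-1 + \sqrt{1 + 4(4n+2)}}{2} \;=\; \frac{\sqrt{16n + 9} - 1}{2} \;=\; 2\sqrt{n + \tfrac{9}{16}} - \tfrac{1}{2},
\]
which is the desired bound. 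Since this holds for any $S \subseteq \{0,1,\dots,n\}$ with $S + S = \{0,1,\dots,2n\}$, the minimum is also bounded below by $2\sqrt{n + 9/16} - 1/2$.

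There is no real obstacle here: the argument is a one-step counting estimate followed by solving a quadratic inequality. The only thing to be careful about is the off-by-one in counting unordered pairs with repetition (which contributes the $+k$ beyond $\binom{k}{2}$), and the algebraic simplification $\sqrt{16n+9} = 4\sqrt{n+9/16}$ that matches the form stated in the corollary.
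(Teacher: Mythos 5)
Your proof is correct, and it takes a genuinely different (and more elementary) route than the paper. The paper deduces the corollary from its commutative-algebra machinery: it considers the $n$th Veronese subring $R^{(n)}$ of $k[x,y]$, observes that $\mu(\fm^2) = 2n+1$ there, invokes the general lower bound $\cs(R) \geq \sqrt{2\mu(\fm^2) + \tfrac14} - \tfrac12$ from Remark \ref{cs lower bound using u(m^2)}, and then combines this with Proposition \ref{cs and ms of Veronese subring}, which says $\cs(R^{(n)})$ is at most the minimum cardinality of a set $S$ with $S+S = [2n] \cup \{0\}$. You instead argue directly in additive combinatorics: a $k$-element set produces at most $\binom{k+1}{2}$ unordered sums, so $2n+1 \leq \binom{k+1}{2}$, and the quadratic formula gives the bound. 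The two arguments rest on the same underlying count (the paper's Remark \ref{cs lower bound using u(m^2)} is itself derived from $\mu(J^2) \leq \binom{\mu(J)+1}{2}$, the ring-theoretic avatar of your pair count), but your version is self-contained and makes transparent that the inequality is purely combinatorial, while the paper's detour through $\cs(R^{(n)})$ has the advantage of tying the combinatorial quantity back to the invariant the section is actually studying. Your algebra checks out: $\sqrt{16n+9} = 4\sqrt{n + 9/16}$, so $\tfrac{\sqrt{16n+9}-1}{2} = 2\sqrt{n+\tfrac{9}{16}} - \tfrac12$ as claimed.
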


\begin{proof}
Let $R^{(n)}$ denote the $n$th Veronese subring of $R = k[x, y]$ with unique homogeneous maximal ideal $\fm = (x^i y^{n - i} \mid 0 \leq i \leq n).$ Observe that $\mu(\fm^2) = 2n + 1.$ By Remark \ref{cs lower bound using u(m^2)}, we find that $$\cs(R) \geq \sqrt{2(2n + 1) + \frac 1 4} - \frac 1 2 = \sqrt{4n + 2 + \frac 1 4} - \frac 1 2 = 2 \sqrt{n + \frac 1 2 + \frac 1 {16}} - \frac 1 2 = 2 \sqrt{n + \frac 9 {16}} - \frac 1 2,$$ hence we conclude the desired result by Proposition \ref{cs and ms of Veronese subring}.
\end{proof}

\section{Computing \texorpdfstring{$\ms(R)$}{ms(R)} and \texorpdfstring{$\cs(R)$}{cs(R)} for quotients by quadratic ideals}\label{Computing ms(R) and cs(R) for quotients by quadratic ideals}

Let $k$ be a field. We will assume throughout this section that $R$ is a standard graded $k$-algebra, i.e., $R$ is the quotient of the polynomial ring $S = k[x_1, \dots, x_n]$ by a homogeneous ideal $I.$ We denote by $\fm = (x_1, \dots, x_n)$ the homogeneous maximal ideal of $S$ and by $\bar \fm$ the image of $\fm$ in $R.$ By Proposition \ref{cs and ms depend only on degree two}, $\ms(R)$ and $\cs(R)$ depend only on the degree two part of $I,$ hence we may assume that $I$ is a homogeneous quadratic ideal.

Using this as our motivation, we seek to compute $\ms(R)$ and $\cs(R)$ in the case that $I$ possesses certain quadratic generators. We begin by establishing the following lower bound for $\cs(R).$

\begin{prop}\label{cs of k[x_1, ..., x_n] modulo t homogeneous quadratics}
Let $R = k[x_1, \dots, x_n]/I,$ where $I$ is minimally generated over $k[x_1, \dots, x_n]$ by $t > 0$ homogeneous polynomials of degree two. We have that $$\binom{n + 1} 2 - t \leq \binom{\cs(R) + 1} 2.$$ Particularly, if $t \leq \dfrac{(s + 1)(2n - s)} 2$ for some integer $0 \leq s \leq n - 1,$ then $\cs(R) \geq n - s - 1.$ Even more, if $t \leq n - 1,$ then $\cs(R) = n = \mu(\bar \fm),$ where $\bar \fm$ is the image of $(x_1, \dots, x_n)$ in $R.$
\end{prop}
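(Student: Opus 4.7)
The plan is to translate the ideal-theoretic equality $J^2 = \bar\fm^2$ for a witness $J$ of $\cs(R)$ into a $k$-vector space dimension count in degree two. First I would invoke Proposition \ref{cs is witnessed by linear forms} to write $c = \cs(R)$ and pick linear forms $\bar\ell_1, \dots, \bar\ell_c \in \bar\fm \setminus \bar\fm^2$ with $J = (\bar\ell_1, \dots, \bar\ell_c)$ satisfying $J^2 = \bar\fm^2$. Since $R$ is standard graded with $R_0 = k,$ Lemma \ref{the square of the maximal ideal consists of elements whose degree zero and degree one homogeneous components vanish} gives $\bar\fm^2 = \bigoplus_{i \geq 2} R_i,$ and hence $\bar\fm^2 / \bar\fm^3 \cong R_2$ as $k$-vector spaces, so $\mu(\bar\fm^2) = \dim_k R_2.$ Because $I$ is a homogeneous ideal minimally generated by $t$ forms of degree two (which are therefore $k$-linearly independent in $S_2$), we obtain $\dim_k R_2 = \binom{n+1}{2} - t.$

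The key step is to bound $\dim_k R_2$ from above. Since $J^2 = \bar\fm^2$ and these ideals are concentrated in degrees $\geq 2,$ every element of $R_2 = [J^2]_2$ is a $k$-linear combination of the $\binom{c+1}{2}$ products $\bar\ell_i \bar\ell_j$ with $1 \leq i \leq j \leq c$ (any $R$-linear expression representing a degree-two element forces all coefficients to lie in $R_0 = k$). Consequently $\dim_k R_2 \leq \binom{c+1}{2},$ and combined with the identity above this yields the desired main inequality $\binom{n+1}{2} - t \leq \binom{\cs(R)+1}{2}.$

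For the particularized version, a routine calculation gives the identity $\binom{n+1}{2} - \binom{n-s}{2} = \frac{(s+1)(2n-s)}{2},$ so the hypothesis $t \leq \frac{(s+1)(2n-s)}{2}$ is equivalent to $\binom{n+1}{2} - t \geq \binom{n-s}{2};$ combined with the main inequality this forces $\binom{c+1}{2} \geq \binom{n-s}{2},$ hence $c \geq n-s-1.$ For the last claim, $I \subseteq \fm^2$ implies $\mu(\bar\fm) = n,$ so Proposition \ref{main proposition about cs and ms}(1) gives $\cs(R) \leq n.$ If $t \leq n-1,$ then $\binom{c+1}{2} \geq \binom{n+1}{2} - (n-1) = \binom{n}{2} + 1 > \binom{n}{2},$ which forces $c \geq n,$ and therefore $\cs(R) = n = \mu(\bar\fm).$

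There is no substantial obstacle in this argument; the only point requiring slight care is the identification $[J^2]_2 = R_2$ with its $k$-span by the $\binom{c+1}{2}$ quadratic monomials in the $\bar\ell_i,$ which is immediate from the standard graded structure. The rest of the proof is bookkeeping with binomial coefficients.
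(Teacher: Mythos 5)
Your proof is correct and follows essentially the same route as the paper's: both rest on the identity $\mu(\bar\fm^2) = \binom{n+1}{2} - t$ together with the bound $\mu(J^2) \leq \binom{\mu(J)+1}{2}$ for a witness $J$ of $\cs(R),$ followed by the same binomial-coefficient bookkeeping. Your dimension count in the degree-two graded piece is just a (valid) reformulation of the paper's count of minimal generators of $J^2,$ and your direct derivation of the final claim is the contrapositive of the paper's argument.
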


\begin{proof}
We note that it is straightforward to verify that $\mu(\bar \fm^2) = \mu(\fm^2) - \mu(I) = \binom{n + 1}{2} - t.$ Consider an ideal $J$ of $R$ that witnesses $\cs(R).$ We have that $\mu(J^2) \leq \binom{\mu(J) + 1}{2}$ so that $$\binom{n + 1}{2} - t = \mu(\bar \fm^2) = \mu(J^2) \leq \binom{\mu(J) + 1} 2 = \binom{\cs(R) + 1} 2,$$ where the last equality holds because $J$ witnesses $\cs(R).$ If $t \leq \dfrac{(s + 1)(2n - s)} 2,$ then
\begin{align*}
\binom{n + 1}{2} - t \geq \frac{n(n + 1) - (s + 1)(2n - s)}{2}
&= \frac{n^2 + n - 2ns + s^2 - 2n + s}{2} \\
&= \frac{(n - s)^2 - (n - s)}{2} = \binom{n - s}{2}
\end{align*}
so that $\binom{\cs(R) + 1}{2} \geq \binom{n - s}{2}$ and hence $\cs(R) \geq n - s - 1,$ as desired.

For the last claim, we will show that if $\cs(R) \neq n,$ then $t \geq n.$ By the first part of Proposition \ref{main proposition about cs and ms}, we have that $\cs(R) \leq \mu(\bar \fm) = n.$ Consequently, if $\cs(R) \neq n,$ we must have that $\cs(R) \leq n-1.$ But then, we have that $\binom{n + 1}{2} - t \leq \binom{\cs(R) + 1}{2} \leq \binom{n}{2}$ so that $t \geq \binom{n + 1}{2} - \binom{n}{2} = n.$
\end{proof}

\begin{remark}\label{cs for rings other than regular local and hypersurface}
For any integer $n \geq 3,$ Proposition \ref{cs of k[x_1, ..., x_n] modulo t homogeneous quadratics} provides a class of (standard graded) local rings for which $\cs(R) = \mu(\fm)$ other than regular local rings and hypersurface rings.
\end{remark}

\begin{remark}\label{cs lower bound using u(m^2)}
Using a similar idea as in the proof of Proposition \ref{cs of k[x_1, ..., x_n] modulo t homogeneous quadratics}, one can show that $$\cs(R) \geq {\left \lceil \frac{\sqrt{8 \mu(\fm^2) + 1} - 1}{2} \right \rceil}$$ holds for any Noetherian local ring $(R, \fm).$ By the third sentence of the proof of Proposition \ref{cs of k[x_1, ..., x_n] modulo t homogeneous quadratics}, we have that $\mu(\fm^2) \leq \binom{\cs(R) + 1}{2}$ so that $2\mu(\fm)^2 \leq \cs(R)[\cs(R) + 1].$ Completing the square yields $${\left(\cs(R) + \frac{1}{2} \right)}^{\!2} \geq 2 \mu(\fm^2) + \frac{1}{4} = \frac{8 \mu(\fm)^2 + 1}{4}.$$ From here, one can easily verify the original displayed inequality.
\end{remark}

We turn our attention to the following proposition that we used in Remark \ref{necessity of certain assumptions}.

\begin{prop}\label{cs of k[x_1, ..., x_n]/(x_1^2, ..., x_n^2)}
If $R = k[x_1, \dots, x_n] / (x_1^2, \dots, x_n^2)$ and $\chara(k) \neq 2,$ then $\cs(R) = n - 1.$
\end{prop}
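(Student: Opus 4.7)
The plan is to prove both inequalities $\cs(R) \geq n-1$ and $\cs(R) \leq n-1$ separately.

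For the lower bound, I would invoke Proposition \ref{cs of k[x_1, ..., x_n] modulo t homogeneous quadratics} directly. The defining ideal is minimally generated by $t = n$ homogeneous quadratics, and taking $s = 0$ we have $t = n \leq \frac{(0+1)(2n-0)}{2} = n$, so the proposition yields $\cs(R) \geq n - 0 - 1 = n - 1$. Equivalently, one can cite the inequality $\binom{n+1}{2} - t \leq \binom{\cs(R)+1}{2}$ from the same proposition: with $t = n$ this becomes $\binom{n}{2} \leq \binom{\cs(R)+1}{2}$, forcing $\cs(R) \geq n-1$.

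For the upper bound, the strategy is to exhibit an ideal $I$ with $n-1$ generators whose square equals $\bar\fm^2$. I would take
\[
I = (\bar x_1 + \bar x_n,\ \bar x_2 + \bar x_n,\ \dots,\ \bar x_{n-1} + \bar x_n),
\]
so $\mu(I) \leq n-1$. Using $\bar x_i^2 = 0$, for any $1 \leq i \leq n-1$ the relation $(\bar x_i + \bar x_n)^2 = 2\bar x_i \bar x_n$ together with $\chara(k) \neq 2$ shows $\bar x_i \bar x_n \in I^2$. Then for distinct $i, j \in \{1, \dots, n-1\}$, the product $(\bar x_i + \bar x_n)(\bar x_j + \bar x_n) = \bar x_i \bar x_j + \bar x_i \bar x_n + \bar x_j \bar x_n$ yields $\bar x_i \bar x_j \in I^2$ by subtracting the previously obtained terms. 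Since the elements $\bar x_i \bar x_j$ with $i \neq j$ generate $\bar\fm^2$, we conclude $I^2 = \bar\fm^2$, hence $\cs(R) \leq \mu(I) \leq n-1$.

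The only nonobvious point is constructing the witnessing ideal; the rest is bookkeeping. The characteristic hypothesis $\chara(k) \neq 2$ enters in exactly one place, to recover each $\bar x_i \bar x_n$ from $(\bar x_i + \bar x_n)^2$; I should flag this explicitly, since the argument would break in characteristic two. Combining the two bounds gives $\cs(R) = n-1$, as desired.
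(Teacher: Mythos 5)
Your proof is correct and follows essentially the same approach as the paper: the lower bound $\cs(R) \geq n-1$ comes from Proposition \ref{cs of k[x_1, ..., x_n] modulo t homogeneous quadratics} with $s = 0,$ and the upper bound from an explicit ideal generated by $n-1$ sums of pairs of variables. The only difference is cosmetic: the paper uses the ``path'' witness $(\bar x_i + \bar x_{i+1} \mid 1 \leq i \leq n-1)$ and recovers the mixed terms by an iterative procedure, whereas your ``star'' witness $(\bar x_i + \bar x_n \mid 1 \leq i \leq n-1)$ recovers them in two steps, which is arguably cleaner bookkeeping.
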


\begin{proof}
By applying Proposition \ref{cs of k[x_1, ..., x_n] modulo t homogeneous quadratics} with $s = 0,$ we have that $\cs(R) \geq n-1,$ so it suffices to exhibit an ideal $J$ of $R$ such that $\mu(J) = n - 1$ and $J^2 = \bar \fm^2.$

Consider the ideal $J = (\bar x_i + \bar x_{i + 1} \mid 1 \leq i \leq n-1).$ Evidently, we have that $\mu(J) = n - 1.$ We claim that $\bar x_i \bar x_j \in J^2$ for any pair of integers $1 \leq i < j \leq n.$ Observe that $\bar x_i \bar x_{i+1} = (\bar x_i + \bar x_{i+1})^2$ belongs to $J^2$ for each integer $1 \leq i \leq n - 1,$ hence $\bar x_i \bar x_{i+2} = (\bar x_i + \bar x_{i+1})(\bar x_{i+1} + \bar x_{i+2}) - x_i \bar x_{i+1} - \bar x_{i+1} \bar x_{i+2}$ belongs to $J^2.$ We obtain $\bar x_i \bar x_j$ for any pair of integers $1 \leq i < j \leq n$ as follows.
\begin{enumerate}

\item[(i.)] Compute first the squares $(\bar x_i + \bar x_{i+1})^2$ of the generators of $J$ for each integer $i \leq j-1.$ From this, we obtain generators of $J^2$ of the form $\bar x_i \bar x_{i+1}.$

\vspace{0.25cm}

\item[(ii.)] Compute next the products $(\bar x_i + \bar x_{i+1})(\bar x_{i+1} + \bar x_{i+2})$ for each integer $i \leq j-2.$ Using the previous step, we obtain generators of $J^2$ of the form $\bar x_i \bar x_{i+2}.$

\vspace{0.25cm}

\item[(iii.)] Compute the products $(\bar x_i + \bar x_{i+1})(\bar x_k + \bar x_{k+1})$ for each integer $i+2 \leq k \leq j-1.$ Use the previous steps to cancel any quadratic forms that have already appeared.

\end{enumerate}
Ultimately, we find that $\bar x_i \bar x_j \in J^2$ for all integers $1 \leq i < j \leq n$ so that $\bar \fm^2 \subseteq J^2.$
\end{proof}

Our next proposition illustrates that when $t = n,$ it is possible that $\cs(R) = n - 1$ or $\cs(R) = n,$ hence the lower bound for $\cs(R)$ provided in Proposition \ref{cs of k[x_1, ..., x_n] modulo t homogeneous quadratics} is sharp in this case.

\begin{prop}\label{cs for k[x_1, x_2, x_3]/(x_i^2, x_j^2, x_i x_j)}
Let $R = k[x_1, x_2, x_3] / I,$ where $I$ is minimally generated by three monomials of degree two and $\chara(k) \neq 2.$ If $I = (x_i^2, x_j^2, x_i x_j)$ for some integers $1 \leq i < j \leq 3,$ then $\cs(R) = 3;$ otherwise, $\cs(R) = 2.$
\end{prop}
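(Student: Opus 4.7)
The plan is to first pin down $\cs(R) \in \{2,3\}$: applying Proposition \ref{cs of k[x_1, ..., x_n] modulo t homogeneous quadratics} with $n = t = 3$ gives $\binom{4}{2} - 3 = 3 \leq \binom{\cs(R)+1}{2},$ hence $\cs(R) \geq 2,$ while Proposition \ref{main proposition about cs and ms}(1.) gives $\cs(R) \leq \mu(\bar\fm) = 3.$ The problem therefore splits into proving $\cs(R) = 3$ when $I = (x_i^2, x_j^2, x_i x_j)$ and exhibiting a two-generator witness in every remaining case.

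For the hard direction, I take $I = (x_1^2, x_2^2, x_1 x_2) = (x_1, x_2)^2$ without loss of generality. Then $R_2$ has $k$-basis $\{\bar x_1 \bar x_3, \bar x_2 \bar x_3, \bar x_3^2\},$ and $\bar\fm^2/\bar\fm\cdot\bar\fm^2 \cong R_2$ has dimension $3.$ Suppose for contradiction that $J = (\ell_1, \ell_2)$ satisfies $J^2 = \bar\fm^2,$ where by Proposition \ref{cs is witnessed by linear forms} we may take $\ell_r = a_r \bar x_1 + b_r \bar x_2 + c_r \bar x_3$ linear. The key observation is that $c_2 \ell_1 - c_1 \ell_2 \in (\bar x_1, \bar x_2),$ and since $(\bar x_1, \bar x_2)^2 = \bar I = 0$ in $R,$ squaring yields
\[
c_2^2\, \ell_1^2 \;-\; 2 c_1 c_2\, \ell_1 \ell_2 \;+\; c_1^2\, \ell_2^2 \;=\; 0 \quad \text{in } R_2.
\]
Because $\chara(k) \neq 2,$ this is a nontrivial $k$-linear dependence among $\ell_1^2, \ell_1 \ell_2, \ell_2^2$ unless $c_1 = c_2 = 0,$ in which case $\ell_1, \ell_2 \in (\bar x_1, \bar x_2)$ forces $J^2 \subseteq (\bar x_1, \bar x_2)^2 = 0 \neq \bar\fm^2.$ Either way, the three images cannot span the three-dimensional space $R_2,$ so $J^2 \neq \bar\fm^2,$ giving $\cs(R) = 3.$

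For the remaining cases, the $S_3$-action permuting the variables preserves $\cs,$ and a Burnside count shows that the $\binom{6}{3} = 20$ triples of distinct quadratic monomials fall into six orbits, precisely one of which (the orbit of $\{x_1^2, x_2^2, x_1 x_2\}$ of size $3$) is Case 1. The remaining five orbits are represented by $\{x_1^2, x_2^2, x_3^2\},$ $\{x_1^2, x_2^2, x_1 x_3\},$ $\{x_1^2, x_1 x_2, x_1 x_3\},$ $\{x_1^2, x_1 x_2, x_2 x_3\},$ and $\{x_1 x_2, x_1 x_3, x_2 x_3\},$ and for each I exhibit an explicit two-generator ideal $J$ with $J^2 = \bar\fm^2$: take $J = (\bar x_1 + \bar x_2, \bar x_2 + \bar x_3)$ for the first (cf.\ Proposition \ref{cs of k[x_1, ..., x_n]/(x_1^2, ..., x_n^2)}) and the last, $J = (\bar x_1 + \bar x_3, \bar x_2 + \bar x_3)$ for the second, $J = (\bar x_2, \bar x_3)$ for the third, and $J = (\bar x_1 + \bar x_2, \bar x_3)$ for the fourth. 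Each check is a short degree-two expansion using $\chara(k) \neq 2$ and the vanishing of the chosen generators of $I.$

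The main obstacle is Case 1: the upper bound $\cs(R) \leq 3$ is immediate, but ruling out $\cs(R) = 2$ hinges on noticing the quadratic relation above forced by the nilpotence of $(\bar x_i, \bar x_j);$ equivalently, any $k$-combination of $\ell_1, \ell_2$ that vanishes on the $\bar x_3$-coordinate is automatically square-zero, and this one relation kills an entire dimension in $R_2.$ The enumeration in Case 2 is mechanical once one reduces to $S_3$-orbit representatives.
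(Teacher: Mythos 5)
Your proposal is correct, and its skeleton matches the paper's: the lower bound $\cs(R) \geq 2$ from Proposition \ref{cs of k[x_1, ..., x_n] modulo t homogeneous quadratics}, explicit two-generator witnesses in the five non-exceptional cases, and a contradiction argument when $I = (x_i^2, x_j^2, x_i x_j).$ Your $S_3$-orbit enumeration is equivalent to the paper's organization by the number of squarefree generators (the orbit sizes $3, 1, 6, 3, 6, 1$ do sum to $20$), and each of your witnesses checks out; some are even simpler than the paper's, e.g.\ $(\bar x_2, \bar x_3)$ for $(x_1^2, x_1x_2, x_1x_3).$ Where you genuinely diverge is the exceptional case. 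The paper argues that both linear generators of a putative witness $J$ must involve $\bar x_k,$ then replaces one generator by a combination lying in $(\bar x_i, \bar x_j)$ and deduces $\mu(J^2) \leq 2.$ You instead extract the single quadratic syzygy $c_2^2 \ell_1^2 - 2c_1c_2\, \ell_1\ell_2 + c_1^2 \ell_2^2 = 0$ from $(c_2\ell_1 - c_1\ell_2)^2 \in (\bar x_1, \bar x_2)^2 = 0$ and conclude $\dim_k (J^2)_2 \leq 2 < 3 = \dim_k R_2$ in one stroke. Both arguments exploit the same nilpotence of $(\bar x_i, \bar x_j),$ but yours is more uniform --- it absorbs the subcase $c_1 = c_2 = 0$ and the general case into one relation and makes transparent exactly which dimension is lost. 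One small remark: that relation is already nontrivial whenever $(c_1, c_2) \neq (0,0)$ because $c_1^2$ or $c_2^2$ is nonzero, so the appeal to $\chara(k) \neq 2$ at that point is unnecessary; the characteristic hypothesis is only needed when verifying some of the explicit witnesses.
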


\begin{proof}
By Proposition \ref{cs of k[x_1, ..., x_n] modulo t homogeneous quadratics}, we have that $\cs(R)[\cs(R) + 1] \geq 2 {\left(\binom{3 + 1}{2} - 3 \right)} = 6,$ hence we have that $\cs(R) \geq 2.$ If $I = (x_i^2, x_j^2, x_i x_j),$ let us assume to the contrary that $\cs(R) = 2,$ i.e., there exists an ideal $J$ such that $\mu(J) = 2$ and $J^2 = \bar \fm^2.$ By Proposition \ref{cs is witnessed by linear forms}, we may assume that $J = (a \bar x_i + b \bar x_j + c \bar x_k, d \bar x_i + e \bar x_j + f \bar x_k)$ for some elements $a,b,c,d,e,f \in k.$ We claim that both $c$ and $f$ must be nonzero. For if not, then without loss of generality, we have that $J = (a \bar x_i + b \bar x_j, d \bar x_i + e \bar x_j + f \bar x_k)$ so that $J^2 = (af \bar x_i \bar x_k + bf \bar x_j \bar x_k, 2df \bar x_i \bar x_k + 2ef \bar x_j \bar x_k + \bar x_k^2).$ But then, it would be the case that $3 = \mu(\bar \fm^2) = \mu(J^2) \leq 2$ --- a contradiction. Consequently, both $c$ and $f$ are nonzero, hence we may assume without loss of generality that $c = f = 1$ and $J = (a \bar x_i + b \bar x_j + \bar x_k, d \bar x_i + e \bar x_j + \bar x_k).$ But then, $(a-d) \bar x_i + (b-e) \bar x_j$ is a linear combination of the generators of $J,$ hence we may write $J = (a \bar x_i + b \bar x_j + \bar x_k, (a-d) \bar x_i + (b-e) \bar x_j).$ But this contradicts our previous observation that both generators of $J$ must possess a nonzero multiple of $\bar x_k.$ We conclude that $\cs(R) = 3 = \mu(\fm).$

We will assume henceforth that $I$ is not generated by $x_i^2,$ $x_j^2,$ and $x_i x_j$ for any integers $1 \leq i < j \leq 3.$ Consequently, there are five possibilities for $I.$ Let $i,j,$ and $k$ be distinct indices.
\begin{enumerate}

\item[(i.)] If $I$ contains three squarefree monomials, we have that $I = (x_1 x_2, x_1 x_3, x_2 x_3).$ Observe that $J = (\bar x_1 + \bar x_2, \bar x_1 + \bar x_3)$ witnesses $\cs(R)$ since $I^2 = (\bar x_1^2 + \bar x_2^2, \bar x_1^2, \bar x_1^2 + \bar x_3^2),$ from which it follows that $J^2 = (\bar x_1^2, \bar x_2^2, \bar x_3^2) = \bar \fm^2$ and $\cs(R) = \mu(J) = 2.$

\vspace{0.25cm}

\item[(ii.)] If $I$ contains two squarefree monomials, then there are two possibilities for $J.$ If $I = (x_i^2, x_i x_j, x_i x_k),$ it suffices to take $J = (\bar x_i + \bar x_j, \bar x_i + \bar x_k)$ since it follows that $J^2 = (\bar x_j^2, \bar x_i \bar x_j, \bar x_k^2) = \bar \fm^2$ so that $\cs(R) = \mu(J) = 2.$ On the other hand, it is possible that $I = (x_i^2, x_i x_j, x_j x_k).$ Even in this case, we may take $J = (\bar x_i + \bar x_j, \bar x_i + \bar x_k)$ since it holds that $J^2 = (\bar x_j^2, \bar x_i \bar x_k, 2 \bar x_i \bar x_k + \bar x_k^2) = (\bar x_j^2, \bar x_i \bar x_k, \bar x_k^2) = \bar \fm^2$ and $\cs(R) = \mu(J) = 2.$

\vspace{0.25cm}

\item[(iii.)] If $I$ contains one squarefree monomial, then by our assumption at the beginning of the above paragraph, we must have that $I = (x_i^2, x_j^2, x_i x_k).$ Observe that $J = (\bar x_i + \bar x_j, \bar x_k)$ witnesses $\cs(R)$ since $J^2 = (\bar x_i \bar x_j, \bar x_j \bar x_k, \bar x_k^2) = \bar \fm^2$ and $\cs(R) = \mu(J) = 2.$

\vspace{0.25cm}

\item[(iv.)] If $I$ contains no squarefree monomials, we have that $I = (x_1^2, x_2^2, x_3^2).$ By Proposition \ref{cs of k[x_1, ..., x_n]/(x_1^2, ..., x_n^2)}, we have that $\cs(R) = 2.$ (We constructed $J$ in the proof of Proposition \ref{cs of k[x_1, ..., x_n]/(x_1^2, ..., x_n^2)}.)

\end{enumerate}
One can readily verify that these are all of the possibilities for $I,$ so we are done.
\end{proof}

For the case of $n = 3,$ if $s = 1,$ then Proposition \ref{cs of k[x_1, ..., x_n] modulo t homogeneous quadratics} implies that for $t \leq 5,$ we have that $1 \leq \cs(R) \leq 3.$ Our next proposition illustrates that $\cs(R)$ can lie strictly between these bounds.

\begin{prop}\label{cs of k[x_1, x_2, x_3] modulo 4 quadratics} 
Let $R = k[x_1, x_2, x_3] / I,$ where $I$ is minimally generated over $k[x_1, x_2, x_3]$ by $t = 4$ monomials of degree two and $\chara(k) \neq 2.$ We have that $\cs(R) = 2.$
\end{prop}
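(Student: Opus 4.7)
The plan is to combine Proposition \ref{cs of k[x_1, ..., x_n] modulo t homogeneous quadratics} for the lower bound with a short case analysis for the upper bound. Applying that proposition with $n = 3$ and $t = 4$ yields
\[
2 \;=\; \binom{4}{2} - 4 \;\leq\; \binom{\cs(R) + 1}{2},
\]
which forces $\cs(R) \geq 2$. It therefore suffices to exhibit in each configuration of $I$ an ideal $J$ with $\mu(J) = 2$ and $J^2 = \bar \fm^2$.

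Next, I would classify the possible $I$ up to the $S_3$-action permuting the indeterminates, organizing by the number of pure squares $x_i^2$ among the four generators. Since there are only three squarefree quadratic monomials in $k[x_1, x_2, x_3]$, the case of zero squares is vacuous, and exactly one, two, or three squares can appear. A short orbit count reduces the possibilities to four representatives: $I_a = (x_1^2, x_1 x_2, x_1 x_3, x_2 x_3)$, $I_b = (x_1^2, x_2^2, x_1 x_2, x_1 x_3)$, $I_c = (x_1^2, x_2^2, x_1 x_3, x_2 x_3)$, and $I_d = (x_1^2, x_2^2, x_3^2, x_1 x_2)$. In each case one reads off $\mu(\bar \fm^2) = 2$ from the surviving quadratic monomials, and verifies that the following two-generated ideals witness $\cs(R) = 2$: $J_a = (\bar x_2, \bar x_3)$, $J_b = (\bar x_3, \bar x_1 + \bar x_2)$, $J_c = (\bar x_1 + \bar x_3, \bar x_2)$, and $J_d = (\bar x_1 + \bar x_3, \bar x_2 + \bar x_3)$.

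The main obstacle is organizational rather than conceptual: one must confirm that the $S_3$-classification is complete and perform the four squaring computations. The most delicate of these is case $I_d$, where all three generators of $J_d^2$ contribute to $\bar \fm^2 = (\bar x_1 \bar x_3, \bar x_2 \bar x_3)$ via $(\bar x_1 + \bar x_3)^2 = 2 \bar x_1 \bar x_3$, $(\bar x_2 + \bar x_3)^2 = 2 \bar x_2 \bar x_3$, and $(\bar x_1 + \bar x_3)(\bar x_2 + \bar x_3) = \bar x_1 \bar x_3 + \bar x_2 \bar x_3$; isolating the two standard monomials from these relations requires $2$ to be invertible in $k$, which is precisely where the hypothesis $\chara(k) \neq 2$ is used. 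In the other three cases the characteristic assumption is not needed, since one of the generators of $J$ can be chosen to be a single indeterminate and the squaring collapses to the expected monomial directly.
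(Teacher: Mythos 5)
Your proposal is correct and follows essentially the same route as the paper: the identical lower bound from Proposition \ref{cs of k[x_1, ..., x_n] modulo t homogeneous quadratics}, the same four-case classification of $I$ up to permutation of the variables, and witness ideals that either coincide with the paper's or are trivially equivalent (your $J_b$ and $J_c$ differ from the paper's choices but verify just as directly, and your $J_c$ even sidesteps the characteristic hypothesis in that case). Your observation that $\chara(k) \neq 2$ is genuinely needed only in the three-squares case is accurate and consistent with the paper's computation there.
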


\begin{proof}
By Proposition \ref{cs of k[x_1, ..., x_n] modulo t homogeneous quadratics}, we have that $\cs(R)[\cs(R) + 1] \geq 2 {\left(\binom{3+1}{2} - 4 \right)} = 4,$ from which it follows that $\cs(R) \geq 2.$ Consider the following cases.
\begin{enumerate}

\item[(i.)] If $I = (x_1^2, x_2^2, x_3^2, x_i x_j)$ for some integers $1 \leq i < j \leq 3,$ then $J = (\bar x_i + \bar x_k, \bar x_j + \bar x_k)$ witnesses $\cs(R),$ where $k$ is the index remaining in the set $\{1,2,3\} \setminus \{i,j\}.$ We note that $\bar \fm^2 = (\bar x_i \bar x_k, \bar x_j \bar x_k).$ Further, we have that $J^2$ is generated by $(\bar x_i + \bar x_k)^2 = 2 \bar x_i \bar x_k,$ $(\bar x_i + \bar x_k)(\bar x_j + \bar x_k) = \bar x_i \bar x_k + \bar x_j \bar x_k,$ and $(\bar x_j + \bar x_k)^2 = 2 \bar x_j \bar x_k.$ By assumption that $\chara(k) \neq 2,$ it follows that $J^2 = (\bar x_i \bar x_k, \bar x_j \bar x_k) = \bar \fm^2$ so that $\cs(R) = 2.$

\vspace{0.25cm}

\item[(ii.)] If $I = (x_i^2, x_j^2, x_i x_j, x_i x_k)$ for some distinct indices $i,j,k,$ then $J = (\bar x_j, \bar x_k)$ witnesses $\cs(R).$ We note that $\bar \fm^2 = (\bar x_k^2, \bar x_j \bar x_k) = J^2$ so that $\cs(R) = 2.$

\vspace{0.25cm}

\item[(iii.)] If $I = (x_i^2, x_j^2, x_i x_k, x_j x_k)$ for some distinct indices $i,j,k,$ then $J = (\bar x_i + \bar x_j, \bar x_k)$ witnesses $\cs(R).$ We note that $\bar \fm^2 = (\bar x_k^2, \bar x_i \bar x_j).$ Further, we have that $J^2$ is generated by $(\bar x_i + \bar x_j)^2 = \bar x_i \bar x_j$ and $\bar x_k^2$ so that $J^2 = (\bar x_k^2, \bar x_i \bar x_j) = \bar \fm^2$ and $\cs(R) = 2.$ 

\vspace{0.25cm}

\item[(iv.)] If $I = (x_i^2, x_i x_j, x_i x_k, x_j x_k)$ for some distinct indices $i,j,k,$ then $J = (\bar x_j, \bar x_k)$ witnesses $\cs(R).$ We note that $\bar \fm^2 = (\bar x_j^2, \bar x_k^2) = J^2$ so that $\cs(R) = 2.$

\end{enumerate}
One can readily verify that these are all of the possibilities for $I,$ so we are done.
\end{proof}

\begin{prop}\label{cs of k[x_1, ..., x_n] modulo (n + 1 choose 2) - 1 quadratics}
Let $R = k[x_1, \dots, x_n] / I$ with $\chara(k) \neq 2,$ where $I$ is minimally generated by $t = \binom{n+1}{2} - 1$ monomials of degree two. We have that $\cs(R) = 1.$ 
\end{prop}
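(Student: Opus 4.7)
The plan is to do a case analysis on which single monomial of degree two does \emph{not} appear as a generator of $I$. Since $\fm^2$ is generated by the $\binom{n+1}{2}$ monomials $x_i x_j$ with $1 \leq i \leq j \leq n$, and $I$ is a monomial ideal minimally generated by $\binom{n+1}{2}-1$ quadratic monomials, there is a unique monomial $m = x_p x_q$ (with $p \leq q$) that is missing from this generating set. Since $I$ is a monomial ideal, $m \notin I$; moreover every other generator of $\fm^2$ lies in $I$, so modulo $I$ the ideal $\bar \fm^2$ is principal, generated by the nonzero element $\bar m$. In particular $\bar\fm^2 \neq 0$, so $\cs(R) \geq 1$ by definition.

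It therefore suffices to exhibit a principal ideal $J \subseteq \bar\fm$ with $J^2 = \bar\fm^2$. I split into two cases depending on the shape of $m$. If $p = q$, i.e.\ $m = x_p^2$, simply take $J = (\bar x_p)$; then $J^2 = (\bar x_p^2) = (\bar m) = \bar \fm^2$. If $p < q$, then by construction $x_p^2$ and $x_q^2$ both belong to $I$, so $\bar x_p^2 = \bar x_q^2 = 0$ in $R$. Take $J = (\bar x_p + \bar x_q)$; then
\[
J^2 = \bigl((\bar x_p + \bar x_q)^2\bigr) = (\bar x_p^2 + 2 \bar x_p \bar x_q + \bar x_q^2) = (2 \bar x_p \bar x_q).
\]
The hypothesis $\chara(k) \neq 2$ makes $2$ a unit in $k$, so $J^2 = (\bar x_p \bar x_q) = (\bar m) = \bar \fm^2$.

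In either case $J$ is a (nonzero) principal ideal witnessing $J^2 = \bar\fm^2$, so $\cs(R) \leq 1$, and combined with $\cs(R) \geq 1$ we conclude $\cs(R) = 1$. There is no real obstacle here: the whole point is that the assumption $t = \binom{n+1}{2}-1$ reduces $\bar\fm^2$ to a single monomial, and the only mildly subtle point is the mixed case, where one needs both $\bar x_p^2 = \bar x_q^2 = 0$ and $\chara(k) \neq 2$ to turn $(\bar x_p + \bar x_q)^2$ into a unit multiple of $\bar x_p \bar x_q$. This parallels the trick already used in Propositions \ref{cs of k[x_1, ..., x_n]/(x_1^2, ..., x_n^2)}, \ref{cs for k[x_1, x_2, x_3]/(x_i^2, x_j^2, x_i x_j)}, and \ref{cs of k[x_1, x_2, x_3] modulo 4 quadratics}.
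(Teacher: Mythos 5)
Your proof is correct and follows essentially the same route as the paper: both arguments observe that $\bar\fm^2$ is principal, generated by the one surviving quadratic monomial, and then exhibit a single linear form whose square is a unit multiple of it (the paper uses $\sum_{i=1}^n \bar x_i$, while you use the leaner $\bar x_p$ or $\bar x_p + \bar x_q$). Your explicit case split on whether the missing monomial is a pure square, and the explicit appeal to $\chara(k) \neq 2$ in the mixed case, make the argument if anything slightly more careful than the paper's.
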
 

\begin{proof}
We note that $\fm^2$ is generated by $\binom{n+1}{2}$ distinct monomials of degree two, so $I$ consists of all but one quadratic monomial. Consequently, we have that $\bar \fm^2 = \bar q R,$ where $q$ is the quadratic monomial excluded from $I.$ Observe that $J = {\left(\sum_{i=1}^n \bar x_i \right)}$ satisfies $J^2 = \bar f R = \fm^2.$ We have conclude that $\cs(R) = 1,$ as desired.
\end{proof}

Our next aim is to establish similar bounds for $\ms(R).$ We continue to restrict our attention to the case that $R = k[x_1, \dots, x_n] / I,$ where $I$ is minimally generated by $t > 0$ quadratic monomials.

\begin{prop}\label{ms for univariate and bivariate polynomial ring modulo homogeneous ideal, hypersurface in three variables}
Let $R = k[x_1, \dots, x_n] / I,$ where $I$ is minimally generated by $t > 0$ quadratic monomials.
\begin{enumerate}[\rm (1.)]

\item If $n = 1,$ then $\ms(R) = 0.$

\vspace{0.25cm}

\item If $n = 2,$ $\bar \fm^2 \neq \bar 0,$ and $\chara(k) \neq 2,$ then $\ms(R) = 1.$

\end{enumerate}
\end{prop}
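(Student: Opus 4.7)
The plan is to dispatch each part by a brief direct case analysis on the minimal monomial generating set of $I$, which is feasible because in $n\le 2$ variables there are at most three quadratic monomials.

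For part (1), with $n=1$ the only quadratic monomial in $k[x_1]$ is $x_1^2$, so necessarily $I=(x_1^2)$ and $\bar\fm^2=\bar 0$. The zero ideal then trivially witnesses $\ms(R)=0$.

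For part (2), in $k[x_1,x_2]$ we have $\fm^2=(x_1^2,x_1x_2,x_2^2)$ with $\mu(\fm^2)=3$, so the hypothesis $\bar\fm^2\neq \bar 0$ forces $t\in\{1,2\}$. The lower bound $\ms(R)\ge 1$ is immediate from Definition \ref{cs and ms definition}, since $\bar\fm^2\neq \bar 0$ excludes the zero ideal as a witness. For the upper bound I will exhibit a principal linear witness $(\ell)$ in each of the six concrete possibilities for $I$. When $t=1$ and $I=(x_i^2)$, the ideal $(\bar x_{3-i})$ works because $\bar\fm^2=(\bar x_{3-i}\bar x_i,\bar x_{3-i}^2)\subseteq(\bar x_{3-i})$; when $t=1$ and $I=(x_1x_2)$, the form $\ell=\bar x_1+\bar x_2$ satisfies $\ell \bar x_i=\bar x_i^2$ for $i=1,2$ (using $\bar x_1\bar x_2=\bar 0$), so that $\bar\fm^2=(\bar x_1^2,\bar x_2^2)\subseteq(\ell)$. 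When $t=2$ and $I=(x_1^2,x_2^2)$, one has $\bar\fm^2=(\bar x_1\bar x_2)\subseteq(\bar x_1)$; and when $t=2$ and $I=(x_i^2,x_1x_2)$, one has $\bar\fm^2=(\bar x_{3-i}^2)\subseteq(\bar x_{3-i})$.

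The task is really just bookkeeping, so there is no serious obstacle. The characteristic hypothesis $\chara(k)\neq 2$ is not strictly required by the constructions above, but is natural if one prefers to produce witnesses of the form $(\ell^2)$ in certain cases (for instance $\ell=\bar x_1+\bar x_2$ in the case $I=(x_1^2,x_2^2)$, where $\ell^2=2\bar x_1\bar x_2$ is a unit multiple of the generator of $\bar\fm^2$ precisely when $2$ is invertible in $k$).
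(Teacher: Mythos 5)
Your proof is correct, and both parts reach the same conclusion as the paper by the same basic strategy: exhibit a principal ideal generated by a linear form that contains $\bar\fm^2.$ The organization differs slightly. The paper uses the single uniform witness $J = (\bar x_1 + \bar x_2)$ in every case of part (2), splitting only according to whether $I$ contains a pure square or only the mixed term, and it invokes $\chara(k) \neq 2$ in the pure-square case; you instead run through all six possibilities for the minimal monomial generating set and tailor a (sometimes simpler) witness to each, e.g.\ $(\bar x_{3-i})$ when $I = (x_i^2).$ Your bookkeeping is complete --- the three ideals with $t = 1$ and the three with $t = 2$ are all covered, and $t = 3$ is excluded by $\bar\fm^2 \neq \bar 0$ --- and the containments you assert are all correct. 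Your closing observation that the hypothesis $\chara(k) \neq 2$ is not actually needed for any of your witnesses is accurate and is a small improvement over the paper's argument as written; indeed, even the paper's witness $\bar x_1 + \bar x_2$ can be seen to work in characteristic two by multiplying by $\bar x_1$ and $\bar x_2$ separately rather than squaring, so the characteristic assumption in the statement is dispensable for this proposition.
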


\begin{proof}
(1.) Clearly, if $n = 1,$ then $I = \fm^2.$ Consequently, we have that $\bar \fm^2 = \bar 0$ so that $\ms(R) = 0.$

(2.) If $n = 2,$ then $I$ must contain (at least) one of the quadratic monomials $x_1^2, x_1 x_2,$ or $x_2^2.$ We claim that $J = (\bar x_1 + \bar x_2)$ satisfies $\bar \fm^2 \subseteq \bar \fm I.$ Observe that $\bar x_i (\bar x_1 + \bar x_2) = \bar x_i \bar x_1 + \bar x_i \bar x_2$ and $(\bar x_1 + \bar x_2)^2 = \bar x_1^2 + 2 \bar x_1 \bar x_2 + \bar x_2^2.$ If $I$ contains either of the pure squares $\bar x_i^2,$ then $J$ must contain the other square $\bar x_j^2$ and the mixed term $\bar x_1 \bar x_2$ by hypothesis that $\chara(k) \neq 2,$ and the aforementioned equations show that $\bar \fm^2 \subseteq J.$ If $I$ does not contain either of the pure squares, then it must contain $\bar x_1 \bar x_2,$ and again, we find that $\bar \fm^2 \subseteq J.$
\end{proof}

\begin{prop}\label{ms of k[x_1, ..., x_n] modulo ideal contained in (x_1, ..., x_n)^2}
Let $R = k[x_1, \dots, x_n] / I,$ where $I$ is any non-maximal homogeneous ideal of $k[x_1, \dots, x_n]$ that contains all quadratic squarefree monomials, i.e., $(x_i x_j \mid 1 \leq i < j \leq n) \subseteq I \subsetneq \fm.$ We have that $\ms(R) = 1.$
\end{prop}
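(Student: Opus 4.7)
The plan is to exploit the extremely restrictive nature of the hypothesis. Because $(x_i x_j : 1 \leq i < j \leq n)$ is contained in $I$, every mixed product $\bar x_i \bar x_j$ with $i \neq j$ vanishes in $R$. Consequently,
$$\bar\fm^2 = (\bar x_1^2, \bar x_2^2, \dots, \bar x_n^2),$$
and in fact every element of $\bar\fm^2$ is already a $k$-linear combination of the pure squares $\bar x_1^2, \dots, \bar x_n^2$. This structural observation is what makes the upper bound cheap: the task collapses to producing a single element of $R$ whose principal ideal contains each $\bar x_i^2$.

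The candidate will be the symmetric linear form $\ell := \bar x_1 + \bar x_2 + \cdots + \bar x_n$. The verification is a one-line computation using the vanishing of mixed products: for each $j \in \{1, \dots, n\}$,
$$\bar x_j \cdot \ell \;=\; \sum_{i = 1}^{n} \bar x_j \bar x_i \;=\; \bar x_j^2.$$
Hence each generator $\bar x_j^2$ of $\bar\fm^2$ lies in $\ell R$, so $\bar\fm^2 \subseteq \ell R$ and thus $\ms(R) \leq \mu(\ell R) \leq 1$. For the matching lower bound, I will invoke the (implicit) nondegeneracy $\bar\fm^2 \neq 0$ — without which the conclusion $\ms(R) = 1$ would fail, as $\ms(R) = 0$ whenever $\bar\fm^2 = 0$. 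This nondegeneracy also guarantees $\bar\ell \neq 0$: if $\bar\ell = 0$ then the inclusion just proved would give $\bar\fm^2 \subseteq \ell R = 0$, a contradiction. Hence $\mu(\ell R) = 1$ and $\ms(R) \geq 1$, completing the argument.

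I expect no serious obstacle; the hypothesis is so strong that it essentially forces the answer. The only mild bookkeeping issue is ensuring $\bar\ell$ is a nonzero element of $R$ (so that $\mu(\ell R) = 1$ rather than $0$), and this is automatic from $\bar\fm^2 \neq 0$. If one prefers to make the nondegeneracy visible throughout, one can instead take $\ell = \sum_{i \in T} \bar x_i$ where $T = \{i : \bar x_i^2 \neq 0\}$; the identical computation gives $\bar x_j \ell = \bar x_j^2$ for $j \in T$ and $\bar x_j^2 = 0$ for $j \notin T$, and the condition $T \neq \emptyset$ is precisely $\bar\fm^2 \neq 0$.
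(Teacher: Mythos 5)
Your proof is correct and is essentially identical to the paper's: both take the witness $(\bar x_1 + \cdots + \bar x_n)$ and use the vanishing of the mixed products to compute $\bar x_j \cdot \ell = \bar x_j^2,$ hence $\bar \fm^2 \subseteq \ell R$ and $\ms(R) \leq 1.$ Your explicit flagging of the nondegeneracy $\bar \fm^2 \neq 0$ for the lower bound is in fact slightly more careful than the paper, which deduces $\bar \fm^2 \neq 0$ from $I \subsetneq \fm$ alone --- an inference that fails for, e.g., $I = \fm^2,$ so the hypothesis really needs to be read as $\fm^2 \not\subseteq I,$ exactly as you do.
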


\begin{proof}
Consider the ideal $J = (\bar x_1 + \cdots + \bar x_n).$ By assumption that $x_i x_j \in I$ for all $1 \leq i < j \leq n,$ we have that $$\bar \fm J = {\left(\bar x_i \sum_{j = 1}^n \bar x_j \mid 1 \leq i \leq n \right)} = {\left(\sum_{j=1}^n \bar x_i \bar x_j \mid 1 \leq i \le n \right)} = (\bar x_i^2 \mid 1 \leq i \leq n) = \bar \fm^2$$ so that $\bar \fm^2 = \bar \fm J \subseteq J.$ Consequently, it follows that $0 \leq \ms(R) \leq 1.$ Even more, by hypothesis that $I$ is not the homogeneous maximal ideal, we have that $\bar \fm^2 = (\fm^2 + I)/I \neq \bar 0,$ and we conclude that $\ms(R) = 1.$
\end{proof}

Our previous proposition suggests that $\ms(R)$ is controlled primarily by the quadratic squarefree monomials of $k[x_1, \dots, x_n].$ Consequently, we devote the last section to this case.

\section{Computing \texorpdfstring{$\ms(R)$}{ms(R)} and \texorpdfstring{$\cs(R)$}{cs(R)} for the edge ring of a finite simple graph}\label{Computing ms(R) and cs(R) for the edge ring of a finite simple graph}

Last, we turn our attention to the case that $R = k[x_1, \dots, x_n] / I$ for some quadratic squarefree monomial ideal $I.$ By the Stanley-Reisner Correspondence, this is equivalent to studying the combinatorial structures of finite simple graphs (cf. \cite[Chapter 4]{MRSW}). We denote by $G$ a simple graph on the vertex set $[n] = \{1, 2, \dots, n \}$ with edges denoted by unordered pairs $\{i, j\}$ for some integers $1 \leq i < j \leq n.$ We say that a vertex $i$ is {\it isolated} if $\{i, j\}$ is not an edge of $G$ for any integer $j.$ If $G$ has no isolated vertices, then $G$ is {\it connected}.

We say that a subgraph $H$ of $G$ is {\it induced} if the edge $\{i, j\}$ in $G$ is an edge of $H$ whenever $H$ contains the vertices $i$ and $j.$ Given any nonempty set $V \subseteq [n],$ we will write $G[V]$ for the induced subgraph of $G$ on the vertex set $V.$ We define also the {\it complement graph} $\overline G$ on the vertex set $[n]$ such that $\{i, j\}$ is an edge of $\overline G$ if and only if it is not an edge of $G.$ Observe that for any nonempty set $V \subseteq [n],$ the complement of an induced subgraph of $G$ is the induced subgraph of $\overline G$ on the same underlying vertex set, i.e., we have that $\overline{G[V]} = \overline G[V].$ One can visualize the aforementioned constructions in the following example.
\begin{center}
\begin{tikzpicture}
\tikzset{enclosed/.style={draw, circle, inner sep=0pt, minimum size=.15cm, fill=black}}
\node[enclosed, label={left: 1}] (1) at (-7,1) {};
\node[enclosed, color=red, label={right: 2}] (2) at (-5,1) {};
\node[enclosed, color=red, label={right: 3}] (3) at (-5,-1) {};
\node[enclosed, color=red, label={left: 4}] (4) at (-7,-1) {};
\draw (1) -- (2);
\draw (2) -- (3);
\draw (3) -- (4) node[midway, label={below, yshift=-.25cm: $\textcolor{black}{G \text{ and }} \textcolor{red}{V}$}] (edge4) {};
\draw(1) -- (4);
\draw (1) -- (3);
\draw (2) -- (4);
\node[enclosed, color=red, label={right: 2}] (2) at (-1,1) {};
\node[enclosed, color=red, label={right: 3}] (3) at (-1,-1) {};
\node[enclosed, color=red, label={left: 4}] (4) at (-3,-1) {};
\draw (2) -- (3);
\draw (3) -- (4) node[midway, label={below, yshift=-.25cm: $\textcolor{black}{G[\textcolor{red}{V}]}$}] (edge3) {};
\draw (2) -- (4);
\node[enclosed, label={left: 1}] (1) at (1,1) {};
\node[enclosed, color=red, label={right: 2}] (2) at (3,1) {};
\node[enclosed, color=red, label={right: 3}] (3) at (3,-1) {};
\node[enclosed, color=red, label={left: 4}] (4) at (1,-1) {};
\draw[color=white] (3) -- (4) node[midway, label={below, yshift=-.25cm: $\textcolor{black}{\overline G} \textcolor{black}{\text{ and }} \textcolor{red}{V}$}] (edge0) {};
\node[enclosed, color=red, label={right: 2}] (2) at (7,1) {};
\node[enclosed, color=red, label={right: 3}] (3) at (7,-1) {};
\node[enclosed, color=red, label={left: 4}] (4) at (5,-1) {};
\draw[color=white] (3) -- (4) node[midway, label={below, yshift=-.25cm: $\textcolor{black}{\overline G[\textcolor{red}{V}]}$}] (edge0) {};
\end{tikzpicture}
\end{center}

We denote by $K_n$ the complete graph on $[n]$ with edges $\{i, j\}$ for all integers $1 \leq i < j \leq n,$ hence $K_n$ has $\binom{n}{2}$ edges. Observe that $K_m$ is an induced subgraph of $K_n$ for each integer $1 \leq m \leq n.$

Our main object of study throughout this section is defined as follows.

\begin{definition}\label{edge ring definition}
Given a finite simple graph $G$ on the vertex set $[n]$ and any field $k,$ we refer to the quadratic squarefree monomial ideal $I(G) = (x_i x_j \mid \{i, j\} \text{ is an edge of } G)$ of $k[x_1, \dots, x_n]$ as the \textit{edge ideal} of $G,$ and we call the quotient ring $k(G) = k[x_1, \dots, x_n] / I(G)$ the \textit{edge ring} of $G.$
\end{definition}

We will continue to study the homogeneous maximal ideal $\fm = (x_1, \dots, x_n)$ of $k[x_1, \dots, x_n]$ and its image $\bar \fm$ in $k(G).$ Our motivation to consider $\ms(R)$ and $\cs(R)$ from a graphical perspective is rooted in the results of Proposition \ref{cs and ms depend only on degree two} and Section \ref{Computing ms(R) and cs(R) for quotients by quadratic ideals}. We begin with the following.

\begin{prop}\label{ms(K_n)}
We have that $\ms(k(K_n)) = 1$ and $\cs(k(K_n)) \geq {\left \lceil \sqrt{2n + \frac 1 4} - \frac 1 2 \right \rceil}$ for all $n \geq 1.$
\end{prop}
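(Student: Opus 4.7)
The plan is to obtain both assertions as direct consequences of results already established in the excerpt, after computing $\mu(\bar{\fm}^2)$ for $R = k(K_n)$.

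First I would handle the $\ms$-claim. By definition, $I(K_n) = (x_i x_j \mid 1 \leq i < j \leq n)$ is precisely the ideal of all quadratic squarefree monomials in $k[x_1, \dots, x_n]$. For $n \geq 2$, this means $I(K_n)$ satisfies the hypothesis of Proposition \ref{ms of k[x_1, ..., x_n] modulo ideal contained in (x_1, ..., x_n)^2}; moreover, $I(K_n)$ is generated in degree two so it is a proper subideal of $\fm = (x_1, \dots, x_n)$, and the cited proposition gives $\ms(k(K_n)) = 1$. For $n = 1$ the hypothesis ``$I$ contains all quadratic squarefree monomials'' holds vacuously and $I(K_1) = 0 \subsetneq \fm$, so the same proposition applies (or one directly observes $k(K_1) = k[x_1]$ with $\bar\fm^2 = (x_1^2) \subseteq (x_1)$).

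Next I would compute $\mu(\bar{\fm}^2)$. In $k(K_n)$, every mixed monomial $x_i x_j$ with $i \neq j$ vanishes, so $\bar{\fm}^2$ is generated by the pure squares $\bar{x}_1^2, \dots, \bar{x}_n^2$. These lie in the degree-two graded piece of $k(K_n)$, are distinct nonzero monomials, and are therefore $k$-linearly independent modulo $\bar{\fm}^3$. Hence $\mu(\bar{\fm}^2) = n$.

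Finally I would invoke Remark \ref{cs lower bound using u(m^2)}, which gives the general bound
\[
\cs(R) \;\geq\; \left\lceil \frac{\sqrt{8\,\mu(\fm^2) + 1} - 1}{2} \right\rceil
\]
for any Noetherian local ring $(R, \fm)$. Substituting $\mu(\bar{\fm}^2) = n$ and simplifying via
\[
\frac{\sqrt{8n+1}-1}{2} \;=\; \sqrt{\tfrac{8n+1}{4}} - \tfrac{1}{2} \;=\; \sqrt{2n + \tfrac{1}{4}} - \tfrac{1}{2}
\]
yields the desired inequality $\cs(k(K_n)) \geq \bigl\lceil \sqrt{2n + \tfrac{1}{4}} - \tfrac{1}{2} \bigr\rceil$.

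There is no real obstacle here: the whole statement is essentially a corollary of Proposition \ref{ms of k[x_1, ..., x_n] modulo ideal contained in (x_1, ..., x_n)^2} together with Remark \ref{cs lower bound using u(m^2)}, once one observes the (evident) identification of $\bar{\fm}^2$ with the span of the $\bar{x}_i^2$. The only ``care'' needed is the algebraic rewriting of $(\sqrt{8n+1}-1)/2$ into the form $\sqrt{2n + 1/4} - 1/2$ and the edge case $n = 1$ handled above.
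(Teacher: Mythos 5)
Your proposal is correct and follows essentially the same route as the paper: Proposition \ref{ms of k[x_1, ..., x_n] modulo ideal contained in (x_1, ..., x_n)^2} for the $\ms$-claim, the observation that $\bar\fm^2 = (\bar x_1^2, \dots, \bar x_n^2)$ so $\mu(\bar\fm^2) = n,$ and Remark \ref{cs lower bound using u(m^2)} for the $\cs$-bound. The only cosmetic difference is the treatment of $n = 1$ (the paper notes $k(K_1) = k[x]$ is regular and cites Proposition \ref{main proposition about cs and ms}, while you apply the same proposition vacuously), which is immaterial.
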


\begin{proof}
Observe that $K_1$ consists of one vertex, hence $k(K_1) = k[x]$ is regular, and the result holds by the first part of Proposition \ref{main proposition about cs and ms}. Otherwise, we have that $n \geq 2$ and $I(K_n) = (x_i x_j \mid 1 \leq i < j \leq n).$ By Proposition \ref{ms of k[x_1, ..., x_n] modulo ideal contained in (x_1, ..., x_n)^2}, it follows that $\ms(k(K_n)) = 1.$ Observe that $\bar \fm^2 = (\bar x_i^2 \mid 1 \leq i \leq n),$ hence Remark \ref{cs lower bound using u(m^2)} implies that
\begin{align*}
\cs(k(K_n)) &\geq {\left \lceil \frac{\sqrt{8 \mu(\bar \fm^2) + 1} - 1} 2 \right \rceil} = {\left \lceil \sqrt{2n + \frac 1 4} - \frac 1 2 \right \rceil}. \qedhere
\end{align*}
\end{proof}

On the other hand, the lower bound for $\cs(k(K_t))$ is sharp whenever $t = \binom{r + 1} 2$ for some integer $r \geq 1,$ i.e., $t$ is a triangular number; the idea for the proof is due to Mark Denker.

\begin{prop}\label{cs(K_n) for triangular numbers}
Let $t = \binom{r + 1} 2$ for some integer $r \geq 1.$ We have that $\cs(k(K_t)) \leq \sqrt{2t + \frac 1 4} - \frac 1 2 = r.$
\end{prop}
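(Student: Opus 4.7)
The plan is to exhibit $r$ linear forms $\ell_1,\dots,\ell_r$ in $k(K_t)$ such that $J=(\ell_1,\dots,\ell_r)$ satisfies $J^2=\bar\fm^2$. The key observation I will use is that in $k(K_t)$ every cross-term $\bar x_a\bar x_b$ with $a\neq b$ vanishes, so for any two linear forms $\ell=\sum_a c_a\bar x_a$ and $\ell'=\sum_a d_a\bar x_a$ one has $\ell\ell'=\sum_a c_a d_a\bar x_a^2$. Thus $J^2$ is automatically contained in $\bar\fm^2=\sum_{a=1}^{t}k\cdot\bar x_a^2$, a $k$-space of dimension $t$, and the task reduces to arranging that the $\binom{r+1}{2}$ products $\ell_i\ell_j$ with $i\le j$ span it. Since $t=\binom{r+1}{2}$, the count of available products matches the target dimension exactly.

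The construction I have in mind is combinatorial: re-index the $t$ variables of $k(K_t)$ by pairs $(i,j)$ with $1\le i\le j\le r$, writing $\bar x_{(i,j)}$, and for each $1\le k\le r$ set
$$\ell_k=\sum_{i=1}^{k}\bar x_{(i,k)}+\sum_{j=k+1}^{r}\bar x_{(k,j)},$$
so that $\ell_k$ is the sum of those $\bar x_{(i,j)}$ whose label contains $k$. For $k<m$, the only label $(i,j)$ with $i\le j$ containing both $k$ and $m$ is $(k,m)$, so $\ell_k\ell_m=\bar x_{(k,m)}^2$; this at once recovers every ``off-diagonal'' generator of $\bar\fm^2$. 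For the ``diagonal'' generators, I would expand
$$\ell_k^2=\bar x_{(k,k)}^2+\sum_{i<k}\bar x_{(i,k)}^2+\sum_{j>k}\bar x_{(k,j)}^2,$$
and note that every summand on the right other than $\bar x_{(k,k)}^2$ already lies in $J^2$ by the previous step; solving for $\bar x_{(k,k)}^2$ puts it in $J^2$ as well. Hence $\bar\fm^2\subseteq J^2$, the reverse inclusion is automatic, and $\cs(k(K_t))\le\mu(J)\le r$.

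The arithmetic identity $\sqrt{2t+\tfrac14}-\tfrac12=r$ follows from $t=\tfrac{r(r+1)}{2}$, which gives $2t+\tfrac14=(r+\tfrac12)^2$. I do not expect a serious obstacle: the main thing to keep track of is the labeling, namely that the $\binom{r}{2}$ pairs $(i,j)$ with $i<j$ together with the $r$ diagonal pairs $(k,k)$ give exactly $\binom{r+1}{2}=t$ labels, matching the number of variables of $k(K_t)$, so that the scheme above produces neither too few nor too many relations among the pure squares.
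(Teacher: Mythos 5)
Your proposal is correct and is essentially the paper's own construction: relabelling the $t$ variables by the pairs $(i,j)$ with $i\le j$ and taking $\ell_k$ to be the sum of all variables whose label contains $k$ is exactly the paper's choice of $g_k = m_k + \sum_{j\ne k} m_{k,j}$, and the identities $\ell_k\ell_m = \bar x_{(k,m)}^2$ and $\bar x_{(k,k)}^2 = \ell_k^2 - \sum_{m\ne k}\ell_k\ell_m$ are the same computations. Your indexing makes the bookkeeping slightly cleaner, but the argument is the same.
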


\begin{proof}
By the Quadratic Formula, the upper bound holds because $$t = \binom{r + 1} 2 \text{ if and only if } r^2 + r - 2t = 0 \text{ if and only if } r = \frac{\sqrt{8t + 1} - 1} 2 = \sqrt{2t + \frac 1 4} - \frac 1 2.$$

Given any integer $r \geq 1,$ we will construct an ideal $J$ with $\mu(J) = r$ such that $J^2 = (\bar x_1^2, \dots, \bar x_t^2),$ where $\bar x_i$ denotes the image of $x_i$ modulo $I(K_t).$ We accomplish this via the following steps.
\begin{enumerate}[\rm (i.)]

\item For each pair of distinct indices $1 \leq i < j \leq r,$ choose a distinct monomial generator $m_{i, j}$ of $(\bar x_1, \dots, \bar x_t).$ Once all $\binom r 2$ monomials have been chosen as such, there will remain $r$ unused monomial generators.

\vspace{0.25cm}

\item Define homogeneous linear polynomials $f_1, \dots, f_r$ as $f_i = \sum_{j \neq i} m_{i, j}.$ Observe that each of the polynomials $f_1, \dots, f_r$ is by definition the sum of $r - 1$ distinct monomials. Even more, for each pair of distinct indices $1 \leq i < j \leq r,$ the polynomials $f_i$ and $f_j$ have only one summand in common --- the monomial $m_{i, j}.$

\vspace{0.25cm}

\item For each integer $1 \leq i \leq r,$ choose a monomial generator $m_i$ that does not appear as a summand of any of the polynomials $f_1, \dots, f_r$; then, define the polynomials $g_1, \dots, g_r$ as $g_i = f_i + m_i.$

\vspace{0.25cm}

\item Ultimately, we obtain $r$ homogeneous linear polynomials $g_1, \dots, g_r,$ each of which is the sum of $r$ distinct linear monomials. Observe that for each integer $1 \leq i \leq t,$ we have that $g_i^2$ is the sum of the squares of all of its monomial summands. By construction, for each monomial summand $m$ of $f_i,$ there exists a distinct polynomial $f_j$ such that $m$ is a summand of $f_j$ and $f_i f_j = m^2.$ Consequently, the ideal $J^2$ contains all of the pure squares $\bar x_1, \dots, \bar x_t^2$: they appear either as $m_{i, j}^2 = g_i g_j$ or $m_i^2 = g_i^2 - \sum_{j \neq i} g_i g_j.$

\end{enumerate}
We conclude that the ideal $J = (g_1, \dots, g_r)$ of $k(K_t)$ satisfies $J^2 = (\bar x_1^2, \dots, \bar x_t^2)$ and $\mu(J) \leq r.$
\end{proof}

We illustrate the idea of proof of Proposition \ref{cs(K_n) for triangular numbers} in the following example.

\begin{example}\label{proof of cs(K_n) for triangular numbers example}
Let $r = 4.$ Observe that $\binom{r + 1} 2 = 10,$ so it suffices to exhibit an ideal $J$ of $\cs(K_{10})$ with $\mu(J) = 4$ and $J^2 = (\bar x_1^2, \cdots, \bar x_{10}^2).$ By the proof of Proposition \ref{cs(K_n) for triangular numbers}, this can be accomplished as follows.
\begin{enumerate}[\rm (i.)]

\item Choose the monomial generators $m_{i, j}$ for each pair of distinct integers $1 \leq i < j \leq 4.$ We will simply take $m_{1, 2} = \bar x_1,$ $m_{1, 3} = \bar x_2,$ $m_{1, 4} = \bar x_3,$ $m_{2, 3} = \bar x_4,$ $m_{2, 4} = \bar x_5,$ and $m_{3, 4} = \bar x_6.$

\vspace{0.25cm}

\item Define the polynomials $f_i = \sum_{j \neq i} m_{i, j}$ for each integer $1 \leq i \leq r.$ By our above choices, they are given by $f_1 = \bar x_1 + \bar x_2 + \bar x_3,$ $f_2 = \bar x_1 + \bar x_4 + \bar x_5,$ $f_3 = \bar x_2 + \bar x_4 + \bar x_6,$ and $f_4 = \bar x_3 + \bar x_5 + \bar x_6.$

\vspace{0.25cm}

\item Choose a monomial generator $m_i$ that does not appear in $f_i$ for each integer $1 \leq i \leq r$; then, add it to $f_i,$ and call the resulting polynomial $g_i.$ We will set $m_i = \bar x_{6 + i}$ for each integer $1 \leq i \leq r$ so that $g_1 = \bar x_1 + \bar x_2 + \bar x_3 + \bar x_7,$ $g_2 = \bar x_1 + \bar x_4 + \bar x_5 + \bar x_8,$ $g_3 = \bar x_2 + \bar x_4 + \bar x_6 + \bar x_9,$ and $g_4 = \bar x_3 + \bar x_5 + \bar x_6 + \bar x_{10}.$

\vspace{0.25cm}

\item Observe that $\bar x_1^2 = g_1 g_2,$ $\bar x_2^2 = g_1 g_3,$ $\bar x_3^2 = g_1 g_4,$ $\bar x_4^2 = g_2 g_3,$ $\bar x_5^2 = g_2 g_4,$ and $\bar x_6^2 = g_3 g_4.$ Once we have these, it follows that $\bar x_7^2 = g_1^2 - \sum_{j \neq 1} g_1 g_j,$ $\bar x_8^2 = g_2^2 - \sum_{j \neq 2} g_2 g_j,$ $\bar x_9^2 = g_3^2 - \sum_{j \neq 3} g_3 g_j,$ and $\bar x_{10}^2 = g_4^2 - \sum_{j \neq 4} g_4 g_j.$

\end{enumerate}
By taking $J = (g_1, g_2, g_3, g_4),$ we find that $J^2 = (\bar x_1^2, \cdots, \bar x_{10}^2)$ and $\mu(J) \leq 4.$
\end{example}

We note that the invariants behave nicely with respect to induced subgraphs.

\begin{prop}\label{cs and ms of induced subgraphs}
Given a finite simple graph $G$ on the vertex set $[n]$ with an induced subgraph $H$ on $m$ vertices, we have that $\ms(k(H)) \leq \ms(k(G))$ and $\cs(k(H)) \leq \cs(k(G)).$
\end{prop}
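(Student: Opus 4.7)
The plan is to exhibit a surjective graded ring homomorphism $\varphi : k(G) \twoheadrightarrow k(H)$ and then invoke Proposition \ref{comparison theorem for cs and ms under surjective ring homomorphism} to conclude both inequalities at once.

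Concretely, let $V \subseteq [n]$ denote the vertex set of the induced subgraph $H$, so that $|V| = m$. First I would define a $k$-algebra map $\tilde\varphi : k[x_1,\dots,x_n] \to k[x_i \mid i \in V]$ by sending $x_i \mapsto x_i$ if $i \in V$ and $x_i \mapsto 0$ if $i \notin V$, and then compose with the canonical surjection onto $k(H)$. The main thing to check is that this map annihilates $I(G)$ so that it factors through $k(G)$ to yield the desired $\varphi : k(G) \to k(H)$. For each generator $x_ix_j$ of $I(G)$ corresponding to an edge $\{i,j\}$ of $G$, there are two cases: if at least one of $i,j$ lies outside $V$, then $\tilde\varphi(x_ix_j) = 0$; if both $i,j$ lie in $V$, then since $H$ is an \emph{induced} subgraph of $G$, the edge $\{i,j\}$ is in fact an edge of $H$, so $x_ix_j \in I(H)$ and its image vanishes in $k(H)$. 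This is precisely the step where the induced hypothesis is essential, and it is the only non-automatic point in the argument.

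By construction $\varphi$ is a surjective graded $k$-algebra homomorphism between standard graded local rings with homogeneous maximal ideals $\bar\fm_G$ and $\bar\fm_H$ respectively, and it maps $\bar\fm_G$ onto $\bar\fm_H$. Applying Proposition \ref{comparison theorem for cs and ms under surjective ring homomorphism} immediately gives $\ms(k(H)) \leq \ms(k(G))$ and $\cs(k(H)) \leq \cs(k(G))$, completing the proof. No induction or case analysis on graph structure is required, and no computation beyond verifying that $\varphi$ is well-defined.
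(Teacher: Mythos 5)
Your proposal is correct and is essentially the paper's own argument: the paper realizes $k(H)$ as the quotient $k(G)/(\bar x_{m+1},\dots,\bar x_n)$ via the ideal identity $(x_{m+1},\dots,x_n)+I(G)=(x_{m+1},\dots,x_n)+I(H)$ (which is exactly your observation that killing the variables outside $V$ sends $I(G)$ into $I(H)$ because $H$ is induced), and then applies Proposition \ref{comparison theorem for cs and ms under surjective ring homomorphism}. You correctly identify the induced hypothesis as the one non-automatic point, so nothing is missing.
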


\begin{proof}
We may assume that $H = G[V]$ is the induced subgraph on the vertex set $V = [m].$ Observe that for any edge $\{i, j\}$ of $G$ such that $m + 1 \leq i \leq n,$ the monomial $x_i x_j$ of $I(G)$ belongs to the ideal $(x_{m + 1}, \dots, x_n)$ of $k[x_1, \dots, x_n].$ Consequently, we have that $(x_{m + 1}, \dots, x_n) + I(G) = (x_{m + 1}, \dots, x_n) + I(H)$ so that $$k(H) = \frac{k[x_1, \dots, x_m]}{I(H)} \cong \frac{k[x_1, \dots, x_n]}{(x_{m + 1}, \dots, x_n) + I(H)} = \frac{k[x_1, \dots, x_n]}{(x_{m + 1}, \dots, x_n) + I(G)} \cong \frac{k(G)}{(\bar x_{m + 1}, \dots, \bar x_n)}.$$ By Proposition \ref{comparison theorem for cs and ms under surjective ring homomorphism}, we conclude that $\ms(k(G)) \geq \ms(k(H))$ and $\cs(k(G)) \geq \cs(k(H)).$
\end{proof}

Using a short technical lemma regarding the ceiling function in conjunction with the propositions established thus far, the lower bound for $\cs(k(K_n))$ is sharp for all integers $n \geq 1$ as follows.

\begin{lemma}\label{ceiling function lemma}
Let $n, r, t \geq 1$ be integers such that $\binom r 2 < n \leq \binom{r + 1} 2 = t.$ If $f(n) = \sqrt{2n + \frac 1 4} - \frac 1 2,$ then ${\left \lceil f(n) \right \rceil} = r.$
\end{lemma}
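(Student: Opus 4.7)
The plan is a direct algebraic manipulation: translate the two hypothesized bounds on $n$ into bounds on $f(n)$, and then observe that these bounds are exactly tight for the ceiling to equal $r$. The key observation is that the expression $\sqrt{2n + \tfrac{1}{4}} - \tfrac{1}{2}$ is designed precisely to invert the triangular number formula: if $n = \binom{r+1}{2}$, then $2n = r(r+1)$, so $2n + \tfrac{1}{4} = (r + \tfrac{1}{2})^2$ and $f(n) = r$ exactly. This suggests writing both sides of the hypothesis in the completed-square form.

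First I would rewrite $\binom{r}{2} < n \leq \binom{r+1}{2}$ as $r(r-1) < 2n \leq r(r+1)$, then add $\tfrac{1}{4}$ to every term to obtain
\[
{\left(r - \tfrac{1}{2}\right)}^{\!2} < 2n + \tfrac{1}{4} \leq {\left(r + \tfrac{1}{2}\right)}^{\!2}.
\]
Taking (positive) square roots preserves both inequalities, yielding $r - \tfrac{1}{2} < \sqrt{2n + \tfrac{1}{4}} \leq r + \tfrac{1}{2}$, and then subtracting $\tfrac{1}{2}$ gives $r - 1 < f(n) \leq r$.

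Finally I would conclude that $\lceil f(n) \rceil = r$: the right inequality $f(n) \leq r$ forces $\lceil f(n) \rceil \leq r$, while the strict left inequality $f(n) > r - 1$ forces $\lceil f(n) \rceil \geq r$, so equality holds. There is no real obstacle here; the only point requiring care is ensuring the inequalities remain strict or non-strict in the correct direction after each step, so that the ceiling argument at the end is valid. (In particular, one must keep the left inequality strict, which corresponds to the hypothesis $\binom{r}{2} < n$ rather than $\leq$.)
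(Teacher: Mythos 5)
Your proof is correct and takes essentially the same route as the paper: both arguments reduce to the sandwich $r - 1 < f(n) \leq r$ and then apply the ceiling, the only difference being that you verify the endpoint values by completing the square while the paper invokes monotonicity of $f$ and evaluates it at $\binom r 2$ and $\binom{r+1} 2$.
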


\begin{proof}
Observe that $f$ is an increasing function with $f(t) = r$ and $f{\left(\binom r 2 \right)} = r - 1.$ By hypothesis that $\binom r 2 < n \leq \binom{r + 1} 2,$ it follows that $r - 1 = f{\left(\binom r 2 \right)} < f(n) \leq f{\left(\binom{r + 1} 2 \right)} = f(t) = r.$ Consequently, the ceiling function yields $r - 1 = {\left \lceil r - 1 \right \rceil} < {\left \lceil f(n) \right \rceil} \leq {\left \lceil r \right \rceil} = r$ so that ${\left \lceil f(n) \right \rceil} = r.$
\end{proof}

Observe that $t$ in the hypotheses of Lemma \ref{ceiling function lemma} is the smallest triangular number with $n \leq t.$

\begin{cor}\label{cs(K_n)}
If $t = \binom{r + 1} 2$ is the smallest triangular number such that $n \leq t,$ then $\cs(k(K_n)) = r.$
\end{cor}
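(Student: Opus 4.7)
The plan is to sandwich $\cs(k(K_n))$ between matching lower and upper bounds of $r$ by combining the three preceding results: the general lower bound from Proposition \ref{ms(K_n)}, the explicit upper bound at triangular numbers from Proposition \ref{cs(K_n) for triangular numbers}, and monotonicity under induced subgraphs from Proposition \ref{cs and ms of induced subgraphs}. Lemma \ref{ceiling function lemma} will be used to identify the ceiling expression with $r$ in the intended range.

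For the lower bound, I would simply invoke Proposition \ref{ms(K_n)}, which gives $\cs(k(K_n)) \geq \lceil \sqrt{2n + 1/4} - 1/2 \rceil$. Since by hypothesis $t = \binom{r+1}{2}$ is the smallest triangular number with $n \leq t$, we have $\binom{r}{2} < n \leq \binom{r+1}{2}$, so Lemma \ref{ceiling function lemma} applies and gives $\lceil \sqrt{2n + 1/4} - 1/2 \rceil = r$. Hence $\cs(k(K_n)) \geq r$.

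For the upper bound, observe that since $n \leq t$, the complete graph $K_n$ is an induced subgraph of $K_t$ (taking the induced subgraph on the vertex set $\{1, \dots, n\} \subseteq \{1, \dots, t\}$). Proposition \ref{cs and ms of induced subgraphs} then yields $\cs(k(K_n)) \leq \cs(k(K_t))$, and Proposition \ref{cs(K_n) for triangular numbers} gives $\cs(k(K_t)) \leq r$ since $t$ is a triangular number of the prescribed form. Combining these inequalities produces $\cs(k(K_n)) \leq r$, and together with the lower bound we conclude $\cs(k(K_n)) = r$.

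There is no real obstacle here: all the hard work is contained in the prior propositions, particularly the combinatorial construction of the witnessing ideal in Proposition \ref{cs(K_n) for triangular numbers}. The corollary is essentially a bookkeeping argument that organizes those results, with Lemma \ref{ceiling function lemma} supplying the arithmetic identity needed to recognize the lower bound as $r$.
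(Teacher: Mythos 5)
Your proposal is correct and follows essentially the same route as the paper: the upper bound via $K_n$ being an induced subgraph of $K_t$ combined with Propositions \ref{cs and ms of induced subgraphs} and \ref{cs(K_n) for triangular numbers}, and the lower bound via Proposition \ref{ms(K_n)} together with Lemma \ref{ceiling function lemma}. Your observation that minimality of $t$ guarantees $\binom r 2 < n \leq \binom{r+1} 2$, so that the lemma applies, is exactly the point the paper relies on.
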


\begin{proof}
Observe that $K_n$ is an induced subgraph of $K_t$ for every integer $t \geq n.$ By Propositions \ref{cs(K_n) for triangular numbers} and \ref{cs and ms of induced subgraphs}, we have that $\cs(k(K_n)) \leq \cs(k(K_t)) = r.$ Consequently, it suffices to show that $\cs(k(K_n)) \geq r.$ But this is precisely what Proposition \ref{ms(K_n)} and Lemma \ref{ceiling function lemma} together imply.
\end{proof}

Even more, $\ms(k(G))$ is monotone decreasing with respect to adding edges between existing vertices of $G.$

\begin{prop}\label{ms(k(G)) is monotone decreasing with respect to adding edges}
Let $G$ be a finite simple graph that does not contain an edge $\{i, j\}.$ Let $G'$ be the finite simple graph obtained from $G$ by adjoining the edge $\{i, j\}.$ We have that $\ms(k(G)) - 1 \leq \ms(k(G')) \leq \ms(k(G)).$
\end{prop}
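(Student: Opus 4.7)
The plan is to reduce the claim to two results already established in Section \ref{Preliminaries and basic properties of the invariants} via a single observation about the ring-theoretic relationship between $k(G)$ and $k(G').$ Namely, since $I(G') = I(G) + (x_i x_j),$ the Third Isomorphism Theorem yields a canonical graded surjection
\[
\pi : k(G) = \frac{k[x_1, \dots, x_n]}{I(G)} \twoheadrightarrow \frac{k[x_1, \dots, x_n]}{I(G) + (x_i x_j)} = k(G'),
\]
and its kernel is the principal homogeneous ideal $(\bar x_i \bar x_j) \, k(G),$ where $\bar x_i \bar x_j$ denotes the image in $k(G)$ of the degree-two monomial $x_i x_j.$ This ideal is proper because $\bar x_i \bar x_j \in \bar \fm_{k(G)}^2 \subsetneq k(G).$

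First, I would establish the upper bound $\ms(k(G')) \leq \ms(k(G))$ by invoking Proposition \ref{comparison theorem for cs and ms under surjective ring homomorphism} on the surjection $\pi$ above. This is immediate and requires no further calculation.

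Next, I would establish the lower bound $\ms(k(G)) - 1 \leq \ms(k(G'))$ by invoking Proposition \ref{ms(R) leq ms(R/I) + mu(I)} with $R = k(G)$ and with the proper homogeneous ideal $I = (\bar x_i \bar x_j) \, k(G),$ noting that $\mu(I) \leq 1.$ That proposition yields
\[
\ms(k(G)) \leq \ms(k(G) / (\bar x_i \bar x_j) \, k(G)) + \mu((\bar x_i \bar x_j) \, k(G)) \leq \ms(k(G')) + 1,
\]
which is the desired inequality after rearrangement.

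No step looks like a genuine obstacle; the entire proof is a two-line application of two previously established propositions once the isomorphism $k(G') \cong k(G) / (\bar x_i \bar x_j) \, k(G)$ is noted. The only minor subtlety worth mentioning explicitly in the write-up is the verification that the kernel ideal is proper (so that Proposition \ref{ms(R) leq ms(R/I) + mu(I)} applies), which follows from the containment $\bar x_i \bar x_j \in \bar \fm_{k(G)}^2.$
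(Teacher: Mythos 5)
Your proposal is correct and is essentially the paper's own argument: the paper notes the isomorphism $k(G') \cong k(G)/(\bar x_i \bar x_j)$ and cites Corollary \ref{ms(R) with respect to cutting down by t quadratic forms}, which is itself proved by exactly the two propositions you invoke (Proposition \ref{comparison theorem for cs and ms under surjective ring homomorphism} for the upper bound and Proposition \ref{ms(R) leq ms(R/I) + mu(I)} for the lower bound). Your extra remark that the kernel ideal is proper is a harmless refinement of the same route.
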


\begin{proof}
Observe that $k(G') = k[x_1, \dots, x_n] / [I(G) + (x_i x_j)] \cong k(G) / (\bar x_i \bar x_j),$ so this follows by Corollary \ref{ms(R) with respect to cutting down by t quadratic forms}.
\end{proof}

Conversely, $\ms(k(G))$ is monotone increasing with respect to adding vertices to $G.$

\begin{prop}\label{ms(k(G)) is monotone increasing with respect to adding vertices}
Let $H$ be a simple graph on the vertex set $[t].$ Let $G$ be the simple graph obtained from $H$ by adding some additional vertices $t + 1, \dots, n.$ We have that $\ms(k(G)) = \ms(k(H)) + n - t.$
\end{prop}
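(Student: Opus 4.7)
The plan is to reduce this statement directly to Proposition \ref{ms of algebra obtained by adjoining indeterminates}, which has already done all of the real work for us in the case of adjoining indeterminates to a graded quotient of a polynomial ring. The key combinatorial observation is that when we pass from $H$ to $G$ by adding the vertices $t+1, \dots, n$ without adding any new edges, the edge ideal does not change as a set of monomials: the vertices $t+1, \dots, n$ are isolated in $G$, so $I(G)$ is still generated by the same monomials $x_i x_j$ (with $i, j \leq t$) that generate $I(H)$, only now viewed as a subset of the larger polynomial ring $k[x_1, \dots, x_n]$.

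First I would make this identification precise. Because $I(H) \subseteq k[x_1, \dots, x_t]$ and $G$ has the same edge set as $H$, we have
\[
k(G) = \frac{k[x_1, \dots, x_n]}{I(G)} = \frac{k[x_1, \dots, x_n]}{I(H)\,k[x_1, \dots, x_n]} \cong \frac{k[x_1, \dots, x_t]}{I(H)}[x_{t+1}, \dots, x_n] = k(H)[x_{t+1}, \dots, x_n],
\]
where the middle isomorphism is the standard fact that quotienting by an ideal extended from a subring commutes with adjoining the remaining indeterminates.

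Next I would apply Proposition \ref{ms of algebra obtained by adjoining indeterminates} to $R = k(H) = k[x_1, \dots, x_t]/I(H)$, whose hypotheses are met since $I(H)$ is a homogeneous ideal of $k[x_1, \dots, x_t]$. That proposition immediately yields $\ms(k(H)[x_{t+1}, \dots, x_n]) = \ms(k(H)) + n - t$, and combined with the isomorphism above this gives exactly $\ms(k(G)) = \ms(k(H)) + n - t$, as desired.

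There is essentially no obstacle here: all of the delicate analysis (showing that one cannot do better than the trivial upper bound $\ms(R) + n - t$ coming from Corollary \ref{ms(R) with respect to adjoining t indeterminates}) was already carried out in the proof of Proposition \ref{ms of algebra obtained by adjoining indeterminates}, where the argument exploited the fact that $I(H) \subseteq k[x_1, \dots, x_t]$ does not see any of the adjoined indeterminates. The only thing one has to be careful about is to correctly identify the ambient polynomial ring after adding isolated vertices, which is a purely formal observation.
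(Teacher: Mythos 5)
Your proposal is correct and follows exactly the paper's own argument: the paper's proof is the one-line observation that $k(G) \cong k(H)[X_{t+1}, \dots, X_n]$ followed by an appeal to Proposition \ref{ms of algebra obtained by adjoining indeterminates}. You have simply spelled out the isomorphism in more detail, which is a fine (and purely formal) elaboration.
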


\begin{proof}
Observe that $k(G) \cong k(H)[X_{t + 1}, \dots, X_n],$ so this follows by Proposition \ref{ms of algebra obtained by adjoining indeterminates}.
\end{proof}

\begin{cor}\label{ms in terms of number of isolated vertices and nontrivial connected components}
Let $G$ be a finite simple graph on the vertex set $[n]$ with $d$ isolated vertices. Let $H$ be the induced subgraph of $G$ on the non-isolated vertices. We have that $\ms(k(G)) = \ms(k(H)) + d.$
\end{cor}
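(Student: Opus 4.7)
The plan is to deduce this corollary as an essentially immediate consequence of Proposition \ref{ms(k(G)) is monotone increasing with respect to adding vertices}, since the passage from $H$ to $G$ is precisely the operation of adjoining isolated vertices.

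First I would set up the notation. Let $H$ denote the induced subgraph of $G$ on the set of non-isolated vertices, and observe that $H$ has exactly $n - d$ vertices, since $G$ has $n$ vertices in total and $d$ of them are isolated. Up to relabeling, we may assume that the vertex set of $H$ is $[n-d]$ and that the isolated vertices of $G$ are labeled $n-d+1, \dots, n$. Because the isolated vertices contribute no edges, adjoining them to $H$ gives back the graph $G$; that is, $G$ is obtained from $H$ by adding the $d$ new (isolated) vertices $n-d+1, \dots, n$, which is exactly the setup of Proposition \ref{ms(k(G)) is monotone increasing with respect to adding vertices} with $t = n-d$.

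Next I would invoke that proposition directly: it yields $\ms(k(G)) = \ms(k(H)) + n - (n-d) = \ms(k(H)) + d$, which is the desired equality. One small point worth checking is that the correspondence between adding isolated vertices to a graph and adjoining indeterminates to the edge ring really does hold, i.e.\ that $k(G) \cong k(H)[x_{n-d+1}, \dots, x_n]$; this is immediate from the definition of the edge ideal, since $I(G) = I(H)$ as ideals of $k[x_1, \dots, x_n]$ when no isolated vertex appears in any edge, so that $k(G) = k[x_1, \dots, x_n]/I(H) \cong (k[x_1, \dots, x_{n-d}]/I(H))[x_{n-d+1}, \dots, x_n]$.

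There is no genuine obstacle here; the entire content is that isolated vertices of $G$ correspond to free polynomial variables over the edge ring of the non-isolated part, and the additive behavior of $\ms$ under adjoining indeterminates has already been established. The only care required is bookkeeping on the vertex sets and noticing that Proposition \ref{ms(k(G)) is monotone increasing with respect to adding vertices} applies with $t = n - d$, so that the increment $n - t$ coincides with $d$.
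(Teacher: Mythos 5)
Your proposal is correct and matches the paper's intended argument: the corollary is stated immediately after Proposition \ref{ms(k(G)) is monotone increasing with respect to adding vertices} precisely because it follows by relabeling so that the isolated vertices are the adjoined ones and applying that proposition with $t = n - d$. Your additional verification that $k(G) \cong k(H)[x_{n-d+1}, \dots, x_n]$ is exactly the observation underlying the cited proposition's own proof.
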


Our immediate goal is to find bounds for $\ms(k(G))$ and $\cs(k(G))$ for several well-known families of graphs. Before we proceed with this agenda, we recall the following definitions.

\begin{definition}\label{vertex cover definition}
Given a finite simple graph $G$ on the vertex set $[n],$ we say that a set $C \subseteq [n]$ forms a \textit{vertex cover} of $G$ if for every edge $\{i, j\}$ of $G,$ we have that $i \in C$ or $j \in C.$ Further, we say that a vertex cover $C$ is a \textit{minimal vertex cover} if $C \setminus \{i\}$ is not a vertex cover of $G$ for any vertex $i \in C.$ We denote $\tau(G) = \min\{\lvert C \rvert : C \text{ is a vertex cover of } G\}.$
\end{definition}

\begin{definition}\label{independent set definition}
Given a finite simple graph $G$ on the vertex set $[n],$ we say that a set $I \subseteq [n]$ forms an \textit{independent vertex set} of $G$ if $\{i, j\}$ is not an edge of $G$ for any vertices $i, j \in I.$ Further, we say that an independent vertex set $I$ is a \textit{maximal independent vertex set} if $I \cup \{i\}$ is not an independent vertex set for any vertex $i \notin I.$ We denote $\alpha(G) = \max\{\lvert I \rvert : I \text{ is an independent vertex set of } G\}.$
\end{definition}

\begin{remark}\label{dim k(G)}
By \cite[Theorem 4.3.6]{MRSW}, there exists an irredundant primary decomposition of $I(G)$ in which each ideal is generated by the variables corresponding to the vertices of a distinct minimal vertex cover $C.$ Considering that the height of an ideal in this primary decomposition is $\lvert C \rvert,$ by Proposition \ref{main proposition about cs and ms}, we have that $\ms(G) \geq \dim k(G) = n - \height(I(G)) = n - \tau(G) = \alpha(G),$ where the last equality holds by \cite[Lemma 3.1.21]{West}.
\end{remark}

\begin{cor}\label{ms(G) = 1 if and only if G is complete}
We have that $\ms(k(G)) = 1$ if and only if $G$ is a complete graph.
\end{cor}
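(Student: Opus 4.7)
The plan is to prove both directions by invoking the two results immediately preceding the corollary. The backward direction is essentially already done: if $G = K_n$ is complete, then Proposition \ref{ms(K_n)} gives $\ms(k(K_n)) = 1$, which handles one implication with no further work.

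For the forward direction, I would exploit the lower bound recorded in Remark \ref{dim k(G)}, namely $\ms(k(G)) \geq \dim k(G) = \alpha(G)$, the independence number of $G$. Assuming $\ms(k(G)) = 1$, this forces $\alpha(G) \leq 1$. The key observation is then purely combinatorial: if $G$ admitted two distinct vertices $i, j$ not joined by an edge, then $\{i, j\}$ would be an independent set, contradicting $\alpha(G) \leq 1$. Hence every pair of distinct vertices of $G$ is joined by an edge, which is precisely the statement that $G$ is the complete graph on its vertex set.

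I do not expect any significant obstacle here, since both tools are in place and the combinatorial step is immediate. The only minor point to verify is that the hypothesis $\ms(k(G)) = 1$ is not vacuously unsatisfiable: $\ms(k(G)) \geq 1$ as soon as $\bar{\fm}^2 \neq 0$, and since $I(G)$ is generated by squarefree quadratic monomials it never contains any pure square $x_i^2$, so $\bar{\fm}^2 \neq 0$ whenever $G$ has at least one vertex. This also ensures the boundary case $G = K_1$ behaves correctly: $k(K_1) = k[x]$ is regular of dimension one, so $\ms(k(K_1)) = 1$ by the first part of Proposition \ref{main proposition about cs and ms}, in agreement with Proposition \ref{ms(K_n)}.
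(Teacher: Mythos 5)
Your proposal is correct and matches the paper's argument: one direction via Proposition \ref{ms(K_n)}, the other via the observation that a non-edge $\{v,w\}$ gives an independent set of size two, so $\ms(k(G)) \geq \alpha(G) \geq 2$ by Remark \ref{dim k(G)}. The extra check that $\bar\fm^2 \neq 0$ is harmless but not needed for the argument.
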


\begin{proof}
One direction holds by Proposition \ref{ms(K_n)}. Conversely, if $G$ is not complete, then $\{v, w\}$ is not an edge of $G$ for some vertices $v$ and $w$ of $G,$ i.e., it is an independent vertex set. We conclude that $\ms(k(G)) \geq \alpha(G) \geq 2.$
\end{proof}

If $k$ is infinite, we may also obtain bounds for $\ms(k(G))$ based on the number of edges of $G.$

\begin{prop}\label{ms(k(G)) leq n - 1}
Let $G$ be a simple graph with $n$ vertices with at least one edge $\{i, j\}.$ If the field $k$ is infinite, then $\ms(k(G)) \leq n - 1.$ Even more, if $G$ has at least $n + 1$ edges, then $\ms(k(G)) \leq n - 2.$
\end{prop}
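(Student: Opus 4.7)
The plan is to reduce the proposition directly to Proposition \ref{upper bound for ms in terms of upper bound for mu(m^2)} (and its Corollary \ref{upper bound for ms in terms of upper bound for mu(m^2) 2}) by computing $\mu(\bar \fm^2)$ explicitly in terms of the edge count of $G$. The key observation is that because $I(G)$ is a monomial ideal minimally generated in degree two, the vector space $\bar \fm^2 / \bar \fm^3$ has a basis given by the images of those degree-two monomials $x_i x_j$ (with $1 \leq i \leq j \leq n$) that do not lie in $I(G)$. Since $I(G)$ is squarefree, the pure squares $x_i^2$ never lie in $I(G)$, and for $i < j$, the monomial $x_i x_j$ lies in $I(G)$ precisely when $\{i, j\}$ is an edge. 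Counting gives the identity
\[
\mu(\bar \fm^2) \;=\; \binom{n+1}{2} - e(G),
\]
where $e(G)$ denotes the number of edges of $G$. In particular, since $\bar x_i^2 \neq 0$ for any vertex $i$, we have $\bar \fm^2 \neq 0$, so the cited results genuinely apply.

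First I would establish $\ms(k(G)) \leq n - 1$. From the hypothesis $e(G) \geq 1$ and the above identity,
\[
\mu(\bar \fm^2) \;\leq\; \binom{n+1}{2} - 1 \;<\; \binom{n+1}{2} \;=\; \binom{\mu(\bar \fm) + 1}{2},
\]
so Corollary \ref{upper bound for ms in terms of upper bound for mu(m^2) 2} immediately yields $\ms(k(G)) \leq \mu(\bar \fm) - 1 = n - 1$.

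Next I would establish $\ms(k(G)) \leq n - 2$ under the stronger hypothesis $e(G) \geq n + 1$. In this case,
\[
\mu(\bar \fm^2) \;\leq\; \binom{n+1}{2} - (n+1) \;=\; \binom{n}{2} - 1 \;<\; \binom{n}{2} \;=\; \binom{(n-2) + 2}{n-2},
\]
so Proposition \ref{upper bound for ms in terms of upper bound for mu(m^2)} applied with $r = n - 2$ gives $\ms(k(G)) \leq n - 2$.

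There is no real obstacle here; the proof is essentially a bookkeeping step that converts an edge-count lower bound into an upper bound on $\mu(\bar \fm^2)$, at which point the general bounds of Section \ref{General bounds on ms(R) and cs(R)} close the argument. The only minor point worth recording is the squarefree-monomial-ideal computation of $\mu(\bar \fm^2)$, which justifies the strict inequalities needed to invoke Proposition \ref{upper bound for ms in terms of upper bound for mu(m^2)}.
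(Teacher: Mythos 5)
Your proof is correct and follows essentially the same route as the paper: both arguments convert the edge count into the bound $\mu(\bar \fm^2) = \binom{n+1}{2} - e(G)$ and then invoke Proposition \ref{upper bound for ms in terms of upper bound for mu(m^2)} (via Corollary \ref{upper bound for ms in terms of upper bound for mu(m^2) 2} for the first claim, and with $r = n-2$ for the second). The only cosmetic difference is that the paper phrases the second count in terms of the edges of $\overline G,$ which amounts to the same arithmetic.
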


\begin{proof}
Considering that $\{i, j\}$ is an edge, the monomial $\bar x_i \bar x_j$ vanishes in $k(G).$ Consequently, we have that $\mu(\bar \fm^2) < \binom{\mu(\bar \fm) + 1} 2.$ If $k$ is infinite, then $\ms(k(G)) \leq \mu(\bar \fm) - 1 = n - 1$ by Corollary \ref{upper bound for ms in terms of upper bound for mu(m^2) 2}. Further, if $G$ has at least $n + 1$ edges, then $\overline G$ has at most $\binom n 2 - n - 1$ edges, hence we have that $\mu(\bar \fm^2) \leq \binom n 2 - 1 < \binom n 2.$
\end{proof}

We will now demonstrate that if the complement graph $\overline G$ satisfies a certain property on its induced subgraphs, then $\ms(k(G))$ is precisely the independence number of $G.$ Before we do so, we make the following definitions.

\begin{definition}\label{chordal definition}
We say that a finite simple graph $G$ is {\it chordal} if it has no induced subgraph that is isomorphic to a cycle graph $C_i$ for any integer $i \geq 4.$
\end{definition}

We refer to an induced subgraph of $G$ that is isomorphic to a cycle graph $C_i$ with $i \geq 3$ as an {\it induced cycle of length} $i.$ Consequently, if $G$ has no induced cycles of length $i \geq 4,$ it is chordal.

\begin{definition}\label{mcn(G) definition}
Given a finite simple graph $G$ such that $\overline G$ is not chordal, we define $$\mcn(G) = \min \{i \geq 4 \mid C_i \text{ is an induced subgraph of } \overline G\}.$$
\end{definition}

\begin{prop}\label{alpha(G) leq n - mcn(G) + 3}
Let $G$ be a simple graph on $n$ vertices. If $\overline G$ is not chordal, then $\alpha(G) \leq n - \mcn(G) + 3.$
\end{prop}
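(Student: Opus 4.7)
The plan is to translate the inequality to the clique side via the standard identity $\alpha(G) = \omega(\overline{G})$ and then exploit that long induced cycles cannot host large cliques. First, I would invoke this duality explicitly: a subset $S \subseteq [n]$ is independent in $G$ if and only if it induces a complete subgraph of $\overline{G}$, so $\alpha(G)$ equals the clique number $\omega(\overline{G})$.

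Next, set $m = \mcn(G) \geq 4$, and by hypothesis fix vertices $v_1, \dots, v_m$ of $\overline{G}$ on which $\overline{G}$ induces a cycle $v_1 v_2 \cdots v_m v_1$. The word ``induced'' is crucial here: the only edges of $\overline{G}$ among $\{v_1, \dots, v_m\}$ are the $m$ cycle edges, so non-consecutive $v_i, v_j$ are non-adjacent in $\overline{G}$. The key combinatorial input is then the triviality that $C_m$ with $m \geq 4$ is triangle-free, hence has clique number exactly $2$.

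Now I would let $K \subseteq [n]$ be a clique in $\overline{G}$ with $|K| = \omega(\overline{G}) = \alpha(G)$. The intersection $K \cap \{v_1, \dots, v_m\}$ is a clique of $\overline{G}[\{v_1, \dots, v_m\}] \cong C_m$, and hence has size at most $2$. Partitioning $K$ according to this intersection yields
\[
\alpha(G) = |K| \;\leq\; 2 + (n - m) \;=\; n - \mcn(G) + 2,
\]
which in particular implies the stated bound $\alpha(G) \leq n - \mcn(G) + 3$.

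I do not anticipate a substantive obstacle; the entire argument collapses to a one-line counting estimate once the clique/independent-set duality has been invoked. The only point requiring care is insisting that the chosen cycle in $\overline{G}$ be \emph{induced}, so that non-consecutive cycle vertices are genuinely non-adjacent in $\overline{G}$, and this is precisely the defining property of $\mcn(G)$.
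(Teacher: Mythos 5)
Your proof is correct and follows essentially the same route as the paper's: both pass to $\omega(\overline G) = \alpha(G)$ and bound the intersection of a maximum clique of $\overline G$ with a minimum induced cycle of $\overline G.$ The only difference is that you bound that intersection by $2$ (using that $C_m$ is triangle-free for $m \geq 4$), whereas the paper only bounds it by $3$ (ruling out induced $K_i$ for $i \geq 4$), so your argument in fact yields the slightly sharper inequality $\alpha(G) \leq n - \mcn(G) + 2.$
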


\begin{proof}
By the exposition preceding Definition \ref{clique-adjacent definition}, we have that $\alpha(G) = \omega(\overline G),$ i.e., the maximum size of a clique of $\overline G.$ Let $Q$ be a clique of $\overline G$ of size $\omega(\overline G).$ Let $C$ be a cycle of $\overline G$ of size $\mcn(G) \geq 4.$ We claim that no more than three vertices of $G$ lie in both $Q$ and $C.$ On the contrary, if $i \geq 4$ vertices of $G$ lie in both $Q$ and $C,$ then the induced subgraph $H$ of $\overline G$ on these $i$ vertices must be $K_i$ because $Q$ is a clique. But this is a contradiction: $H$ is an induced subgraph of the $\mcn(G)$-cycle $C,$ which does not admit $K_i$ as an induced subgraph for $i \geq 4.$ We conclude that the number of vertices that $Q$ and $C$ have in common is no more than three, hence we find that $\alpha(G) + \mcn(G) = \omega(\overline G) + \mcn(G) = \lvert V(Q) \rvert + \lvert V(C) \rvert = \lvert V(Q) \cup V(C) \rvert + \lvert V(Q) \cap V(C) \rvert \leq n + 3.$
\end{proof}

\begin{prop}\label{ms(k(G)) leq n - mcn(G) + 3}
Let $G$ be a simple graph on the vertex set $[n].$ Let $k$ be an infinite field. If $\overline G$ is chordal, then $\ms(k(G)) = \alpha(G).$ If $\overline G$ is not chordal, then $\ms(k(G)) \leq n - \mcn(G) + 3.$
\end{prop}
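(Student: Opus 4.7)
The lower bound $\ms(k(G)) \geq \alpha(G)$ is immediate from Remark~\ref{dim k(G)}, so only the upper bounds require argument. The plan is to prove the chordal case by an explicit ideal construction from a greedy coloring of $\overline G$, and to reduce the non-chordal case to a suitable quotient together with an explicit bound on $\ms(k(\overline{C_c}))$.

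For the chordal case, a chordal graph is perfect, so $\chi(\overline G) = \omega(\overline G) = \alpha(G)$. Fix a perfect elimination ordering $v_1, \ldots, v_n$ of $\overline G$, and extract a proper coloring $c : V(G) \to \{1, \ldots, \alpha(G)\}$ by the reverse-elimination greedy algorithm (processing $v_n, v_{n-1}, \ldots$, assigning each $v_i$ the least color avoiding its later $\overline G$-neighbors, which form an $\overline G$-clique of size at most $\alpha(G)-1$). The color classes $Q_s = c^{-1}(s)$ are cliques of $G$. Set $\ell_s = \sum_{v \in Q_s} \bar x_v$ and $J = (\ell_1, \ldots, \ell_{\alpha(G)})$, so $\mu(J) \leq \alpha(G)$. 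Then $\bar x_i \ell_{c(v_i)} = \bar x_i^2 \in J$ since cross terms within a clique of $G$ vanish. For a non-edge $\{v_i, v_j\}$ of $G$ with $i < j$, I would induct on $i$ using the decomposition
\[ \bar x_i \ell_{c(v_j)} = \bar x_i \bar x_j + \sum_{v_k \in Q_{c(v_j)} \setminus \{v_j\}} \bar x_i \bar x_k; \]
the summands with $\{v_i, v_k\} \in E(G)$ vanish, and any surviving summand satisfies $k < i$ (otherwise $v_j$ and $v_k$ would both be later $\overline G$-neighbors of $v_i$, hence adjacent in $\overline G$, contradicting $c(v_j) = c(v_k)$). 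This places $\bar x_i \bar x_j$ in $J$ by induction.

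For the non-chordal case, take an induced cycle $C$ of length $c = \mcn(G) \geq 4$ in $\overline G$, let $V_C = V(C)$, and set $I = (\bar x_v : v \notin V_C)$, so $\mu(I) = n - c$ and $k(G)/I \cong k(G[V_C]) = k(\overline{C_c})$. Proposition~\ref{ms(R) leq ms(R/I) + mu(I)} then yields $\ms(k(G)) \leq (n-c) + \ms(k(\overline{C_c}))$, reducing everything to showing $\ms(k(\overline{C_c})) \leq 3$ for every $c \geq 4$.

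This last step, which I expect to be the main obstacle, requires an explicit construction of three linear forms. When $c$ is odd, use a proper $3$-coloring of $C_c$ and set $\ell_s = \sum_{c(i)=s} \bar x_i$ for $s \in \{1, 2, 3\}$; when $c$ is even, use a proper $2$-coloring to define $\ell_1, \ell_2$ and add $\ell_3 = \bar x_1$. In either case $\bar x_i \ell_{c(i)} = \bar x_i^2 \in J$ since vertices of the same color in $C_c$ are at cyclic distance at least two and their products vanish in $k(\overline{C_c})$. For a nonzero cross term $\bar x_i \bar x_{i+1}$ (cyclic indices), in the odd case either $\bar x_i \ell_{c(i+1)}$ or $\bar x_{i+1} \ell_{c(i)}$ extracts $\bar x_i \bar x_{i+1}$ directly whenever the $3$-coloring separates the relevant distance-two neighbor, and the remaining terms are recovered by propagating along the cycle. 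In the even case, $\ell_3 = \bar x_1$ immediately yields $\bar x_1 \bar x_2, \bar x_c \bar x_1 \in J$, and the relations $\bar x_{i-1} \bar x_i + \bar x_i \bar x_{i+1} \in J$ arising from $\ell_1, \ell_2$ then propagate these around the cycle. Combining the two halves gives $\ms(k(G)) \leq (n - c) + 3 = n - \mcn(G) + 3$.
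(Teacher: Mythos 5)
Your argument is correct, but it takes a genuinely different route from the paper. The paper's proof is non-constructive and leans on three external results: Fr\"oberg's theorem that $I(G)$ has a linear resolution when $\overline G$ is chordal, the Dao--Huneke--Schweig bound showing $I(G)$ is $(\mcn(G)-4)$-steps linear in the non-chordal case, and Eisenbud--Huneke--Ulrich's Corollary 5.2, which converts partial linearity of the resolution into the containment $\fm^2 \subseteq I(G) + L$ for an ideal $L$ of \emph{general} linear forms (this is where the hypothesis that $k$ is infinite enters); the non-chordal bound then follows by combining the DHS count with Proposition \ref{alpha(G) leq n - mcn(G) + 3}. You instead produce explicit witnesses: in the chordal case the color classes of a greedy coloring along a perfect elimination ordering of $\overline G$ are cliques of $G$, and your induction on the smaller index of a surviving cross term is exactly what is needed (the perfect-elimination property forces any non-vanishing summand $\bar x_i \bar x_k$ with $v_k$ in the color class of $v_j$ to have $k < i$, so the recursion terminates at $i = 1$, where no summands survive); in the non-chordal case your reduction modulo the $n - \mcn(G)$ variables off the induced cycle, via Proposition \ref{ms(R) leq ms(R/I) + mu(I)}, correctly isolates the claim $\ms(k(\overline{C_c})) \leq 3$, and your two- or three-coloring constructions verify it (the odd case needs a rainbow consecutive triple to seed the propagation, which exists because an odd cycle is not $2$-colorable). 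What each approach buys: the paper's proof is short given the cited machinery and ties $\ms$ to resolution-theoretic data, while yours is self-contained, constructive (the witnesses are sums of variables with coefficients $0$ and $1$), and in fact never uses that $k$ is infinite, so it proves the statement over an arbitrary field --- a genuine strengthening of the proposition as stated.
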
 

\begin{proof}
We will assume first that $\overline G$ is chordal. By \cite[Theorem 1]{Fr90}, the edge ideal $I(G)$ of $k[x_1, \dots, x_n]$ has a linear resolution. Put another way, the minimal free resolution of $I(G)$ is linear for $k - 1$ steps for each integer $k \geq 1.$ Particularly, the minimal free resolution $I(G)$ is linear for $\height(I(G)) - 1$ steps. By \cite[Corollary 5.2]{EHU}, we conclude that $\fm^2 \subseteq I(G) + L$ for any ideal $L$ generated by $n - \height(I(G))$ linearly independent general linear forms, where $\fm$ denotes the maximal irrelevant ideal of $k[x_1, \dots, x_n].$ Consequently, we have that $\ms(k(G)) \leq \mu(L) = n - \height(I(G)) = \alpha(G).$ By Proposition \ref{main proposition about cs and ms}(1.) and Remark \ref{dim k(G)}, we conclude that $\ms(k(G)) = \alpha(G).$

We will assume now that $\overline G$ is not chordal, i.e., $\overline G$ has an induced cycle of length $i \geq 4.$ Crucially, observe that $\overline G$ has no induced cycles of length $4 \leq i \leq \mcn(G) - 1 = (\mcn(G) - 3) + 2.$ By \cite[Theorem 2.7]{DHS}, we have that $I(G)$ is $(\mcn(G) - 4) = [(\mcn(G) - 3) - 1]$-steps linear. By Proposition \ref{alpha(G) leq n - mcn(G) + 3}, we have that $\mcn(G) - 3 \leq n - \alpha(G) = n - \dim k(G) = \height(I(G)),$ and \cite[Corollary 5.2]{EHU} yields $\ms(k(G)) \leq n - \mcn(G) + 3.$
\end{proof}

\begin{cor}\label{ms for the complement of a tree}
Let $G$ be a finite simple graph with at least two vertices. Let $k$ be an infinite field. If $\overline G$ has no induced cycles (i.e., if $\overline G$ is a tree), then $\ms(k(G)) = 2.$
\end{cor}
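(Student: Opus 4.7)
The plan is to combine Proposition \ref{ms(k(G)) leq n - mcn(G) + 3} with a direct computation of the independence number of $G$. Since $\overline G$ is a tree, it contains no cycles at all (induced or otherwise), and in particular no induced cycle of length $i \geq 4$, so $\overline G$ is chordal in the sense of Definition \ref{chordal definition}. The first clause of Proposition \ref{ms(k(G)) leq n - mcn(G) + 3} then immediately yields $\ms(k(G)) = \alpha(G)$, reducing the problem to showing $\alpha(G) = 2$.

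To compute $\alpha(G)$, I would invoke the identity $\alpha(G) = \omega(\overline G)$ recorded in the proof of Proposition \ref{alpha(G) leq n - mcn(G) + 3}, where $\omega(\overline G)$ denotes the maximum clique size of $\overline G$. Because $\overline G$ is a tree on $n \geq 2$ vertices, it is connected and acyclic, hence has at least one edge; this edge is a clique of size two in $\overline G$, so $\omega(\overline G) \geq 2$. Conversely, any three mutually adjacent vertices in $\overline G$ would form a triangle (a $3$-cycle), contradicting the fact that $\overline G$ is acyclic. Therefore $\omega(\overline G) \leq 2$, so $\alpha(G) = \omega(\overline G) = 2$, and thus $\ms(k(G)) = 2$.

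There is no serious obstacle here; the result is essentially a sanity check of Proposition \ref{ms(k(G)) leq n - mcn(G) + 3} in the simplest non-complete case. The only minor subtlety is to confirm that the hypothesis on $\overline G$ (phrased as ``no induced cycles'' and equated in the parenthetical with ``tree'') delivers both ingredients needed: chordality, to apply Proposition \ref{ms(k(G)) leq n - mcn(G) + 3}, and the existence of at least one edge in $\overline G$, to force $\alpha(G) \geq 2$ (rather than landing in the complete-graph case of Corollary \ref{ms(G) = 1 if and only if G is complete}). Both follow from the standard definition of a tree on at least two vertices.
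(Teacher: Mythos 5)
Your proof is correct and follows the same route as the paper: observe that a tree is chordal, apply the first clause of Proposition \ref{ms(k(G)) leq n - mcn(G) + 3} to get $\ms(k(G)) = \alpha(G)$, and compute $\alpha(G) = \omega(\overline G) = 2$ since a tree on at least two vertices has an edge but no triangle. The paper compresses this into a single sentence, while you spell out the same reasoning (including the useful remark that one must rule out the edgeless case where $G$ would be complete).
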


\begin{proof}
Observe that $\overline G$ is chordal with $\alpha(G) = \omega(\overline G) = 2$ (cf. the exposition preceding Definition \ref{clique-adjacent definition}).
\end{proof}

\begin{remark}\label{ms(k(G)) < n - mcn(G) + 3}
For a finite simple graph $G$ whose complement graph $\overline G$ is not chordal, it is possible for the upper bound produced in Proposition \ref{ms(k(G)) leq n - mcn(G) + 3} to be strict, as the following illustrates.
\begin{center}
\begin{tikzpicture}
\tikzset{enclosed/.style={draw, circle, inner sep=0pt, minimum size=.15cm, fill=black}}
\node[enclosed, label={below right: 1}] (1) at (-2,0) {};
\node[enclosed, label={left: 2}] (2) at (-3,1) {};
\node[enclosed, label={left: 3}] (3) at (-3,-1) {};
\node[enclosed, label={right: 4}] (4) at (-1,1) {};
\node[enclosed, label={right: 5}] (5) at (-1,-1) {};
\draw (1) -- (2);
\draw[color=white] (3) -- (5) node[midway, label={below, yshift=-.25cm: $\textcolor{black}{G}$}] (edge4) {};
\draw (1) -- (3);
\draw (1) -- (4);
\draw (2) -- (3);
\draw (4) -- (5);
\node[enclosed, label={below right: 1}] (1) at (4,0) {};
\node[enclosed, label={left: 2}] (2) at (3,1) {};
\node[enclosed, label={right: 3}] (3) at (5,-1) {};
\node[enclosed, label={right: 4}] (4) at (5,1) {};
\node[enclosed, label={left: 5}] (5) at (3,-1) {};
\draw (2) -- (4);
\draw (4) -- (3);
\draw (3) -- (5) node[midway, label={below, yshift=-.25cm: $\textcolor{black}{\overline G}$}] (edge0) {};
\draw (5) -- (2);
\draw (5) -- (1);
\end{tikzpicture}
\end{center}
Observe that $\mcn(G) = 4$ so that $\ms(k(G)) \leq 4 = n - \mcn(G) + 3$ by Proposition \ref{ms(k(G)) leq n - mcn(G) + 3}; however, the ideal $J = (x_1 + x_2, x_1 + x_3, x_4 + x_5)$ satisfies $\fm^2 \subseteq I(G) + J$ so that $\ms(k(G)) \leq 3.$
\end{remark}

\begin{remark}\label{ms(k(G)) = dim k(G) does not imply chordal}
The converse of the first part of Proposition \ref{ms(k(G)) leq n - mcn(G) + 3} does not hold: the finite simple graph $G$ whose complement is pictured below satisfies $\overline G$ is not chordal and $\ms(k(G)) = \alpha(G).$
\begin{center}
\begin{tikzpicture}
\tikzset{enclosed/.style={draw, circle, inner sep=0pt, minimum size=.15cm, fill=black}}
\node[enclosed, label={left: 1}] (1) at (2,0) {};
\node[enclosed, label={above: 2}] (2) at (3,1) {};
\node[enclosed, label={below: 3}] (3) at (3,-1) {};
\node[enclosed, label={above: 4}] (4) at (4.5,1) {};
\node[enclosed, label={below: 5}] (5) at (4.5,-1) {};
\node[enclosed, label={above: 6}] (6) at (6,1) {};
\node[enclosed, label={below: 7}] (7) at (6,-1) {};
\draw (1) -- (2);
\draw (1) -- (3);
\draw (1) -- (4);
\draw (1) -- (5);
\draw (2) -- (3);
\draw (2) -- (4);
\draw (2) -- (5);
\draw (3) -- (4);
\draw (3) -- (5);
\draw (4) -- (5);
\draw (4) -- (6);
\draw (5) -- (7);
\draw (6) -- (7) node[midway, label={right, xshift=.25cm: $\textcolor{black}{\overline G}$}] (edge0) {};
\end{tikzpicture}
\end{center}
By Proposition \ref{upper bound for ms in terms of upper bound for mu(m^2)}, if $k$ is infinite and $n + \#E(\overline G) < \binom{r + 2}{r},$ then $\ms(k(G)) \leq r,$ where $\#E(\overline G)$ denotes the number of edges of $\overline G.$ Observe that $\#E(\overline G) = \binom 5 2 + 3$ so that $n + \#E(\overline G) = 20 < \binom{5 + 2} 2$ and $\ms(k(G)) \leq 5.$ On the other hand, the vertex set $I = \{1, 2, 3, 4, 5\}$ of $G$ is independent, hence we have that $5 \leq \alpha(G) \leq \ms(k(G)).$ We conclude that $\ms(k(G)) = \alpha(G),$ but $\overline G$ is not chordal, as $\overline G[\{4, 5, 6, 7\}]$ is isomorphic to the four-cycle $C_4.$

Generally, this construction produces a family of graphs $G_n$ on $n \geq 7$ vertices with $\ms(k(G_n)) = \alpha(G_n) = n - 2$ whose complement graphs $\overline{G_n}$ are not chordal. Explicitly, define $\overline{G_n}$ to be the complete graph $K_{n - 2}$ on the vertices $1, 2, \dots, n - 2$ adjoined with the edges $\{n - 3, n - 1\},$ $\{n - 2, n\},$ and $\{n - 1, n\}.$ Observe that $$n + \#E(\overline{G_n}) = n + \binom{n - 2} 2 + 3 = \frac{2n + (n - 2)(n - 3) + 6} 2 = \frac{n^2 - 3n + 12} 2,$$ from which it follows that $$\binom n 2 - n - \#E(\overline{G_n}) = \frac{n^2 - n} 2 - \frac{n^2 - 3n + 12} 2 = \frac{2n - 12} 2 = n - 6 > 0$$ for all integers $n \geq 7.$ By Proposition \ref{upper bound for ms in terms of upper bound for mu(m^2)}, we conclude that $\ms(k(G_n)) \leq n - 2.$ Conversely, the vertex set $I = \{1, 2, \dots, n - 2\}$ of $G$ is independent so that $n - 2 \leq \alpha(G_n) \leq \ms(k(G_n)).$

We note that this construction also provides a simple graph $G$ on $n$ vertices such that $\ms(k(G)) = \alpha(G)$ and $\alpha(G) < n - \mcn(G) + 3$ whenever $k$ is infinite. By the second part of Proposition \ref{ms(k(G)) leq n - mcn(G) + 3}, if $\alpha(G) = n - \mcn(G) + 3,$ then $\ms(k(G)) = \alpha(G)$; however, for each integer $n \geq 7,$ the complement graph $\overline{G_n}$ is not chordal but it holds that $\ms(k(G_n)) = \alpha(G_n),$ and we have that $\alpha(G_n) = n - 2 < n - \mcn(G) + 3.$
\end{remark}

Our next proposition partially answers the question posed prior to Definition \ref{chordal definition}.

\begin{prop}\label{ms(K_n) minus non-adjacent edges}
Let $G$ be the graph obtained from the complete graph $K_n$ on $n \geq 3$ vertices by removing $1 \leq \ell \leq {\left \lfloor \frac n 2 \right \rfloor}$ pairwise non-adjacent edges. If $k$ is infinite, then we have that $\ms(k(G)) = 2.$
\end{prop}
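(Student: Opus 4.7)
The plan is to apply Proposition \ref{ms(k(G)) leq n - mcn(G) + 3} to the complement graph $\overline{G}$. First I would observe that ``pairwise non-adjacent edges'' is synonymous with a matching: the $\ell$ removed edges share no common vertex. Consequently $\overline{G}$ is exactly the graph on $[n]$ whose edge set is this matching of size $\ell$, together with $n - 2\ell$ isolated vertices.

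Next I would verify that $\overline{G}$ is chordal. Since $\overline{G}$ is a disjoint union of $\ell$ copies of $K_2$ and $n - 2\ell$ isolated vertices, it contains no cycles whatsoever, and in particular no induced cycle of length $\geq 4$. Hence the first part of Proposition \ref{ms(k(G)) leq n - mcn(G) + 3} applies (using the hypothesis that $k$ is infinite) and gives $\ms(k(G)) = \alpha(G)$.

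It remains to compute $\alpha(G)$. As observed in the proof of Proposition \ref{alpha(G) leq n - mcn(G) + 3}, independent sets in $G$ correspond bijectively to cliques in $\overline{G}$, so $\alpha(G) = \omega(\overline{G})$. Because $\overline{G}$ is a matching with $\ell \geq 1$ edges, its maximal cliques are the single edges (of size $2$) and the isolated vertices (of size $1$); hence $\omega(\overline{G}) = 2$. Combining this with the previous step yields $\ms(k(G)) = 2$, as desired.

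There is no real obstacle: the entire argument is a direct application of Proposition \ref{ms(k(G)) leq n - mcn(G) + 3} once one recognizes that the complement of $K_n$ minus a matching is itself a matching (plus isolated vertices), which is trivially chordal. The only minor point to double-check is the bound $\ell \leq \lfloor n/2 \rfloor$, which ensures that such a matching actually exists in $K_n$; together with $\ell \geq 1$ and $n \geq 3$ it guarantees $\omega(\overline{G}) = 2$ rather than $1$.
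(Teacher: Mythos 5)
Your proposal is correct and follows exactly the same route as the paper: identify $\overline G$ as a matching of $\ell$ edges plus $n - 2\ell$ isolated vertices, note it is chordal, and invoke Proposition \ref{ms(k(G)) leq n - mcn(G) + 3} to get $\ms(k(G)) = \alpha(G) = \omega(\overline G) = 2.$ Your write-up merely spells out the intermediate steps (the clique/independent-set duality and the computation of $\omega(\overline G)$) that the paper leaves implicit.
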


\begin{proof}
Observe that $\overline G$ consists of $n - 2 \ell$ isolated vertices and $\ell$ pairwise non-adjacent edges. Consequently, $\overline G$ is chordal, and we have that $\ms(k(G)) = \alpha(G) = 2$ by Proposition \ref{ms(k(G)) leq n - mcn(G) + 3}.
\end{proof}

If $k$ is an infinite field, then Proposition \ref{ms(k(G)) leq n - 1} demonstrates that $\ms(k(G)) \leq n - 1.$ Our next proposition investigates a class of graphs with $\alpha(G) = n - 1.$

\begin{prop}\label{cs(S_n) and ms(S_n)}
Let $S_n$ be the star graph on $n \geq 3$ vertices, i.e., the graph with edges $\{1, i\}$ for each integer $2 \leq i \leq n.$ We have that $\cs(k(S_n)) = n$ and $\ms(k(S_n)) = n - 1.$
\end{prop}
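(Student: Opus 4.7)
The plan is to handle the two invariants separately, since both values fall out of tools already built in the paper.

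For $\cs(k(S_n))$, the key observation is that the edge ideal $I(S_n) = (x_1 x_2, x_1 x_3, \dots, x_1 x_n)$ of $k[x_1,\dots,x_n]$ is minimally generated by $t = n-1$ quadratic monomials. Since $t = n-1 \leq n-1$, Proposition \ref{cs of k[x_1, ..., x_n] modulo t homogeneous quadratics} applies verbatim and yields $\cs(k(S_n)) = n = \mu(\bar{\fm})$. No further work is required here.

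For $\ms(k(S_n))$, I would establish the two inequalities directly, avoiding any infinite-field hypothesis. For the upper bound, I exhibit the ideal
\[
J = (\bar x_1 + \bar x_2,\ \bar x_3,\ \bar x_4,\ \dots,\ \bar x_n),
\]
which has $n-1$ generators. Because $\{1,i\}$ is an edge for every $2 \leq i \leq n$, we have $\bar x_1 \bar x_i = 0$ in $k(S_n)$, and in particular $\bar x_1 \bar x_2 = 0$. Therefore $\bar x_2^2 = \bar x_2(\bar x_1 + \bar x_2) \in J$, and then $\bar x_1^2 = (\bar x_1 + \bar x_2)^2 - \bar x_2^2 \in J$. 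All other generators $\bar x_i \bar x_j$ of $\bar\fm^2$ with $\{i,j\}\ne \{1,2\}$ either vanish in $k(S_n)$ (when $1 \in \{i,j\}$) or lie in $J$ because at least one of $\bar x_i, \bar x_j$ is a generator of $J$. This gives $\bar\fm^2 \subseteq J$ and hence $\ms(k(S_n)) \leq n-1$.

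For the matching lower bound, I apply Remark \ref{dim k(G)}: the set $\{2,3,\dots,n\}$ is independent in $S_n$, so $\alpha(S_n) = n-1$ and hence $\ms(k(S_n)) \geq \dim k(S_n) = \alpha(S_n) = n-1$. Combining gives $\ms(k(S_n)) = n-1$. There is no real obstacle; the only subtle point is verifying that $\bar x_1^2$ lies in the proposed witness $J$, which hinges on exploiting the relation $\bar x_1 \bar x_2 = 0$ coming from the edge $\{1,2\}$ of $S_n$.
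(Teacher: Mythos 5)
Your proposal is correct and follows essentially the same route as the paper: the $\cs$ computation is identical (count the $n-1$ quadratic generators of $I(S_n)$ and invoke Proposition \ref{cs of k[x_1, ..., x_n] modulo t homogeneous quadratics}), and the $\ms$ lower bound is the same independence-number argument via Remark \ref{dim k(G)}. The only difference is your choice of witness $J = (\bar x_1 + \bar x_2, \bar x_3, \dots, \bar x_n)$ in place of the paper's $J = (\bar x_1 + \bar x_i \mid 2 \leq i \leq n)$; your verification that $\bar\fm^2 \subseteq J$ is valid (and arguably a bit quicker, since all but two generators of $\bar\fm^2$ land in $J$ for trivial reasons).
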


\begin{proof}
Observe that $I(S_n) = (x_1 x_i \mid 2 \leq i \leq n)$ is generated by $t = n - 1$ homogeneous polynomials of degree two. By Proposition \ref{cs of k[x_1, ..., x_n] modulo t homogeneous quadratics}, we conclude that $\cs(k(S_n)) = n.$

Consider the ideal $J = (\bar x_1 + \bar x_i \mid 2 \leq i \leq n).$ We find that $\bar x_1^2 = \bar x_1(\bar x_1 + \bar x_i)$ and $\bar x_i^2 = \bar x_i(\bar x_1 + \bar x_i)$ are in $\bar \fm J$ for all integers $2 \leq i \leq n.$ Likewise, we have that $\bar x_i \bar x_j = \bar x_j(\bar x_1 + \bar x_i)$ is in $\bar \fm J$ for all integers $2 \leq i < j \leq n.$ We conclude that $\bar \fm^2 \subseteq \bar \fm J$ so that $\ms(k(S_n)) \leq n - 1.$

Conversely, the vertices $2, 3, \dots, n$ are independent, hence we have that $\ms(k(S_n)) \geq n - 1.$
\end{proof}

\begin{remark}\label{ms(S_n) = n - 1 if k is infinite}
If $k$ is infinite, then we have that $\ms(k(S_n)) = n - 1$ by Proposition \ref{ms(k(G)) leq n - mcn(G) + 3}: the complement graph $\overline{S_n}$ is the graph union of $K_1$ and the complete graph $K_{n - 1},$ hence it is chordal with $\alpha(S_n) = n - 1.$ For a detailed explanation of this argument, see the discussion preceding Proposition \ref{cs(G*H) and ms(G*H)}.
\end{remark}

If $\mcn(G) = 4$ and $k$ is infinite, then $\ms(k(G)) \leq n - 4 + 3 = n - 1$; however, this upper bound follows already from Proposition \ref{ms(k(G)) leq n - 1}. Consequently, Proposition \ref{ms(k(G)) leq n - mcn(G) + 3} does not provide any new information. Going forward, our aim is to understand graphs with $\mcn(G) = 4.$ We make use of the following terminology.

\begin{definition}\label{gap-free definition}
We say that a finite simple graph $G$ is {\it gap-free} if $\overline G$ has no induced cycle of length $4.$ Put another way, $G$ is gap-free if and only if either $\overline G$ is chordal or $\mcn(G) \geq 5.$
\end{definition}

Considering that an induced cycle of length 4 in $\overline G$ arises from a pair of non-adjacent edges in $G$ that are not connected by a third edge, it follows that many familiar graphs are not gap-free and therefore satisfy $\mcn(G) = 4.$ We present non-trivial bounds on $\ms(k(G))$ and $\cs(k(G))$ for these graphs whenever possible.

\begin{prop}\label{cs(P_n) and ms(P_n)}
Let $P_n$ be the path graph on $n \geq 4$ vertices with edges $\{i, i + 1\}$ for each integer $1 \leq i \leq n - 1.$ We have that $\cs(k(P_n)) = n$ and ${\left \lceil \frac n 2 \right \rceil} \leq \ms(k(P_n)) \leq n - 1.$
\end{prop}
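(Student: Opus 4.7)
The plan has three independent components, one for $\cs$ and two for the lower and upper bounds on $\ms$.

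For the equality $\cs(k(P_n)) = n$: the edge ideal $I(P_n)$ is generated by the $t = n-1$ quadratic monomials $x_i x_{i+1}$, so this follows immediately from the last sentence of Proposition \ref{cs of k[x_1, ..., x_n] modulo t homogeneous quadratics}, which asserts that $t \le n-1$ forces $\cs(R)=n=\mu(\bar\fm)$.

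For the lower bound $\ms(k(P_n)) \ge \lceil n/2 \rceil$: by Remark \ref{dim k(G)} we have $\ms(k(G)) \ge \dim k(G) = \alpha(G)$, so it suffices to note that the independent set $\{1,3,5,\dots\}$ in $P_n$ has size $\lceil n/2 \rceil$, which is clearly maximal. Thus $\alpha(P_n) = \lceil n/2 \rceil$.

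For the upper bound $\ms(k(P_n)) \le n-1$, the main thing to do is exhibit an explicit witness. I propose $J = (\bar x_i + \bar x_{i+1} \mid 1 \le i \le n-1)$, which has $n-1$ generators. Since $\bar x_i \bar x_{i+1} = 0$ in $k(P_n)$ for every $1 \le i \le n-1$, one checks immediately that $\bar x_i^2 = \bar x_i(\bar x_i + \bar x_{i+1}) \in J$ for $1 \le i \le n-1$ and $\bar x_n^2 = \bar x_n(\bar x_{n-1} + \bar x_n) \in J$. For the mixed squarefree products $\bar x_i \bar x_j$ with $|i-j| \ge 2$, I would proceed by induction on $|i-j|$: the base case $|i-j|=2$ is
\[
\bar x_i \bar x_{i+2} = \bar x_{i+2}(\bar x_i + \bar x_{i+1}) - \bar x_{i+1}\bar x_{i+2} = \bar x_{i+2}(\bar x_i + \bar x_{i+1}) \in J,
\]
and for the inductive step with $j = i+d$, $d \ge 3$, write
\[
\bar x_i \bar x_{i+d} = \bar x_i(\bar x_{i+d-1} + \bar x_{i+d}) - \bar x_i \bar x_{i+d-1},
\]
where the first summand lies in $J$ and the second lies in $J$ by the inductive hypothesis. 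This establishes $\bar\fm^2 \subseteq J$ and hence $\ms(k(P_n)) \le n-1$.

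The only obstacle worth mentioning is the bookkeeping in the induction, which is entirely routine once one notices that the adjacency relations $\bar x_i \bar x_{i+1} = 0$ let each generator $\bar x_i + \bar x_{i+1}$ act as the ``identity'' on $\bar x_i$ and on $\bar x_{i+1}$ individually when multiplied by either of them.
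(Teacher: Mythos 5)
Your proposal is correct and follows essentially the same route as the paper: the identity $\cs(k(P_n)) = n$ via Proposition \ref{cs of k[x_1, ..., x_n] modulo t homogeneous quadratics} with $t = n-1$, the lower bound from $\alpha(P_n) = \lceil n/2 \rceil$ and Remark \ref{dim k(G)}, and the upper bound via the same witness ideal $J = (\bar x_i + \bar x_{i+1} \mid 1 \leq i \leq n-1)$ with the same sequential cancellation of mixed terms. The only cosmetic difference is that the paper phrases the containment as $\bar\fm^2 \subseteq \bar\fm J$ rather than $\bar\fm^2 \subseteq J$, which changes nothing.
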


\begin{proof}
We note that the edge ideal $I(P_n) = (x_i x_{i+1} \mid 1 \leq i \leq n - 1)$ is generated by $t = n-1$ homogeneous polynomials of degree two. By Proposition \ref{cs of k[x_1, ..., x_n] modulo t homogeneous quadratics}, we conclude that $\cs(k(P_n)) = n.$

Consider the ideals $J = (\bar x_i + \bar x_{i+1} \mid 1 \leq i \leq n - 1)$ and $\bar \fm^2 = (\bar x_i^2 \mid 1 \leq i \leq n) + (\bar x_i \bar x_j \mid 3 \leq i + 2 \leq j \leq n)$ of $k(P_n).$ We obtain the pure squares $\bar x_i^2$ of $\bar \fm^2$ in $\bar \fm J$ by taking the products $\bar x_i(\bar x_i + \bar x_{i+1}).$ Further, the mixed terms $\bar x_i \bar x_j$ with $3 \leq i + 2 \leq j \leq n$ can be obtained as follows.
\begin{enumerate}

\item[(i.)] Using the fact that $\bar x_i \bar x_{i+1}$ vanishes in $k(P_n),$ we have that $\bar x_i(\bar x_{i+1} + \bar x_{i+2}) = \bar x_i \bar x_{i+2}.$

\vspace{0.25cm}

\item[(ii.)] Using the previous step, it follows that $\bar x_i \bar x_{i+3} = \bar x_i(\bar x_{i+2} + \bar x_{i+3}) - \bar x_i \bar x_{i+2}$ is in $\bar \fm J.$

\vspace{0.25cm}

\item[(iii.)] Continue in this manner to obtain $\bar x_i \bar x_k$ for all integers $i + 2 \leq k \leq j.$

\end{enumerate}
We conclude that $\bar \fm^2 \subseteq \bar \fm J,$ from which it follows that $\ms(k(P_n)) \leq n - 1.$

On the other hand, we have that $\alpha(P_n) = {\left \lceil \frac n 2 \right \rceil}$ so that $\ms(k(P_n)) \geq {\left \lceil \frac n 2 \right \rceil}$ by Remark \ref{dim k(G)}. Explicitly, the collection of ${\left \lceil \frac n 2 \right \rceil}$ odd vertices of $P_n$ is a maximum independent vertex set.
\end{proof}

\begin{remark}\label{ms(k(P_4)) = 2}
We have that $\ms(k(P_4)) = 2,$ as the ideal $J = (\bar x_1 + \bar x_2, \bar x_3 + \bar x_4)$ of $k(P_4)$ contains $\bar \fm^2.$ On the other hand, the path graph $P_n$ is not gap-free for any integer $n \geq 5.$ Given that $k$ is infinite, we may conclude that $\ms(k(P_n)) \leq n - 1$ by Proposition \ref{ms(k(G)) leq n - 1}.
\end{remark}

\begin{prop}\label{cs(C_n) and ms(C_n)}
Let $C_n$ be the cycle graph on $n \geq 3$ vertices, i.e., the path graph $P_n$ together with the edge $\{1, n\}.$ We have that $n - 1 \leq \cs(k(C_n)) \leq n$ and $\ms(k(C_n)) \geq {\left \lfloor \frac n 2 \right \rfloor}.$ If $n \geq 3$ is odd, then $\cs(k(C_n)) = n - 1.$ If $n = 3,$ then $\ms(k(C_n)) = {\left \lfloor \frac n 2 \right \rfloor}.$ For $n \leq 7,$ we have that $\ms(k(C_n)) \leq {\left \lceil \frac{n}{2} \right \rceil}.$ Equality holds for $n = 4$ and $n = 6.$
\end{prop}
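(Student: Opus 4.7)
The plan is to establish the stated inequalities and equalities in stages. First, I would handle the easy bounds. The upper bound $\cs(k(C_n)) \leq n$ is immediate from Proposition \ref{main proposition about cs and ms}(1.). For the lower bound $\cs(k(C_n)) \geq n - 1$, I would apply Proposition \ref{cs of k[x_1, ..., x_n] modulo t homogeneous quadratics} with $t = n$ (the number of quadratic generators of $I(C_n)$) and $s = 0$, noting that the hypothesis $t \leq (s+1)(2n-s)/2 = n$ holds with equality. The lower bound $\ms(k(C_n)) \geq \lfloor n/2 \rfloor$ follows from Remark \ref{dim k(G)} upon computing $\dim k(C_n) = \alpha(C_n) = \lfloor n/2 \rfloor$. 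Finally, the case $n = 3$ reduces to $C_3 = K_3$, so Proposition \ref{ms(K_n)} immediately gives $\ms(k(C_3)) = 1 = \lfloor 3/2 \rfloor$.

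To establish $\cs(k(C_n)) = n - 1$ for odd $n \geq 3$, the plan is to exhibit an ideal $J$ with $\mu(J) = n - 1$ and $J^2 = \bar \fm^2$. I would take $J = (\bar x_i - \bar x_{i+2} \mid 1 \leq i \leq n-1)$ with indices read modulo $n$. Linear independence of the generators, hence $\mu(J) = n - 1$, follows from the observation that for odd $n$ the corresponding difference vectors $e_i - e_{i+2}$ are linearly independent in $k^n$ (they span the hyperplane $\sum_j y_j = 0$ precisely when $\gcd(2, n) = 1$). For the inclusion $J^2 \supseteq \bar \fm^2$, I would expand the products $(\bar x_i - \bar x_{i+2})(\bar x_j - \bar x_{j+2})$ for various index pairs and exploit the vanishing relations $\bar x_k \bar x_{k+1} = 0$ and $\bar x_1 \bar x_n = 0$ to extract each generator of $\bar \fm^2$ through linear combinations, generalizing the direct case-by-case verification that succeeds for $n = 3$ and $n = 5$.

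For the upper bound $\ms(k(C_n)) \leq \lceil n/2 \rceil$ in the range $n \leq 7$, the plan is to exhibit explicit ideals case by case: take $J = (\bar x_1 + \bar x_2, \bar x_3 + \bar x_4)$ for $n = 4$; take $J = (\bar x_1 + \bar x_2, \bar x_3 + \bar x_4, \bar x_5)$ for $n = 5$; take $J = (\bar x_1 + \bar x_2, \bar x_3 + \bar x_4, \bar x_5 + \bar x_6)$ for $n = 6$; and take $J = (\bar x_1 + \bar x_2, \bar x_3 + \bar x_4, \bar x_5 + \bar x_6, \bar x_1 + \bar x_5 + \bar x_7)$ for $n = 7$. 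In each case the verification amounts to identifying $k(C_n) / J$ as a small Artinian ring whose maximal ideal squares to zero, which shows that $\bar \fm^2 \subseteq J$. For $n = 4$ and $n = 6$, the equality $\ms(k(C_n)) = \lceil n/2 \rceil$ is then forced because the matching lower bound $\lfloor n/2 \rfloor = n/2 = \lceil n/2 \rceil$ coincides with this upper bound.

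The main obstacle will be verifying $J^2 \supseteq \bar \fm^2$ in the odd $n$ case, as this requires careful bookkeeping of cross terms and cannot be replaced by the simpler containment $\bar \fm^2 \subseteq J$: indeed, the natural observation that $\gcd(2, n) = 1$ for odd $n$ forces every $\bar x_i$ to be identified in $k(C_n)/J$, so $k(C_n)/J \cong k[y]/(y^2)$, which only yields $\bar \fm^2 \subseteq J$ rather than the stronger $\bar \fm^2 \subseteq J^2$ required for $\cs$. A secondary subtlety arises for $n = 7$, where the naive greedy pairing $(\bar x_1 + \bar x_2, \bar x_3 + \bar x_4, \bar x_5 + \bar x_6, \bar x_7)$ fails to absorb $\bar x_1 \bar x_5$ (a non-edge), so the fourth generator must be perturbed to couple $\bar x_7$ with both $\bar x_1$ and $\bar x_5$ nontrivially.
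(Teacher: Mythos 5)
Your handling of the easy bounds, the $n = 3$ case, and the $\ms$ upper bounds for $4 \leq n \leq 7$ coincides with the paper's argument; your witness ideals for $n = 5$ and $n = 7$ differ from the paper's choices $(\bar x_1 + \bar x_2, \bar x_3 + \bar x_4, \bar x_4 + \bar x_5)$ and $(\dots, \bar x_5 + \bar x_6, \bar x_6 + \bar x_7),$ but both of yours do work (one checks that $\bar\fm$ squares to zero in $k(C_n)/J$), and your diagnosis that the naive pairing for $n = 7$ fails precisely at $\bar x_1 \bar x_5$ is correct. The genuine divergence is the odd case of $\cs$: the paper takes the ``star'' ideal $J = (\bar x_1 + \bar x_i \mid 2 \leq i \leq 2m+1)$ and writes down closed-form alternating-sum identities expressing every generator of $\bar\fm^2$ in terms of the products $(\bar x_1 + \bar x_j)(\bar x_1 + \bar x_{j+1}),$ whereas you take the ``difference'' ideal $J = (\bar x_i - \bar x_{i+2} \mid 1 \leq i \leq n-1).$ Your ideal is valid, but be aware that $\mu(J) = n-1$ while $\mu(\bar\fm^2) = \binom{n}{2} = \binom{\mu(J)+1}{2},$ so you sit in the extremal case where all $\binom{n}{2}$ products $g_ig_j$ must be linearly independent; the ``careful bookkeeping'' you defer is therefore the entire content of the step, and checking $n = 3, 5$ does not discharge it. The verification can be organized as follows: since $g_n = \bar x_n - \bar x_2$ is minus the sum of the listed generators, all $g_i$ with $i$ read modulo $n$ lie in $J$; then $g_i g_{i+1} = -\bar x_i \bar x_{i+3}$ gives every product at directed distance $3,$ the expansion $g_i g_{i+d} = \bar x_i \bar x_{i+d} - \bar x_i \bar x_{i+d+2} - \bar x_{i+2}\bar x_{i+d} + \bar x_{i+2}\bar x_{i+d+2}$ lets you climb inductively through the odd directed distances $5, 7, \dots, n-2$ (and every non-edge of $C_n$ sits at an odd directed distance from one of its endpoints because $n$ is odd), and finally $g_i g_{i+2}$ recovers the pure square $\bar x_{i+2}^2$ from already-known mixed terms, with no hypothesis on $\chara(k).$ With some such argument supplied, your route is a legitimate alternative to the paper's; without it, the odd case remains a plan rather than a proof.
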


\begin{proof}
Observe that $I(C_n) = (x_i x_{i+1} \mid 1 \leq i \leq n-1) + (x_1 x_n)$ is generated by $t = n$ quadratic monomials. By Proposition \ref{cs of k[x_1, ..., x_n] modulo t homogeneous quadratics} with $s = 0$ and Proposition \ref{main proposition about cs and ms}, we have $n - 1 \leq \cs(k(C_n)) \leq n.$

Clearly, the cycle graph $C_3$ is isomorphic to the complete graph $K_3,$ hence we have that $\ms(k(C_3)) = 1 = {\left \lfloor \frac 3 2 \right \rfloor}$ by Proposition \ref{ms(K_n)}. Generally, we find that $\ms(k(C_n)) \geq \alpha(C_n) = {\left \lfloor \frac n 2 \right \rfloor}$ by Remark \ref{dim k(G)}, as the ${\left \lfloor \frac n 2 \right \rfloor}$ even vertices of $C_n$ form a maximum independent vertex set.

Consequently, it remains to prove the last two statements. We will exhibit for each integer $4 \leq n \leq 7$ an ideal $J$ of $k(C_n)$ such that $\mu(J) = {\left \lceil \frac{n}{2} \right \rceil}$ and $\bar \fm^2$ is contained in $\bar \fm J.$

For $n = 4,$ we have $I(C_n) = (x_1 x_2, x_2 x_3, x_3 x_4, x_1 x_4)$ and $\bar \fm^2 = (\bar x_1^2, \bar x_2^2, \bar x_3^2, \bar x_4^2) + (\bar x_1 \bar x_3, \bar x_2 \bar x_4).$ Consider the ideal $J = (\bar x_1 + \bar x_2, \bar x_3 + \bar x_4)$ of $k(C_n).$ We obtain the pure squares $\bar x_i^2$ of $\bar \fm^2$ in $\bar \fm J$ by taking the products $\bar x_i(\bar x_i + \bar x_{i+1})$ and $\bar x_{i+1}(\bar x_i + \bar x_{i+1})$ for $i = 1$ and $i = 3.$ We obtain the terms $\bar x_1 \bar x_3$ and $\bar x_2 \bar x_4$ by taking $\bar x_1(\bar x_3 + \bar x_4)$ and $\bar x_2(\bar x_3 + \bar x_4).$ We conclude that $\ms(k(C_4)) = 2 = {\left \lceil \frac 4 2 \right \rceil}.$

For $n = 5,$ we have $I(C_n) = (x_1 x_2, x_2 x_3, x_3 x_4, x_4 x_5, x_1 x_5)$ and $$\bar \fm^2 = (\bar x_1^2, \bar x_2^2, \bar x_3^2, \bar x_4^2, \bar x_5^2) + (\bar x_1 \bar x_3, \bar x_1 \bar x_4, \bar x_2 \bar x_4, \bar x_2 \bar x_5, \bar x_3 \bar x_5).$$ Consider the ideal $J = (\bar x_1 + \bar x_2, \bar x_3 + \bar x_4, \bar x_4 + \bar x_5)$ of $k(C_n).$ We obtain the pure squares $\bar x_i^2$ of $\bar \fm^2$ in the same fashion as in the previous paragraph. Further, we obtain the mixed terms by taking $\bar x_3(\bar x_1 + \bar x_2),$ $\bar x_1(\bar x_4 + \bar x_5),$ $\bar x_2(\bar x_3 + \bar x_4),$ $\bar x_2(\bar x_4 + \bar x_5) - \bar x_2 \bar x_4,$ and $\bar x_3(\bar x_4 + \bar x_5),$ respectively.

For $n = 6,$ we have $I(C_n) = (x_i x_{i+1} \mid 1 \leq i \leq 5) + (x_1 x_6)$ and $$\bar \fm^2 = (\bar x_i^2 \mid 1 \leq i \leq 6) + (\bar x_i \bar x_j \mid 3 \leq i + 2 \leq j \leq 6 \text{ and } j \leq i + 4).$$ Consider the ideal $J = (\bar x_1 + \bar x_2, \bar x_3 + \bar x_4, \bar x_5 + \bar x_6)$ of $k(C_n).$ We obtain the pure squares $\bar x_i^2$ of $\bar \fm^2$ as in the previous two paragraphs; then, we obtain the mixed terms sequentially by first gathering all of the $\bar x_i \bar x_j$ such that $j = i + 2.$ Explicitly, we have that $\bar x_3(\bar x_1 + \bar x_2) = \bar x_1 \bar x_3,$ $\bar x_2(\bar x_3 + \bar x_4) = \bar x_2 \bar x_4,$ $\bar x_5(\bar x_3 + \bar x_4) = \bar x_3 \bar x_5,$ and $\bar x_6(\bar x_4 + \bar x_5) = \bar x_4 \bar x_6$ in $k(C_n).$ We use these to gather all of the $\bar x_i \bar x_j$ such that $j = i + 3.$ We have that $\bar x_1(\bar x_3 + \bar x_4) - \bar x_1 \bar x_3 = \bar x_1 \bar x_4,$ $\bar x_2(\bar x_4 + \bar x_5) - \bar x_2 \bar x_4 = \bar x_2 \bar x_5,$ and $\bar x_3(\bar x_5 + \bar x_6) - \bar x_3 \bar x_5 = \bar x_3 \bar x_6.$ Last, we gather the terms $\bar x_i \bar x_j$ such that $j + i = 4.$ We have that $\bar x_1(\bar x_5 + \bar x_6) = \bar x_1 \bar x_5$ and $\bar x_6(\bar x_1 + \bar x_2) = \bar x_2 \bar x_6$ in $k(C_n).$ We conclude that $\bar \fm^2 \subseteq \bar \fm J$ so that $\ms(k(C_6)) \leq 3.$ Considering that $\alpha(C_6) = {\left \lceil \frac 6 2 \right \rceil} = 3,$ we have that $\ms(k(C_6)) = {\left \lceil \frac 6 2 \right \rceil}.$

For $n = 7,$ we have $I(C_n) = (x_i x_{i+1} \mid 1 \leq i \leq 6) + (x_1 x_7)$ and $$\bar \fm^2 = (\bar x_i^2 \mid 1 \leq i \leq 7) + (\bar x_i \bar x_j \mid 3 \leq i + 2 \leq j \leq 7 \text{ and } j \leq i + 5).$$ Consider the ideal $J = (\bar x_1 + \bar x_2, \bar x_3 + \bar x_4, \bar x_5 + \bar x_6, \bar x_6 + \bar x_7)$ of $k(C_n).$ We obtain the pure squares $\bar x_i^2$ of $\bar \fm^2$ as in the previous three paragraphs and the mixed terms as in the previous paragraph.

Finally, if $n = 2m + 1$ for some integer $m \geq 1,$ then the ideal $J = (\bar x_1 + \bar x_i \mid 2 \leq i \leq 2m + 1)$ of $k(C_n)$ satisfies $\bar \fm^2 = J^2.$ Observe that $I(C_n) = (x_i x_{i + 1} \mid 1 \leq i \leq 2m) + (x_1 x_{2m + 1})$ and $$\bar \fm^2 = (\bar x_i^2 \mid 1 \leq i \leq 2m + 1) + (\bar x_i \bar x_j \mid 3 \leq i + 2 \leq j \leq 2m + 1).$$ We obtain the pure squares $\bar x_i^2$ of $\bar \fm^2$ for each integer $1 \leq i \leq 2m + 1$ by taking
\begin{align*}
\bar x_1^2 &= \sum_{j = 2}^{2m} (-1)^j (\bar x_1 + \bar x_j)(\bar x_1 + \bar x_{j + 1}) \text{ and} \\
\bar x_i^2 &= (\bar x_1 + \bar x_i)^2 + \sum_{j = 2}^{i - 1} (-1)^{i + j} (\bar x_1 + \bar x_j)(\bar x_1 + \bar x_{j + 1}) + \sum_{j = i}^{2m} (-1)^{i + j + 1} (\bar x_1 + \bar x_j)(\bar x_1 + \bar x_{j + 1}).
\end{align*}
Considering that the pure squares of $\bar \fm^2$ belong to $J^2,$ we obtain the mixed terms as follows.
\begin{enumerate}

\item[(i.)] We have that $\bar x_1 \bar x_3 = (\bar x_1 + \bar x_2)(\bar x_1 + \bar x_3) - \bar x_1^2.$

\vspace{0.25cm}

\item[(ii.)] We have that $\bar x_1 \bar x_4 = (\bar x_1 + \bar x_3)(\bar x_1 + \bar x_4) - \bar x_1^2 - \bar x_1 \bar x_3.$

\vspace{0.25cm}

\item[(iii.)] Continuing in this manner, we obtain all mixed terms $\bar x_1 \bar x_j$ with $2 \leq j \leq 2m + 1.$

\vspace{0.25cm}

\item[(iv.)] We obtain the remaining mixed terms $\bar x_i \bar x_j$ for some integers $4 \leq i + 2 \leq j \leq 2m + 1$ by observing that $\bar x_i \bar x_j = (\bar x_1 + \bar x_i)(\bar x_1 + \bar x_j) - \bar x_1^2 - \bar x_1 \bar x_i - \bar x_1 \bar x_j.$

\end{enumerate}
We conclude that $\cs(k(C_n)) = \cs(k(C_{2m + 1})) = 2m = n - 1.$
\end{proof}

\begin{cor}\label{ms(P_5) and ms(P_6)}
We have that $\ms(k(P_5)) = 3$ and $3 \leq \ms(k(P_6)) \leq 4.$
\end{cor}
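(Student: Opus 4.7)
The plan is to realize both path graphs $P_5$ and $P_6$ as induced subgraphs of cycle graphs whose $\ms$-invariants have already been controlled in Proposition \ref{cs(C_n) and ms(C_n)}, and then combine this with the lower bound from Proposition \ref{cs(P_n) and ms(P_n)}.

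First, I would record the lower bounds. Proposition \ref{cs(P_n) and ms(P_n)} gives ${\left \lceil \frac{5}{2} \right \rceil} = 3 \leq \ms(k(P_5))$ and ${\left \lceil \frac{6}{2} \right \rceil} = 3 \leq \ms(k(P_6))$, which are exactly the lower bounds that appear in the corollary.

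Next, I would obtain the upper bounds by exhibiting the appropriate induced subgraph relationships. The path $P_5$ on vertex set $[5]$ with edges $\{i, i+1\}$ for $1 \leq i \leq 4$ is the induced subgraph of $C_6$ on vertex set $[5]$: indeed, the cycle $C_6$ on $[6]$ has edges $\{i, i+1\}$ for $1 \leq i \leq 5$ together with $\{1, 6\}$, so deleting vertex $6$ produces precisely $P_5$. Similarly, $P_6$ is the induced subgraph of $C_7$ on vertex set $[6]$. Applying Proposition \ref{cs and ms of induced subgraphs}, we obtain $\ms(k(P_5)) \leq \ms(k(C_6))$ and $\ms(k(P_6)) \leq \ms(k(C_7))$. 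By Proposition \ref{cs(C_n) and ms(C_n)} (the portion of the statement asserting that $\ms(k(C_n)) \leq {\left \lceil \frac{n}{2} \right \rceil}$ for $n \leq 7$, with equality at $n = 6$), we conclude $\ms(k(P_5)) \leq \ms(k(C_6)) = 3$ and $\ms(k(P_6)) \leq \ms(k(C_7)) \leq 4$.

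Combining the two directions yields $\ms(k(P_5)) = 3$ and $3 \leq \ms(k(P_6)) \leq 4$, which is the statement of the corollary. There is no real obstacle here: the entire argument is a bookkeeping exercise that leverages the already-established bounds for cycle graphs via the monotonicity of $\ms$ under induced subgraphs. The only point requiring a brief verification (and essentially the only content) is the identification of $P_5$ and $P_6$ as the induced subgraphs of $C_6$ and $C_7$ obtained by deleting a single vertex.
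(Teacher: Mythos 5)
Your argument is correct and is essentially the same as the paper's: both obtain the lower bounds from Proposition \ref{cs(P_n) and ms(P_n)} and the upper bounds by identifying $P_n$ with the induced subgraph $C_{n+1}[\{1,\dots,n\}]$ and invoking Propositions \ref{cs and ms of induced subgraphs} and \ref{cs(C_n) and ms(C_n)}. No issues.
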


\begin{proof}
Observe that $P_n$ is isomorphic to the induced subgraph $C_{n + 1}[\{1, 2, \dots, n\}].$ By Propositions \ref{cs and ms of induced subgraphs}, \ref{cs(P_n) and ms(P_n)}, and \ref{cs(C_n) and ms(C_n)}, we conclude that $\left \lceil \frac n 2 \right \rceil \leq \ms(k(P_n)) \leq \ms(k(C_{n + 1})) \leq \left \lceil \frac{n + 1} 2 \right \rceil$ for each integer $n \leq 6.$
\end{proof}

\begin{remark}\label{obstruction to ms(C_n) upper bound}
Unfortunately, the technique of the proof of Proposition \ref{cs(C_n) and ms(C_n)} fails to produce an ideal that witnesses $\ms(k(C_n))$ for $n \geq 8.$ Consider the cycle graph $C_8$ with edge ideal $I(C_8) = (x_i x_{i + 1} \mid 1 \leq i \leq 7) + (x_1 x_8).$ Using the same approach as in the proof, the ideal $J = (\bar x_1 + \bar x_2, \bar x_3 + \bar x_4, \bar x_5 + \bar x_6, \bar x_7 + \bar x_8)$ of $k(C_8)$ is a prospective witness of $\ms(k(C_8));$ however, one can show that the element $\bar x_1 \bar x_5$ of $\bar \fm^2$ is not contained in $J.$
\end{remark}

\begin{remark}\label{better bounds for cs(C_n) and ms(C_n)}
We believe that $\ms(k(C_n)) \leq {\left \lceil \frac{n}{2} \right \rceil}$ does not hold for all integers $n \geq 8$ whenever $k$ has characteristic zero. Using the following code in Macaulay2, one can generate any number of random ideals in $\QQ(C_8)$ and subsequently test whether such an ideal witnesses $\ms(\QQ(C_8)).$
\begin{verbatim}
loadPackage "EdgeIdeals";
loadPackage "RandomIdeals";

setRandomSeed(currentTime())

-- Declare the file to which a witness ideal will be written.
file = "msWitnessIdealsC_n";

-- Declare the number of variables for the cycle.
n = 8

-- Establish the polynomial ring and its homogeneous maximal ideal.
R = QQ[x_1 .. x_n];
m = ideal(vars R);

-- Declare the number of random ideals to test.
numRuns = 1000000;

-- Define the graph G and the edge ideal of G in R.
G = cycle R;
I = edgeIdeal G;

-- Specify the generators. The first entry is the degree of a monomial.
B = basis(1, R);

-- Create a list whose length is the total number of possible generators.
-- The integers in the list prescribe how Macaulay2 will randomly choose a
-- monomial of this degree (randomly means that 0 is a possible coefficient,
-- so the 0 polynomial could appear). E.g., in the following list, Macaulay2
-- will randomly choose four degree one terms.
L = {1,1,1,1};

-- This begins the loop. First, we generate a random ideal J; then, we test
-- if m^2 is a subset of I + J. If it is, then J is written to the file, and
-- a new line is created. The loop ends after numRuns iterations.
for iter from 1 to numRuns do (
  J = randomIdeal(L, B);
  if isSubset(m^2, I + J) then {
    file << J << endl;
  };);

file << close;
\end{verbatim}
Consistently, for one million random ideals, we could not find an ideal $J$ of $k(C_8)$ with $\mu(J) = 4$ and $\bar \fm^2 \subseteq J.$ Certainly, further testing is required to determine if this is the case for larger values.

On the other hand, consider the linear forms $y_i$ of $k(C_n)$ defined by $$y_i = \begin{cases} \bar x_i + \bar x_{n - 2} + \bar x_{n - 1} + \bar x_n &\text{if } i \equiv 0 \modulo[4], \\ \bar x_i - \bar x_{n - 2} - \bar x_{n - 1} - \bar x_n &\text{if } i \equiv 1 \modulo[4], \\ \bar x_i - \bar x_{n - 2} - \bar x_{n - 1} + \bar x_n &\text{if } i \equiv 2 \modulo[4], \text{ and} \\ \bar x_i - \bar x_{n - 2} + \bar x_{n - 1} + \bar x_n &\text{if } i \equiv 3 \modulo[4]. \end{cases}$$ We have found that for all integers $8 \leq n \leq 40,$ the ideal $J = (y_i \mid 1 \leq i \leq n - 3)$ satisfies $\bar \fm^2 \subseteq J$ in $k(C_n).$ Consequently, we have that $\ms(k(C_n)) \leq n - 3$ for all integers $8 \leq n \leq 40.$

Even more, the proof of Proposition \ref{cs(C_n) and ms(C_n)} does not settle the case of $\cs(k(C_n))$ when $n$ is even. Using the following code in Macaulay2, we have not found an ideal $J$ of $\QQ(C_{2n})$ with $\mu(J) = 2n - 1$ such that $\bar \fm^2 = J^2,$ hence we believe that $\cs(k(C_{2n})) = 2n$ if $k$ has characteristic zero.
\begin{verbatim}
loadPackage "EdgeIdeals";
loadPackage "RandomIdeals";

setRandomSeed(currentTime())

-- Declare the file to which a witness ideal will be written.
file = "csWitnessIdealsC_2n";

-- Declare the largest value of n to test.
n = 20

-- Declare the number of random ideals to test in each iteration.
numRuns = 10000;

-- Run a loop.
for i from 2 to n do (
  n = 2*i;
  R = QQ[x_1 .. x_n];
  m = ideal(vars R);
  G = cycle R;
  I = edgeIdeal G;
  B = basis(1, R);
  -- Define the witness ideal J.
  L = {};
  for j from 1 to n - 1 do (
    L = append(L, 1));
  for iter from 1 to numRuns do (
    J = randomIdeal(L, B);
    if isSubset(m^2, I + J^2) then {
        file << i << endl;
  };););

file << close;
\end{verbatim}
\end{remark}

Our next proposition aims for a partial explanation of the difficulty of computing $\ms(k(C_n))$ for $n$ sufficiently large. Before we are able to state it, we must recall the following terminology. Let $G$ be a simple graph on the vertex set $[n] = \{1, 2, \dots, n\}.$ We say that a nonempty set $Q \subseteq [n]$ forms a {\it clique} in $G$ whenever $\{i, j\}$ is an edge of $G$ for any vertices $i, 
j \in Q.$ Consequently, $Q$ is a clique of $G$ if and only if the induced subgraph of $G[Q]$ is isomorphic to the complete graph $K_{\lvert Q \rvert}.$ Likewise, a nonempty set $\mathcal Q \subseteq [n]$ is a {\it coclique} if the vertices of $\mathcal Q$ form a clique of $\overline G.$ Consequently, $\mathcal Q$ is a coclique of $G$ if and only if the induced subgraph $\overline G[\mathcal Q]$ is isomorphic to the complete graph $K_{\lvert \mathcal Q \rvert}$ if and only if the induced subgraph $G[\mathcal Q]$ is isomorphic to the empty graph $\overline{K_{\lvert \mathcal Q \rvert}}$ if and only if $\mathcal Q$ is an independent vertex set of $G.$ Consequently, we have that $$\alpha(G) = \max \{\lvert \mathcal Q \rvert : \mathcal Q \text{ is a coclique of } G \} = \max \{\lvert \mathcal Q \rvert : \mathcal Q \text{ is a clique of } \overline G \} = \omega(\overline G),$$ where $\omega(G)$ denotes the {\it clique number} of $G.$ Crucially, any edge of $G$ is trivially a clique.

One other important invariant of the graph $G$ is its {\it vertex clique cover number}, i.e., the minimum number of cliques in $G$ needed to cover all of the vertices of $G$ $$\theta(G) = \min {\left\{\ell \mid Q_1, \dots, Q_\ell \text{ are cliques of } G \text{ and } [n] = \bigcup_{i = 1}^\ell Q_i \right\}}.$$ For instance, the vertex clique cover number of $P_n$ and $C_n$ is ${\left \lceil \frac{n}{2} \right \rceil}.$ For if $n = 2 \ell,$ then a minimum clique covering is achieved by the edges $\{1, 2\}, \{3, 4\}, \dots, \{2 \ell - 1, 2 \ell\};$ on the other hand, if $n = 2 \ell + 1,$ then a minimum clique covering is achieved by the edges $\{1, 2\}, \{3, 4\}, \dots, \{2 \ell - 1, 2 \ell\}, \{2 \ell + 1\}.$

\begin{comment}
We recall the fact that the clique covering number of $G$ is equal to the {\it chromatic number} $\chi(\overline G).$
\end{comment}

\begin{definition}\label{clique-adjacent definition}
Let $G$ be a simple graph on the vertices $[n]$ with vertex clique cover $[n] = \bigcup_{i = 1}^\ell Q_i.$ We say that two cliques $Q_i$ and $Q_j$ are \textit{clique-adjacent} if one of the following conditions holds.
\begin{enumerate}

\item[(a.)] There exist vertices $v \in Q_i$ and $w \in Q_j$ such that $\{v, w\}$ is an edge of $G.$

\vspace{0.25cm}

\item[(b.)] There exists a vertex $v \in Q_i \cap Q_j,$ i.e., the cliques $Q_i$ and $Q_j$ ``overlap'' at a vertex.

\end{enumerate}
Further, if each pair $Q_i$ and $Q_j$ of cliques are clique-adjacent, we say that $\{Q_i\}_{i = 1}^\ell$ is $K_\ell$-\textit{connected}. We say that $G$ is $K_\ell$-connected if it admits a vertex clique cover $\{Q_i\}_{i = 1}^\ell$ that is $K_\ell$-connected.
\end{definition}

\begin{definition}\label{clique graph definition}
Let $G$ be a simple graph on the vertex set $[n]$ with a clique covering $[n] = \bigcup_{i = 1}^\ell Q_i.$ We define the \textit{clique graph} $\cc(G, X)$ of $G$ induced by the vertex clique cover $X = \{Q_i\}_{i = 1}^\ell$ to be the simple graph whose vertices are the cliques $Q_1, \dots, Q_\ell$ together with the edges $\{Q_i, Q_j\}$ for all integers $1 \leq i < j \leq \ell$ such that $Q_i$ and $Q_j$ are clique-adjacent (as defined in Definition \ref{clique-adjacent definition}).
\end{definition}

\begin{example}
Observe that $Q_1 = \{1, 2\},$ $Q_2 = \{3, 4\},$ and $Q_3 = \{5, 6\}$ is a minimum clique covering of the cycle graph $C_6.$ Because the edges $\{2, 3\},$ $\{4, 5\},$ and $\{1, 6\}$ belong to $C_6,$ the clique $Q_1$ is clique-adjacent to both $Q_2$ and $Q_3,$ and the clique $Q_2$ is clique-adjacent to $Q_3.$ Consequently, the clique graph $\cc(C_6, X)$ of $C_6$ induced by the clique covering $X = \{Q_1, Q_2, Q_3\}$ has vertices $Q_1, Q_2,$ and $Q_3$ and edges $\{Q_1, Q_2\}, \{Q_1, Q_3\},$ and $\{Q_2, Q_3\}.$ Put another way, we have that $\cc(C_6, X) \cong K_3,$ i.e., $C_6$ is $K_3$-connected.
\begin{center}
\begin{tikzpicture}
\tikzset{enclosed/.style={draw, circle, inner sep=0pt, minimum size=.15cm, fill=black}}
\node[enclosed, label={left: 6}, color=ForestGreen] (6) at (-5.6,-1) {};
\node[enclosed, label={left: 1}, color=red] (1) at (-5,0) {};
\node[enclosed, label={left: 5}, color=ForestGreen] (5) at (-5,-2) {};
\node[enclosed, label={right: 2}, color=red] (2) at (-3.75,0) {};
\node[enclosed, label={right: 4}, color=blue] (4) at (-3.75,-2) {};
\node[enclosed, label={right: 3}, color=blue] (3) at (-3.15,-1) {};
\draw[thick, color=red] (1) -- (2);
\draw (2) -- (3);
\draw[thick, color=blue] (3) -- (4);
\draw (4) -- (5) node[midway, label={below, yshift=-.25cm: $C_6$}] (edge4) {};
\draw[thick, color=ForestGreen] (5) -- (6);
\draw (1) -- (6);
\node[enclosed, label={right: $Q_1$}, color=red] (Q1) at (1.85,0) {};
\node[enclosed, label={right: $Q_2$}, color=blue] (Q2) at (1.85,-2) {};
\node[enclosed, label={left: $Q_3$}, color=ForestGreen] (Q3) at (-0.15,-2) {};
\draw (Q1) -- (Q2);
\draw (Q2) -- (Q3) node[midway, label={below, yshift=-.25cm: $\cc(C_6, X)$}] (edge3) {};
\draw(Q1) -- (Q3);
\end{tikzpicture}
\end{center}
\end{example}

Crucially, the cycle graph $C_n$ on $n \leq 6$ vertices is $K_{{\left \lceil n/2 \right \rceil}}$-connected; however, for any integer $n \geq 7,$ we have that $\cc(C_n, X) \cong C_{{\left \lceil n/2 \right \rceil}}$, where $X$ is the clique covering from the paragraph preceding Definition \ref{clique-adjacent definition}. Because $X$ is a minimum clique covering of $C_n,$ it follows that $C_n$ is not $K_\ell$-connected for any integer $\ell \geq 1.$

Every connected graph has a trivial vertex clique covering by all of its edges. We refer to a vertex clique cover of $G$ by edges as an {\it edge cover} of $G.$ Consequently, the clique cover $X$ of the paragraph preceding Definition \ref{clique-adjacent definition} is an edge cover of $C_n.$ Generalizing the idea of the proof of Proposition \ref{cs(C_n) and ms(C_n)} yields the following observation.

\begin{prop}\label{K_ell-connected edge cover}
If a finite simple graph $G$ admits an edge cover $X = \{E_i\}_{i = 1}^\ell$ such that the edges of $X$ are all clique-adjacent (i.e., $X$ is $K_\ell$-connected), then we have that $\ms(k(G)) \leq \ell.$
\end{prop}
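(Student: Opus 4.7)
The plan is to produce a witness ideal explicitly. For each cover edge $E_i \in X$, write $E_i = \{a_i, b_i\}$ and set
$$J = (\bar x_{a_i} + \bar x_{b_i} \mid 1 \leq i \leq \ell) \subseteq k(G),$$
so that $\mu(J) \leq \ell$ automatically. It then suffices to show $\bar \fm^2 \subseteq J$, and since $\bar \fm^2$ is generated by the monomials $\bar x_v \bar x_w$ for vertices $v, w$ of $G$, it suffices to verify that every such monomial lies in $J$ (in fact in $\bar \fm J$).

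Two configurations are automatic. First, if $\{v, w\}$ is an edge of $G$, then $\bar x_v \bar x_w = 0$. Second, if $v = w$, then because $X$ is a vertex cover there is some $E_i = \{v, u\}$ containing $v$; the generator $\bar x_v + \bar x_u$ yields $\bar x_v(\bar x_v + \bar x_u) = \bar x_v^2 + \bar x_v \bar x_u = \bar x_v^2$, since $\{v, u\}$ is an edge of $G$ and hence $\bar x_v \bar x_u = 0$.

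For the remaining case, $v \neq w$ with $\{v, w\} \notin E(G)$, choose $E_i \ni v$ and $E_j \ni w$. These must be distinct (otherwise $\{v, w\} \subseteq E_i$ would itself be an edge of $G$), so the clique-adjacency hypothesis applies. If $E_i$ and $E_j$ share a common vertex $u$, write $E_i = \{u, v'\}$ and $E_j = \{u, w'\}$; the nontrivial subcase is $(v, w) = (v', w')$, and the identity
$$\bar x_{v'}(\bar x_u + \bar x_{w'}) = \bar x_{v'} \bar x_u + \bar x_{v'} \bar x_{w'} = \bar x_{v'} \bar x_{w'}$$
(using the edge $\{v', u\} = E_i$) places $\bar x_v \bar x_w$ in $\bar \fm J$. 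If instead $E_i$ and $E_j$ are vertex-disjoint but joined by an edge $\{v^*, w^*\}$ with $v^* \in E_i$ and $w^* \in E_j$, write $E_i = \{v^*, v^{**}\}$ and $E_j = \{w^*, w^{**}\}$; then for the pair $(v^*, w^{**})$ we use
$$\bar x_{v^*}(\bar x_{w^*} + \bar x_{w^{**}}) = \bar x_{v^*} \bar x_{w^*} + \bar x_{v^*} \bar x_{w^{**}} = \bar x_{v^*} \bar x_{w^{**}},$$
the analogous identity handles $(v^{**}, w^*)$, and for the last pair $(v^{**}, w^{**})$ one applies
$$\bar x_{v^{**}}(\bar x_{w^*} + \bar x_{w^{**}}) = \bar x_{v^{**}} \bar x_{w^*} + \bar x_{v^{**}} \bar x_{w^{**}}$$
and subtracts $\bar x_{v^{**}} \bar x_{w^*}$, already shown to lie in $\bar \fm J$.

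The argument is essentially a bookkeeping exercise: every quadratic monomial $\bar x_v \bar x_w$ in $k(G)$ gets absorbed either by the vanishing of an edge-monomial of $G$ or by pairing a vertex of one cover edge against the generator associated to another cover edge that is clique-adjacent to it. The main (minor) obstacle is simply ensuring the case analysis in the vertex-disjoint subcase of clique-adjacency is performed in the right order, so that the reduction for $(v^{**}, w^{**})$ is backed by the already-established reductions for the mixed pairs.
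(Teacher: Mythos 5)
Your proposal is correct and follows essentially the same route as the paper's proof: you take the witness ideal $J$ generated by the sums $\bar x_{a_i} + \bar x_{b_i}$ over the cover edges, obtain the pure squares from the vanishing edge relations, and absorb the mixed terms $\bar x_v \bar x_w$ by a case analysis on whether the covering edges overlap or are joined by an edge, subtracting an already-obtained product in the one subcase that requires it. The only difference is cosmetic: you spell out the ordering of the subcases in the vertex-disjoint configuration slightly more explicitly than the paper does.
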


\begin{proof}
Observe that $\bar \fm^2 = (\bar x_i^2 \mid 1 \leq i \leq n) + (\bar x_e \bar x_f \mid \{e,f\} \text{ is not an edge of } G).$ We claim that the ideal $J = \sum_{i = 1}^\ell (\bar x_r + \bar x_s \mid E_i = \{r, s\})$ satisfies $J \supseteq \bar \fm^2.$ By hypothesis that $X$ is an edge cover of $G,$ for each integer $1 \leq i \leq n,$ there exists an integer $j$ such that $\{i, j\}$ belongs to $X.$ Consequently, the terms $\bar x_i^2 = \bar x_i (\bar x_i + \bar x_j)$ of $\bar \fm^2$ belong to $\bar \fm J.$ We obtain all of the mixed terms $\bar x_e \bar x_f$ such that $\{e, f\}$ is not an edge of $G$ as follows.
\begin{enumerate}[\rm (i.)]

\item By hypothesis that the edges of $X$ are clique-adjacent, any two edges $\{e, e'\}$ and $\{f, f'\}$ of $X$ are either connected by an edge $\{e, f'\},$ $\{e', f\},$ or $\{e', f'\},$ or they ``overlap'' so that $e' = f'.$

\vspace{0.25cm}

\item If $\{e, f'\}$ is an edge, then $\bar x_e \bar x_f = \bar x_e(\bar x_f + \bar x_{f'})$ belongs to $\bar \fm J$; a similar argument shows that $\bar x_e \bar x_f$ belongs to $\bar \fm J$ if $\{e', f\}$ is an edge. If $\{e', f'\}$ is an edge, then $\bar x_e \bar x_{f'} = \bar x_{f'} (\bar x_e + \bar x_{e'})$ belongs to $\bar \fm J$ so that $\bar x_e \bar x_f = \bar x_e(\bar x_f + \bar x_{f'}) - \bar x_e \bar x_{f'}$ belongs to $\bar \fm J.$ If $e' = f',$ then $\bar x_e \bar x_f = \bar x_e(\bar x_f + \bar x_{e'}) = \bar x_e(\bar x_f + \bar x_{f'})$ belongs to $\bar \fm J.$ Observe that in any case, we conclude that $\bar x_e \bar x_f$ belongs to $\bar \fm J$ (and so must belong to $J$).

\end{enumerate}
We conclude that $\bar \fm^2 \subseteq \bar \fm J \subseteq J$ so that $\ms(k(G)) \leq \mu(J) = \ell,$ as desired.
\end{proof}

\begin{remark}\label{Wagner graph obstruction}
Recall that the {\it diameter} of a finite simple graph is the maximum distance of a shortest path connecting any two vertices. By the proof of Proposition \ref{K_ell-connected edge cover}, any finite simple graph satisfying the hypotheses of the proposition must have diameter at most three. Explicitly, the maximum occurs precisely when there exist vertices $e$ and $f$ such that $\{e, f\}$ is not an edge and the edges $\{e, e'\}$ and $\{f, f'\}$ do not overlap.

Observe that in a finite simple graph of diameter two, any two vertices $e$ and $f$ are connected by a path of length at most two, hence either $\{e, f\}$ is an edge or $\{e, e'\}$ and $\{e', f\}$ are distinct edges. Consequently, it is natural to wonder if $G$ has diameter two, then must any ${\left \lceil \frac{n - 1} 2 \right \rceil}$ edges of $G$ constitute an edge cover of $G$ such that the hypotheses of Proposition \ref{K_ell-connected edge cover} hold? For if this were the case, then we would have that $\ms(k(G)) \leq {\left \lceil \frac{n - 1} 2 \right \rceil}.$

Unfortunately, the answer is no. Even more, it is not the case that in a finite simple graph of diameter two, every pair of edges in an edge cover must be clique-adjacent. Consider the Wagner graph $M_8$ pictured below.
\begin{center}
\begin{tikzpicture}[scale=0.9]
\tikzset{enclosed/.style={draw, circle, inner sep=0pt, minimum size=.15cm, fill=black}}
\node[enclosed, label={left: 1}, color=red] (1) at (-8.7,3.75) {};
\node[enclosed, label={right: 2}, color=blue] (2) at (-7.3,3.75) {};
\node[enclosed, label={right: 3}, color=OliveGreen] (3) at (-6.5,2.95) {};
\node[enclosed, label={right: 4}, color=BurntOrange] (4) at (-6.5,1.8) {};
\node[enclosed, label={right: 5}, color=red] (5) at (-7.3,1) {};
\node[enclosed, label={left: 6}, color=blue] (6) at (-8.7,1) {};
\node[enclosed, label={left: 7}, color=OliveGreen] (7) at (-9.5,1.8) {};
\node[enclosed, label={left: 8}, color=BurntOrange] (8) at (-9.5,2.95) {};
\draw (1) -- (2);
\draw[very thick, color=red] (1) -- (5);
\draw (2) -- (3);
\draw[very thick, color=blue] (2) -- (6);
\draw (3) -- (4);
\draw[very thick, color=OliveGreen] (3) -- (7);
\draw (4) -- (5);
\draw[very thick, color=BurntOrange] (4) -- (8);
\draw (5) -- (6);
\draw (6) -- (7);
\draw (7) -- (8);
\draw (8) -- (1);
\node[enclosed, label={left: 1}, color=red] (1') at (-0.7,3.75) {};
\node[enclosed, label={right: 2}, color=blue] (2') at (0.7,3.75) {};
\node[enclosed, label={right: 3}, color=blue] (3') at (1.5,2.95) {};
\node[enclosed, label={right: 4}, color=BurntOrange] (4') at (1.5,1.8) {};
\node[enclosed, label={right: 5}, color=red] (5') at (0.7,1) {};
\node[enclosed, label={left: 6}, color=OliveGreen] (6') at (-0.7,1) {};
\node[enclosed, label={left: 7}, color=OliveGreen] (7') at (-1.5,1.8) {};
\node[enclosed, label={left: 8}, color=BurntOrange] (8') at (-1.5,2.95) {};
\draw (1') -- (2');
\draw[very thick, color=red] (1') -- (5');
\draw[very thick, color=blue] (2') -- (3');
\draw (2') -- (6');
\draw (3') -- (4');
\draw (3') -- (7');
\draw (4') -- (5');
\draw[very thick, color=BurntOrange] (4') -- (8');
\draw (5') -- (6');
\draw[very thick, color=OliveGreen] (6') -- (7');
\draw (7') -- (8');
\draw (8') -- (1');
\end{tikzpicture}
\end{center}
Upon inspection, we find that $M_8$ has diameter two. Further, the colored edges of both figures give a minimum clique covering (by maximal cliques); however, the pair of red and blue edges in the left-hand graph are not clique-adjacent. On the other hand, if we ``amend'' the left-hand edge cover to obtain the figure on the right, we have found a minimum clique covering of $M_8$ (by maximal cliques) that is clique-adjacent.

Crucially, every finite simple graph $H$ is an induced subgraph of a finite simple graph $G$ of diameter two. Explicitly, for any vertex $v$ that is not in $V(H),$ we may define $G = H*K_1,$ where $K_1$ is the complete graph on the vertex $v,$ i.e., it is simply the isolated vertex $v$ (cf. the second paragraph following Corollary \ref{ms(overline S_n)} for a definition of the graph operation $*$). Observe that $H \cong G[V(H)].$ Further, by Proposition \ref{cs(G*H) and ms(G*H)}, we have that $\ms(G) = \ms(H),$ hence it suffices to understand this invariant for finite simple graphs of diameter two.

Bearing all of these observations in mind, we ask the following question.
\end{remark}

\begin{question}\label{existence of K_ell-connected edge cover in diameter 2 triangle-free graph question}
If $G$ is a finite simple graph of diameter two, must it admit an edge cover that is $K_\ell$-connected? In particular, can any edge cover of $G$ be ``amended'' to an edge cover that is $K_\ell$-connected?
\end{question}

\begin{comment}
\begin{question}\label{upper bound for ms(G) in diameter 2 triangle-free graph question}
If $G$ is a finite simple graph of diameter two and vertex clique cover number two, must it hold that $\ms(k(G)) \leq 2?$ Must this bound hold if we assume that $G$ is connected with arbitrary diameter?
\end{question}
\end{comment}

Often, conjectures in graph theory are given a litmus test against the Petersen graph $P,$ as it is renowned among graph theorists for its consistent ability to produce counterexamples to many expected properties of graphs. Crucially, the Petersen graph is a simple connected graph of diameter two with 10 vertices and 15 edges. Even more, we have that $\alpha(P) = 4$ (cf. \cite[1.1.12]{West}). Our next proposition confirms that the Petersen graph does not contradict our findings thus far, hence in particular, Question \ref{existence of K_ell-connected edge cover in diameter 2 triangle-free graph question} remains open.

\begin{prop}\label{cs and ms of Petersen graph}
Let $P$ be the Petersen graph. We have that $8 \leq \cs(k(P)) \leq 10$ and $4 \leq \ms(k(P)) \leq 5.$
\end{prop}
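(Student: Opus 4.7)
All four bounds will follow from results already established in the paper applied to the Petersen graph $P$ with its standard presentation on $n = 10$ vertices: outer pentagon $1,\ldots,5$ with edges $\{i,i+1\}$ (indices mod $5$), inner pentagram $6,\ldots,10$ with edges $\{i+5,j+5\}$ whenever $j-i \equiv \pm 2 \pmod 5$, and spokes $\{i,i+5\}$ for $1 \leq i \leq 5$. In total $P$ has $t = 15$ edges.

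For $\cs(k(P))$, the upper bound $\cs(k(P)) \leq 10 = \mu(\bar{\fm})$ is immediate from Proposition \ref{main proposition about cs and ms}(1.). The lower bound follows from Proposition \ref{cs of k[x_1, ..., x_n] modulo t homogeneous quadratics}: with $n = 10$, $t = 15$, and $s = 1$, the inequality $t = 15 \leq (s+1)(2n-s)/2 = 19$ holds, so $\cs(k(P)) \geq n - s - 1 = 8$.

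For $\ms(k(P))$, the lower bound $\ms(k(P)) \geq 4$ comes from Remark \ref{dim k(G)} together with the observation that $\{1,3,9,10\}$ is an independent set of $P$ (each pair can be checked directly against the edge list, so $\alpha(P) \geq 4$). For the upper bound $\ms(k(P)) \leq 5$, the plan is to apply Proposition \ref{K_ell-connected edge cover} to the set of five spokes $X = \{\{i,i+5\} : 1 \leq i \leq 5\}$, which is a perfect matching and hence an edge cover of $P$. The key step is to verify that $X$ is $K_5$-connected: for any two distinct spokes $\{i,i+5\}$ and $\{j,j+5\}$, the nonzero residue $j - i \pmod 5$ lies in $\{\pm 1, \pm 2\}$; in the first case $\{i,j\}$ is an edge of the outer pentagon, and in the second case $\{i+5,j+5\}$ is an edge of the inner pentagram. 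Either way the two spokes are clique-adjacent, so Proposition \ref{K_ell-connected edge cover} applies. The main obstacle is identifying the right edge cover; weaker strategies (e.g.\ Proposition \ref{ms(k(G)) leq n - mcn(G) + 3}, which yields only $\ms(k(P)) \leq 9$ since $\mcn(P) = 4$, or Proposition \ref{upper bound for ms in terms of upper bound for mu(m^2)} with $\mu(\bar{\fm}^2) = 40$, which yields only $\ms(k(P)) \leq 8$) do not suffice, and the bound really depends on the $5$-fold symmetry of $P$.
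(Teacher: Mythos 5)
Your proposal is correct and follows essentially the same route as the paper: the $\cs$ bounds via Proposition \ref{cs of k[x_1, ..., x_n] modulo t homogeneous quadratics} with $s=1$ and $t=15$, the lower bound $\ms(k(P))\geq 4$ from $\alpha(P)\geq 4$ and Remark \ref{dim k(G)}, and the upper bound $\ms(k(P))\leq 5$ by exhibiting the five spokes as a $K_5$-connected edge cover and invoking Proposition \ref{K_ell-connected edge cover}. Your explicit modular-arithmetic verification of clique-adjacency of the spokes is a slightly more detailed version of the symmetry argument the paper gives.
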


\begin{proof}
By the preceding commentary and Remark \ref{dim k(G)}, we have that $\ms(k(P)) \geq \alpha(P) \geq 4.$ Conversely, we may realize the Petersen graph in the plane as a five-cycle connected to a pentagram by some ``spokes'' (cf. \cite{Graf}).
\begin{center}
\begin{tikzpicture}[scale=0.3]
\tikzset{enclosed/.style={draw, circle, inner sep=0pt, minimum size=.15cm, fill=black}}
\begin{scope}[rotate=90]
\foreach \x/\y in {0/1}{
\node[enclosed, label={above right: \y}] (\y) at (canvas polar cs: radius=2.5cm, angle=\x){};}
\foreach \x/\y in {216/4}{
\node[enclosed, label={right: \y}] (\y) at (canvas polar cs: radius=2.5cm, angle=\x){};}
\foreach \x/\y in {72/2, 288/5}{
\node[enclosed, label={\y}] (\y) at (canvas polar cs: radius=2.5cm, angle=\x){};}
\foreach \x/\y in {144/3}{
\node[enclosed, label={left: \y}] (\y) at (canvas polar cs: radius=2.5cm, angle=\x){};}
\foreach \x/\y in {0/6}{
\node[enclosed, label={\y}] (\y) at (canvas polar cs: radius=5cm,angle=\x){};}
\foreach \x/\y in {216/9, 288/10}{
\node[enclosed, label={right: \y}] (\y) at (canvas polar cs: radius=5cm,angle=\x){};}
\foreach \x/\y in {72/7, 144/8}{
\node[enclosed, label={left: \y}] (\y) at (canvas polar cs: radius=5cm,angle=\x){};}
\end{scope}
\foreach \x/\y in {1/6}{\draw[very thick, color=red] (\x) -- (\y);}
\foreach \x/\y in {2/7}{\draw[very thick, color=BurntOrange] (\x) -- (\y);}
\foreach \x/\y in {3/8}{\draw[very thick, color=ForestGreen] (\x) -- (\y);}
\foreach \x/\y in {4/9}{\draw[very thick, color=blue] (\x) -- (\y);}
\foreach \x/\y in {5/10}{\draw[very thick, color=violet] (\x) -- (\y);}
\foreach \x/\y in {1/3, 2/4, 3/5, 4/1, 5/2}{\draw (\x) -- (\y);}
\foreach \x/\y in {6/7, 7/8, 8/9, 9/10, 10/6}{\draw (\x) -- (\y);}
\end{tikzpicture}
\end{center}
One can readily verify that the colored edges pictured above induce a $K_5$-connected edge cover of $P$: indeed, the red edge shares a ``common edge'' with each of the other colored edges, hence it is clique-adjacent to each of the colored edges; the rest are clique-adjacent by symmetry. We conclude that $4 \leq \ms(P) \leq 5 = {\left \lceil \frac 9 2 \right \rceil}.$ Last, we have that $\lvert E(P) \rvert = 15 < 19$ so that $8 \leq \cs(k(P)) \leq 10$ by Propositions \ref{main proposition about cs and ms} and \ref{cs of k[x_1, ..., x_n] modulo t homogeneous quadratics} with $s = 1.$
\end{proof}

Consider two finite simple graphs $G$ and $H$ on the disjoint vertex sets $V(G)$ and $V(H)$ with respective edge sets $E(G)$ and $E(H).$ We recall that the {\it graph union} of $G$ and $H$ is the graph $G + H$ on the vertex set $V(G) \cup V(H)$ and edge set $E(G) \cup E(H).$ We illustrate this below for the blue graph $G$ and the red graph $H.$
\begin{center}
\begin{tikzpicture}[scale=1.1]
\tikzset{enclosed/.style={draw, circle, inner sep=0pt, minimum size=.15cm, fill=black}}
\node[enclosed, label={left: \textcolor{blue}{1}}, color=blue] (1) at (-5,1) {};
\node[enclosed, label={right: \textcolor{blue}{2}}, color=blue] (2) at (-3,1) {};
\node[enclosed, label={right: \textcolor{blue}{3}}, color=blue] (3) at (-3,-1) {};
\node[enclosed, label={left: \textcolor{blue}{4}}, color=blue] (4) at (-5,-1) {};
\draw[color=blue] (1) -- (2);
\draw[color=blue] (2) -- (3);
\draw[color=blue] (3) -- (4);
\draw[color=blue] (1) -- (4) node[midway, label={left, xshift=-.25cm: $\textcolor{blue}{G}$}] {};
\draw[color=blue] (1) -- (3);
\draw[color=blue] (2) -- (4);
\node[enclosed, color=red, label={right: \textcolor{red}{5}}] (5) at (2,1) {};
\node[enclosed, color=red, label={right: \textcolor{red}{6}}] (6) at (2,-1) {};
\node[enclosed, color=red, label={left: \textcolor{red}{7}}] (7) at (0,-1) {};
\draw[color=red] (5) -- (6) node[midway, label={right,xshift=.25cm: $\textcolor{red}{H}$}] {};
\draw[color=red] (6) -- (7);
\draw[color=red] (5) -- (7);
\draw[color=white] (3) -- (7) node[midway, label={below, yshift=-0.25cm: $\textcolor{black}{\textcolor{blue}{G} + \textcolor{red}{H}}$}] {};
\end{tikzpicture}
\end{center}
Observe that $G$ and $H$ are both induced subgraphs of $G + H.$ Further, there are no edges between $G$ and $H$ in $G + H,$ hence every independent vertex set of $G + H$ is the disjoint union of some independent vertex set of $G$ and some independent vertex set of $H.$ Consequently, we obtain the following bounds.

\begin{prop}\label{a lower bound for cs(G + H) and ms(G + H)}
We have that $\ms(k(G + H)) \geq \max \{\ms(k(G)), \ms(k(H)), \alpha(G) + \alpha(H)\}$ and $\cs(k(G + H)) \geq \max \{\cs(k(G)), \cs(k(H))\}$ for any finite simple graphs $G$ and $H.$
\end{prop}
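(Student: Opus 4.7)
The plan is to reduce everything to results already established in the excerpt, namely Proposition \ref{cs and ms of induced subgraphs} and the lower bound $\ms(k(G)) \geq \alpha(G)$ from Remark \ref{dim k(G)}. There is essentially no new combinatorics required; the proof amounts to identifying the right induced-subgraph relations and the right independence-number identity.

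First I would observe that both $G$ and $H$ arise as induced subgraphs of $G + H$ in the natural way: $G = (G+H)[V(G)]$ and $H = (G+H)[V(H)]$, because there are no edges of $G+H$ running between $V(G)$ and $V(H)$. Then Proposition \ref{cs and ms of induced subgraphs} applied twice immediately yields $\cs(k(G+H)) \geq \max\{\cs(k(G)), \cs(k(H))\}$ and $\ms(k(G+H)) \geq \max\{\ms(k(G)), \ms(k(H))\}$. This already settles the $\cs$ inequality completely, and handles the first two terms inside the $\max$ in the $\ms$ inequality.

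It remains to establish that $\ms(k(G+H)) \geq \alpha(G) + \alpha(H)$. I would do this by first proving the identity $\alpha(G+H) = \alpha(G) + \alpha(H)$. One direction is trivial: the disjoint union of any independent set of $G$ with any independent set of $H$ is an independent set of $G+H$, so $\alpha(G+H) \geq \alpha(G) + \alpha(H)$. For the reverse, given any independent set $I$ of $G+H$, the sets $I \cap V(G)$ and $I \cap V(H)$ are independent in $G$ and $H$ respectively (since induced edges on these vertex sets are exactly the edges of $G$ and $H$), and $|I| = |I \cap V(G)| + |I \cap V(H)| \leq \alpha(G) + \alpha(H)$. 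Then by Remark \ref{dim k(G)} applied to $G+H$, we conclude $\ms(k(G+H)) \geq \alpha(G+H) = \alpha(G) + \alpha(H)$, which together with the previous paragraph gives the claimed lower bound.

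There is no real obstacle here; the only thing that might be worth stating more carefully is the identification of $G$ and $H$ as induced subgraphs of $G+H$, so that Proposition \ref{cs and ms of induced subgraphs} applies without ambiguity, and the disjointness of the vertex sets ensures no ``crossing'' edges contaminate the induced-subgraph structure or the independent-set decomposition.
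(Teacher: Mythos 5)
Your proposal is correct and follows essentially the same route as the paper: the paper also deduces the $\cs$ bound and the first two terms of the $\ms$ bound from Proposition \ref{cs and ms of induced subgraphs} via the induced subgraphs $G = (G+H)[V(G)]$ and $H = (G+H)[V(H)]$, and obtains the third term from Remark \ref{dim k(G)} together with the identity $\alpha(G+H) = \alpha(G) + \alpha(H)$. Your write-up is only slightly more explicit in verifying both inequalities of that identity, which the paper relegates to the exposition preceding the proposition.
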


\begin{proof}
By Proposition \ref{cs and ms of induced subgraphs}, Remark \ref{dim k(G)}, and the exposition preceding the statement of this proposition, we have that $\ms(k(G + H)) \geq k(G),$ $\ms(k(G + H)) \geq k(H),$ and $\ms(k(G + H)) \geq \alpha(G + H) = \alpha(G) + \alpha(H),$ so the lower bound for $\ms(k(G + H))$ holds. Likewise, the lower bound for $\cs(k(G + H))$ holds by Proposition \ref{cs and ms of induced subgraphs}.
\end{proof}

\begin{cor}\label{ms(overline S_n)}
We have that $\ms(k(K_n + K_1)) = 2$ for all integers $n \geq 1.$
\end{cor}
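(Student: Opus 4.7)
The approach is to recognize that $K_n + K_1$ is obtained from the complete graph $K_n$ on the vertex set $[n]$ by adjoining a single isolated vertex $n+1$, since $K_1$ consists of just one vertex with no edges. This interpretation matches the graph union defined in the paragraph preceding Proposition \ref{a lower bound for cs(G + H) and ms(G + H)}.

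First, I would apply Proposition \ref{ms(k(G)) is monotone increasing with respect to adding vertices} with $H = K_n$ (on the vertex set $[n]$) and $G = K_n + K_1$ (on the vertex set $[n+1]$), obtaining the exact equality
\[
\ms(k(K_n + K_1)) = \ms(k(K_n)) + (n+1) - n = \ms(k(K_n)) + 1.
\]
Alternatively, since the only isolated vertex of $K_n + K_1$ (for $n \geq 2$) is the vertex $n+1$, and its induced subgraph on the non-isolated vertices is precisely $K_n$, one may invoke Corollary \ref{ms in terms of number of isolated vertices and nontrivial connected components} to reach the same conclusion; for $n = 1$, the graph is simply two isolated vertices and one may instead appeal directly to Proposition \ref{ms of algebra obtained by adjoining indeterminates} or observe that $k(K_1 + K_1) \cong k[x_1, x_2]$ is a regular local ring of dimension $2$.

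Next, I would apply Proposition \ref{ms(K_n)} to obtain $\ms(k(K_n)) = 1$ for all $n \geq 1$. Combining this with the displayed equality yields $\ms(k(K_n + K_1)) = 1 + 1 = 2$, as claimed. There is no substantial obstacle: the argument is a direct concatenation of two results established earlier in the excerpt, and the only minor point to keep track of is that the ``additional vertex'' in question is the vertex belonging to the $K_1$ summand, which is automatically isolated in the graph union because no edges between $K_n$ and $K_1$ are present.
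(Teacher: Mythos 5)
Your proof is correct and follows essentially the same route as the paper: the paper's own argument obtains the upper bound from Corollary \ref{ms in terms of number of isolated vertices and nontrivial connected components} (itself a consequence of Proposition \ref{ms(k(G)) is monotone increasing with respect to adding vertices}) together with $\ms(k(K_n)) = 1,$ and the lower bound from Proposition \ref{a lower bound for cs(G + H) and ms(G + H)} via $\alpha(K_n) + \alpha(K_1) = 2.$ Since the isolated-vertex result you invoke is an equality, your version simply dispenses with the separate lower-bound step, which is a harmless streamlining.
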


\begin{proof}
By Proposition \ref{a lower bound for cs(G + H) and ms(G + H)}, it follows that $\ms(k(K_n + K_1)) \geq \alpha(K_n) + \alpha(K_1) = 2.$ Conversely, we have that $\ms(k(K_n + K_1)) \leq 1 + \ms(k(K_n)) = 2$ by Propositions \ref{ms(K_n)} and \ref{ms in terms of number of isolated vertices and nontrivial connected components}.
\end{proof}

\begin{comment}
Conversely, observe that $$\bar \fm^2 = (\bar x_i^2 \mid 1 \leq i \leq n + 1) + (\bar x_j \bar x_{n + 1} \mid 1 \leq j \leq n).$$ Consequently, the ideal $J = (\bar x_1 + \cdots + \bar x_n, \bar x_{n + 1})$ of $k(K_n + K_1)$ contains all of the pure squares $\bar x_i^2$ for each integer $1 \leq i \leq n + 1,$ as we have that $\bar x_i^2 = \bar x_i(\bar x_1 + \cdots + \bar x_n)$ for any integer $1 \leq i \leq n.$ On the other hand, the mixed terms $\bar x_j \bar x_{n + 1}$ belong to $\bar \fm J$ and so also to $J.$ We conclude that $\ms(k(K_n + K_1)) \leq 2.$
\end{comment}

Generally, the invariants $\ms(k(G + H))$ and $\cs(k(G + H))$ are difficult to understand because there are no relations between the vertices of $G$ and the vertices of $H.$ Explicitly, we have that $k(G + H) \cong k(G) \otimes_k k(H)$ (cf. Question \ref{ms and cs of tensor product of k-algebras}). Even still, the Macaulay2 code provided below is a good starting point to study these rings.
\begin{verbatim}
-- This script takes a quintuple (int_1, int_2, graph_1, graph_2, numLinearForms) and
-- defines the edge ring of the disjoint union G + H on int_1 + int_2 vertices. It defines
-- also a list L with numLinearForms copies of 1 from which a random ideal K can be created.
-- The inputs graph_1 and graph_2 are graphs from the EdgeIdeals package, e.g., the cycle
-- graph (cycle), the anticycle graph (antiCycle), the complete graph (completeGraph),
-- and the complete multipartite graph (completeMultiPartite).

loadPackage "EdgeIdeals";
loadPackage "RandomIdeals";

setRandomSeed(currentTime())

-- Declare the file to which a witness ideal will be written.
file = "msWitnessIdealsDisjointUnion";

-- Declare the number of variables of the first graph.
int_1 = read "How many vertices does your first graph have? ";
n_1 = value int_1;

-- Establish the polynomial ring for the first graph.
R = QQ[x_1 .. x_(n_1)];

-- Define the graph G and the edge ideal of G in R.
graph_1 = read "What type of graph would you like to consider? ";
graph_1 = value graph_1;
G = graph_1 R;
I = edgeIdeal G;

-- Declare the number of variables of the second graph.
int_2 = read "How many vertices does your second graph have? ";
n_2 = value int_2;

-- Establish the polynomial ring for the second graph.
S = QQ[x_(n_1 + 1) .. x_(n_1 + n_2)];

-- Define the graph H and the edge ideal of H in S.
graph_2 = read "What type of graph would you like to consider? ";
graph_2 = value graph_2;
H = graph_2 S;
J = edgeIdeal H;

-- Define the ambient polynomial ring and its homogeneous maximal ideal.
T = QQ[x_1 .. x_(n_1 + n_2)];
m = ideal(vars T);

-- Identify I and J as ideals of T.
f_1 = map(T, R);
I = f_1(I);
f_2 = map(T, S);
J = f_2(J);

-- Specify the generators. The first entry is the minimum degree of a monomial.
B = basis(1, T);

-- Create a list whose length is the total number of possible generators.
-- The integers in the list prescribe how Macaulay2 will randomly choose a
-- monomial of this degree (randomly means that 0 is a possible coefficient,
-- so the 0 polynomial could appear).
numLinearForms = read "How many linear forms would you like to consider? ";
len = value numLinearForms;
L = {};
for i from 1 to len do (
  L = append(L, 1));

-- This begins the loop. First, we generate a random ideal K; then, we test
-- if m^2 is a subset of I + J + K. If it is, then K is written to the file,
-- and a new line is created. The loop ends after numRuns iterations.
numTests = read "How many random ideals would you like to test? ";
numRuns = value numTests;
for iter from 1 to numRuns do (
  K = randomIdeal(L, B);
  if isSubset(m^2, I + J + K) then {
    file << K << endl;
  };);

file << close;
\end{verbatim}

We define the {\it graph join} $G*H$ of $G$ and $H$ as the graph union $G + H$ together with all edges joining a vertex of $G$ with a vertex of $H.$ For example, the graph join of two paths on $n = 2$ vertices is the complete graph $K_4.$
\begin{center}
\begin{tikzpicture}[scale=1.1]
\tikzset{enclosed/.style={draw, circle, inner sep=0pt, minimum size=.15cm, fill=black}}
\node[enclosed, label={left: \textcolor{blue}{1}}, color=blue] (1) at (-5,1) {};
\node[enclosed, label={left: \textcolor{blue}{2}}, color=blue] (2) at (-5,-1) {};
\draw[color=blue] (1) -- (2) node[midway, label={left: $\textcolor{blue}{G}$}] {} node[midway, label={right, xshift=.65cm: $\textcolor{black}{*}$}] {};
\node[enclosed, label={right: \textcolor{red}{3}}, color=red] (3) at (-3,1) {};
\node[enclosed, label={right: \textcolor{red}{4}}, color=red] (4) at (-3,-1) {};
\draw[color=red] (3) -- (4) node[midway, label={right: $\textcolor{red}{H}$}] (edge2) {};
\node[enclosed, label={left: \textcolor{blue}{1}}, color=blue] (11) at (0,1) {};
\node[enclosed, label={right: \textcolor{red}{3}}, color=red] (33) at (2,1) {};
\node[enclosed, label={right: \textcolor{red}{4}}, color=red] (44) at (2,-1) {};
\node[enclosed, label={left: \textcolor{blue}{2}}, color=blue] (22) at (0,-1) {};
\draw (11) -- (33);
\draw[thick, color=red] (33) -- (44) node[midway, label={right, xshift=.25cm: $\textcolor{blue}{G} \textcolor{black}{*} \textcolor{red}{H}$}] {};
\draw (44) -- (22);
\draw[thick, color=blue] (11) -- (22) node[midway, label={left, xshift=-1cm: $\textcolor{black}{=}$}] {};
\draw (11) -- (44);
\draw (22) -- (33);
\end{tikzpicture}
\end{center}
Observe that $G = (G*H)[V(G)]$ and $H = (G*H)[V(H)]$ are induced subgraphs of $G*H.$

Even more, the complement graph of the graph join $G*H$ is the graph union of the complements of $G$ and $H,$ i.e., we have that $\overline{G*H} = \overline G + \overline H.$ Explicitly, the pair $\{i, j\}$ is an edge of $G*H$ if and only if (a.) $\{i, j\}$ is an edge of $G$ or (b.) $\{i, j\}$ is an edge of $H$ or (c.) $i \in V(G)$ and $j \in V(H)$ or vice-versa, hence $\{i, j\}$ is an edge of $\overline{G*H}$ if and only if $\{i, j\}$ is neither an edge of $G$ nor an edge of $H$ nor an edge connecting some vertex of $G$ to some vertex of $H$ if and only if $\{i, j\}$ is an edge of $\overline G$ or $\overline H,$ i.e., $\{i, j\}$ is an edge of $\overline G + \overline H.$ Consequently, the graph invariants of $G*H$ can be described in terms of those of $\overline G + \overline H.$

Let $m$ and $n$ be positive integers. Let $G$ and $H$ be simple graphs on the vertex sets $V(G) = [m]$ and $V(H) = \{m + 1, m + 2, \dots, m + n\}.$ We have the edge ideals $I(G)$ and $I(H),$ edge rings $k(G) = k[x_1, \dots, x_m] / I(G)$ and $k(H) = k[x_{m + 1}, \dots, x_{m + n}] / I(H),$ and their respective irrelevant maximal ideals $\fm_G = (x_1, \dots, x_m)$ and $\fm_H = (x_{m + 1}, \dots, x_{m + n}).$ By definition of $G*H,$ observe that $V(G*H) = V(G) \cup V(H)$ and $$E(G*H) = E(G) \cup \{\{i, j\} \mid 1 \leq i \leq m \text{ and } m + 1 \leq j \leq n\} \cup E(H).$$ Consequently, the edge ideal of $G*H$ is given by $$I(G*H) = I(G) + (x_i x_j \mid 1 \leq i \leq m \text{ and } m + 1 \leq j \leq n) + I(H) = I(G) + \fm_G \fm_H + I(H).$$ Using this notation, we make the following observation.

\begin{prop}\label{cs(G*H) and ms(G*H)}
Let $G$ and $H$ be simple graphs on the vertex sets $V(G)$ and $V(H)$ as above. We have that $\ms(k(G*H)) = \max \{\ms(k(G)), \ms(k(H))\}$ and $\max \{\cs(k(G)), \cs(k(H))\} \leq \cs(k(G*H)) \leq \cs(k(G)) + \cs(k(H)).$
\end{prop}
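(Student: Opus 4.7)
The plan rests on the decomposition $\fm^2 = \fm_G^2 + \fm_G\fm_H + \fm_H^2$ in $k[x_1, \dots, x_{m+n}]$ together with the crucial observation that $\fm_G\fm_H \subseteq I(G*H)$ by definition of the graph join. The lower bounds are immediate from Proposition \ref{cs and ms of induced subgraphs}, since $G$ and $H$ are induced subgraphs of $G*H$. For the upper bounds I would separately handle $\cs$ and $\ms$, with the $\ms$ case being the main obstacle.

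For the upper bound on $\cs(k(G*H))$, I would invoke Proposition \ref{cs is witnessed by linear forms} to choose linear forms $\ell_1, \dots, \ell_e \in k[x_1, \dots, x_m]$ and $\ell'_1, \dots, \ell'_f \in k[x_{m+1}, \dots, x_{m+n}]$ whose images witness $\cs(k(G))$ and $\cs(k(H))$ respectively, and let $L$ be the ideal they jointly generate. Expanding $L^2$ produces three pieces, and the cross piece $(\ell_1, \dots, \ell_e)(\ell'_1, \dots, \ell'_f)$ lies in $\fm_G\fm_H \subseteq I(G*H)$. Modulo $I(G*H)$ only the ``pure'' pieces $(\ell_1, \dots, \ell_e)^2$ and $(\ell'_1, \dots, \ell'_f)^2$ survive, and they recover $\fm_G^2 + I(G*H)$ and $\fm_H^2 + I(G*H)$ respectively. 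Combined with the decomposition of $\fm^2$, this gives $\bar L^2 = \bar \fm^2$ in $k(G*H)$, so $\cs(k(G*H)) \leq e+f$.

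The upper bound on $\ms(k(G*H))$ is the genuine difficulty, since a naive union of witnesses gives $c+d$ generators rather than $\max\{c,d\}$. Assume without loss of generality that $c = \ms(k(G)) \geq d = \ms(k(H))$, and pick linear forms $\ell_1, \dots, \ell_c$ and $\ell'_1, \dots, \ell'_d$ witnessing each invariant via Proposition \ref{ms is witnessed by linear forms}. The key idea is to overlay the two families: define $L_k = \ell_k + \ell'_k$ for $1 \leq k \leq d$ and $L_k = \ell_k$ for $d < k \leq c$, and set $L = (L_1, \dots, L_c)$. To show $\bar \fm^2 \subseteq \bar L$ in $k(G*H)$, I would verify, for each degree-two generator $x_i x_j$ of $\fm^2$, that $x_i x_j \in L + I(G*H)$, splitting on whether $i,j \in V(G)$, both in $V(H)$, or mixed. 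In the first case, write $x_i x_j = \sum_{k=1}^c p_k \ell_k + g$ with $g \in I(G)$ (the $p_k$ may be taken linear in $k[x_1, \dots, x_m]$ by extracting the degree-two part); then
\[
\sum_{k=1}^c p_k \ell_k \;=\; \sum_{k=1}^c p_k L_k \;-\; \sum_{k=1}^d p_k \ell'_k,
\]
where the subtracted sum lies in $\fm_G\fm_H \subseteq I(G*H)$. The case $i, j \in V(H)$ is handled symmetrically, using the $\ell'_k$ components of $L_k$ for $k \leq d$; the mixed case is immediate since $x_i x_j \in I(G*H)$. This yields $\ms(k(G*H)) \leq c = \max\{\ms(k(G)), \ms(k(H))\}$, completing the proof.
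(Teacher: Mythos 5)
Your proof is correct, but it reaches the conclusion by a more elementary route than the paper's. The paper first identifies $k(G*H)$ with the fiber product $k(G) \times_k k(H)$ via the decomposition $I(G*H) = I(G) + \fm_G\fm_H + I(H) = (I(G) + \fm_H) \cap (I(H) + \fm_G)$, checks that homogeneous elements of the fiber product lift to homogeneous polynomials, and then invokes the general Proposition \ref{cs and ms of fiber product}; there the witness of $\ms$ is the ideal generated by the pairs $(s_k, t_k),$ which under that identification is exactly your overlay $L_k = \ell_k + \ell'_k.$ So the combinatorial core --- pairing the $k$-th witness of $G$ with the $k$-th witness of $H$ and absorbing all cross terms into $\fm_G\fm_H \subseteq I(G*H)$ --- is the same in both arguments, but you carry it out by hand on the degree-two generators of $\fm^2$ rather than through the fiber-product formalism. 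What your version buys is self-containment and transparency: no isomorphism $\bar\theta,$ no homogeneity bookkeeping, only the observation that the multipliers $p_k$ can be taken to be linear forms in the variables of the correct block (justified, as you note, by extracting the degree-two graded piece of the relation inside $k[x_1, \dots, x_m],$ which is what guarantees $p_k \ell'_k \in \fm_G \fm_H$ --- this is the one step you should not omit in a final write-up). What the paper's version buys is that the fiber-product statement is proved once in full local generality and then reused; it also makes the lower bounds automatic from the surjections $k(G*H) \to k(G)$ and $k(G*H) \to k(H),$ which is equivalent to your appeal to Proposition \ref{cs and ms of induced subgraphs}.
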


\begin{proof}
Let $R = k[x_1, \dots, x_m, x_{m + 1}, \dots, x_n].$ Observe that the edge ideals $I(G)$ and $I(H)$ and the maximal ideals $\fm_G = (x_1, \dots, x_m)$ and $\fm_H = (x_{m + 1}, \dots, x_n)$ lie in $R$; they satisfy $I(G) + \fm_G \fm_H + I(H) = (I(G) + \fm_H) \cap (I(H) + \fm_G)$ by \cite[Proposition 2.21]{Gel}. Observe that $k(G) \cong R / (I(G) + \fm_H)$ and $k(H) \cong R / (I(H) + \fm_G).$ We will henceforth identify $k(G)$ and $k(H)$ with their quotients of $R.$ By the analog of \cite[Proposition 5.11]{Rot} in the category of commutative rings, the fiber product of $k(G)$ and $k(H)$ with respect to $k$ is the subring $$k(G) \times_k k(H) \stackrel{\text{def}} = \{(f + I(G) + \fm_H, g + I(H) + \fm_G) \mid f + \fm_G + \fm_H = g + \fm_G + \fm_H\} \subseteq k(G) \times k(H)$$ together with the restriction of the first-coordinate projection map $\pi_1 : k(G) \times_k k(H) \to k(G)$ and the restriction of the second-coordinate projection map $\pi_2 : k(G) \times_k k(H) \to k(H).$ Observe that the triple $(R, \pi_{k(G)}, \pi_{k(H)})$ with the ring homomorphisms defined by $\pi_{k(G)}(f) = f + I(G) + \fm_H$ and $\pi_{k(H)}(f) = f + I(H) + \fm_G$ satisfies the identity $\pi_G \circ \pi_{k(G)} = \pi_H \circ \pi_{k(H)}$ for the canonical surjections $\pi_G : k(G) \to k$ and $\pi_H : k(H) \to k,$ hence the analog of \cite[Proposition 5.11]{Rot} in the category of commutative rings yields a unique ring homomorphism $\theta : R \to k(G) \times_k k(H)$ defined by $\theta(f) = (f + I(G) + \fm_H, f + I(H) + \fm_G)$ with $\ker \theta = (I(G) + \fm_H) \cap (I(H) + \fm_G).$ By \cite[Proposition 2.1]{Gel}, we find that $\theta$ is surjective with $\ker \theta = I(G) + \fm_G \fm_H + I(H) = I(G*H),$ hence it induces a ring isomorphism $\bar \theta : k(G*H) \to k(G) \times_k k(H)$ by the First Isomorphism Theorem.

We claim that for any element $(g + I(G) + \fm_H, h + I(H) + \fm_G)$ of the fiber product $k(G) \times_k k(H)$ such that $g, h \in R$ are homogeneous polynomials of same degree, there exists a homogeneous polynomial $f \in R$ such that $\theta(f) = (g + I(G) + \fm_H, h + I(H) + \fm_G).$ Indeed, for any homogeneous polynomials $g, h \in R$ of positive degree, we may write $g = g_{G \setminus H} + g_{H \setminus G} + g_{G \cap H}$ and $h = h_{G \setminus H} + h_{H \setminus G} + h_{G \cap H}$ for some polynomials $g_{G \setminus H}, h_{G \setminus H} \in \fm_G \setminus \fm_H,$ $g_{H \setminus G}, h_{H \setminus G} \in \fm_H \setminus \fm_G,$ and $g_{G \cap H}, h_{H \cap G} \in \fm_G \cap \fm_H.$ Consequently, the polynomial $f = g_{G \setminus H} + h_{H \setminus G}$ satisfies $\theta(f) = (g + I(G) + \fm_H, h + I(H) + \fm_G).$ Even more, if $g$ and $h$ have the same degree, then $f$ is homogeneous.

Let $a = \max \{\ms(k(G)), \ms(k(H))\}.$ By Proposition \ref{ms equivalent to m^2 = mI inductive step}, there exist homogeneous linear forms $s_1, \dots, s_a$ in the variables $x_1, \dots, x_m$ whose images in $k(G)$ satisfy $\fm_G^2 = (s_1, \dots, s_a) \fm_G.$ Likewise, there exist homogeneous linear forms $t_1, \dots, t_a$ in the variables $x_{m + 1}, \dots, x_{m + n}$ whose images in $k(H)$ satisfy $\fm_H^2 = (t_1, \dots, t_a) \fm_H.$ By the surjectivity of $\theta,$ there exist polynomials $f_i \in R$ such that $\theta(f_i) = (s_i + I(G) + \fm_H, t_i + I(H) + \fm_G)$ for each integer $1 \leq i \leq a.$ Crucially, the elements $s_i$ and $t_i$ are homogeneous linear forms, so we may assume that the polynomials $f_i$ are homogeneous by the construction of the previous paragraph. By the proof of Proposition \ref{cs and ms of fiber product}, we have that $\fm^2_{k(G) \times_k k(H)} \subseteq ((s_1, t_1), \dots, (s_a, t_a)).$ Going modulo $I(G*H) = I(G) + \fm_G \fm_H + I(H),$ we find that $\fm^2_{k(G*H)} \subseteq (\bar f_1, \dots, \bar f_a).$ Because the polynomials $f_i$ are homogeneous, we conclude that $\ms(k(G*H)) \leq a$. Last, $\ms(k(G*H)) \geq a$ holds by Proposition \ref{comparison theorem for cs and ms under surjective ring homomorphism} since there are surjections $k(G*H) \to k(G)$ and $k(G*H) \to k(H).$ 

Likewise, the $\cs(k(G*H))$ bounds follow by the proof of Proposition \ref{cs and ms of fiber product} and the second paragraph here.
\end{proof}

Combining Propositions \ref{ms(k(G)) leq n - mcn(G) + 3} and \ref{cs(G*H) and ms(G*H)} yields two immediate corollaries.

\begin{cor}\label{ms(G*H) when overline G and overline H are chordal}
Let $G$ and $H$ be graphs on disjoint vertex sets $V(G)$ and $V(H)$ such that $\overline G$ and $\overline H$ are chordal. If $k$ is an infinite field, then $\ms(k(G*H)) = \max \{\alpha(G), \alpha(H)\}.$
\end{cor}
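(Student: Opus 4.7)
The plan is to assemble this directly from the two major results referenced in its statement, namely Proposition \ref{ms(k(G)) leq n - mcn(G) + 3} and Proposition \ref{cs(G*H) and ms(G*H)}. There is essentially no new content to develop; the corollary is a clean conjunction of prior work, and the only task is to verify that the hypotheses line up correctly.

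First, I would invoke Proposition \ref{cs(G*H) and ms(G*H)} to reduce the invariant on the join to the individual graphs:
\[
\ms(k(G*H)) \;=\; \max\{\ms(k(G)),\, \ms(k(H))\}.
\]
This step requires no hypothesis about chordality of the complements, so it applies unconditionally to the pair $(G,H)$ at hand.

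Next, I would apply Proposition \ref{ms(k(G)) leq n - mcn(G) + 3} to each factor separately. Since $\overline G$ is chordal and $k$ is infinite, that proposition yields $\ms(k(G)) = \alpha(G)$; the identical argument with $H$ in place of $G$ yields $\ms(k(H)) = \alpha(H)$. Substituting into the display above gives
\[
\ms(k(G*H)) \;=\; \max\{\alpha(G),\,\alpha(H)\},
\]
which is the desired equality.

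There is no main obstacle: the combinatorial content (the bound on $\ms$ via independence number when the complement is chordal) has already been established, and the reduction of $\ms$ along a graph join has likewise been established. The only tiny care needed is to note that the hypotheses of Proposition \ref{ms(k(G)) leq n - mcn(G) + 3} (infinite $k$, chordal complement) are assumed here for \emph{both} $G$ and $H$, and that Proposition \ref{cs(G*H) and ms(G*H)} places no restriction on $k$ or on the complements, so the two results compose without friction.
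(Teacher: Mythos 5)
Your proposal is correct and matches the paper's intended argument: the text introduces this corollary precisely as the combination of Proposition \ref{cs(G*H) and ms(G*H)} (to reduce $\ms(k(G*H))$ to $\max\{\ms(k(G)),\ms(k(H))\}$) with Proposition \ref{ms(k(G)) leq n - mcn(G) + 3} applied to each factor. The hypotheses line up exactly as you describe, so nothing further is needed.
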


\begin{comment}
\begin{proof}
Observe that $\omega(\overline G + \overline H) = \max \{\omega(\overline G), \omega(\overline H)\}.$ If $\overline G$ and $\overline H$ are chordal, then $\overline{G*H} = \overline G + \overline H$ is chordal. If $k$ is infinite, the result follows by Proposition \ref{ms(k(G)) leq n - mcn(G) + 3} and $\alpha(G) = \omega(\overline G).$
\end{proof}
\end{comment}

\begin{cor}\label{ms of graph join of complete graphs minus an edge}
Let $m$ and $n$ be positive integers. Let $G$ be the graph obtained from the complete graph $K_m$ by removing $1 \leq i \leq {\left \lfloor \frac m 2 \right \rfloor}$ non-adjacent edges. Let $H$ be the graph obtained from the complete graph $K_n$ by removing $1 \leq j \leq {\left \lfloor \frac n 2 \right \rfloor}$ non-adjacent edges. If $k$ is infinite, then $\ms(k(G*H)) = 2.$
\end{cor}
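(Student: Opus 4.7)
The plan is to derive this corollary as an immediate consequence of two previously established results: Proposition \ref{ms(K_n) minus non-adjacent edges}, which computes $\ms$ for a complete graph with non-adjacent edges removed, and Proposition \ref{cs(G*H) and ms(G*H)}, which relates $\ms$ of a graph join to the $\ms$ of its components.

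First, I would invoke Proposition \ref{ms(K_n) minus non-adjacent edges} separately on $G$ and on $H.$ Since $k$ is infinite, $i$ satisfies $1 \leq i \leq \lfloor m/2 \rfloor,$ and $j$ satisfies $1 \leq j \leq \lfloor n/2 \rfloor,$ that proposition yields $\ms(k(G)) = 2$ and $\ms(k(H)) = 2.$ (One small point to address: Proposition \ref{ms(K_n) minus non-adjacent edges} is stated for $n \geq 3,$ so in the edge cases $m = 2$ or $n = 2$ one must observe directly that removing the single edge of $K_2$ produces two isolated vertices, whose edge ring is a polynomial ring in two variables and thus has $\ms$ equal to $2.$)

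Next, I would apply Proposition \ref{cs(G*H) and ms(G*H)}, which asserts that $\ms(k(G*H)) = \max\{\ms(k(G)), \ms(k(H))\}.$ Plugging in the values from the previous step yields $\ms(k(G*H)) = \max\{2, 2\} = 2,$ as desired.

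There is essentially no obstacle: the entire proof is a one-line combination of the two cited results. The only conceptual point worth flagging in the writeup is the boundary case for $m$ or $n$ equal to $2,$ which sidesteps the $n \geq 3$ hypothesis of Proposition \ref{ms(K_n) minus non-adjacent edges} via a direct argument; this can be handled in a single sentence.
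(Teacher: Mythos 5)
Your proposal is correct and follows essentially the same route as the paper: the paper also combines Proposition \ref{ms(K_n) minus non-adjacent edges} with the graph-join formula, merely phrasing the second step through Corollary \ref{ms(G*H) when overline G and overline H are chordal} (itself a consequence of Proposition \ref{cs(G*H) and ms(G*H)}) rather than citing Proposition \ref{cs(G*H) and ms(G*H)} directly. Your explicit handling of the $m=2$ or $n=2$ boundary case is a small point the paper glosses over, and it is handled correctly.
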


\begin{proof}
Observe that $\overline G$ consists of $m - 2i$ isolated vertices and $i$ non-adjacent edges. Likewise, $\overline H$ consists of $n - 2j$ isolated vertices and $j$ non-adjacent edges. Consequently, the graphs $\overline G$ and $\overline H$ are chordal so that $\ms(k(G*H)) = \max \{\alpha(G), \alpha(H)\} = 2$ by Proposition \ref{ms(K_n) minus non-adjacent edges} and Corollary \ref{ms(G*H) when overline G and overline H are chordal}.
\end{proof}

For any positive integers $n_1, \dots, n_t,$ the graph $K_{n_1, n_2, \dots, n_t} = \overline{K_{n_1}} * \overline{K_{n_2}} * \cdots * \overline{K_{n_t}}$ is called the {\it complete $t$-partite graph} on $n_1, \dots, n_t.$ Considering that $k(\overline K_n) = k[x_1, \dots, x_n]$ is regular, we obtain the following.

\begin{cor}\label{ms(K_n_1, n_2, dots, n_t)}
We have that $\ms(K_{n_1, n_2, \dots, n_t}) = \max \{n_1, n_2, \dots, n_t\}.$
\end{cor}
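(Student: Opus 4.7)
The plan is to combine the associativity/iteration of the graph join operation with the formula for $\ms(k(G*H))$ already established in Proposition~\ref{cs(G*H) and ms(G*H)}, applied to the regular local building blocks $k(\overline{K_{n_i}})$.

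First I would record the base ingredient: for each $i$, the edge ring $k(\overline{K_{n_i}}) = k[x_1,\dots,x_{n_i}]$ is a polynomial ring over $k$, viewed as a standard graded local ring with homogeneous maximal ideal $\fm_i = (x_1,\dots,x_{n_i})$. Because this ring is regular with $\dim = \mu(\fm_i) = n_i$, the chain of inequalities $\dim R \le \ms(R) \le \cs(R) \le \mu(\fm)$ of Proposition~\ref{main proposition about cs and ms}(1.) collapses and yields $\ms(k(\overline{K_{n_i}})) = n_i$.

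Next I would run an induction on $t$. The case $t=1$ is the previous paragraph. For the inductive step, write $K_{n_1,\dots,n_t} = K_{n_1,\dots,n_{t-1}} * \overline{K_{n_t}}$ from the definition of the complete multipartite graph, and apply Proposition~\ref{cs(G*H) and ms(G*H)} with $G = K_{n_1,\dots,n_{t-1}}$ and $H = \overline{K_{n_t}}$. This gives
\[
\ms(k(K_{n_1,\dots,n_t})) \;=\; \max\{\ms(k(K_{n_1,\dots,n_{t-1}})),\, \ms(k(\overline{K_{n_t}}))\} \;=\; \max\{\max\{n_1,\dots,n_{t-1}\},\, n_t\},
\]
where the second equality uses the inductive hypothesis together with the base case. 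Since $\max\{\max\{n_1,\dots,n_{t-1}\}, n_t\} = \max\{n_1,\dots,n_t\}$, the desired equality follows.

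There is essentially no obstacle here: the only thing one must be a little careful about is that Proposition~\ref{cs(G*H) and ms(G*H)} is stated for two graphs on disjoint vertex sets, so the inductive step requires relabeling the vertices of $K_{n_1,\dots,n_{t-1}}$ and $\overline{K_{n_t}}$ to lie on disjoint sets before taking the join; this is automatic from the definition $K_{n_1,\dots,n_t} = \overline{K_{n_1}}*\cdots*\overline{K_{n_t}}$, which already places each factor on its own block of vertices. Apart from this bookkeeping, the corollary is a direct iteration of Proposition~\ref{cs(G*H) and ms(G*H)} starting from the regular local rings $k(\overline{K_{n_i}})$.
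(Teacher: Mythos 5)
Your proof is correct and matches the paper's argument: the paper likewise reduces to $\ms(k(\overline{K_{n_i}})) = n_i$ via regularity of the polynomial ring and applies Proposition \ref{cs(G*H) and ms(G*H)} across the iterated join, merely leaving the induction on $t$ implicit where you spell it out.
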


\begin{proof}
Observe that $\ms(k(K_{n_1, \dots, n_t})) = \max \{\ms(k(\overline{K_{n_1}})), \dots, \ms(k(\overline{K_{n_t}}))\}$ by Proposition \ref{cs(G*H) and ms(G*H)}. By the observation preceding the statement of the corollary, the latter value is $\max \{n_1, \dots, n_t\}.$
\end{proof}

\begin{comment}
\begin{prop}\label{ms(G*H) leq max |V(G)|, |V(H)|}
Let $G$ and $H$ be graphs on the disjoint vertex sets $V(G)$ and $V(H)$ defined as above. We have that $\ms(k(G*H)) \leq \max \{m, n\}.$
\end{prop}

\begin{proof}
We may assume without loss of generality that $m < n.$ By Proposition \ref{K_ell-connected edge cover}, it suffices to find an edge cover with $n$ edges such that any pair of edges is clique-adjacent. Consider any edge cover $L$ of $G*H$ consisting of some $m$ edges from a vertex of $G$ to a vertex of $H$ and some $n - m$ edges from a vertex of $G$ to the remaining $n - m$ vertices of $H$ that have not already been covered. We claim that any two edges $E$ and $F$ of $L$ are clique-adjacent. If $E$ and $F$ are both incident to some vertex, then we are done; otherwise, the endpoints of $E$ and $F$ are distinct. By definition of $L,$ an endpoint $v$ of $E$ belongs to $G,$ and an endpoint $w$ of $F$ belongs to $H.$ Considering that $\{v, w\}$ is an edge of $G*H$ by definition, we conclude that $E$ and $F$ are clique-adjacent.
\end{proof}
\end{comment}

Given a finite simple graph $G$ on $n$ vertices, one naturally wonders if $\ms(k(G)) + \ms(k(\overline G)) = n.$ Our next proposition illustrates that this is not the case.

\begin{prop}\label{ms(S_n) + ms(overline S_n) = n + 1}
We have that $\ms(k(S_n)) + \ms(k(\overline{S_n})) = n + 1.$
\end{prop}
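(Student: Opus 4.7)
The plan is to identify the complement graph $\overline{S_n}$ explicitly and then reduce to values of $\ms$ that have already been computed. Recall that $S_n$ has edges $\{1,i\}$ for $2 \leq i \leq n$, so vertex $1$ is adjacent to every other vertex while the vertices $2, \ldots, n$ are pairwise non-adjacent. Passing to the complement, vertex $1$ becomes isolated in $\overline{S_n}$, while the vertices $2, \ldots, n$ induce the complete graph $K_{n-1}$. In other words, $\overline{S_n}$ is the disjoint union of one isolated vertex and $K_{n-1}$.

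First, I would apply Corollary \ref{ms in terms of number of isolated vertices and nontrivial connected components} (or equivalently Proposition \ref{ms(k(G)) is monotone increasing with respect to adding vertices}) to this decomposition. Since $\overline{S_n}$ has exactly one isolated vertex and its unique non-trivial connected component is $K_{n-1}$, we obtain
\[
\ms(k(\overline{S_n})) = \ms(k(K_{n-1})) + 1.
\]
Next, Proposition \ref{ms(K_n)} gives $\ms(k(K_{n-1})) = 1$ (for $n \geq 2$), so $\ms(k(\overline{S_n})) = 2$.

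Finally, combining this with Proposition \ref{cs(S_n) and ms(S_n)}, which asserts $\ms(k(S_n)) = n-1$, yields
\[
\ms(k(S_n)) + \ms(k(\overline{S_n})) = (n-1) + 2 = n+1,
\]
as required. There is essentially no obstacle here beyond correctly identifying the complement graph; the bulk of the work has already been carried out in the earlier propositions cited, and the conclusion follows by direct substitution.
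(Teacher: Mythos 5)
Your proof is correct and follows essentially the same route as the paper: both identify $\overline{S_n}$ as the graph union of $K_{n-1}$ and an isolated vertex, deduce $\ms(k(\overline{S_n})) = 2$ from $\ms(k(K_{n-1})) = 1$ plus the isolated-vertex formula, and combine with $\ms(k(S_n)) = n-1$ from Proposition \ref{cs(S_n) and ms(S_n)}. The paper merely packages the middle step as the separately stated Corollary \ref{ms(overline S_n)}, whose proof uses the same ingredients you cite directly.
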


\begin{proof}
Observe that $S_n = \overline{K_{n - 1}} * K_1,$ hence $\overline{S_n} = K_{n - 1} + K_1$ so that $\ms(k(\overline{S_n})) = 2$ by Corollary \ref{ms(overline S_n)}. We conclude the result, as $\ms(k(S_n)) = n - 1$ by Proposition \ref{cs(S_n) and ms(S_n)}.
\end{proof}

\begin{comment}
We note that we have not yet given non-trivial bounds for $\cs(k(K_n)).$ Primarily, this is due to the fact that Proposition \ref{cs of k[x_1, ..., x_n] modulo t homogeneous quadratics} fails to produce a meaningful lower bound and $\ms(k(K_n)) = 1.$ Our next proposition uses the mechanics of the graph join to produce an upper bound.

\begin{cor}\label{cs(K_3n+r)}
We have that $2 \leq \cs(k(K_{3n + r})) \leq 2n + r$ for each integer $0 \leq r \leq 2.$
\end{cor}

\begin{proof}
Observe that $K_{3n + r}$ is the graph join of $n$ copies of $K_3$ and $K_r.$ By Proposition \ref{cs(C_n) and ms(C_n)}, we have that $\cs(k(K_3)) = 2 \geq \cs(k(K_r))$ for each integer $0 \leq r \leq 2,$ hence by the first part of Proposition \ref{main proposition about cs and ms} and Proposition \ref{cs(G*H) and ms(G*H)}, we conclude that $2 = \max \{\cs(k(K_3)), \cs(k(K_r))\} \leq \cs(k(K_{3n + r})) \leq n \cs(k(K_3)) + \cs(k(K_r)) \leq 2n + r.$
\end{proof}
\end{comment}

We provide bounds on our invariants for the wheel graph on $n \geq 4$ vertices. We will see that in turn, the following proposition yields a slightly improved upper bound on the cycle graph.

\begin{prop}\label{cs(W_n) and ms(W_n)}
Let $W_n$ be the wheel graph on $n \geq 4$ vertices, i.e., the graph join of the complete graph $K_1$ and the cycle graph $C_{n - 1}.$ The following inequalities hold.
\begin{align*}
n - 2 &\leq \cs(k(W_n)) \leq \begin{cases} n - 1 &\text{if } n \text{ is even and} \\ n &\text{if } n \text{ is odd} \end{cases} \\ \\
{\left \lfloor \frac{n - 1} 2 \right \rfloor} &\leq \ms(k(W_n)) \leq \begin{cases}
\displaystyle {\left \lceil \frac{n - 1} 2 \right \rceil} &\text{if } n \leq 7 \text{ and} \\
n - 3 &\text{if } n \geq 8. \end{cases}
\end{align*}
If $n = 4, 5, $ or 7, then $\ms(k(W_n))$ achieves its lower bound.
\end{prop}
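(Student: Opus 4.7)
My plan is to decompose $W_n$ as the graph join $K_1 * C_{n-1}$ and then repeatedly invoke Proposition \ref{cs(G*H) and ms(G*H)} to reduce everything to (essentially) known cycle invariants from Proposition \ref{cs(C_n) and ms(C_n)}. First, for the lower bound $\cs(k(W_n)) \geq n - 2$, I would count edges: the $n - 1$ cycle edges together with the $n - 1$ spokes from the center give $t = 2n - 2$ quadratic minimal generators of $I(W_n)$, and Proposition \ref{cs of k[x_1, ..., x_n] modulo t homogeneous quadratics} applied with $s = 1$ (valid because $2n - 2 \leq 2n - 1 = (s+1)(2n-s)/2$) immediately delivers $\cs(k(W_n)) \geq n - s - 1 = n - 2$. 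For the matching upper bound, Proposition \ref{cs(G*H) and ms(G*H)} gives $\cs(k(W_n)) \leq \cs(k(K_1)) + \cs(k(C_{n-1})) = 1 + \cs(k(C_{n-1}))$; the parity cases of Proposition \ref{cs(C_n) and ms(C_n)} then close out $n - 1$ (when $n$ is even, so $n - 1$ is odd and $\cs(k(C_{n-1})) = n - 2$) and $n$ (when $n$ is odd, so $n - 1$ is even and $\cs(k(C_{n-1})) \leq n - 1$).

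The cornerstone of the $\ms$ analysis will be the sharp identity $\ms(k(W_n)) = \max\{\ms(k(K_1)), \ms(k(C_{n-1}))\} = \ms(k(C_{n-1}))$ supplied by Proposition \ref{cs(G*H) and ms(G*H)} together with $\ms(k(K_1)) = 0$ (since $k(K_1) = k[x]$ is regular). The lower bound $\ms(k(W_n)) \geq \lfloor (n-1)/2 \rfloor$ is then immediate from the cycle lower bound in Proposition \ref{cs(C_n) and ms(C_n)}. For the upper bound I would split on $n$: when $n \leq 7$, so $n - 1 \leq 6 \leq 7$, Proposition \ref{cs(C_n) and ms(C_n)} directly yields $\ms(k(C_{n-1})) \leq \lceil (n-1)/2 \rceil$; when $n = 8$, it still applies to give $\ms(k(C_7)) \leq \lceil 7/2 \rceil = 4 \leq 5 = n - 3$; and when $n \geq 9$, I would appeal to the explicit linear forms $y_i$ of Remark \ref{better bounds for cs(C_n) and ms(C_n)}, which produce an ideal with $n - 4$ generators of $k(C_{n-1})$ containing $\bar{\fm}^2$, and hence is already dominated by $n - 3$.

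Finally, the equality cases $n \in \{4, 5, 7\}$ will fall out of the identity $\ms(k(W_n)) = \ms(k(C_{n-1}))$ combined with the exact values $\ms(k(C_3)) = \ms(k(K_3)) = 1$ from Proposition \ref{ms(K_n)}, $\ms(k(C_4)) = 2$, and $\ms(k(C_6)) = 3$, the last two being forced by the squeeze between matching floor and ceiling in Proposition \ref{cs(C_n) and ms(C_n)}. The hard part of this plan is the $n \geq 9$ branch of the $\ms$ upper bound: Remark \ref{better bounds for cs(C_n) and ms(C_n)} only verifies the cycle bound $\ms(k(C_m)) \leq m - 3$ computationally for $8 \leq m \leq 40$, so a uniform statement for all $m \geq 8$ requires a standalone combinatorial lemma showing that the sign pattern defining the $y_i$ causes every non-edge monomial $\bar{x}_i \bar{x}_j$ of $\bar{\fm}^2$ in $k(C_m)$ to lie in $(y_1, \dots, y_{m-3})$ — essentially a case analysis in $m \pmod 4$ that I would carry out once as a preliminary lemma before feeding the result through the join identity to finish.
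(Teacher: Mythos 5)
Your treatment of the $\cs$ bounds, the $\ms$ lower bound, the $n \leq 7$ upper bound, and the equality cases $n \in \{4,5,7\}$ matches the paper's argument (the paper gets the lower bound $\cs(k(W_n)) \geq n-2$ from $\cs(k(W_n)) \geq \cs(k(C_{n-1})) \geq n-2$ rather than by re-counting edges, but your edge count with $s = 1$ in Proposition \ref{cs of k[x_1, ..., x_n] modulo t homogeneous quadratics} is equally valid). One small slip: $\ms(k(K_1)) = 1$, not $0$, since for a regular local ring $\ms(R) = \mu(\fm) = \dim R$ and here $\fm^2 = (x^2) \neq 0$; this is harmless because $\max\{1, \ms(k(C_{n-1}))\} = \ms(k(C_{n-1}))$ regardless.

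The genuine gap is the $n \geq 9$ branch of the $\ms$ upper bound. You propose to prove $\ms(k(C_{n-1})) \leq n-4$ via the linear forms $y_i$ of Remark \ref{better bounds for cs(C_n) and ms(C_n)}, but that containment is only verified computationally in the paper for cycles on $8$ through $40$ vertices, and you defer the general verification to an unwritten ``standalone combinatorial lemma.'' As written, the case $n \geq 9$ is therefore not proved. More importantly, the reduction is pointed in the wrong direction: the cycle $C_m$ is \emph{not} $K_\ell$-connected for $m \geq 7$, which is exactly why bounding $\ms(k(C_m))$ directly is delicate, whereas the wheel is easy because of the hub. The paper's proof builds a $K_{n-3}$-connected edge cover of $W_n$ itself --- two clique-adjacent perimeter edges plus the $n-5$ spokes joining the hub to the remaining perimeter vertices --- and applies Proposition \ref{K_ell-connected edge cover} to get $\ms(k(W_n)) \leq n-3$ outright; the cycle bound $\ms(k(C_m)) \leq m-2$ for $m \geq 8$ (Corollary \ref{ms(C_n) revisited}) is then \emph{deduced from} the wheel result via $\ms(k(C_m)) = \ms(k(W_{m+1}))$, not the other way around. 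You should replace your $n \geq 9$ argument with this edge-cover construction (or actually supply the combinatorial lemma you allude to, which would in fact strengthen the paper's Remark \ref{better bounds for cs(C_n) and ms(C_n)}).
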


\begin{proof}
Observe that $W_n = K_1 * C_{n - 1},$ hence Proposition \ref{cs(G*H) and ms(G*H)} implies that $$\cs(k(C_{n - 1})) = \max \{\cs(k(K_1)), \cs(k(C_{n - 1}))\} \leq \cs(k(W_n)) \leq \cs(k(K_1)) + \cs(k(C_{n - 1})) = \cs(k(C_{n - 1})) + 1$$ and $\ms(k(W_n)) = \max \{\ms(k(K_1)), \ms(k(C_{n - 1}))\} = \ms(k(C_{n - 1})).$ By Proposition \ref{cs(C_n) and ms(C_n)}, the stated lower bounds for $\cs(k(W_n))$ and $\ms(k(W_n))$ hold. If $n$ is even, then $n - 1$ is odd so that $\cs(C_{n - 1}) = n - 2;$ otherwise, we have that $\cs(k(C_{n - 1})) \leq n - 1,$ hence the upper bound for $\cs(k(W_n))$ holds. Likewise, if $n \leq 7,$ then the upper bound for $\ms(k(W_n))$ holds by Proposition \ref{cs(C_n) and ms(C_n)}. Last, we have that $\ms(k(W_4)) = \ms(k(C_3)) = 1$; $\ms(k(W_5)) = \ms(k(C_4)) = 2$; and $\ms(k(W_7)) = \ms(k(C_6)) = 3$ by Proposition \ref{cs(C_n) and ms(C_n)}.

For $n \geq 8,$ we construct an edge cover $\{E_i\}_{i = 1}^{n - 3}$ that is $K_{n - 3}$-connected. First, we cover two clique-adjacent ``perimeter'' edges of $W_n$; then, we cover the remaining $n - 5$ ``perimeter'' vertices and the ``hub'' with the $n - 5$ edges connecting the ``hub'' to each of these ``perimeter'' vertices. Ultimately, we obtain an edge cover with $n - 3 = 2 + (n - 5)$ edges. It is $K_{n - 3}$-connected because the two ``perimeter'' edges are clique-adjacent, and both of these edges are clique-adjacent to an edge connecting the ``hub'' and a ``perimeter'' vertex because the ``hub'' is adjacent to all ``perimeter'' vertices. By Proposition \ref{K_ell-connected edge cover}, we conclude that $\ms(k(W_n)) \leq n - 3.$
\end{proof}

\begin{cor}\label{ms(C_n) revisited}
We have that $\ms(k(C_n)) \leq n - 2$ for all integers $n \geq 8.$
\end{cor}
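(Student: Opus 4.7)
The plan is to deduce this corollary immediately from Proposition \ref{cs(W_n) and ms(W_n)} by combining it with the identity $\ms(k(G*H)) = \max\{\ms(k(G)), \ms(k(H))\}$ from Proposition \ref{cs(G*H) and ms(G*H)}. The key observation is that the cycle $C_n$ appears as a constituent of a wheel graph: namely, $W_{n+1} = K_1 * C_n$ by definition of the wheel graph.

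First, I would apply Proposition \ref{cs(G*H) and ms(G*H)} to the decomposition $W_{n+1} = K_1 * C_n$ to obtain $\ms(k(W_{n+1})) = \max\{\ms(k(K_1)), \ms(k(C_n))\}$. Since $k(K_1) \cong k[x]$ is regular with $\mu(\fm) = 1$, we have $\ms(k(K_1)) = 1$ by Proposition \ref{main proposition about cs and ms}(1.). For $n \geq 8$, we have $\ms(k(C_n)) \geq \lfloor n/2 \rfloor \geq 4 > 1$ by Proposition \ref{cs(C_n) and ms(C_n)}, so the maximum is attained by $\ms(k(C_n))$, giving $\ms(k(W_{n+1})) = \ms(k(C_n))$.

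Next, I would invoke the second part of Proposition \ref{cs(W_n) and ms(W_n)}: since $n + 1 \geq 9 \geq 8$, we have $\ms(k(W_{n+1})) \leq (n+1) - 3 = n - 2$. Combining this with the equality from the previous step yields $\ms(k(C_n)) \leq n - 2$, which is precisely the desired statement. This is a purely formal consequence of results already established, so no substantive obstacle arises; the only subtle point is verifying that the lower bound $\ms(k(C_n)) > \ms(k(K_1)) = 1$ ensures that the maximum in Proposition \ref{cs(G*H) and ms(G*H)} picks out the contribution from $C_n$ rather than from $K_1$.
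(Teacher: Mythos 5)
Your proposal is correct and is essentially identical to the paper's own one-line proof, which reads $\ms(k(C_n)) = \ms(k(W_{n+1})) \leq n-2$ via Proposition \ref{cs(W_n) and ms(W_n)}. Your extra check that the maximum in Proposition \ref{cs(G*H) and ms(G*H)} is attained by the $C_n$ factor rather than $K_1$ is a welcome (if minor) bit of added care.
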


\begin{proof}
By Proposition \ref{cs(W_n) and ms(W_n)}, we have that $\ms(k(C_n)) = \ms(k(W_{n + 1})) \leq n - 2$ for all $n \geq 8.$
\end{proof}

We conclude with a discussion of another familiar graphical construction.

\begin{definition}\label{graph wedge definition}
Consider any finite simple graphs $G$ and $H$ with respective vertex sets $V(G)$ and $V(H)$ such that $V(G) \cap V(H) = \{v\}$ and respective edge sets $E(G)$ and $E(H).$ We define the \textit{wedge graph} $G \vee_v H$ with respect to $v$ as the graph with vertices $V(G) \cup V(H)$ and edges $E(G) \cup E(H).$ Put another way, $G \vee_v H$ is obtained by ``gluing'' $G$ and $H$ together at their common vertex $v.$
\end{definition}

Generally, the vertex $v$ determines the corresponding wedge graph $G \vee_v H.$ Explicitly, for any labelling of the vertices of $G,$ the wedge graph $G \vee_v H$ depends upon the labeling of the vertices of $H.$ Below is a diagram of two non-isomorphic graphs that are obtained by wedging two copies of $P_3$ with different labelings.
\begin{center}
\begin{tikzpicture}
\tikzset{enclosed/.style={draw, circle, inner sep=0pt, minimum size=.15cm, fill=black}}
\node[enclosed, label={left: 1}] (1) at (-10,1) {};
\node[enclosed, color=red, label={left: 2}] (2) at (-10,0) {};
\node[enclosed, label={left: 3}] (3) at (-10,-1) {};
\node[enclosed, label={right: 4}] (4) at (-8,1) {};
\node[enclosed, color=red, label={right: 2}] (5) at (-8,0) {};
\node[enclosed, label={right: 5}] (6) at (-8,-1) {};
\node[enclosed, color=white, label={above: $\textcolor{black}{\bigvee_2}$}] (v) at (-9,-0.25) {};
\node[enclosed, color=white, label={above: $\textcolor{black}{\cong}$}] (v) at (-7,-0.25) {};
\node[enclosed, label={left: 1}] (1') at (-6,1) {};
\node[enclosed, color=red, label={below: 2}] (2') at (-5,0) {};
\node[enclosed, label={left: 3}] (3') at (-6,-1) {};
\node[enclosed, label={right: 4}] (4') at (-4,1) {};
\node[enclosed, label={right: 5}] (6') at (-4,-1) {};
\draw (1) -- (2);
\draw (2) -- (3);
\draw (4) -- (5);
\draw (5) -- (6);
\draw (1') -- (2');
\draw (2') -- (3');
\draw (2') -- (4');
\draw (2') -- (6');
\node[enclosed, color=white, label={above: \textcolor{black}{versus}}] (v) at (-2.5,-0.25) {};
\node[enclosed, label={left: 1}] (12) at (-1,1) {};
\node[enclosed, color=red, label={left: 2}] (22) at (-1,0) {};
\node[enclosed, label={left: 3}] (32) at (-1,-1) {};
\node[enclosed, color=red, label={right: 2}] (42) at (1,1) {};
\node[enclosed, label={right: 4}] (52) at (1,0) {};
\node[enclosed, label={right: 5}] (62) at (1,-1) {};
\node[enclosed, color=white, label={above: $\textcolor{black}{\bigvee_2}$}] (v) at (0,-0.25) {};
\node[enclosed, color=white, label={above: $\textcolor{black}{\cong}$}] (v) at (2,-0.25) {};
\node[enclosed, label={left: 1}] (12') at (3,1) {};
\node[enclosed, color=red, label={left: 2}] (22') at (3,0) {};
\node[enclosed, label={left: 3}] (32') at (3,-1) {};
\node[enclosed, label={below: 4}] (52') at (4,0) {};
\node[enclosed, label={below: 5}] (62') at (5,0) {};
\draw (12) -- (22);
\draw (22) -- (32);
\draw (42) -- (52);
\draw (52) -- (62);
\draw (12') -- (22');
\draw (22') -- (32');
\draw (22') -- (52');
\draw (22') -- (62');
\end{tikzpicture}
\end{center}
On the other hand, for the complete graph $K_n,$ the wedge vertex is irrelevant, as every labeling of the vertices of $K_n$ induces a graph automorphism of $K_n.$ Because the diameter of $K_n$ is one, the wedge graphs $K_m \vee K_n$ form a family of graphs of diameter two. By the discussion of Remark \ref{Wagner graph obstruction}, graphs of diameter two are of particular interest because they are highly connected yet ostensibly exhibit subtle behavior with respect to the invariants.

Our first result on the wedge of complete graphs gives non-trivial bounds on $\ms(K_m \vee K_n).$

\begin{prop}\label{ms(K_m vee K_n)}
We have that $2 \leq \ms(k(K_m \vee K_n)) \leq \min \{m, n\}.$
\end{prop}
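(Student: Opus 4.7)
The plan is to prove the two inequalities separately, with the upper bound being the more interesting direction that requires constructing an explicit witness ideal.

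For the lower bound $\ms(k(K_m \vee K_n)) \geq 2$, I would use Corollary \ref{ms(G) = 1 if and only if G is complete}, which states that $\ms(k(G)) = 1$ if and only if $G$ is complete. Assuming $m, n \geq 2$ (else the statement is vacuous, since $K_1 \vee K_n = K_n$), the graph $K_m \vee K_n$ contains a non-adjacent pair: if $v$ denotes the shared vertex, then any vertex $u$ of $K_m \setminus \{v\}$ and any vertex $w$ of $K_n \setminus \{v\}$ satisfy $\{u, w\} \notin E(K_m \vee K_n)$. Hence the graph is not complete, so $\ms(k(K_m \vee K_n)) \geq 2$.

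For the upper bound, I would assume without loss of generality that $m \leq n,$ label the shared vertex $v,$ the remaining vertices of $K_m$ as $u_1, \ldots, u_{m-1},$ and the remaining vertices of $K_n$ as $w_1, \ldots, w_{n-1}.$ The key step is to exhibit the ideal
\[
J = (\bar v + \bar u_i \mid 1 \leq i \leq m-1) + \Big(\bar v + \sum_{j = 1}^{n-1} \bar w_j\Big)
\]
of $k(K_m \vee K_n).$ It is clear that $\mu(J) \leq m.$ It remains to show $\bar{\mathfrak m}^2 \subseteq \bar{\mathfrak m} J,$ and I would verify this by directly computing each generator of $\bar{\mathfrak m}^2.$ The non-vanishing monomial generators of $\bar{\mathfrak m}^2$ modulo $I(K_m \vee K_n)$ are the pure squares $\bar v^2, \bar u_i^2, \bar w_j^2$ and the mixed terms $\bar u_i \bar w_j$ (all other products correspond to edges). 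One then checks: $\bar v (\bar v + \bar u_1) = \bar v^2$ because $\bar v \bar u_1 = 0$; $\bar u_i (\bar v + \bar u_i) = \bar u_i^2$ because $\bar v \bar u_i = 0$; $\bar w_j (\bar v + \bar u_i) = \bar u_i \bar w_j$ because $\bar v \bar w_j = 0$; and $\bar w_k (\bar v + \sum_j \bar w_j) = \bar w_k^2$ because every other term $\bar v \bar w_k$ and $\bar w_j \bar w_k$ (for $j \neq k$) corresponds to an edge of $K_n$ and so vanishes. This exhausts the generators of $\bar{\mathfrak m}^2,$ so $\bar{\mathfrak m}^2 \subseteq J$ and $\ms(k(K_m \vee K_n)) \leq m = \min\{m, n\}.$

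The main obstacle — and the reason a naive choice such as $(\bar v, \bar u_i + \bar w_i)$ fails — is capturing the pure squares $\bar w_j^2$ for all $j$ simultaneously with only $m \leq n$ generators; the trick is that summing all the $\bar w_j$ into a single linear form allows the edge relations within $K_n$ to ``absorb'' all the cross terms $\bar w_j \bar w_k,$ leaving only the square upon multiplication by $\bar w_k.$
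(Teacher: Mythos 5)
Your proposal is correct and follows essentially the same route as the paper: both establish the lower bound via the independence number (an independent pair consisting of one non-shared vertex from each clique) and the upper bound via an explicit witness ideal of $m$ linear forms whose key generator is the shared vertex plus the sum of all non-shared vertices of the larger clique, so that the edge relations of $K_n$ absorb the cross terms. The only cosmetic difference is that the paper takes the first $m-1$ generators to be the variables $\bar u_i$ themselves rather than $\bar v + \bar u_i$; both choices verify $\bar{\fm}^2 \subseteq \bar{\fm}J$ by the same computation.
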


\begin{proof}
We will assume that $m < n$ so that $\min \{m, n\} = m$ and $\max \{m, n\} = n.$ Further, we will assume that $K_m$ is the complete graph on the vertices $[m] = \{1, 2, \dots, m\}$ and $K_n$ is the complete graph on the vertices $\{m, m + 1, \dots, m + n - 1\}.$ By definition, the graph $K_m \vee K_n$ is obtained by gluing $K_m$ and $K_n$ along the common vertex $m$ of $K_m$ and $K_n,$ hence we have that
\begin{align*}
I(K_m \vee K_n) &= (x_i x_j \mid 1 \leq i < j \leq m \text{ or } m \leq i < j \leq m + n - 1) \text{ and} \\
\bar \fm^2 &= (\bar x_i^2 \mid 1 \leq i \leq m + n - 1) + (\bar x_i \bar x_j \mid 1 \leq i \leq m - 1 \text{ and } m + 1 \leq j \leq m + n - 1).
\end{align*}
Consider the ideal $J = (\bar x_1, \bar x_2, \dots, \bar x_{m - 1}, \bar x_m + \bar x_{m + 1} + \cdots + \bar x_{m + n - 1}).$ We obtain the pure squares $\bar x_i^2$ for each integer $m \leq i \leq m + n - 1$ by taking $\bar x_i(\bar x_m + \bar x_{m + 1} + \cdots + \bar x_{m + n - 1}),$ and the remaining pure squares $\bar x_i^2$ for each integer $1 \leq i \leq m - 1$ are clearly contained in $\bar \fm J.$ We obtain the mixed terms $\bar x_i \bar x_j$ such that $1 \leq i \leq m - 1$ and $m + 1 \leq j \leq m + n - 1$ in the following manner.
\begin{enumerate}

\item[(i.)] Observe that $\bar x_i(\bar x_m + \bar x_{m + 1} + \cdots + \bar x_{m + n - 1})$ contains $\bar x_i \bar x_j$ as a summand.

\vspace{0.25cm}

\item[(ii.)] Given any integer $\ell \in \{m, m + 1, \dots, m + n - 1\} \setminus \{j\},$ observe that the monomial $\bar x_i \bar x_\ell$ is an element of $\bar \fm J$ by hypothesis that $1 \leq i \leq m - 1.$

\vspace{0.25cm}

\item[(iii.)] Consequently, we have that $\bar x_i \bar x_j = \bar x_i(\bar x_m + \bar x_{m + 1} + \bar x_{m + n - 1}) - \sum_{\ell \in S} \bar x_i \bar x_\ell$ is an element of $\bar \fm J,$ where $S$ is the set $\{m, m + 1, \dots, m + n - 1\} \setminus \{j\}.$

\end{enumerate}
We conclude that $\ms(k(K_m \vee K_n)) \leq \min \{m, n\}.$ On the other hand, we have that $\alpha(K_m \vee K_n) = 2$ so that $\ms(k(K_m \vee K_n)) \geq 2.$ Every vertex $1 \leq i \leq m - 1$ is incident to all vertices $1 \leq i < j \leq m,$ hence the set $\{i, j\}$ with $m + 1 \leq j \leq m + n - 1$ is a maximum independent vertex set.
\end{proof}

\begin{remark}
We note that the proof of Proposition \ref{ms(K_m vee K_n)} comes from the following observation. Let $G$ be a simple graph on $n$ vertices. Relabelling $G,$ if necessary, if the vertices $\{1, 2, \dots, m\}$ form an independent vertex set in $\overline G$ and the induced subgraph on $\{m + 1, m + 2, \dots, n\}$ is connected in $\overline G,$ then the ideal $\bar \fm^2$ of $k(G)$ is contained in $J = (\bar x_1 + \cdots + \bar x_m, \bar x_{m + 1}, \dots, \bar x_n).$ Indeed, if the vertices $\{1, 2, \dots, m\}$ are independent in $\overline G,$ then the graph $G$ must contain all the edges $\{i, j\}$ such that $1 \leq i < j \leq i.$ Consequently, the pure squares $\bar x_i^2$ such that $1 \leq i \leq m + 1$ belong to the ideal $\bar \fm J,$ as they can be written as $\bar x_i^2 = \bar x_i(\bar x_1 + \cdots + \bar x_m).$ Clearly, the pure squares $\bar x_{m + 1}^2, \dots, \bar x_n^2$ all belong to $\bar \fm J.$ Further, the mixed terms $\bar x_i \bar x_j$ such that $m + 1 \leq i < j \leq n$ belong to the ideal $\bar \fm J,$ as the monomials $\bar x_{m + 1}, \dots, \bar x_n$ all belong to $J$ by construction.
\end{remark}

\begin{comment}
Unfortunately, this observation is useless if $G$ is not very connected because then $\overline G$ is very connected, and we have that $\mu(J)$ is quite large.
\end{comment}

\begin{cor}\label{ms(T_n)}
Let $T_n$ be the kite graph on $n \geq 5$ vertices, i.e., the wedge graph of the complete graphs $K_{n - 2}$ and $K_2.$ We have that $\ms(k(T_n)) = 2.$
\end{cor}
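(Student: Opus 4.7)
The plan is to obtain this as a direct consequence of Proposition \ref{ms(K_m vee K_n)} by plugging in the appropriate parameters. By definition, $T_n = K_{n-2} \vee K_2$, so the proposition applies with $m = n-2$ and with the second complete graph having $2$ vertices. The proposition gives
\[
2 \leq \ms(k(K_{n-2} \vee K_2)) \leq \min\{n-2,\, 2\}.
\]

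The hypothesis $n \geq 5$ guarantees $n - 2 \geq 3 > 2$, so $\min\{n-2, 2\} = 2$. Hence both bounds collapse to $2$, and we conclude $\ms(k(T_n)) = 2$. There is no real obstacle here: the entire content is packaged in Proposition \ref{ms(K_m vee K_n)}, and this corollary is essentially a labeled instance with the smallest possible choice of one of the two factors. The only thing worth noting is that when one of the two complete graphs is $K_2$, the wedge construction is canonical (the labeling of the common vertex is forced up to isomorphism), so $T_n$ is well-defined without specifying a wedge vertex.
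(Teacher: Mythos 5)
Your proposal is correct and matches the paper's intent exactly: Corollary \ref{ms(T_n)} is stated as an immediate instance of Proposition \ref{ms(K_m vee K_n)}, and plugging in $K_{n-2}$ and $K_2$ with $n \geq 5$ forces $\min\{n-2,2\} = 2$, so the upper and lower bounds coincide. Your remark that the wedge vertex is irrelevant for complete graphs is also consistent with the paper's discussion preceding Proposition \ref{ms(K_m vee K_n)}.
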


\begin{remark}\label{lollipop graph}
The kite graph of Corollary \ref{ms(T_n)} is also called the {\it lollipop graph} $L_{n - 2, 1}.$
\end{remark}

Our next goal is to extend Corollary \ref{ms(T_n)} to the wedge graph of $K_m$ and $n$ copies of $K_2.$

\begin{definition}\label{jellyfish graph definition}
Let $m$ and $n$ be positive integers. We define the \textit{jellyfish graph} $J_{m, n}$ with body of size $m$ and $n$ tentacles as the simple graph $J_{m, n} = K_m \vee (\vee_{i = 1}^n K_2)$ on $m + n$ vertices with an edge $\{i, j\}$ whenever $1 \leq i < j \leq m$ or $i = 1$ and $m + 1 \leq j \leq m + n.$
\end{definition}

\begin{prop}\label{complement graph of J_m,n}
Let $m$ and $n$ be positive integers. We have that $\overline{J_{m, n}} \cong (\overline{K_{m - 1}} * K_n) + K_1.$ Particularly, the complement graph $\overline{J_{m, n}}$ is chordal with $\omega(\overline{J_{m, n}}) = n + 1.$
\end{prop}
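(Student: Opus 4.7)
The plan is to verify the isomorphism directly from the definitions and then to read off chordality and the clique number from the resulting structure. First, by Definition \ref{jellyfish graph definition}, the vertex $1$ is incident to every other vertex of $J_{m,n}$: it is joined to $2,\dots,m$ through $K_m$ and to $m+1,\dots,m+n$ through the $n$ tentacles. Hence vertex $1$ is isolated in $\overline{J_{m,n}}$, giving the $K_1$-summand. It then suffices to identify the induced subgraph $\overline{J_{m,n}}[\{2,\dots,m+n\}]$ with the join $\overline{K_{m-1}}*K_n$. The vertices $2,\dots,m$ are pairwise adjacent in $J_{m,n}$ (they form $K_m$), so pairwise non-adjacent in $\overline{J_{m,n}}$; this gives the $\overline{K_{m-1}}$ factor. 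The vertices $m+1,\dots,m+n$ are incident in $J_{m,n}$ only to vertex $1$, so they are pairwise adjacent in $\overline{J_{m,n}}$, producing the $K_n$ factor. Finally, no edge of $J_{m,n}$ connects $\{2,\dots,m\}$ to $\{m+1,\dots,m+n\}$, so every such pair is an edge of $\overline{J_{m,n}}$, realizing precisely the join structure.

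For the chordality claim, I would argue that the graph $H=\overline{K_{m-1}}*K_n$ is a \emph{complete split graph} with independent set $A=\overline{K_{m-1}}$ and clique $B=K_n$ and all $A$--$B$ edges present; adding the isolated vertex $K_1$ creates no new cycles, so it suffices to show $H$ is chordal. Suppose, to the contrary, that $C=v_1v_2\cdots v_\ell v_1$ is an induced cycle in $H$ of length $\ell\ge 4$. Two consecutive vertices of $C$ cannot both lie in $A$ (non-adjacent), and three consecutive vertices cannot all lie in $B$ (the first and third would then be adjacent, giving a chord). The first constraint forces the $B$-vertices of $C$ to occupy at least $\lceil \ell/2\rceil$ positions, and among these any two $B$-vertices that are non-consecutive on $C$ are adjacent in $K_n$, producing a chord. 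A short case check for $\ell\in\{4,5\}$ (and the analogous argument for $\ell\ge 6$) shows no induced cycle of length $\ge 4$ can exist; hence $H$, and therefore $\overline{J_{m,n}}$, is chordal.

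For the clique number, any clique of $H$ contains at most one vertex of $A$ (since $A$ is independent) together with some vertices of $B$. Since every vertex of $A$ is adjacent to every vertex of $B$ and $B$ itself is a clique of size $n$, the maximum clique has size $n+1$, realized by $\{a\}\cup B$ for any $a\in A$. Adjoining the isolated vertex contributes only a clique of size $1$, so $\omega(\overline{J_{m,n}})=n+1$, completing the proof.

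The main obstacle is the chordality verification: one must rule out induced cycles of all lengths $\ell\ge 4$ in the complete split graph, which I expect to handle cleanly by the two structural constraints (no two consecutive $A$-vertices; no three consecutive $B$-vertices) forcing the cycle to contain two non-consecutive $B$-vertices, which are then adjacent in $K_n$ and yield a chord.
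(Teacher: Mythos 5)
Your proof is correct and follows essentially the same route as the paper: identify the edges of $\overline{J_{m,n}}$ directly from the definition of the jellyfish graph, read off the decomposition $(\overline{K_{m-1}} * K_n) + K_1$, and compute the clique number as $n+1$ by taking one vertex of the independent part together with the clique $K_n.$ The one place you go beyond the paper is the chordality step: the paper dismisses it with the remark that the graph is a union of chordal graphs, whereas your direct argument ruling out induced cycles of length at least four in the complete split graph $\overline{K_{m-1}} * K_n$ is the more careful justification (a join of chordal graphs need not be chordal in general, e.g. $\overline{K_2} * \overline{K_2} \cong C_4$), so that extra work is welcome rather than redundant.
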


\begin{proof}
Observe that $\{i, j\}$ is an edge of $\overline{J_{m, n}}$ if and only if $2 \leq i \leq m$ and $m + 1 \leq j \leq m + n$ or $m + 1 \leq i < j \leq m + n.$ Consequently, the vertex $i = 1$ of $\overline{J_{m, n}}$ forms a copy of $K_1$; the vertices $2 \leq i \leq m$ of $\overline{J_{m, n}}$ form a copy of $\overline{K_{m - 1}}$; the vertices $m + 1 \leq i \leq m + n$ of $\overline{J_{m, n}}$ form a clique $K_n$; and each vertex of $\overline{K_{m - 1}}$ is adjacent to a vertex of $K_n.$ We conclude that $\overline{J_{m, n}} \cong (\overline{K_{m - 1}} * K_n) + K_1.$ Considering that this is a union of chordal graphs, it follows that $\overline{J_{m, n}}$ is chordal. Further, observe that $\omega(\overline{J_{m, n}}) = \omega(\overline{K_{m - 1}} * K_n) = \alpha(K_{m - 1} + \overline{K_n}) = n + 1.$
\end{proof}

\begin{prop}\label{ms of J_m,n}
Let $m$ and $n$ be positive integers. We have that $\ms(J_{m, n}) = n + 1.$
\end{prop}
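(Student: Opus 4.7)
The plan is to realize $J_{m,n}$ as a graph join and then chain together the multiplicativity and polynomial-extension results already proved, assuming $m \geq 2$ (the case $m = 1$ is the star graph $S_{n+1}$, to which Proposition \ref{cs(S_n) and ms(S_n)} applies directly). First I would observe that vertex $1$ is universal in $J_{m,n}$: by the definition of the edge set, it is adjacent to every other body vertex $2, \ldots, m$ (via the $K_m$-body) and to every tentacle vertex $m+1, \ldots, m+n$. Consequently, deleting vertex $1$ leaves the induced subgraph on $\{2, \ldots, m+n\}$, which is the disjoint union of the complete graph $K_{m-1}$ on $\{2, \ldots, m\}$ and the empty graph $\overline{K_n}$ on the tentacle vertices $\{m+1, \ldots, m+n\}$. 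This identifies $J_{m,n} \cong K_1 * (K_{m-1} + \overline{K_n})$.

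With this decomposition in hand, I would apply Proposition \ref{cs(G*H) and ms(G*H)} to write $\ms(k(J_{m,n})) = \max\{\ms(k(K_1)), \ms(k(K_{m-1} + \overline{K_n}))\}$. The first term equals $1$ because $k(K_1) \cong k[x]$ is regular of dimension one (the first part of Proposition \ref{main proposition about cs and ms}). For the second term, I would note that because $\overline{K_n}$ contributes no edges, there is an isomorphism $k(K_{m-1} + \overline{K_n}) \cong k(K_{m-1})[x_m, x_{m+1}, \ldots, x_{m+n-1}]$; Proposition \ref{ms of algebra obtained by adjoining indeterminates} then yields $\ms(k(K_{m-1} + \overline{K_n})) = \ms(k(K_{m-1})) + n$. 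Since $\ms(k(K_{m-1})) = 1$ by Proposition \ref{ms(K_n)}, the maximum collapses to $\max\{1, n+1\} = n+1$, as desired.

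The main obstacle is essentially bookkeeping: one must verify the graph isomorphism $J_{m,n} \cong K_1 * (K_{m-1} + \overline{K_n})$ at the level of vertex labels so that the induced isomorphism of edge rings really places $k(K_{m-1} + \overline{K_n})$ into the form required by Proposition \ref{ms of algebra obtained by adjoining indeterminates}. Pleasingly, this approach works over any field $k$ and does not require $k$ to be infinite, in contrast with the alternative route via Proposition \ref{ms(k(G)) leq n - mcn(G) + 3}, which would invoke the chordality of $\overline{J_{m,n}}$ already recorded in Proposition \ref{complement graph of J_m,n} but is only available in the infinite-residue-field case.
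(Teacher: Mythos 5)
Your argument for $m \geq 2$ is correct and takes a genuinely different route from the paper. The paper's proof shows (via Proposition \ref{complement graph of J_m,n}) that $\overline{J_{m,n}}$ is chordal with clique number $n+1$ and then invokes Proposition \ref{ms(k(G)) leq n - mcn(G) + 3}, which rests on Fr\"oberg's linear-resolution theorem and \cite[Corollary 5.2]{EHU} and therefore carries an infinite-field hypothesis that the statement of the proposition does not record. Your decomposition $J_{m,n} \cong K_1 * (K_{m-1} + \overline{K_n})$ (correct, since vertex $1$ is adjacent to every other vertex and the remaining vertices induce $K_{m-1} + \overline{K_n}$) instead routes the computation through the fiber-product formula of Proposition \ref{cs(G*H) and ms(G*H)}, the polynomial-extension formula of Proposition \ref{ms of algebra obtained by adjoining indeterminates} (equivalently Corollary \ref{ms in terms of number of isolated vertices and nontrivial connected components}), and $\ms(k(K_{m-1})) = 1$ from Proposition \ref{ms(K_n)}; none of these requires $k$ to be infinite, so your proof is indeed more general in that respect, and the bookkeeping you flag is routine.

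One caveat: your parenthetical treatment of $m = 1$ does not close that case --- it exposes a defect in the statement. For $m = 1$ the jellyfish graph is the star $S_{n+1}$, and Proposition \ref{cs(S_n) and ms(S_n)} gives $\ms(k(S_{n+1})) = n$, not $n+1$ (likewise $J_{1,1} = K_2$ has $\ms = 1$); your own join decomposition gives $\max\{1, \ms(k(\overline{K_n}))\} = n$ in this case as well. So the formula $\ms(k(J_{m,n})) = n+1$ fails at $m = 1$, as does the computation $\omega(\overline{J_{m,n}}) = n+1$ in Proposition \ref{complement graph of J_m,n}, which implicitly needs $m \geq 2$ so that $\alpha(K_{m-1} + \overline{K_n}) = 1 + n.$ This is a boundary-case problem shared by the paper's own proof rather than a flaw in your method, but you should say explicitly that the result holds only for $m \geq 2$ rather than suggesting that the star-graph proposition confirms it for $m = 1.$
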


\begin{proof}
By Proposition \ref{complement graph of J_m,n}, we have that $\overline{J_{m, n}}$ is chordal with $\omega(\overline{J_{m, n}}) = n + 1.$ Consequently, it follows that $\ms(J_{m, n}) = \alpha(J_{m, n}) = \omega(\overline{J_{m, n}}) = n + 1$ by Proposition \ref{ms(k(G)) leq n - mcn(G) + 3}.
\end{proof}

\begin{comment}
\begin{question}\label{cs(K_m vee K_n)}
What is $\cs(k(K_m \vee K_n))?$ Can we find a non-trivial bound?
\end{question}
\end{comment}

\section{Further directions}\label{Further Directions}

One lingering question concerns the tensor product of standard graded algebras over a field $k.$ Consider $R = k[x_1, \dots, x_m] / I$ for some homogeneous quadratic ideal $I$ of $k[x_1, \dots, x_m]$ and $S = k[y_1, \dots, y_n] / J$ for some homogeneous quadratic ideal $J$ of $k[y_1, \dots, y_n].$ Observe that $I + J$ is an ideal of $k[x_1, \dots, x_m, y_1, \dots y_n]$ and $$R \otimes_k S \cong \frac{k[x_1, \dots, x_m, y_1, \dots, y_n]}{I + J}.$$ We note that the quadratic squarefree monomials $\bar x_i \bar y_j$ of $R \otimes_k S$ do not vanish.

\begin{question}\label{ms and cs of tensor product of k-algebras}
Let $R$ and $S$ be defined as above. What are $\ms(R \otimes_k S)$ and $\cs(R \otimes_k S)$?
\end{question}

Earlier, in Section \ref{Computing ms(R) and cs(R) for the edge ring of a finite simple graph}, we saw that even if $I$ and $J$ are quadratic squarefree monomial ideals, the above question is quite subtle. We provided some Macaulay2 code toward verifying this in the paragraph after Corollary \ref{ms(overline S_n)}.

One other interesting graphical invariant related to Section \ref{Computing ms(R) and cs(R) for the edge ring of a finite simple graph} can be defined as follows. Let $\mathbf X = (X_1, \dots, X_m)$ be an $m$-dimensional random vector with multivariate normal (or Gaussian) distribution $X \sim \mathcal N_m(\boldsymbol \mu, \mathbf \Sigma),$ where $\mathbf \Sigma$ is an $m \times m$ positive-semidefinite matrix known as the {\it covariance matrix}. Consider the finite simple graph $G$ on the vertices $[m]$ with an edge $\{i, j\}$ if and only if the random variables $X_i$ and $X_j$ are conditionally dependent given all of the other random variables (cf. \cite[Corollary 2.2]{Uhl}). By the paragraph following \cite[Problem 1.2]{GrSu}, we may define the {\it maximum likelihood threshold} $\operatorname{mlt}(G) = \min \{\#\text{i.i.d. samples} \mid \mathbf \Sigma \text{ exists with probability one}\}.$

\begin{prop}\label{connection between ms(G) and mlt(G)}
Let $G$ be the finite simple graph corresponding to an $m$-dimensional Gaussian random vector. Let $I(G)$ be the edge ideal of $G$ in the polynomial ring $\RR[x_1, \dots, x_m].$ We have that $\ms(\RR(G)) = \operatorname{mlt}(\overline G)$ if \vspace{0.25cm}
\begin{enumerate}[\rm (1.)]

\item $\overline G$ is chordal; \vspace{0.25cm}

\item $\overline G$ is complete; \vspace{0.25cm}

\item $G$ is complete; or \vspace{0.25cm}

\item $\overline G$ has no induced cycles (i.e., $\overline G$ is a tree).

\end{enumerate}
\end{prop}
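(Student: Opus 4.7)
My plan is to reduce all four cases to a single unifying observation: in every listed situation, the complement graph $\overline G$ is chordal. Once this is established, I will combine an internal result of this paper with a classical theorem from the theory of Gaussian graphical models.

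First I would verify chordality of $\overline G$ in each case separately. Case (1) is the hypothesis. In case (2), $\overline G$ is complete, and a complete graph is trivially chordal because any four of its vertices induce $K_4,$ which contains no induced cycle of length at least four. In case (3), $G$ complete forces $\overline G$ to be the edgeless graph on $m$ vertices, which contains no cycles whatsoever. Case (4) is immediate: a tree has no cycles at all.

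With chordality of $\overline G$ in hand, I would apply the first part of Proposition \ref{ms(k(G)) leq n - mcn(G) + 3} (valid because $\RR$ is infinite) to conclude $\ms(\RR(G)) = \alpha(G).$ The combinatorial identity $\alpha(G) = \omega(\overline G),$ recorded in the paragraph preceding Definition \ref{clique-adjacent definition}, rewrites this as $\ms(\RR(G)) = \omega(\overline G).$

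The remaining step is to invoke Buhl's classical theorem from algebraic statistics: if $H$ is chordal, then $\operatorname{mlt}(H) = \omega(H)$ (see \cite{Uhl} for an exposition and further references). Applying this with $H = \overline G$ gives $\operatorname{mlt}(\overline G) = \omega(\overline G),$ and chaining the equalities produces $\ms(\RR(G)) = \alpha(G) = \omega(\overline G) = \operatorname{mlt}(\overline G),$ as required. The hard part of the argument is not algebraic at all: the internal content is handled entirely by the chordal branch of Proposition \ref{ms(k(G)) leq n - mcn(G) + 3}, and the obstacle is the reliance on the external chordal maximum likelihood threshold theorem. One minor bookkeeping point is case (3), where $\overline G$ is edgeless: under the standard zero-mean convention, $\operatorname{mlt}(\overline G) = 1 = \omega(\overline G),$ which matches the value $\ms(\RR(G)) = 1$ supplied by Corollary \ref{ms(G) = 1 if and only if G is complete}.
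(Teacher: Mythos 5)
Your argument is correct, and for case (1) it coincides exactly with the paper's proof: chordality of $\overline G$ plus Proposition \ref{ms(k(G)) leq n - mcn(G) + 3} gives $\ms(\RR(G)) = \alpha(G) = \omega(\overline G),$ and the chordal maximum likelihood threshold theorem (\cite[Proposition 1.3]{GrSu}, which is the Buhl-type result you invoke) gives $\operatorname{mlt}(\overline G) = \omega(\overline G).$ Where you differ is in cases (2)--(4): you observe that a complete graph, an edgeless graph, and a tree are all chordal, so these cases are literally instances of case (1), whereas the paper verifies each of them independently using internal results --- regularity of $\RR[x_1,\dots,x_m]$ together with Propositions \ref{ms(R) = ms(R_m) for a standard graded local ring} and \ref{main proposition about cs and ms} for case (2), Proposition \ref{ms(K_n)} for case (3), and Corollary \ref{ms for the complement of a tree} for case (4), then matching against the values of $\operatorname{mlt}$ recorded in the discussion preceding \cite[Proposition 1.3]{GrSu}. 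Your route is more economical; what the paper's route buys is that the special cases do not lean on Fr\"oberg's linear-resolution theorem (the engine behind Proposition \ref{ms(k(G)) leq n - mcn(G) + 3}) in the degenerate situation of case (2), where $G$ has no edges and $I(G) = 0,$ so ``linear resolution of $I(G)$'' is vacuous. That degenerate case does still go through (one gets $L = \fm$ and $\alpha(G) = m$), so your unification is sound, but it is worth a sentence acknowledging it rather than letting it pass silently. Your closing remark on the zero-mean convention for $\operatorname{mlt}$ of the edgeless graph is a reasonable piece of bookkeeping that the paper handles by citation alone.
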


\begin{proof}
By \cite[Proposition 1.3]{GrSu} and Proposition \ref{ms(k(G)) leq n - mcn(G) + 3}, if $\overline G$ is chordal, then $\operatorname{mlt}(\overline G) = \omega(\overline G) = \alpha(G) = \ms(\RR(G)).$ If $\overline G$ is complete, then $G$ has no edges, hence $\RR(G) = \RR[x_1, \dots, x_m]$ is a regular standard graded local ring, from which it follows that $\ms(\RR(G)) = m$ by Propositions \ref{ms(R) = ms(R_m) for a standard graded local ring} and \ref{main proposition about cs and ms}. Conversely, if $G$ is complete, then $\ms(\RR(G)) = 1$ by Proposition \ref{ms(K_n)}. Last, if $\overline G$ has no cycles, then $\ms(\RR(G)) = 2$ by Corollary \ref{ms for the complement of a tree}. By the paragraph preceding \cite[Proposition 1.3]{GrSu}, we have that $\ms(\RR(G)) = \operatorname{mlt}(\overline G)$ in each of these cases.
\end{proof}

\begin{question}
Let $G$ be the finite simple graph corresponding to an $m$-dimensional Gaussian random vector. Let $I(G)$ be the edge ideal of $G$ in the polynomial ring $\RR[x_1, \dots, x_m].$ Does it hold that $\ms(\RR(G)) = \operatorname{mlt}(\overline G)$?
\end{question}

\section*{Acknowledgements}

We express our gratitude to Hailong Dao for suggesting this problem to us and for many productive conversations regarding this work. We appreciate the useful comments of Grigoriy Blekherman toward a possible connection with the maximum likelihood threshold of a graph. We thank the creators of the Macaulay2 computer algebra software and especially those who contributed to the {\tt EdgeIdeals} and {\tt RandomIdeals} packages.

\end{document}